\documentclass[12pt,oneside]{amsart}
\usepackage{amsmath,amssymb,amsthm,amsfonts,amscd,soul,cite, amsxtra,amstext,mathrsfs}
\usepackage{color,enumitem,graphicx}
\usepackage[utf8]{inputenc}
\usepackage[english]{babel}
\usepackage[colorlinks=true,urlcolor=blue,
citecolor=red,linkcolor=blue,linktocpage,pdfpagelabels,
bookmarksnumbered,bookmarksopen]{hyperref}
\usepackage[english]{babel}
\usepackage{lineno}
\usepackage[hyperpageref]{backref}
\pretolerance=10000

\def\dis{\displaystyle}

\def\ato0{{\buildrel{\dis\longrightarrow}\over{a\to0}}}

\newtheorem{theorem}{Theorem}[section]
\newtheorem{proposition}{Proposition}[section]

\newtheorem{lemma}[theorem]{Lemma}

\newtheorem{remark}{Remark}[section]

\numberwithin{equation}{section}

\DeclareMathSymbol{\C}{\mathalpha}{AMSb}{"43}

\textwidth=6.5in \oddsidemargin=-0.0in \evensidemargin=0.0in
\baselineskip=16pt \textheight 8.1 in

\newtheorem*{theorem-a}{Theorem A}
\newtheorem*{theorem-b}{Theorem B}

\newcommand{\bsub}{\begin{subequations}}
\newcommand{\esub}{\end{subequations}$\!$}

\title[On a sharp inequality of Adimurthi-Druet type]{On a sharp inequality of Adimurthi-Druet type and extremal functions}
\author[J.F. de Oliveira]{Jos\'{e} Francisco de Oliveira}\thanks{First author was supported by   CNPq/Brazil grant 309491/2021-5 }
\author[J.M. do \'{O}]{Jo\~{a}o Marcos do \'{O}}\thanks{Second author was supported by  INCTmat/MCT/Brazil and CNPq grant  312340/2021-4}

\address[J.F. de Oliveira]{\newline\indent Department of Mathematics
\newline\indent 
Federal University of Piau\'{\i}
\newline\indent
 64049-550 Teresina, PI, Brazil}
\email{\href{mailto:jfoliveira@ufpi.edu.br}{jfoliveira@ufpi.edu.br}}

\address[J.M. do \'{O}]{\newline\indent Department of Mathematics
\newline\indent 
Federal University of Para\'{\i}ba
\newline\indent
58051-900 Jo\~{a}o Pessoa, PB, Brazil}
\email{\href{mailto:jmbo@pq.cnpq.br}{jmbo@pq.cnpq.br}}
\keywords{Trudinger-Moser inequality, Blow up analysis,  Fractional Dimensions, Extremals}
\subjclass[2000]{35J50, 46E35, 26D10, 35B33}

\begin{document}
\maketitle
\thispagestyle{empty}
\begin{abstract}
Our main purpose in this paper is to establish the existence and nonexistence of extremal functions for sharp inequality of Adimurthi-Druet type for fractional dimensions on the entire space. Precisely, we extend the sharp Trudinger-Moser type inequality in (Calc.Var.Partial Differential Equations, \textbf{52}  (2015) 125-163) for the entire space. In addition, we perform the two-step strategy of Carleson-Chang together blow up analysis method to ensure the existence of maximizers for the associated extremal problems for both subcritical and critical  regimes. We also present a nonexistence result under subcritical regime for some special cases.
\end{abstract}
\section{Introduction}
The classical Trudinger-Moser inequality \cite{Moser, Trudinger67, Pohozaev65, Yudo} states that
\begin{equation}\label{classica-TM}
   \sup_{u\in C^{1}_{0}(\Omega),\, \int_{\Omega}\vert \nabla u\vert ^{N}dx\le 1}\int_{\Omega}e^{\mu \vert u \vert^{\frac{N}{N-1}}}dx \;\; \begin{cases}
  <\infty,\;\; \mbox{if}\;\;  \mu\le \mu_{N}\\
      =\infty,\;\; \mbox{if}\;\;  \mu>\mu_{N}
    \end{cases}
\end{equation}
where $\Omega\subset\mathbb{R}^{N}$  is a bounded domain,  $\mu_{N}=N\omega^{1/(N-1)}_{N-1}$, and $\omega_{N-1}$ is the measure of the unit sphere in $\mathbb{R}^{N}$. By using symmetrization techniques J. Moser \cite{Moser} was able to reduce \eqref{classica-TM}  to the following
\begin{equation}\label{r-classica-TM}
    \sup_{u\in C^{1}_{0,rad}(B_R),\, \int_{B_R}\vert \nabla u \vert^{N}dx\le 1}\int_{B_R}e^{\mu \vert u\vert^{\frac{N}{N-1}}}dx\;\; \left\{\begin{aligned}
    & <\infty,\;\; \mbox{if}\;\;  \mu\le \mu_{N}\\
     & =\infty,\;\; \mbox{if}\;\;  \mu>\mu_{N}
    \end{aligned}\right.
\end{equation}
where $\vert B_R\vert=\vert\Omega\vert$ and $C^{1}_{0,rad}(B_R)$ represents the set of the radially symmetric functions in $C^{1}_{0}(B_R)$. 

On the other hand,  according to the formalism in \cite{Still77,Zubair12}, the integration of radially symmetric function $f(r)$ on a $\theta$-dimensional fractional space is given by
\begin{equation}\label{fractional integral}
    \int f(r(x_0,x_1))dx_0=\omega_{\theta}\int_{0}^{\infty}r^{\theta}f(r)dr,
\end{equation}
where $r(x_0,x_1)$ is the distance between two points $x_0$ and $x_1$, and $\omega_{\theta}$
given by
\begin{equation}\label{fractional volume}
\omega_{\theta}=\frac{2\pi^{\frac{\theta}{2}}}{\Gamma(\frac{\theta}{2})},\;\;\;\mbox{with}\;\;\;\Gamma(x)=\int_0^{\infty} t^{x-1} e^{-t} \,
\mathrm{d} t.
\end{equation}

In the case that $\theta$ is a positive integer number $\omega_{\theta}$ agrees precisely with the measure of the unit sphere in for Euclidean space $\mathbb{R}^{\theta+1}$. Integration over fractional dimensional spaces is often used in the dimensional regularization method as a powerful tool to obtain results in statistical mechanics and quantum field theory \cite{Collins, Palmer, Zubair11,Zubair12}.
 Motivated by the above discussion, in \cite{JJ2012} the authors were able to establish a sharp Trudinger-Moser type inequality which extends the classical \eqref{classica-TM} for fractional dimensions. Indeed, for  $0<R<\infty$,  $\alpha\ge 1$, $\sigma\ge 0$ and $\alpha-p+1=0$,
 it is proven that
\begin{equation}\label{TM1}
\sup_{u\in AC_{loc}(0,R],\, u(R)=0,\,\int_{0}^{R}\vert u^{\prime}\vert^{p}\mathrm{d}\lambda_{\alpha}\le1}\int_{0}^{R}
e^{\mu\vert u\vert^{\frac{p}{p-1}}}\, \mathrm{d}\lambda_{\sigma}\;\;
\left\{
  \begin{array}{lll}
    <\infty & \mbox{if} & \mu\le\mu_{\alpha,\sigma}\\
    =\infty & \mbox{if} & \mu>\mu_{\alpha,\sigma}
  \end{array}
\right.
\end{equation}
where $\mu_{\alpha,\sigma}=(\sigma+1)\omega_{\alpha}^{1/\alpha}$,  $AC_{loc}(0,R]$ denotes set of all locally absolutely continuous functions on the interval $(0,R]$ and we are denoting the $\theta$-fractional measure (cf. \eqref{fractional integral}) by
\begin{equation}\label{fractional-measure}
\begin{aligned}
\int_{0}^{R}f(r)\mathrm{d}\lambda_{\theta}=\omega_{\theta}\int_{0}^{R} r^{\theta}f(r)\mathrm{d}r,\quad \theta\ge0,\quad \mbox{and}\quad 0<R\le \infty.
\end{aligned}
\end{equation} 

Despite its simple form, the inequality \eqref{TM1} hides surprises and some interesting points have been drawing attention. Firstly, in the particular case that $\alpha=\sigma=N-1$, the fractional inequality \eqref{TM1} implies that the Moser's reduction \eqref{r-classica-TM} and then \eqref{classica-TM} holds. In this sense we say that \eqref{TM1} extends \eqref{classica-TM} to weighted Sobolev spaces including fractional dimensions. Secondly, for $\alpha=N-k$ and $\sigma=N-1$ we can recover the Trudinger-Moser type inequality for the $k$-Hessian equation obtained by Tian and Wang \cite{Tian}. Further, based on \eqref{TM1}, in \cite{RUDOL}  the authors were able to investigate the existence of maximizers for that inequality obtained in \cite{Tian} and the existence of radially symmetric solutions for the $k$-Hessian equation  was obtained in \cite{OLUBrmi}. Also, for arbitrary real choices of $\alpha$ and $\sigma$,  the estimate \eqref{TM1} can be employed  to study a general class of quasi-linear elliptic operators  \cite{JJ2014,Clement-deFigueiredo-Mitidieri}. Thirdly, in \cite{JJ2014} was proved that the exponential growth is optimal in \eqref{TM1} which leads to loss of compactness in the sense of the embedding into the Orlicz-type spaces and makes the extremal problem associated to \eqref{TM1} interesting. Inspired by the results due to Adimurthi and O.~Druet \cite{ADDruet2004}, in \cite{JJ2013} the authors have obtained a sharp form of \eqref{TM1} and investigated the associated extremal problem. 

In this paper we are mainly interested in extends the results in \cite{JJ2013} for the unbounded case, when $R=\infty$. In order to state our results, let us present briefly the related weighted Sobolev space introduced by  P. Cl\'{e}ment et al. \cite{Clement-deFigueiredo-Mitidieri}. For  $0<R\le\infty$, $\theta\ge0$ and $q\ge 1$, set $L^q_{\theta}=L^q_{\theta}(0,R)$ the Lebesgue space associated with the $\theta$-fractional measure \eqref{fractional-measure} on the interval $(0,R)$. Then, we denote by $X_R=X^{1,p}_R\left(\alpha,\theta\right)$ the completion of the set  of all  functions  $u\in AC_{loc}(0,R)$ such that  $\lim_{r\rightarrow R}u(r)=0$, $u\in L^{p}_{\theta}$ and $u^{\prime}\in L^{p}_{\alpha}$ with the norm
\begin{equation}\label{Xnorm-full}
\|u\|=(\|u\|^{p}_{L^{p}_{\theta}}+\|u^{\prime}\|^{p}_{L^{p}_{\alpha}})^{\frac{1}{p}}.
\end{equation}
If $\alpha-p+1=0$ and $R=\infty$, we have the continuous embedding (See Section~\ref{Improvement-section} below)
\begin{equation}
\label{PTM-compactembeddings}
 X^{1,p}_{R}(\alpha,\theta) \hookrightarrow L^q_{\theta}\quad\mbox{for all}\quad q\in [p,\infty).
\end{equation}
Set 
\begin{equation}\label{exp-frac}
\varphi_p(t)=e^{t}-
\sum_{k=0}^{k_0-1}\frac{t^{k}}{k!}=\sum_{j\in\mathbb{N}\;:\; j\ge p-1}\frac{t^j}{j!},\; t\ge 0,
\end{equation}
with $k_0=\min\left\{j\in\mathbb{N}\;:\; p-1\le j\right\}$. In view of the \eqref{PTM-compactembeddings}, for each term of the series expansion of $\varphi_{p}(\vert u \vert^{p/(p-1)})$ belongs to $L^{1}_{\theta}$ for all $u\in X^{1,p}_{\infty}(\alpha,\theta)$. It motivates us to investigate the supremum
\begin{equation}\label{SUP}
   AD(\eta, \mu, \alpha,\theta)=\sup_{u\in X^{1,p}_{\infty}(\alpha,\theta),\;\|u\|= 1}\int_{0}^{\infty}\varphi_{p}\big(\mu(1+\eta\|u\|^{p}_{L^{p}_{\theta}})^{\frac{1}{p-1}}\vert u\vert^{\frac{p}{p-1}}\big)\mathrm{d}\lambda_{\theta}.
 \end{equation}
  
 Actually,  we are able to establish  the following sharp result:
 \begin{theorem}\label{TSUP}
 Let  $p\ge 2$ and $\alpha=p-1$ and $\theta\ge 0$. Then, 
 \begin{itemize}
 \item [$(1)$] $ AD(\eta, \mu, \alpha,\theta)<\infty$, for any $ \mu\le \mu_{\alpha,\theta}$ and $0\le \eta<1$
 \item [$(2)$] $  AD(1, \mu_{\alpha,\theta}, \alpha,\theta)=\infty.$ 
 \end{itemize}
 \end{theorem}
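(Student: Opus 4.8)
For $(1)$: the first move is a rescaling that makes the threshold $\eta<1$ transparent. Given $u$ with $\|u\|=1$, set $t:=\|u\|_{L^p_\theta}^p\in[0,1)$ (the value $t=1$ is impossible since $\lambda_\theta$ has full support), so $\|u^{\prime}\|_{L^p_\alpha}^p=1-t$, and put $v:=(1-t)^{-1/p}u$, which has $\|v^{\prime}\|_{L^p_\alpha}=1$ and $\|v\|_{L^p_\theta}^p=s:=t/(1-t)$. With $p':=p/(p-1)$ the argument of $\varphi_p$ in \eqref{SUP} becomes
\[
\mu(1+\eta t)^{1/(p-1)}|u|^{p'}=\mu\big[(1+\eta t)(1-t)\big]^{1/(p-1)}|v|^{p'}=\mu\Big(\tfrac{1+(1+\eta)s}{(1+s)^{2}}\Big)^{1/(p-1)}|v|^{p'}=:g(s)\,|v|^{p'}.
\]
Since $(1+s)^{2}-\big(1+(1+\eta)s\big)=(1-\eta)s+s^{2}\ge0$ for all $s\ge0$ — exactly the place where $\eta\le1$ is used — one gets $g(s)\le\mu\le\mu_{\alpha,\theta}$, and for $\eta<1$ the deficit is strict and \emph{linear}: $\mu_{\alpha,\theta}-g(s)\gtrsim(1-\eta)\min\{s,1\}$. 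Hence $AD(\eta,\mu,\alpha,\theta)\le\sup_{s\ge0}\Psi(s)$, where $\Psi(s)$ denotes the supremum of $\int_{0}^{\infty}\varphi_p(g(s)|v|^{p'})\,\mathrm{d}\lambda_{\theta}$ over $\|v^{\prime}\|_{L^p_\alpha}=1$, $\|v\|_{L^p_\theta}^p=s$.

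To bound $\Psi(s)$ uniformly I would decompose $\int_{0}^{\infty}=\int_{0}^{1}+\int_{1}^{\infty}$. On $(0,1)$ one truncates at $r=1$, subtracting $v(1)$ (controlled by $\|v\|\le(1+s)^{1/p}$ via \eqref{PTM-compactembeddings}) and absorbing it through Young's inequality — this is admissible because $\big(g(s)|v(1)|^{p'}\big)^{p-1}\le\mu_{\alpha,\theta}^{p-1}C^p\,\frac{1+(1+\eta)s}{1+s}\le\mu_{\alpha,\theta}^{p-1}C^p(1+\eta)$ is bounded in $s$ — so that the bounded-domain inequality \eqref{TM1} with $R=1$ applies, the near-critical regime $s\to0^{+}$ being handled by the linear deficit (the contribution of a Moser-type profile of logarithmic width $w$ and mass $s$ is $\lesssim s\,w\,e^{-c(1-\eta)sw}$, whose maximum over $w$ is $O\big((1-\eta)^{-1}\big)$). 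On $(1,\infty)$, $\varphi_p$ is comparable to its leading monomial $t^{k_0}$ and \eqref{PTM-compactembeddings} gives $\int_{1}^{\infty}|v|^{q}\,\mathrm{d}\lambda_{\theta}\le C_q(1+s)^{q/p}$ for $q\ge p$, which together with $g(s)\to0$ closes the estimate.

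For $(2)$: at $(\eta,\mu)=(1,\mu_{\alpha,\theta})$ the coefficient above degenerates to $g(s)^{p-1}=\mu_{\alpha,\theta}^{p-1}\big(1-s^{2}/(1+s)^{2}\big)$, so the criticality deficit is only \emph{quadratic} in the $L^p_\theta$-mass, and this is what I would exploit with an explicit family adapted to $\alpha=p-1$. Fix $w\to\infty$, set $\ell_w:=w^{-1/2}$ and $M_w:=(w^{p-1}/\omega_{\alpha})^{1/p}$, choose $r_{\mathrm{out}}=r_{\mathrm{out}}(w)>e^{w}$ so that $\|m_w\|_{L^p_\theta}^p=\ell_w$ (an elementary computation gives $r_{\mathrm{out}}^{\theta+1}=\tfrac{(\theta+1)^{p+1}\omega_{\alpha}}{\Gamma(p+1)\,\omega_{\theta}}\,\ell_w\,w\,(1+o(1))$), put $r_{0}:=r_{\mathrm{out}}e^{-w}$, and let
\[
m_w(r):=\begin{cases}M_w,&0<r\le r_{0},\\[4pt] M_w\,\dfrac{\log(r_{\mathrm{out}}/r)}{w},&r_{0}\le r\le r_{\mathrm{out}},\\[8pt]0,&r\ge r_{\mathrm{out}}.\end{cases}
\]
Because $\alpha=p-1$ one has $\|m_w^{\prime}\|_{L^p_\alpha}^p=\omega_{\alpha}M_w^{p}w^{1-p}=1$, so $u_w:=m_w/(1+\|m_w\|_{L^p_\theta}^p)^{1/p}$ belongs to $X^{1,p}_{\infty}(\alpha,\theta)$ with $\|u_w\|=1$. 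On $(0,r_{0})$, $u_w$ is constant, hence there
\[
\mu_{\alpha,\theta}\big(1+\|u_w\|_{L^p_\theta}^p\big)^{1/(p-1)}|u_w|^{p'}=(\theta+1)\,w\,\Big(\tfrac{1+2\ell_w}{(1+\ell_w)^{2}}\Big)^{1/(p-1)}(1+o(1))=(\theta+1)w-\tfrac{\theta+1}{p-1}+o(1)\longrightarrow+\infty,
\]
so $\varphi_p$ of it is $\ge\tfrac12 e^{(\cdot)}$ for $w$ large, and using $\tfrac{\omega_\theta}{\theta+1}r_0^{\theta+1}=\tfrac{(\theta+1)^{p}\omega_\alpha}{\Gamma(p+1)}\,\ell_w w\,e^{-(\theta+1)w}(1+o(1))$ and $w\ell_w^{2}=1$,
\[
\int_{0}^{\infty}\varphi_p\big(\mu_{\alpha,\theta}(1+\|u_w\|_{L^p_\theta}^p)^{1/(p-1)}|u_w|^{p'}\big)\,\mathrm{d}\lambda_{\theta}\ \ge\ \tfrac12\,\tfrac{\omega_\theta}{\theta+1}r_0^{\theta+1}\,e^{(\theta+1)w-\frac{\theta+1}{p-1}+o(1)}=c\,w^{1/2}(1+o(1))
\]
for some $c>0$, which diverges; therefore $AD(1,\mu_{\alpha,\theta},\alpha,\theta)=\infty$.

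The main obstacle is the construction in $(2)$: one must balance the exponential height gain $(\theta+1)w$ of the bubble against the volume loss $e^{-(\theta+1)w}$ of its core and the deficit produced by carrying a nonzero $L^p_\theta$-mass, and the fact that a polynomial surplus $w^{1/2}$ survives at $\eta=1$ but is annihilated (via the factor $e^{-c(1-\eta)sw}$) for $\eta<1$ rests entirely on the deficit passing from linear to quadratic in the mass as $\eta\uparrow1$. For $(1)$ the only subtlety absent from the bounded-domain case of \cite{JJ2013} is the failure of a Poincar\'e inequality on $(0,\infty)$, which forces $\|v\|_{L^p_\theta}$ to be carried through every estimate explicitly — precisely the role of the $\|u\|_{L^p_\theta}$ summand in the norm \eqref{Xnorm-full} and of the embedding \eqref{PTM-compactembeddings}.
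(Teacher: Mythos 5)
Your part (2) is correct and is essentially a diagonalized version of the paper's Step~2. The paper keeps the outer radius $\rho$ of the Moser bubble fixed, lets $n\to\infty$ to obtain the lower bound $\omega_\theta\rho^{\theta+1}/(\theta+1)$ (its condition $n\mathcal{R}_n\to0$ is exactly your quadratic cancellation $w\ell_w^2=O(1)$), and then lets $\rho\to\infty$; you instead couple the outer radius to the width so that the $L^p_\theta$-mass is $w^{-1/2}$ and extract the divergence $w^{1/2}$ in a single limit. Both are valid and rest on the same mechanism, namely that at $\eta=1$ the deficit $1-(1+t)(1-t)=t^2$ is quadratic in the mass $t$. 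The stray requirement $r_{\mathrm{out}}>e^{w}$ is neither needed nor consistent with your own formula $r_{\mathrm{out}}^{\theta+1}\sim C\,\ell_w w$, but nothing depends on it.

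Part (1) is where there is a genuine gap. The reduction to $\sup_{s\ge0}\Psi(s)$ with coefficient $g(s)$ is fine, but this supremum does not follow from the quoted literature: for $\eta>0$ and $s>0$ one has $g(s)>\mu_{\alpha,\theta}(1+s)^{-1/(p-1)}$, which is the coefficient that \cite{AbreFern} controls slice by slice, so you must prove the uniform bound yourself. The only genuinely hard regime, $s\to0^{+}$ at $\mu=\mu_{\alpha,\theta}$, is ``handled'' by computing the contribution of Moser-type profiles of width $w$ and mass $s$; that is a lower-bound computation for special test functions and does not bound $\Psi(s)$ from above over all admissible $v$. To close the argument along your lines you would need at least: (i) monotone rearrangement, so that $|v(1)|^{p}\lesssim\|v\|^{p}_{L^{p}_{\theta}}=s$ rather than the crude radial estimate $|v(1)|^{p}\lesssim 1+s$; and (ii) an $s$-dependent splitting parameter $\epsilon\sim(1-\eta)s$ in \eqref{elementary-ineq}, together with the verification that $(1+\epsilon)^{1/p}g(s)\le\mu_{\alpha,\theta}$ while the additive term $\epsilon^{-1/(p-1)}|v(1)|^{p/(p-1)}$ remains bounded --- in effect reproving the Adimurthi--Druet improvement on the ball. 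The paper avoids all of this via Nguyen's renormalization \cite{Nguyen}: since $(1+\eta t)(1-\eta t)\le 1$, the function $w=u/(1-\eta\|u\|^{p}_{L^{p}_{\theta}})^{1/p}$ dominates the integrand pointwise and satisfies the modified constraint $\|w'\|^{p}_{L^{p}_{\alpha}}+(1-\eta)\|w\|^{p}_{L^{p}_{\theta}}=1$, and the scaling identity \eqref{rts-LpLq} shows that the supremum under this constraint equals $(1-\eta)^{-1}$ times the supremum under $\|u\|=1$, which is finite by \cite[Theorem~1.1]{AbreFern}. You should either adopt that two-line reduction or supply (i)--(ii) in full.
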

If $\eta=0$, Theorem~\ref{TSUP} recovers  the result in \cite[Theorem~1.1]{AbreFern} and part of  the \cite[Theorem~1.2]{JJ2021}.  In addition, Theorem~\ref{TSUP}  extends \cite[Theorem~2]{JJ2013}  for the entire space $R=\infty$.

 Maximizers for the Trudinger-Moser type inequality \eqref{TM1} and its extensions were investigated in \cite{JJ2021, AbreFern,JJ2012, JJ2013}. In this paper, we will also investigate the existence and nonexistence of extremal function for the supremum \eqref{SUP}. Our first existence results reads as follow:

 \begin{theorem}[Subcritical case] \label{thm-extremal}  Let  $p\ge 2$  and $\alpha=p-1$ and $\theta\ge \alpha$. Then the supremum $AD(\eta, \mu, \alpha,\theta)$ is attained  in the following cases:
 \begin{itemize}
 \item [$(1)$]  $p>2$,   $0\le \eta<1$ and $ 0<\mu<\mu_{\alpha,\theta}$,
 \item [$(2)$]  $p=2$, $0\le \eta<1$ and   $\frac{2(1+2\eta)}{(1+\eta)^2B_{2,\theta}}<\mu<\mu_{1,\theta}$, where
 \begin{equation}\label{B2}
\frac{1}{B_{2,\theta}}=\inf_{0\not\equiv u\in X^{1,2}_{\infty}(1,\theta)}\frac{\|u^{\prime}\|^{2}_{L^{2}_{1}}\|u\|^{2}_{L^{2}_{\theta}}}{\|u\|^{4}_{L^{4}_{\theta}}}.
\end{equation}
  \end{itemize} 
 \end{theorem}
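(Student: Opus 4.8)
\textbf{Proof proposal for Theorem~\ref{thm-extremal}.}

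The plan is to run the classical concentration-compactness/direct-method scheme for the subcritical supremum. Fix $\mu<\mu_{\alpha,\theta}$ and $0\le\eta<1$, and take a maximizing sequence $(u_n)\subset X^{1,p}_\infty(\alpha,\theta)$ with $\|u_n\|=1$ for the functional
\begin{equation*}
J_{\eta,\mu}(u)=\int_0^\infty \varphi_p\big(\mu(1+\eta\|u\|^p_{L^p_\theta})^{\frac{1}{p-1}}|u|^{\frac{p}{p-1}}\big)\,\mathrm{d}\lambda_\theta .
\end{equation*}
Since $\theta\ge\alpha$ one may replace $u_n$ by its symmetric-decreasing-type rearrangement (the monotone rearrangement on $(0,\infty)$ with respect to $\mathrm{d}\lambda_\alpha$, which does not increase $\|u_n'\|_{L^p_\alpha}$ and preserves the $L^q_\theta$-norms when $\theta\ge\alpha$); so we may assume each $u_n$ is nonnegative and nonincreasing in $r$. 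By reflexivity of $X^{1,p}_\infty(\alpha,\theta)$ pass to a subsequence with $u_n\rightharpoonup u_0$ weakly in $X_\infty$, $u_n'\rightharpoonup u_0'$ in $L^p_\alpha$, $u_n\to u_0$ in $L^q_\theta$ for every $q\in[p,\infty)$ by the compact embedding in \eqref{PTM-compactembeddings} (the embedding stated there is continuous; the point of the first part of the proof will be to upgrade it to compactness for the subcritical range, which for monotone functions on the weighted half-line is standard), and $u_n\to u_0$ a.e.

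The crux is to show $u_0\not\equiv 0$ and then that $u_0$ is admissible and maximizing. First, $J_{\eta,\mu}(0)=0<AD(\eta,\mu,\alpha,\theta)$ (positivity of the supremum is clear by testing a fixed bump), so if we show $\lim_n J_{\eta,\mu}(u_n)\le J_{\eta,\mu}(u_0)$ whenever $u_n\rightharpoonup u_0$ and this fails for $u_0=0$, we are done. The vanishing alternative $u_0\equiv0$ is excluded by a Lions-type argument: if $\|u_n\|_{L^p_\theta}\to0$ then $\|u_n'\|_{L^p_\alpha}\to1$, and the critical Trudinger-Moser estimate \eqref{TM1} (with $\sigma=\theta$, applied on finite intervals and then globally via the tail decay of monotone $X_\infty$-functions) combined with $\mu<\mu_{\alpha,\theta}$ gives, for some $q>1$ close to $1$, a uniform bound on $\int_0^\infty \varphi_p(q\mu(1+o(1))|u_n|^{p/(p-1)})\mathrm{d}\lambda_\theta$; hence $\varphi_p(\mu(1+\eta\|u_n\|^p_{L^p_\theta})^{1/(p-1)}|u_n|^{p/(p-1)})$ is equi-integrable, and since $u_n\to0$ in $L^p_\theta$ and $\varphi_p(t)\le C t^{\,p-1}\,\mathrm{e}^{Ct}$ near $0$ with the leading power $p-1\ge1$, one gets $J_{\eta,\mu}(u_n)\to0$ when $p>2$ (leading exponent $\ge2$) — contradiction with maximality. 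This is precisely where the two cases diverge: when $p=2$ the leading term of $\varphi_2$ is $t$ itself, i.e.\ $J_{1,\mu}(u)\ge \mu(1+\eta\|u\|^2_{L^2_\theta})\|u\|^2_{L^2_\theta}+\tfrac{\mu^2}{2}(1+\eta\|u\|^2_{L^2_\theta})^2\|u\|^4_{L^4_\theta}+\cdots$, and a maximizing sequence could in principle concentrate with $\|u_n\|_{L^2_\theta}\to0$; the hypothesis $\mu>\frac{2(1+2\eta)}{(1+\eta)^2B_{2,\theta}}$ is exactly the condition guaranteeing that the quartic gain beats the loss, i.e.\ that $AD(\eta,\mu,1,\theta)>\mu$ (the value of the quadratic part along a normalized sequence with all mass in the gradient), so no-vanishing is forced. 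I would verify this by evaluating $J$ on an optimal or near-optimal function for $B_{2,\theta}$ rescaled to norm $1$ and checking the inequality $AD>\mu$ reduces to the stated lower bound on $\mu$; this estimate-from-below is the main technical obstacle in case (2).

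Granted $u_0\not\equiv0$, set $t_0=\|u_0\|^p_{L^p_\theta}$, $s_0=\|u_0'\|^p_{L^p_\alpha}$, so $t_0+s_0\le1$ by weak lower semicontinuity, $v_0:=u_0/\|u_0\|$ is admissible, and $\|u_0\|<1$ unless $u_n\to u_0$ strongly. If $\|u_0\|<1$ we derive a contradiction: by a Brezis-Lieb splitting for the Trudinger-Moser functional (using that $s_0+\limsup\|u_n'-u_0'\|^p_{L^p_\alpha}=1$ forces $\limsup\|u_n'-u_0'\|^p_{L^p_\alpha}=1-s_0<1$, so the tail $u_n-u_0$ has subcritical gradient energy and its contribution to $J$ vanishes by \eqref{TM1} again), we get $AD(\eta,\mu,\alpha,\theta)=\lim_n J_{\eta,\mu}(u_n)= J_{\eta,\mu}(u_0)$; but then, using the strict monotonicity of $\varphi_p$ and of the prefactor in $\|u\|_{L^p_\theta}$, the rescaled function $v_0$ of unit norm satisfies $J_{\eta,\mu}(v_0)>J_{\eta,\mu}(u_0)=AD$, a contradiction. (When $p=2$ and $\eta>0$ one must be slightly careful because enlarging $u_0$ to $v_0$ also enlarges the prefactor $1+\eta\|\cdot\|^2_{L^2_\theta}$, which only helps; the inequality is still strict.) Hence $\|u_0\|=1$, $u_n\to u_0$ in $X_\infty$, and by the uniform subcritical exponential integrability (a Vitali/Lebesgue argument as above, now with no vanishing) $J_{\eta,\mu}(u_n)\to J_{\eta,\mu}(u_0)$, so $u_0$ attains the supremum. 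The two places demanding real work are: (i) establishing compactness of $X^{1,p}_\infty(\alpha,\theta)\hookrightarrow L^q_\theta$ for $q\in[p,\infty)$ and the attendant Lions-type equi-integrability of $\varphi_p$ along sequences of subcritical gradient energy, which propagates from the bounded-interval inequality \eqref{TM1} to $R=\infty$ via the pointwise decay $|u(r)|\le C\|u\|\,r^{-\theta/p'}$ of monotone functions in $X_\infty$; and (ii) the sharp lower bound $AD(\eta,\mu,1,\theta)>\mu$ in the case $p=2$, whose threshold $\mu>\frac{2(1+2\eta)}{(1+\eta)^2 B_{2,\theta}}$ I expect comes out of testing with the $B_{2,\theta}$-optimizer and keeping the quartic term.
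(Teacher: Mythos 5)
Your overall scheme (rearranged maximizing sequence, exclusion of vanishing via a strict lower bound on the supremum, then upgrading weak to strong convergence) matches the paper's, but two of your key steps contain genuine gaps. First, the embedding $X^{1,p}_{\infty}(\alpha,\theta)\hookrightarrow L^{q}_{\theta}$ is \emph{not} compact at $q=p$: by \eqref{rts-LpLq} the dilation $u_{\tau}(r)=u(\tau^{1/(\theta+1)}r)$ preserves $\|u^{\prime}\|_{L^{p}_{\alpha}}$ and multiplies $\|u\|^{p}_{L^{p}_{\theta}}$ by $\tau^{-1}$, so a normalized sequence can send its $L^{p}_{\theta}$ mass to spatial infinity while converging weakly, and even pointwise, to $0$. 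Consequently your assertions that ``$u_n\to u_0$ in $L^q_\theta$ for every $q\in[p,\infty)$'' and that, under vanishing, ``$J_{\eta,\mu}(u_n)\to 0$ when $p>2$'' both fail when $p$ is an integer: the leading term of $\varphi_p$ then contributes exactly $\frac{\mu^{p-1}}{(p-1)!}(1+\eta a)a$ with $a=\lim_n\|u_n\|^{p}_{L^{p}_{\theta}}$, which need not be $0$. The paper deals with this by proving the dichotomy \eqref{NnotN}, which records the defect $a-\|u_0\|^{p}_{L^{p}_{\theta}}$, and then eliminating it with the dilation $v_0(r)=u_0(r/\tau)$, $\tau=(a/\|u_0\|^{p}_{L^{p}_{\theta}})^{1/(\theta+1)}$; some argument of this kind is indispensable and is absent from your proposal (your Brezis--Lieb step only addresses loss in the gradient, not the $L^p_\theta$ dilation loss).

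Second, your non-vanishing threshold for $p=2$ is not the right one. The worst vanishing scenario is $u_0\equiv 0$ with $a=1$ (all mass in $L^{2}_{\theta}$ escaping to infinity), for which the limit of the functional is $\mu(1+\eta)$, not $\mu$; so one must prove $AD(\eta,\mu,1,\theta)>\mu(1+\eta)$, and the same strict bound $AD>\frac{\mu^{p-1}}{(p-1)!}(1+\eta)$ is also needed for integer $p>2$. Moreover, merely evaluating $J_{\eta,\mu}$ at the $B_{2,\theta}$-optimizer does not produce a number exceeding $\mu(1+\eta)$. The paper's Proposition~\ref{vanishing-integer} obtains the strict inequality by Ishiwata's derivative trick: along the rescaled family $v_t=\xi_t u_t$ through a $B_{2,\theta}$-optimizer one has $h(0)=1+\eta$, and the hypothesis $\mu>2(1+2\eta)/((1+\eta)^2B_{2,\theta})$ is precisely the condition ensuring $h^{\prime}(0^{+})>0$, whence $AD>\mu(1+\eta)$. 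It is this first-order expansion in $t$, not a plain evaluation, that produces the stated threshold, and without it the vanishing alternative cannot be excluded.
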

 
If we choose $\eta =0$, Theorem~\ref{thm-extremal} recovers precisely \cite[Theorem~1.2]{AbreFern}. In addition, for the critical case $\mu=\mu_{\alpha,\theta}$ we are able to obtain the following:
 \begin{theorem}[Critical case]\label{thm-cmax} Assume $p,\alpha$ and $\theta$ under the assumption of Theorem~\ref{thm-extremal}. Then, there exists $\eta_{0}\in (0, 1)$ such that $AD(\eta, \mu_{\alpha,
 \theta}, \alpha,\theta)$ is attained for any $0\le \eta<\eta_0$.
 \end{theorem}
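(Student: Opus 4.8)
The plan is to run the standard variational scheme together with the two-step Carleson--Chang strategy: maximize a family of subcritical functionals approaching the critical level $\mu_{\alpha,\theta}$, and then either pass to the limit or rule out concentration via a blow-up analysis followed by a sharp test-function comparison. Fix $\eta\in[0,1)$ and a sequence $\mu_k\uparrow\mu_{\alpha,\theta}$; for $p>2$, and for $p=2$ once $k$ is large so that $\mu_k$ lies in the window of Theorem~\ref{thm-extremal}(2), let $u_k$ be a maximizer of $AD(\eta,\mu_k,\alpha,\theta)$. By the symmetrization available for this class of weighted functionals one may take $u_k\ge0$ radially nonincreasing with $\|u_k\|=1$, and, writing $c_k:=u_k(0)=\|u_k\|_{L^\infty}$ and $\beta_k:=1+\eta\|u_k\|_{L^p_\theta}^p$, the function $u_k$ solves the Euler--Lagrange ODE of $p$-Laplace type
\[
-\big(r^{\alpha}|u_k'|^{p-2}u_k'\big)'=\frac{1}{\lambda_k}\,\tfrac{p}{p-1}\,\mu_k\,\beta_k^{1/(p-1)}\,r^{\theta}\,|u_k|^{1/(p-1)}\,\varphi_p'\!\big(\mu_k\beta_k^{1/(p-1)}|u_k|^{p/(p-1)}\big)+\text{(lower order)},
\]
where $\lambda_k>0$ is the Lagrange multiplier and the lower order terms come from differentiating $\|u\|_{L^p_\theta}^p$, which occurs both in the constraint and inside $\varphi_p$. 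Passing to a subsequence, $u_k\rightharpoonup u_0$ in $X^{1,p}_\infty(\alpha,\theta)$, and by the compact embedding $X^{1,p}_\infty(\alpha,\theta)\hookrightarrow L^q_\theta$, $q\in[p,\infty)$ (cf.\ \eqref{PTM-compactembeddings}), $u_k\to u_0$ in every $L^q_\theta$, so $\beta_k\to\beta_0:=1+\eta\|u_0\|_{L^p_\theta}^p$; one may also assume $\lambda_k$ converges.

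I would then split according to whether $(c_k)$ is bounded. If $\limsup_k c_k<\infty$, then $\varphi_p(\mu_k\beta_k^{1/(p-1)}|u_k|^{p/(p-1)})$ is pointwise bounded and, using $\varphi_p(t)\le C t^{k_0}$ for small $t$ together with $k_0 p/(p-1)\ge p$, it is dominated by an equi-integrable family; combined with the $L^q_\theta$-convergence this gives
\[
\int_0^\infty\varphi_p\big(\mu_k\beta_k^{1/(p-1)}|u_k|^{p/(p-1)}\big)\,\mathrm{d}\lambda_\theta\ \longrightarrow\ \int_0^\infty\varphi_p\big(\mu_{\alpha,\theta}\beta_0^{1/(p-1)}|u_0|^{p/(p-1)}\big)\,\mathrm{d}\lambda_\theta.
\]
Since $\mu\mapsto AD(\eta,\mu,\alpha,\theta)$ is nondecreasing and $AD(\eta,\mu_k,\alpha,\theta)\to AD(\eta,\mu_{\alpha,\theta},\alpha,\theta)$ (the $\ge$ direction by monotone convergence applied to each fixed competitor), the left-hand limit equals $AD(\eta,\mu_{\alpha,\theta},\alpha,\theta)$; this being positive forces $u_0\not\equiv0$, and since $\|u_0\|\le1$ while $v\mapsto v/\|v\|$ strictly increases the functional whenever $\|v\|<1$, we must have $\|u_0\|=1$, so $u_0$ is an extremal and the theorem holds in this case.

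The substance is to show that, for $\eta$ below a suitable threshold, the alternative $c_k\to\infty$ cannot occur. Assuming $c_k\to\infty$, I would perform a blow-up at the origin: exploiting $\alpha-p+1=0$, fix the scale $r_k\to0$ by
\[
r_k^{\theta+1}=\lambda_k\,c_k^{-p/(p-1)}\exp\!\big(-\mu_k\beta_k^{1/(p-1)}c_k^{p/(p-1)}\big),
\]
set $w_k(s):=\kappa\,c_k^{1/(p-1)}\big(u_k(r_k s)-c_k\big)$ with $\kappa$ a fixed normalizing constant, and prove $w_k\to w_0$ in $C^1_{\mathrm{loc}}[0,\infty)$, where $w_0\le0$, $w_0(0)=0$ solves a fractional Liouville-type equation $-\big(s^{\alpha}|w_0'|^{p-2}w_0'\big)'=\Lambda\,s^{\theta}e^{w_0}$ for a suitable $\Lambda>0$; this ODE integrates explicitly and has finite bubble mass $\int_0^\infty s^{\theta}e^{w_0}\,\mathrm{d}s$. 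Simultaneously, away from the origin the rescaled sequence $c_k^{1/(p-1)}u_k$ detects the weak limit $u_0$ and a Green-type function. Using the Pohozaev/Lagrange-multiplier identities (to pin down $\lim_k c_k^{p/(p-1)-1}/\lambda_k>0$) and summing the bubble, neck and exterior contributions, one reaches the sharp upper bound
\[
AD(\eta,\mu_{\alpha,\theta},\alpha,\theta)\ \le\ \int_0^\infty\varphi_p\big(\mu_{\alpha,\theta}\beta_0^{1/(p-1)}|u_0|^{p/(p-1)}\big)\,\mathrm{d}\lambda_\theta+\frac{\omega_\theta}{\theta+1}\,e^{A_{p,\alpha,\theta}+\Gamma(\eta)},
\]
where $A_{p,\alpha,\theta}$ is an explicit Carleson--Chang number and $\Gamma(\eta)$ encodes the residual energy $1-\|u_0\|^p$ and the interaction with $u_0$; crucially $\Gamma(\eta)$ is small for $\eta$ small.

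Finally I would contradict this bound. Building a family $\phi_\varepsilon\in X^{1,p}_\infty(\alpha,\theta)$ with $\|\phi_\varepsilon\|=1$ — obtained by matching a concentrating Moser/blow-up profile near the origin (the conformal identity $\int_\varepsilon^1 r^{\alpha-p}\,\mathrm{d}r=\log(1/\varepsilon)$ normalizing the constants) to an optimally chosen outer part recovering the weak-limit contribution — and evaluating the functional, one is led to a lower bound
\[
AD(\eta,\mu_{\alpha,\theta},\alpha,\theta)\ \ge\ \int_0^\infty\varphi_p\big(\mu_{\alpha,\theta}\beta_0^{1/(p-1)}|u_0|^{p/(p-1)}\big)\,\mathrm{d}\lambda_\theta+\frac{\omega_\theta}{\theta+1}\,e^{A_{p,\alpha,\theta}+\Gamma(\eta)+\delta(\eta)+o_\varepsilon(1)},
\]
where the Adimurthi--Druet amplification $(1+\eta\|\phi_\varepsilon\|_{L^p_\theta}^p)^{1/(p-1)}$ is precisely what offsets the energy that the constraint $\|\phi_\varepsilon'\|_{L^p_\alpha}^p=1-\|\phi_\varepsilon\|_{L^p_\theta}^p<1$ diverts away from the concentration, and $\delta(\eta)$ is a genuine positive gain for $\eta$ small. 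Choosing $\varepsilon$ small enough and comparing with the blow-up bound, one gets $AD(\eta,\mu_{\alpha,\theta},\alpha,\theta)$ strictly larger than the right-hand side of that bound for all $\eta$ below some $\eta_0\in(0,1)$, a contradiction; hence $(c_k)$ is bounded and the first alternative applies, so $AD(\eta,\mu_{\alpha,\theta},\alpha,\theta)$ is attained. The main obstacles I anticipate are the blow-up analysis itself — fixing the exact scale $r_k$, proving $C^1_{\mathrm{loc}}$ convergence to $w_0$, and quantizing the neck energy to obtain the precise constant $A_{p,\alpha,\theta}$ — and, going hand in hand with it, the construction and calibration of the test family so that the net gain $\delta(\eta)$ stays strictly positive on a nontrivial interval $[0,\eta_0)$.
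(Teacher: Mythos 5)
Your overall strategy coincides with the paper's: approximate by subcritical maximizers, split on whether the sup norms $c_k=u_k(0)$ stay bounded, and in the blow-up alternative derive a Carleson--Chang upper bound that is then beaten by a calibrated test family for small $\eta$. However, there are concrete gaps in the way you execute the first alternative. The embedding $X^{1,p}_\infty(\alpha,\theta)\hookrightarrow L^q_\theta$ is compact only for $q>p$, not at $q=p$; so you cannot conclude $u_k\to u_0$ in $L^p_\theta$ nor $\beta_k\to 1+\eta\|u_0\|_{L^p_\theta}^p$. The paper must track the possibly distinct limit $a=\lim_k\|u_k\|^p_{L^p_\theta}\ge\|u_0\|^p_{L^p_\theta}$ and remove the discrepancy by a dilation argument. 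This non-compactness also invalidates your claim that positivity of $AD(\eta,\mu_{\alpha,\theta},\alpha,\theta)$ forces $u_0\not\equiv0$: when $p\in\mathbb{N}$ the lowest term $\frac{t^{p-1}}{(p-1)!}$ of $\varphi_p$ survives the limit even if $u_0\equiv0$, yielding the value $\frac{\mu^{p-1}}{(p-1)!}(1+\eta a)a>0$. Excluding vanishing requires the strict quantitative lower bound $AD>\frac{\mu^{p-1}}{(p-1)!}(1+\eta)$ of Proposition~\ref{vanishing-integer} (the Ishiwata-type scaling computation), which is precisely why the subcritical existence theorem carries the window $\mu>2(1+2\eta)/((1+\eta)^2B_{2,\theta})$ when $p=2$; your sketch never invokes it.

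In the blow-up alternative your upper bound retains a contribution from $u_0$, but in this setting concentration forces $u_0\equiv 0$ (the paper's Lemma~\ref{we-concentra} shows the whole gradient mass collapses to the origin and $u_\epsilon\to0$ in $L^p_\theta$), so the sharp bound is the pure constant $\frac{\omega_\theta}{\theta+1}e^{\mu_{\alpha,\theta}\mathcal{A}_\eta+\gamma+\Psi(p)}$, where $\mathcal{A}_\eta$ comes from the expansion near $r=0$ of the Green-type function obtained as the limit of $a_\epsilon^{1/(p-1)}u_\epsilon$ --- not from $u_0$. Finally, the entire theorem hinges on the strict inequality in Step~2, i.e.\ on showing your $\delta(\eta)$ is genuinely positive on an interval $[0,\eta_0)$; you flag this as an anticipated obstacle but give no mechanism. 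In the paper this is the content of Lemma~\ref{acima}: after gluing the bubble to $g_\eta$ and expanding, the leading positive term is $\frac{\mu^{k_0}_{\alpha,\theta}}{k_0!}\|g_\eta\|^{k_0p/(p-1)}_{L^{k_0p/(p-1)}_\theta}$ coming from the tail of the test function, and the potentially negative $\eta^2$-corrections from the Adimurthi--Druet factor must be shown to be of lower order, with a case analysis on $p\in\mathbb{N}$ versus $p\notin\mathbb{N}$ (and a coupling $\epsilon=\eta$ in the latter). Without that computation the proof is not complete.
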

 We note that Theorem~\ref{thm-cmax} is new even  for $\eta=0$. Indeed, the existence of maximizers for $AD(0, \mu, \alpha,\theta)$ was recently ensured  in \cite{AbreFern} and \cite{JJ2021} only for the strict case $\mu<\mu_{\alpha,\theta}$.

On the non-existence we provide the following result which extends \cite[Theorem~1.3]{AbreFern} for $\eta>0$ and complements the classical non-existence results  \cite{Ishi,Nguyen} to include non-integer dimensions.
 \begin{theorem}\label{thm non-existence}
 Let $p=2$,  $\alpha=1$ and $\theta\ge 0$. Then there exists $\mu_{0}>0$ such that  $ AD(\eta, \mu, 1,\theta)$ is not attained for any $0\le \eta<1$ and $0<\mu<\mu_0$.
 \end{theorem}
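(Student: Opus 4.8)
The plan is to prove the nonexistence of extremals for $AD(\eta,\mu,1,\theta)$ when $0\le\eta<1$ and $\mu$ is small by a direct test-function argument combined with an explicit estimate of the functional. Since $p=2$ and $\alpha=1$, the relevant exponent is $p/(p-1)=2$, so $\varphi_2(t)=\sum_{j\ge 1}t^j/j! = e^t-1$, and the functional in \eqref{SUP} reads
\begin{equation}\label{np-functional}
J_\mu(u)=\int_0^\infty\Big(e^{\mu(1+\eta\|u\|_{L^2_\theta}^2)\,u^2}-1\Big)\mathrm{d}\lambda_\theta,\qquad \|u\|^2=\|u'\|_{L^2_1}^2+\|u\|_{L^2_\theta}^2=1.
\end{equation}
First I would record the elementary expansion $e^t-1\ge t$ and isolate the leading term: for any admissible $u$,
\begin{equation}\label{np-lower}
J_\mu(u)\ge \mu(1+\eta\|u\|_{L^2_\theta}^2)\int_0^\infty u^2\,\mathrm{d}\lambda_\theta=\mu(1+\eta\|u\|_{L^2_\theta}^2)\|u\|_{L^2_\theta}^2.
\end{equation}
The supremum is therefore at least $\sup\{\mu(1+\eta s)s : 0\le s\le 1\}$ over the attainable values $s=\|u\|_{L^2_\theta}^2$; using the compact embedding \eqref{PTM-compactembeddings} one checks that $s$ can be pushed arbitrarily close to $1$ (take a function concentrating its norm in the $L^2_\theta$ part, e.g. a slowly decaying bump), so $AD(\eta,\mu,1,\theta)\ge \mu(1+\eta)$, with the value $\mu(1+\eta)$ itself not attained because equality in \eqref{np-lower} forces $u\equiv 0$ while $\|u\|_{L^2_\theta}^2=1$ would force $\|u'\|_{L^2_1}=0$.

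The core of the argument is then an \emph{upper} bound showing $AD(\eta,\mu,1,\theta)\le \mu(1+\eta)$ for $\mu$ small, which combined with the strict non-attainment above finishes the proof. For this I would use the sharp embedding constants: by \eqref{PTM-compactembeddings} with $q=2$ and $q=4$ there are constants $C_2,C_4$ with $\|u\|_{L^2_\theta}^2\le C_2\|u\|^2$ and $\|u\|_{L^4_\theta}^4\le C_4\|u\|^4$, and more generally a Trudinger–Moser–type tail estimate from Theorem~\ref{TSUP}(1): for $\mu\le\mu_{1,\theta}$ and $\eta<1$ the quantity $AD(\eta,\mu,1,\theta)$ is finite, and a Taylor expansion of $\varphi_2$ gives
\begin{equation}\label{np-expand}
J_\mu(u)=\mu(1+\eta\|u\|_{L^2_\theta}^2)\|u\|_{L^2_\theta}^2+\sum_{j\ge 2}\frac{\mu^j(1+\eta\|u\|_{L^2_\theta}^2)^j}{j!}\|u\|_{L^{2j}_\theta}^{2j}.
\end{equation}
Each higher term is controlled by $C\mu^j\|u\|^{2j}=C\mu^j$ via the embeddings and interpolation, so the whole tail $\sum_{j\ge2}(\cdots)$ is bounded by $\mu^2 R(\mu)$ with $R$ bounded for $\mu$ in a neighborhood of $0$; meanwhile the leading term is at most $\mu(1+\eta)$. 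Thus $J_\mu(u)\le \mu(1+\eta)+C\mu^2$. This does not yet beat $\mu(1+\eta)$, so the refinement is to gain from the constraint: when $\|u\|_{L^2_\theta}^2$ is close to $1$ the gradient part is small, forcing (by the one-dimensional structure and $u(\infty)=0$) $\|u\|_{L^\infty}$ and hence all higher $L^{2j}_\theta$ norms to be small; when $\|u\|_{L^2_\theta}^2$ is bounded away from $1$ the leading term $\mu(1+\eta\|u\|_{L^2_\theta}^2)\|u\|_{L^2_\theta}^2$ is bounded away from $\mu(1+\eta)$ by a fixed gap, which dominates the $O(\mu^2)$ correction once $\mu<\mu_0$. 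A careful splitting of the constraint set into these two regimes, with $\mu_0$ chosen accordingly, yields $AD(\eta,\mu,1,\theta)=\mu(1+\eta)$ for $0<\mu<\mu_0$ and that it is \emph{not} attained.

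I expect the main obstacle to be making the two-regime argument quantitative and uniform in $\eta\in[0,1)$: in the "concentrated" regime one needs a precise bound of the form $\|u\|_{L^{2j}_\theta}^{2j}\le (1-\|u\|_{L^2_\theta}^2)\cdot C^j$ or similar, which requires exploiting both the one-dimensional Sobolev inequality $\|u\|_\infty^2\lesssim\|u'\|_{L^2_1}\|u\|_{L^2_\theta}$-type bound (so that a small gradient norm genuinely kills the sup norm, noting $\alpha=1,\theta\ge0$ make the weights integrable near $0$ only after the $u(\infty)=0$ normalization) and the finiteness from Theorem~\ref{TSUP}(1) to sum the series. An alternative, possibly cleaner route is to argue by contradiction: if a maximizer $u_\mu$ existed it would satisfy an Euler–Lagrange equation, and testing that equation against $u_\mu$ and against the first eigenfunction of the associated weighted problem, then letting $\mu\to0$, would force $u_\mu\to0$ in norm, contradicting $\|u_\mu\|=1$; the pointwise blow-up analysis developed for the existence theorems can be recycled here to rule out the concentration scenario. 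I would develop the direct estimate first and fall back on the Euler–Lagrange contradiction if the uniform constants prove stubborn.
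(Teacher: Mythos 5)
Your strategy is correct but follows a genuinely different route from the paper. The paper runs Ishiwata's vanishing argument in its local form: if a maximizer $u$ existed, the normalized scaling curve $v_\tau$ through $u$ would force $\frac{d}{d\tau}J(v_\tau)\vert_{\tau=1}=0$, and a term-by-term computation of that derivative, controlled by the interpolation inequality \eqref{p=2AT} (itself a consequence of the Trudinger--Moser inequality of \cite{JJ2012}), shows the derivative is strictly negative for $\mu<\mu_0$ --- a contradiction. You instead propose the global version: identify $AD(\eta,\mu,1,\theta)=\mu(1+\eta)$ for small $\mu$ via a vanishing sequence from below and a uniform strict upper bound $J_\mu(u)<\mu(1+\eta)$ from above. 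Your plan is viable, and in fact your "main obstacle" dissolves once you use exactly \eqref{p=2AT}: since $\|u\|_{L^{2j}_\theta}^{2j}\le \mathrm{C}_{\gamma,\theta}\,j!\,\gamma^{-j}\|u^{\prime}\|_{L^2_1}^{2(j-1)}\|u\|_{L^2_\theta}^{2}$, the entire tail $\sum_{j\ge2}$ in \eqref{np-expand} is bounded by $C\mu^{2}\|u^{\prime}\|_{L^2_1}^{2}=C\mu^{2}(1-s)$ with $s=\|u\|_{L^2_\theta}^{2}$, while the gap in the leading term is $\mu(1+\eta)-\mu(1+\eta s)s=\mu(1-s)\left[1+\eta(1+s)\right]\ge\mu(1-s)$; so a single global estimate gives $J_\mu(u)\le\mu(1+\eta)-\mu(1-s)(1-C\mu)<\mu(1+\eta)$ for every admissible $u$ once $\mu<1/C$, and no two-regime splitting (nor any weighted $L^\infty$ interpolation, which is not literally available from \eqref{radial lemma}) is needed. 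What your route buys is the extra information that the supremum equals the vanishing value $\mu(1+\eta)$ in the subcritical small-$\mu$ regime; what the paper's route buys is brevity, since it never needs the lower-bound construction and works directly with the criticality condition at a putative maximizer. Both arguments ultimately rest on the same Gagliardo--Nirenberg-type consequence \eqref{p=2AT} of the Trudinger--Moser inequality.
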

This paper is organized as follows. In the  Section~\ref{Improvement-section} we present some preliminary results on the weighted Sobolev space $X^{1,p}_{R}(\alpha,
\theta)$.  The Section~\ref{sec-AD} is devoted to prove of the sharp estimate given by Theorem~\ref{TSUP}. In the Section~\ref{sec=extremal} we prove the attainability of  $AD(\eta, \mu, \alpha,\theta)$ such as stated in Theorem~\ref{thm-extremal} and Theorem~\ref{thm-cmax} . Finally, in the Section~\ref{sec-non-existence} we prove Theorem~\ref{thm non-existence}.

\section{Notations and preliminary results}
\label{Improvement-section}
In this section we present briefly some notations and preliminary results on  $X^{1,p}_{R}(\alpha,\theta)$. For a deeper discussion on this subject we recommend \cite{JF2017,JJ2021,Clement-deFigueiredo-Mitidieri,DOMADE} and the references therein. 

According to the relation between the parameters $\alpha$ and $p$,  we can distinguish two cases for $X^{1,p}_{R}(\alpha,\theta)$: the \textit{Sobolev case} when $\alpha-p+1>0$ and the  \textit{Trudinger-Moser case} for $\alpha-p+1=0$. Supposing $\alpha-p+1>0$, we have the following continuous embedding  
\begin{equation}\label{Ebeddingswhole}
    X^{1,p}_{R}(\alpha,\theta) \hookrightarrow L^q_{\theta} \quad \mbox{if}\quad q \in \left.\left[p, p^{\ast}\right.\right]\quad\mbox{and}\quad\theta\ge\alpha-p
\end{equation}
where the critical exponent $p^{\ast}$ is given by
$$
p^{\ast}=p^{\ast}(\alpha,\theta,p)=\frac{(\theta+1)p}{\alpha-p+1}.
$$
Also, the embeddings \eqref{Ebeddingswhole} are compact for the strict conditions  $\theta>\alpha-p$ and $p<q < p^{\ast}$. On the other hand, for the \textit{Trudinger-Moser case} we have
the continuous embeddings
\begin{equation}
\label{TM-compactembeddings}
 X^{1,p}_{R}(\alpha,\theta) \hookrightarrow L^q_{\theta}\quad\mbox{for all}\quad q\in [p,\infty)
\end{equation}
which are compact for the  strict case $q>p$. Of course, if $0<R\not =\infty$ the embedding \eqref{Ebeddingswhole} and \eqref{TM-compactembeddings} can be extend to  $1\le q\le p^{*}$ and $1\le q<\infty$, respectively.
\begin{remark}\label{remarkW} Let us denote $W^{1,p}_{R}(\alpha,
\theta)$ the set  of all  functions  $u\in AC_{loc}(0,R)$ such that  $u\in L^{p}_{\theta}$ and $u^{\prime}\in L^{p}_{\alpha}$. We recall  $W^{1,p}_{R}(\alpha,
\theta)$  is  a Banach space  endowed with the norm \eqref{Xnorm-full}. In addition,  according to \cite[Lemma~2.2]{JF2017},  the embeddings \eqref{Ebeddingswhole} and \eqref{TM-compactembeddings} also hold for $W^{1,p}_{R}(\alpha,\theta)$.
\end{remark}
\noindent From \cite[Lemma~4.1]{JJ2021},  for each $u\in X^{1,p}_{\infty}(\alpha,\theta)$, $p\ge2$, we have the point-wise estimate
\begin{equation}\label{radial lemma}
    \vert u(r)\vert^{p}\le \frac{C}{r^{\frac{\alpha+\theta(p-1)}{p}}}\|u\|^{p},
\,\;\forall\,\; r>0 
\end{equation}
where $C>0$ depends only on $\alpha$, $p$ and
$\theta$. For any $q\ge 1$, the following  elementary inequality holds
 \begin{equation}\label{elementary-ineq}
 \left(x+y\right)^{q}\le (1+\epsilon)^{\frac{q-1}{q}}x^{q}+\big(1-(1+\epsilon)^{-\frac{1}{q}}\big)^{1-q}y^{q},\;\; x,y\ge 0
 \end{equation}
for all 
$\epsilon>0$.

Henceforth we suppose $\alpha, \theta$ and $p$ such as in Theorem~\ref{TSUP} and $\varphi_{p}$ as defined in \eqref{exp-frac}. 
\begin{lemma}\label{prop1} Let $R>0$ and $\mu, \sigma\ge 0$ be arbitrary real numbers.
\begin{itemize}
\item [$(i)$] For any $u\in X^{1,p}_{\infty}(\alpha,\theta)$ we have $\exp(\mu \vert u\vert^{\frac{p}{p-1}})\in L^{1}_{\sigma}(0,R)$. In addition, if $\mu<\mu_{\alpha,\sigma}$ then
\begin{equation}\nonumber
\sup_{\|u^{\prime}\|_{L^{p}_{\alpha}}\le 1,\, \|u\|_{L^{p}_{\theta}}\le M }\int_{0}^{R}e^{\mu \vert u\vert^{\frac{p}{p-1}}}\mathrm{d}\lambda_{\sigma}\le c
\end{equation}
 for some  constant $c=c( \alpha,\sigma,\mu,M, R)>0$.\\
 \item[$(ii)$] For any $u\in X^{1,p}_{\infty}(\alpha,\theta)$ we have $\varphi_{p}(\mu \vert u\vert^{\frac{p}{p-1}})\in L^{1}_{\theta}(R,\infty)$. Also, 
 \begin{equation}\label{uniform-rabo}
\sup_{\|u^{\prime}\|_{L^{p}_{\alpha}}\le 1,\, \|u\|_{L^{p}_{\theta}}\le M }\int_{R}^{\infty}\varphi_{p}(\mu \vert u\vert^{\frac{p}{p-1}})\mathrm{d}\lambda_{\theta}\le c
\end{equation}
 for some  constant $c=c( \alpha,\theta,\mu,M, R)>0$.
\end{itemize}
\end{lemma}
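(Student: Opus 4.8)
The plan is to reduce both parts, by truncating the function and peeling off a constant with \eqref{elementary-ineq}, to the sharp inequality \eqref{TM1} on a finite interval (for $(i)$) and to the trivial tail bound $\int_R^\infty|u|^p\,\mathrm{d}\lambda_\theta\le\|u\|_{L^p_\theta}^p$ combined with the decay estimate \eqref{radial lemma} (for $(ii)$).

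For the uniform bound in $(i)$, given $u$ with $\|u'\|_{L^p_\alpha}\le1$ and $\|u\|_{L^p_\theta}\le M$, set $v:=u-u(R)$ on $(0,R]$, so that $v(R)=0$ and $\|v'\|_{L^p_\alpha(0,R)}\le\|u'\|_{L^p_\alpha}\le1$; by \eqref{radial lemma} together with $\|u\|^p\le1+M^p$, the value $|u(R)|$ is bounded by a constant $K_0=K_0(\alpha,\theta,p,M,R)$. Applying \eqref{elementary-ineq} with $q=p/(p-1)$ and $\epsilon>0$ so small that $(1+\epsilon)^{1/p}\mu\le\mu_{\alpha,\sigma}$ (possible since $\mu<\mu_{\alpha,\sigma}$) yields $|u|^{p/(p-1)}\le(1+\epsilon)^{1/p}|v|^{p/(p-1)}+C_\epsilon K_0^{p/(p-1)}$, with $C_\epsilon$ the constant from \eqref{elementary-ineq}, whence
\[
\int_0^R e^{\mu|u|^{p/(p-1)}}\,\mathrm{d}\lambda_\sigma\le e^{\mu C_\epsilon K_0^{p/(p-1)}}\int_0^R e^{(1+\epsilon)^{1/p}\mu|v|^{p/(p-1)}}\,\mathrm{d}\lambda_\sigma,
\]
and the last integral is at most the finite supremum in \eqref{TM1}; this gives the asserted bound with $c=c(\alpha,\sigma,\mu,M,R)$. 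For the qualitative statement (arbitrary $\mu\ge0$, arbitrary $u$), split $(0,R)=(0,\delta)\cup[\delta,R)$: on $[\delta,R)$, \eqref{radial lemma} makes $|u|$ bounded, so the integral is trivially finite; on $(0,\delta)$, since $|u'|^p\in L^1_\alpha(0,\infty)$ we may choose $\delta$ so small that $A^p:=\int_0^\delta|u'|^p\,\mathrm{d}\lambda_\alpha$ satisfies $2^{1/p}\mu A^{p/(p-1)}\le\mu_{\alpha,\sigma}$, and then repeat the argument with $v:=u-u(\delta)$ on $(0,\delta]$, rescaling $v$ by $A$ and invoking \eqref{TM1} on $(0,\delta]$ (the degenerate case $A=0$, where $u$ is constant near $0$, being immediate).

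For $(ii)$, fix $u\in X^{1,p}_\infty(\alpha,\theta)$. Since $\beta:=(\alpha+\theta(p-1))/p>0$ under the standing hypotheses, \eqref{radial lemma} and $r^{-\beta}\le R^{-\beta}$ on $(R,\infty)$ give $|u(r)|\le M_0:=(CR^{-\beta}\|u\|^p)^{1/p}$ for all $r\ge R$. As $\varphi_p(t)=\sum_{j\in\mathbb{N},\,j\ge k_0}t^j/j!$ with $k_0\ge p-1$, the function $t\mapsto\varphi_p(t)/t^{p-1}$ extends continuously to $t=0$, so $C_0:=\sup_{0<t\le\mu M_0^{p/(p-1)}}\varphi_p(t)/t^{p-1}<\infty$ and $\varphi_p(\mu|u(r)|^{p/(p-1)})\le C_0\mu^{p-1}|u(r)|^p$ on $(R,\infty)$; therefore
\[
\int_R^\infty\varphi_p(\mu|u|^{p/(p-1)})\,\mathrm{d}\lambda_\theta\le C_0\mu^{p-1}\int_R^\infty|u|^p\,\mathrm{d}\lambda_\theta\le C_0\mu^{p-1}\|u\|_{L^p_\theta}^p<\infty.
\]
If moreover $\|u'\|_{L^p_\alpha}\le1$ and $\|u\|_{L^p_\theta}\le M$, then $\|u\|^p\le1+M^p$, so $M_0$ and hence $C_0$ are bounded by constants depending only on $\alpha,\theta,p,\mu,M,R$, and \eqref{uniform-rabo} follows.

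The bookkeeping in \eqref{elementary-ineq} and the elementary series bound for $\varphi_p$ are routine; the point that needs care is the legitimate use of \eqref{TM1}, which is informative only for functions whose derivative has $L^p_\alpha$-norm at most $1$ and for exponential constant at most $\mu_{\alpha,\sigma}$. This is what forces the rescaling of the truncated function $v$, and it is why the "every $\mu$" claim in $(i)$ must be localized near $r=0$ --- the only place where $e^{\mu|u|^{p/(p-1)}}$ can fail to be integrable --- rather than handled by one global application of \eqref{TM1}, for which neither the normalization of $\|v'\|_{L^p_\alpha}$ nor the size of $\mu$ is under control. Controlling $|u(R)|$ and $\sup_{(R,\infty)}|u|$ by the full norm $\|u\|$ is precisely what \eqref{radial lemma} delivers, so no separate pointwise estimate is needed.
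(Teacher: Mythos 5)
Your proof is correct. For the uniform bound in $(i)$ you follow essentially the paper's route: truncate to $v=u-u(R)$, peel off the constant $u(R)$ with \eqref{elementary-ineq} and \eqref{radial lemma}, and absorb the loss by choosing $\epsilon$ with $(1+\epsilon)^{1/p}\mu<\mu_{\alpha,\sigma}$ before invoking \eqref{TM1}. Two points differ. First, for the qualitative claim in $(i)$ (arbitrary $\mu\ge 0$, a single $u$ with no normalization of $\|u'\|_{L^p_\alpha}$) the paper's written argument stops at the choice of $\epsilon$, which is only available when $\mu<\mu_{\alpha,\sigma}$; your localization near $r=0$ --- shrinking $\delta$ until $\int_0^{\delta}|u'|^p\,\mathrm{d}\lambda_{\alpha}$ is small enough that the rescaled truncation falls under \eqref{TM1} --- is the standard way to complete this and is a genuine (if routine) addition to what the paper records. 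Second, for $(ii)$ the paper expands $\varphi_p$ as a series, controls the terms $j=k_0,k_0+1$ via the embedding \eqref{TM-compactembeddings} and the terms $j\ge k_0+2$ via the pointwise decay \eqref{radial lemma} and explicit integration of $r^{-(\theta+1)j/p}$, then resums; you instead use \eqref{radial lemma} once to get $|u|\le M_0$ on $(R,\infty)$ and the elementary bound $\varphi_p(t)\le C_0\,t^{p-1}$ on $[0,\mu M_0^{p/(p-1)}]$ (valid because $k_0\ge p-1$), reducing everything to $\|u\|_{L^p_\theta}^p$. Your version is shorter and makes the uniformity in the constraint set more transparent; the paper's version yields the explicit exponential-type constant \eqref{LE1}, but for the purposes of the lemma both are equally adequate.
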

\begin{proof}
 For each $u\in X^{1,p}_{\infty}(\alpha,\theta)$, by setting $v=u-u(R)$ on $(0,R)$  we have $v\in X^{1,p}_{R}(\alpha,\theta)$. Thus, from \eqref{radial lemma} and \eqref{elementary-ineq} we have
\begin{equation}\nonumber
\begin{aligned}
\vert u\vert^{\frac{p}{p-1}}
&\le (1+\epsilon)^{\frac{1}{p}}\vert v\vert^{\frac{p}{p-1}}+\frac{c_1}{R^{\theta+1}}\|u\|^{\frac{p}{p-1}},\\
\end{aligned}
\end{equation}
where $c_1$ depends only on $\alpha,\theta$ and $\epsilon$. Hence,
\begin{equation}\label{bound-ball}
\begin{aligned}
 \int_{0}^{R}e^{\mu \vert u\vert^{\frac{p}{p-1}}}\mathrm{d}\lambda_{\sigma}
 & \le e^{\frac{c_1\mu }{R^{\theta+1}}\|u\|^{\frac{p}{p-1}}} \int_{0}^{R}e^{(1+\epsilon)^{\frac{1}{p}}\mu \vert v\vert^{\frac{p}{p-1}}}\mathrm{d}\lambda_{\sigma}.
\end{aligned}
\end{equation}
 Hence, by choosing $\epsilon>0$ such that $(1+\epsilon)^{1/p}\mu<\mu_{\alpha,\sigma}$  the above inequality and \eqref{TM1} imply  $(i)$. 
 
 Further, the continuous embedding  \eqref{TM-compactembeddings} and the monotone convergence theorem yield
 \begin{equation}
 \label{UE}
\begin{aligned}
& \int_{R}^{\infty}\varphi_{p}(\mu \vert u\vert^{\frac{p}{p-1}})\mathrm{d}\lambda
_{\theta}=\sum_{j \in\mathbb{N}\;:\;j\ge k_0}\int _{R}^{\infty}\frac{\mu^{j}\vert u\vert^{\frac{jp}{p-1}}}{j!}\mathrm{d}\lambda_{\theta}\\
\quad\quad&\le \frac{\mu^{k_0}}{k_0!}\left(c_1\|u\|\right)^{\frac{k_0p}{p-1}}+ \frac{\mu^{k_0+1}}{(k_0+1)!}\left(c_1\|u\|\right)^{\frac{(k_0+1)p}{p-1}}+ \sum_{j\in\mathbb{N}\;:\;j\ge k_0+2}\frac{\mu^{j}}{j!}\int _{R}^{\infty}\vert u\vert^{\frac{jp}{p-1}}\mathrm{d}\lambda_{\theta}
\end{aligned}
\end{equation}
for some $c_1>0$ depending only on $\alpha$ and $\theta$. Also, $j\ge k_0+2$, from \eqref{radial lemma} we have
\begin{equation}\nonumber
\int_{R}^{\infty}\vert u(r)\vert^{\frac{jp}{p-1}}\mathrm{d}\lambda_{\theta}\le \left(C\|u\|^{p}\right)^{\frac{j}{p-1}}\int_{R}^{\infty}\frac{1}{r^{(\theta+1)\frac{j}{p}}}\mathrm{d}\lambda_{\theta}\le\left(C\|u\|^{p}\right)^{\frac{j}{p-1}} \frac{c_2}{R^{\frac{(\theta+1)j}{p}}},
\end{equation}
where $c_2$ depends only on $p, \theta
 $ and $R$.  Using \eqref{UE}
 \begin{equation}\label{LE1}
\begin{aligned}
\int_{R}^{\infty}\varphi_{p}(\mu \vert u\vert^{\frac{p}{p-1}})\mathrm{d}\lambda
_{\theta} & \le \max\{1, c_2\} e^{\max\big\{\mu (c_1\|u\|)^{\frac{p}{p-1}},\mu(C\|u\|^{p})^{\frac{1}{p-1}}R^{-\frac{\theta+1}{p}}\big\}}
\end{aligned}
\end{equation}
which proves $(ii)$.
\end{proof}
\begin{remark}\label{remark-imp}
By setting $\sigma=\theta$ in Lemma~\ref{prop1}-(i) and combining this with $(ii)$, we can see that $\varphi_{p}(\mu \vert u\vert^{\frac{p}{p-1}})\in L^{1}_{\theta}(0,\infty)$, for any $\mu>0$ and $u\in X^{1,p}_{\infty}(\alpha,\theta)$. Moreover,  in the uniform estimate \eqref{uniform-rabo}  we are not supposing $\mu\le \mu_{\alpha,\theta}$.
\end{remark}
We finish this section with some elementary properties of the fractional integral in \eqref{fractional integral}. 
First, the change of variables $s=\tau r$ yields
\begin{equation}\label{rts-fractional}
\begin{aligned}
\int_{0}^{\infty}f(\tau r)\mathrm{d}\lambda_{\theta}=\frac{1}{\tau^{\theta+1}}\int_{0}^{\infty} f(s)\mathrm{d}\lambda_{\theta},\quad \tau>0.
\end{aligned}
\end{equation}
Thus,   by setting  $u_{\tau}(r)=\zeta u(\tau r),$ with $\zeta, \tau>0$  and $u\in X^{1,p}_{\infty}(\alpha,\theta)$ we can write
\begin{equation}\label{rts-LpLq}
\begin{aligned}
&\|u^{\prime}_{\tau}\|^{p}_{L^{p}_{\alpha}}=\frac{(\zeta\tau)^{p}}{\tau^{\alpha+1}}\|u^{\prime}\|^{p}_{L^{p}_{\alpha}}\\
&\|u_{\tau}\|^{q}_{L^{q}_{\theta}}=\frac{\zeta^{q}}{\tau^{\theta+1}}\|u\|^{q}_{L^{q}_{\theta}}, \;\; q\ge p.
\end{aligned}
\end{equation}
\section{Sharp Trudinger-Moser inequality of Adimurthi-Druet type}
\label{sec-AD}
This  section is devoted to prove Theorem~\ref{TSUP}.   We split the proof into two steps.
\paragraph*{\textbf{Step~1:} Boundedness}
We follow the argument of V.H. Nguyen  \cite{Nguyen}.  Let  $u\in X^{1,p}_{\infty}$ be arbitrary. Set 
$$u_{\tau}(r)=u(\tau^{\frac{1}{\theta+1}}r),\,\tau>0.$$
Then, \eqref{rts-LpLq} yields
\begin{equation}\nonumber
\begin{aligned}
 \|u\|^{p}_{L^{p}_{\theta}}=\tau \|u_{\tau}\|^{p}_{L^{p}_{\theta}}\quad\mbox{and}\quad
 \|u^{\prime}\|^{p}_{L^{p}_{\alpha}}=\|u^{\prime}_{\tau}\|^{p}_{L^{p}_{\alpha}}.
\end{aligned}
\end{equation}
Consequently
\begin{equation}\nonumber
\begin{aligned}
\sup_{\|u^{\prime}\|^{p}_{L^{p}_{\alpha}}+\tau\|u\|^{p}_{L^{p}_{\theta}}= 1}\int_{0}^{\infty}\varphi_{p}\big(\mu_{\alpha,\theta}\vert u\vert^{\frac{p}{p-1}}\big)\mathrm{d}\lambda_{\theta}&=\frac{1}{\tau}\sup_{\|u\|= 1}\int_{0}^{\infty}\varphi_{p}\big(\mu_{\alpha,\theta}\vert u\vert^{\frac{p}{p-1}}\big)\mathrm{d}\lambda_{\theta}
\end{aligned}
\end{equation}
which is finite due to \cite[Theorem~1.1]{AbreFern} (see also \cite[Theorem~1.2]{JJ2021}). Now, for $\tau=1-\eta$ and $u\in X^{1,p}_{\infty}$ with $\|u\|=1$, we  set
$$
w=\frac{u}{(\|u^{\prime}\|^{p}_{L^{p}_{\alpha}}+\tau\|u\|^{p}_{L^{p}_{\theta}})^{\frac{1}{p}}}.
$$
Then one has $\|w^{\prime}\|^{p}_{L^{p}_{\alpha}}+\tau\|w\|^{p}_{L^{p}_{\theta}}=1$,  and from  above observation it follows
\begin{equation}\label{tau-sup}
\int_{0}^{\infty}\varphi_{p}\big(\mu_{\alpha,\theta}\vert w\vert^{\frac{p}{p-1}}\big)\mathrm{d}\lambda_{\theta}\le \frac{1}{1-\eta}\sup_{\|u\|= 1}\int_{0}^{\infty}\varphi_{p}\big(\mu_{\alpha,\theta}\vert u\vert^{\frac{p}{p-1}}\big)\mathrm{d}\lambda_{\theta}.
\end{equation}
Since $\vert u\vert^{\frac{p}{p-1}}\le (1-\eta\|u\|^{p}_{L^{p}_{\theta}})^{\frac{1}{p-1}}\vert w\vert^{\frac{p}{p-1}}$, it follows that
$
 (1+\eta\|u\|^{p}_{L^{p}_{\theta}})^{\frac{1}{p-1}}\vert u\vert ^{\frac{p}{p-1}}\le \vert w\vert^{\frac{p}{p-1}}
$
which together with 
\eqref{tau-sup} and \cite[Theorem~1.1]{AbreFern}  completes the proof of the item $(1)$.\\

\paragraph*{\textbf{Step~2:} Sharpness} We will employ the Moser type sequence $(v_n)$ given by
\begin{equation}\label{Msequences}
v_n(r)=\frac{1}{\omega^{\frac{1}{p}}_{\alpha}}\left\{\begin{aligned}
&\Big(\frac{n}{\theta+1}\Big)^{\frac{p-1}{p}}, & \mbox{if}&\quad 0\le r \le e^{-\frac{n}{\theta+1}},\\
&\Big(\frac{n}{\theta+1}\Big)^{-\frac{1}{p}}\ln\frac{1}{r},&\mbox{if}&\quad e^{-\frac{n}{\theta+1}}<r<1,\\
& 0, &\mbox{if}& \quad r\ge 1.
 \end{aligned}\right.
\end{equation}
It follows that 
 \begin{equation}\nonumber
 \begin{aligned}
 \|v^{\prime}_n\|^{p}_{L^{p}_{\alpha}}=1\quad\mbox{and}\quad
 \|v_n\|^{p}_{L^{p}_{\theta}} =\frac{\omega_{\theta}}{n(\theta+1)^{p}\omega_{\alpha}}\int_{0}^{n}s^{p}e^{-s}\mathrm{d} s+O(n^{p-1}e^{-n}).
 \end{aligned}
 \end{equation}
 Since 
 \begin{equation}\nonumber
\Gamma(p+1)=\int_{0}^{\infty}s^{p}e^{-s}\mathrm{d}s\quad \mbox{and}\quad \lim_{x\rightarrow\infty}\frac{e^{x}}{x^p}\int_{x}^{\infty}s^pe^{-s}\mathrm{d}s=1
\end{equation}
 we can also write
 \begin{equation}\nonumber
 \begin{aligned}
\|v_n\|^{p}_{L^{p}_{\theta}} =\frac{\omega_{\theta}}{\omega_{\alpha}}\frac{\Gamma(p+1)}{n(\theta+1)^{p}}+O(n^{p-1}e^{-n}).
 \end{aligned}
 \end{equation}
  For $\rho>0$, define $v_{n,\rho}(r)=v_{n}(r/\rho)$ and $w_{n,\rho}=v_{n,\rho}/\|v_{n,\rho}\|$. From \eqref{rts-LpLq} with $\alpha=p-1$, we get
  \begin{equation}\label{vrho1}
\|v^{\prime}_{n,\rho}\|^{p}_{L^{p}_{\alpha}}=1\quad\mbox{and}\quad
\|v_{n,\rho}\|^{p}_{L^{p}_{\theta}}=\frac{\rho^{\theta+1}}{n}\Big[\frac{\omega_{\theta}}{\omega_{\alpha}}\frac{\Gamma(p+1)}{(\theta+1)^{p}}+O(n^{p}e^{-n})\Big].
\end{equation}
 Then
 \begin{equation}\nonumber
 \frac{1+\|w_{n,\rho}\|^{p}_{L^{p}_{\theta}}}{\|v_{n,\rho}\|^{p}}=\frac{1+\frac{\|v_{n,\rho}\|^{p}_{L^{p}_{\theta}}}{\|v_{n,\rho}\|^{p}}}{\|v_{n,\rho}\|^{p}}=\frac{1+2\|v_{n,\rho}\|^{p}_{L^{p}_{\theta}}}{(1+\|v_{n,\rho}\|^{p}_{L^{p}_{\theta}})^{2}}=1-\mathcal{R}_n,
 \end{equation}
 where
 $
 \mathcal{R}_n={\|v_{n,\rho}\|^{2p}_{L^{p}_{\theta}}}/{(1+\|v_{n,\rho}\|^{p}_{L^{p}_{\theta}})^{2}}.
 $
 Hence, since $\|w_{n,\rho}\|=1$ for $n$ large enough we have
 \begin{equation}\nonumber
 \begin{aligned}
   AD(1, \mu_{\alpha,\theta}, \alpha,\theta) &\ge \int_{0}^{\infty}\varphi_{p}\big(\mu_{\alpha,\theta}(1+\|w_{n,\rho}\|^{p}_{L^{p}_{\theta}})^{\frac{1}{p-1}}\vert w_{n,\rho}\vert^{\frac{p}{p-1}}\big)\mathrm{d}\lambda_{\theta}\\
   &\ge \rho^{\theta+1}\int_{0}^{e^{-\frac{n}{\theta+1}}}\varphi_{p}\big(\left(1-\mathcal{R}_n\right)\mu_{\alpha,\theta}\vert v_{n}\vert^{\frac{p}{p-1}}\big)\mathrm{d}\lambda_{\theta}\\
   &=\frac{\omega_{\theta}\rho^{\theta+1}}{\theta+1}\Big[e^{n-n\mathcal{R}_n}-\sum_{j\in\mathbb{N\; :\;}j<p-1}\frac{\left(1-\mathcal{R}_n\right)^{j}n^{j}}{j!}\Big]e^{-n}.\\
   \end{aligned}
 \end{equation}
From \eqref{vrho1}, we obtain $n\mathcal{R}_n\rightarrow 0$  as   $n\rightarrow\infty$. Then, letting  $n\rightarrow\infty$ we obtain $AD(1, \mu_{\alpha,\theta}, \alpha,\theta)\ge
   {\omega_{\theta}\rho^{\theta+1}}/{(\theta+1)}$
 for any $\rho>0$. Hence $AD(1, \mu_{\alpha,\theta}, \alpha,\theta)=\infty$.

\section{Extremal function for Adimurthi-Druet type inequality}
\label{sec=extremal}
In this section we will show Theorem~\ref{thm-extremal}.  Here,  we are assuming  the assumptions
$\alpha=p-1\ge 1$ and $\theta\ge \alpha $. We first prove a lower bound for $ AD(\eta, \mu, \alpha,\theta)$.
\begin{proposition}\label{vanishing-integer} Let $p\ge 2$ be an integer number and  $\eta\in [0,1)$. Then
\begin{equation}\nonumber
 AD(\eta, \mu, \alpha,\theta)>\left\{\begin{aligned}
& \frac{\mu^{p-1}}{(p-1)!} (\eta+1), \;\; \mbox{if}\;\; p>2\;\;\mbox{and}\;\; \mu\in (0,\mu_{\alpha,\theta}]\\
& \frac{\mu^{p-1}}{(p-1)!} (\eta+1), \;\; \mbox{if}\;\; p=2\;\;\mbox{and}\;\; \mu \in \Big(\frac{2(1+2\eta)}{(1+\eta)^2B_{2,\theta}},\mu_{1,\theta}\Big],
 \end{aligned}\right.
\end{equation}
where
\begin{equation}
\frac{1}{B_{2,\theta}}=\inf_{0\not\equiv u\in X^{1,2}_{\infty}(1,\theta)}\frac{\|u^{\prime}\|^{2}_{L^{2}_{1}}\|u\|^{2}_{L^{2}_{\theta}}}{\|u\|^{4}_{L^{4}_{\theta}}}.
\end{equation}
\end{proposition}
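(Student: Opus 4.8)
The plan is to bound the functional defining \eqref{SUP} from below by its first two series terms and then to optimise over a one‑parameter family of rescaled test functions. Since $p$ is an integer $\ge 2$ we have $k_0=p-1$ in \eqref{exp-frac}, hence $\varphi_p(t)\ge \tfrac{t^{p-1}}{(p-1)!}+\tfrac{t^{p}}{p!}$ for all $t\ge 0$. Applying this with $t=\mu(1+\eta\|u\|^{p}_{L^{p}_{\theta}})^{1/(p-1)}|u|^{p/(p-1)}$, integrating against $\mathrm{d}\lambda_{\theta}$, and writing $s:=\|u\|^{p}_{L^{p}_{\theta}}\in(0,1)$, we obtain for every $u\in X^{1,p}_{\infty}(\alpha,\theta)$ with $\|u\|=1$ that
\begin{equation}\nonumber
AD(\eta,\mu,\alpha,\theta)\ \ge\ \frac{\mu^{p-1}}{(p-1)!}\,(1+\eta s)\,s+\frac{\mu^{p}}{p!}\,(1+\eta s)^{\frac{p}{p-1}}\,\|u\|^{q}_{L^{q}_{\theta}},\qquad q:=\frac{p^{2}}{p-1}\in[p,\infty),
\end{equation}
the last norm being finite by \eqref{TM-compactembeddings}. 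Thus it suffices to exhibit, for the admissible values of $\mu$, a single $u$ with $\|u\|=1$ for which the right‑hand side is strictly larger than $\frac{\mu^{p-1}}{(p-1)!}(1+\eta)$.

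To construct such a $u$, fix any $0\not\equiv\phi\in X^{1,p}_{\infty}(\alpha,\theta)$ and, for $s\in(0,1)$, put $u_{s}(r)=\zeta\,\phi(\tau r)$ with $\zeta,\tau>0$ chosen so that $\zeta^{p}\|\phi^{\prime}\|^{p}_{L^{p}_{\alpha}}=1-s$ and $\zeta^{p}\tau^{-(\theta+1)}\|\phi\|^{p}_{L^{p}_{\theta}}=s$; this is possible precisely because $\alpha=p-1$ makes $\|u_{s}^{\prime}\|^{p}_{L^{p}_{\alpha}}=\zeta^{p}\|\phi^{\prime}\|^{p}_{L^{p}_{\alpha}}$ independent of $\tau$ by \eqref{rts-LpLq}. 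Then $u_{s}\in X^{1,p}_{\infty}(\alpha,\theta)$, $\|u_{s}\|=1$, $\|u_{s}\|^{p}_{L^{p}_{\theta}}=s$, and a second use of \eqref{rts-LpLq} together with the identity $q/p-1=1/(p-1)$ gives
\begin{equation}\nonumber
\|u_{s}\|^{q}_{L^{q}_{\theta}}=C_{\phi}\,(1-s)^{\frac{1}{p-1}}\,s,\qquad C_{\phi}:=\frac{\|\phi\|^{q}_{L^{q}_{\theta}}}{\|\phi^{\prime}\|^{p/(p-1)}_{L^{p}_{\alpha}}\,\|\phi\|^{p}_{L^{p}_{\theta}}}\in(0,\infty).
\end{equation}
Feeding this into the previous bound yields $AD(\eta,\mu,\alpha,\theta)\ge \frac{\mu^{p-1}}{(p-1)!}\,G(s)$ for all $s\in(0,1)$, where
\begin{equation}\nonumber
G(s)=(1+\eta s)\,s+\frac{\mu C_{\phi}}{p}\,(1+\eta s)^{\frac{p}{p-1}}(1-s)^{\frac{1}{p-1}}\,s,\qquad G(1)=1+\eta .
\end{equation}
So Proposition~\ref{vanishing-integer} is proved once we check $G(s)>1+\eta$ for some $s<1$, equivalently $G^{\prime}(1^{-})<0$.

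Differentiating $G$, all terms of $G^{\prime}$ tend to $0$ as $s\to1^{-}$ except $1+2\eta$ and the term produced by $\frac{d}{ds}(1-s)^{1/(p-1)}$, which is a negative multiple of $(1-s)^{(2-p)/(p-1)}$. When $p>2$ this exponent is negative, so $G^{\prime}(s)\to-\infty$ as $s\to1^{-}$ for \emph{every} $\mu>0$ and every choice of $\phi$; hence $G(s)>1+\eta$ for $s$ close to $1$, which settles case $(1)$ (the bound $\mu\le\mu_{\alpha,\theta}$ only guarantees, via Theorem~\ref{TSUP}, that $AD<\infty$). When $p=2$ the exponent is $0$ and a direct computation gives $G^{\prime}(1^{-})=1+2\eta-\tfrac12\,\mu C_{\phi}(1+\eta)^{2}$; since $B_{2,\theta}$ from \eqref{B2} is the reciprocal of an infimum, $\sup_{\phi}C_{\phi}=B_{2,\theta}$, so for $\mu>\frac{2(1+2\eta)}{(1+\eta)^{2}B_{2,\theta}}$ we may pick $\phi$ with $C_{\phi}$ close enough to $B_{2,\theta}$ to force $G^{\prime}(1^{-})<0$, which settles case $(2)$.

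The one genuinely delicate step is the scaling bookkeeping in the middle paragraph: one must use $\alpha=p-1$ so that the seminorm $\|u_{s}^{\prime}\|_{L^{p}_{\alpha}}$ is dilation invariant, solve the two‑parameter system $(\zeta,\tau)$ under the single normalisation $\|u_{s}\|=1$ (parametrising the family by $s=\|u_{s}\|^{p}_{L^{p}_{\theta}}$), and correctly extract the power $(1-s)^{1/(p-1)}$ in $\|u_{s}\|^{q}_{L^{q}_{\theta}}$. Once this is in place the whole dichotomy between $p>2$ and $p=2$ is nothing more than the sign of $(2-p)/(p-1)$: negative makes $G^{\prime}(1^{-})=-\infty$ and gives the result with no restriction on $\mu$, while for $p=2$ it is $0$, $G^{\prime}(1^{-})$ is finite, and one needs the quantitative threshold $\mu>\tfrac{2(1+2\eta)}{(1+\eta)^{2}B_{2,\theta}}$ encoded by the sharp constant $B_{2,\theta}$.
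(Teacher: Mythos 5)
Your argument is correct and is essentially the paper's own proof (Ishiwata's scaling trick): the same two-term truncation $\varphi_p(t)\ge \frac{t^{p-1}}{(p-1)!}+\frac{t^{p}}{p!}$, the same one-parameter dilation family concentrating the norm in $L^{p}_{\theta}$ (your $s\to 1^{-}$ is the paper's $t\to 0^{+}$), and the same endpoint-derivative dichotomy governed by the exponent $(2-p)/(p-1)$. The only (harmless) deviation is that for $p=2$ you approximate the extremal of \eqref{B2} rather than invoking its attainment, which works because the hypothesis $\mu>\frac{2(1+2\eta)}{(1+\eta)^{2}B_{2,\theta}}$ is strict.
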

\begin{proof}
We follow the argument of Ishiwata \cite{Ishi}.
Let $u\in X^{1,p}_{\infty}(\alpha,\theta)$ such that $\|u\|=1$, and  set $$u_t(r)=t^{1/p}u(t^{\frac{1}{\theta+1}}r).$$
From \eqref{rts-LpLq}, we can easily to show that
\begin{equation}\nonumber
\begin{aligned}
&\|u^{\prime}_t\|^{p}_{L^{p}_{\alpha}}=t\|u^{\prime}\|_{L^{p}_{\alpha}}^{p}\\
 &\|u_t\|^{q}_{L^{q}_{\theta}}=t^{\frac{q-p}{p}}\|u\|_{L^{q}_{\theta}}^{q},\;\; \forall\, q\ge p.
\end{aligned}
\end{equation}
In particular, if  $v_t=\xi_{t}u_t$ with $\xi_{t}=(t+(1-t)\|u\|^{p}_{L^{p}_{\theta}})^{-1/p}$  with $t>0$ small enough, we have
\begin{equation}\nonumber
\|v_t\|=1\;\;\mbox{and}\;\; \|v_t\|^{q}_{L^{q}_{\theta}}=\xi^{q}_{t}t^{\frac{q-p}{p}}\|u\|^{q}_{L^{q}_{\theta}},\;\; q\ge p.
\end{equation} 
Noticing that
$\varphi_{p}(s)\ge \frac{s^{k_0}}{k_0!}+\frac{s^{k_0+1}}{(k_0+1)!}$, $s\ge 0$
we obtain 
\begin{equation}\nonumber
\begin{aligned}
AD(\eta, \mu, \alpha,\theta)&\ge \frac{\mu^{k_0}}{k_0!}\big(1+\eta \xi^{p}_{t} \|u\|^{p}_{L^{p}_{\theta}}\big)^{\frac{k_0}{p-1}}\|u\|^{\frac{pk_0}{p-1}}_{L^{\frac{k_0p}{p-1}}_{\theta}}t^{\frac{\frac{k_0p}{p-1}-p}{p}}\xi^{\frac{k_0p}{p-1}}_{t}\\
&+ \frac{\mu^{k_0+1}}{(k_0+1)!}\big(1+\eta \xi^{p}_{t} \|u\|^{p}_{L^{p}_{\theta}}\big)^{\frac{k_0+1}{p-1}}\|u\|^{\frac{p(k_0+1)}{p-1}}_{L^{\frac{(k_0+1)p}{p-1}}_{\theta}}t^{\frac{\frac{(k_0+1)p}{p-1}-p}{p}}\xi^{\frac{(k_0+1)p}{p-1}}_{t}.
\end{aligned}
\end{equation}
Since we are supposing that $p\ge 2$ is an integer number we have $k_0=p-1$. Thus, 
\begin{equation}\nonumber
\begin{aligned}
AD(\eta, \mu, \alpha,\theta)&\ge \frac{\mu^{p-1}}{(p-1)!}h(t),
\end{aligned}
\end{equation}
where
\begin{equation}\nonumber
h(t)=\big(\xi^{p}_{t}+\eta \xi^{2p}_{t} \|u\|^{p}_{L^{p}_{\theta}}\big)\|u\|^{p}_{L^{p}_{\theta}}
+ \frac{\mu}{p}\big(\xi^{p}_t+\eta \xi^{2p}_{t} \|u\|^{p}_{L^{p}_{\theta}}\big)^{\frac{p}{p-1}}\|u\|^{\frac{p^2}{p-1}}_{L^{\frac{p^2}{p-1}}_{\theta}}t^{\frac{1}{p-1}}.
\end{equation}
Since  $\xi_{t}\rightarrow1/\|u\|_{L^{p}_{\theta}}$ as $t\rightarrow 0^{+}$, we have $ h(0)=1+\eta$. Thus, it remains to show that $h^{\prime}(t)>0$, if $0<t\ll
1$.  In order to see this, note that for any $q\ge p$  (recall $\|u^{\prime}\|^{p}_{L^{p}_{\alpha}}+\|u\|^{p}_{L^{p}_{\theta}}=1)$
\begin{equation}\label{xit}
\begin{aligned}
\frac{d}{dt}\left(\xi^{q}_{t}\right)
&=-\frac{q}{p}\big(t+(1-t)\|u\|^{p}_{L^{p}_{\theta}}\big)^{-\frac{q+p}{p}}\big(1-\|u\|^{p}_{L^{p}_{\theta}}\big)
\rightarrow-\frac{q}{p} \|u\|^{-(p+q)}_{L^{p}_{\theta}}\|u^{\prime}\|^{p}_{L^{p}_{\alpha}}\;\;\mbox{as}\;\; t\rightarrow 0.
\end{aligned}
\end{equation}
 Therefore
\begin{equation}\nonumber
\begin{aligned}
h^{\prime}(t)&=\frac{\mu}{p(p-1)}\big(\xi^{p}_t+\eta \xi^{2p}_{t} \|u\|^{p}_{L^{p}_{\theta}}\big)^{\frac{p}{p-1}}\|u\|^{\frac{p^2}{p-1}}_{L^{\frac{p^2}{p-1}}_{\theta}}t^{\frac{2-p}{p-1}}+f(t),
\end{aligned}
\end{equation}
where
\begin{equation}\label{xit1}
\begin{aligned}
f(t)&=\|u\|^{p}_{L^{p}_{\theta}}\frac{d}{dt}\big(\xi^{p}_{t}+\eta \xi^{2p}_{t} \|u\|^{p}_{L^{p}_{\theta}}\big)+t^{\frac{1}{p-1}}\frac{\mu}{p}\|u\|^{\frac{p^2}{p-1}}_{L^{\frac{p^2}{p-1}}_{\theta}}\frac{d}{dt}\big(\xi^{p}_t+\eta \xi^{2p}_{t} \|u\|^{p}_{L^{p}_{\theta}}\big)^{\frac{p}{p-1}}\\
&\rightarrow -(1+2\eta)\frac{\|u^{\prime}\|^{p}_{L^{p}_{\alpha}}}{\|u\|^{p}_{L^{p}_{\theta}}}\;\;\mbox{as}\;\; t\rightarrow 0.
\end{aligned}
\end{equation}
Hence, if $p>2$ we have  $$\lim_{t\rightarrow 0}h^{\prime}(t)=\infty$$
and, thus $h^{\prime}(t)>0$, if $t>0$ is small enough.  If $p=2$, also from \eqref{xit1}, we have
\begin{equation}\nonumber
\begin{aligned}
\lim_{t\rightarrow 0}h^{\prime}(t)&=\frac{\mu}{2}\frac{(1+\eta)^2}{\|u\|^{4}_{L^{2}_{\theta}}}\|u\|^{4}_{L^{4}_{\theta}}-(1+2\eta)\frac{\|u^{\prime}\|^{2}_{L^{2}_{1}}}{\|u\|^{2}_{L^{2}_{\theta}}}.
\end{aligned}
\end{equation}
Hence, 
\begin{equation}\nonumber
\begin{aligned}
\lim_{t\rightarrow 0}h^{\prime}(t)>0,\;\;\mbox{if}\;\; \mu>\frac{2(1+2\eta)}{(1+\eta)^2}\frac{\|u^{\prime}\|^{2}_{L^{2}_{1}}\|u\|^{2}_{L^{2}_{\theta}}}{\|u\|^{4}_{L^{4}_{\theta}}}.
\end{aligned}
\end{equation}
Since $1/B_{2,\theta}$ is attained (cf. \cite[Proposition~7.1]{AbreFern}) for some $u\in X^{1,2}_{\infty}$, with $\|u\|=1$ our result is proved.
\end{proof} 

Let $(u_n)\subset X^{1,p}_{\infty}(\alpha,\theta)$ be a maximizing sequence for the supremmum $AD(\eta, \mu, \alpha,\theta)$. From the  P\'{o}lya-Szeg\"{o} type principle in \cite{AbreFern} (see also \cite{Alvino2017}),  we can assume that each $u_n$ is a non-increasing function. Moreover,   from \cite[Lemma~2.2]{JF2017}) (see Remark~\ref{remarkW}), for any $ R>0$ and $q\in (1,\infty)$ we can assume that 
\begin{equation}\label{loc-convergence}
u_n\rightharpoonup u_0\;\;\mbox{in}\;\; X^{1,p}_{\infty},\;\; u_n\rightarrow u_0\;\;\mbox{in}\;\;L^{q}_{\theta}(0,R)\;\;\mbox{and}\;\; u_n(r)\rightarrow u_0(r)\;\;\mbox{a.e in}\;\; (0,\infty).
\end{equation} 
In addition, since $0<\|u_n\|^{p}_{L^{p}_{\theta}}\le 1$, up to a subsequence, we can take $a\in [0,1]$ such that 
\begin{equation}\label{ab-limite}
\|u_n\|^{p}_{L^{p}_{\theta}}\rightarrow a.
\end{equation}
\subsection{Proof of Theorem~\ref{thm-extremal}:  Maximizers  for the subcritical case}
\label{Max-sub}
For $R>1$, we set $v_n(r)=u_n(r)-u_n(R)$, with $r\in (0,R]$. Then
 $v_n\in X^{1,p}_{R}(\alpha,\theta)$ and since $\|u_n\|=1$
\begin{equation}\label{deltauv}
\|v^{\prime}_n\|^{p}_{L^{p}_{\alpha}}\le 1-\|u_n\|^{p}_{L^{p}_{\theta}}.
\end{equation}
For any $\epsilon>0$, combining \eqref{radial lemma} with \eqref{elementary-ineq}  we have
\begin{equation}\label{uvR}
\begin{aligned}
\vert u_n\vert ^{\frac{p}{p-1}}
&\le (1+\epsilon)^{\frac{1}{p}}\vert v_n\vert ^{\frac{p}{p-1}}+\frac{c_{\epsilon}}{R^{\frac{\theta+1}{p}}},
\end{aligned}
\end{equation}
for some $c_{\epsilon}>0$ depending only on $\epsilon, p$ and $\theta$.  If $w_n=v_n/\|v^{\prime}_n\|_{L^{p}_{\alpha}}$,  then \eqref{deltauv} and  \eqref{uvR}  imply
\begin{equation}\nonumber
\begin{aligned}
\varphi_{p}\big(\mu(1+\eta\|u_n\|^{p}_{L^{p}_{\theta}})^{\frac{1}{p-1}}\vert u_n\vert^{\frac{p}{p-1}}\big)
&\le e^{\mu(1+\eta)c_{\epsilon}R^{-\frac{\theta+1}{p}}}e^{\mu(1+\epsilon)^{\frac{1}{p}}\vert w_n \vert^{\frac{p}{p-1}}}.
\end{aligned}
\end{equation}
Hence, if $\epsilon>0$ is small enough and $q>1$ is close to $1$  the inequality \eqref{TM1} ensures 
\begin{equation}\nonumber
\begin{aligned}
\sup_{n}\int_{0}^{R}\big[\varphi_{p}\big(\mu(1+\eta\|u_n\|^{p}_{L^{p}_{\theta}})^{\frac{1}{p-1}}\vert u_n\vert ^{\frac{p}{p-1}}\big)\big]^{q}\mathrm{d}\lambda_{\theta} <\infty.
\end{aligned}
\end{equation}
From Vitali's convergence theorem
\begin{equation}\label{sub-loc}
\begin{aligned}
&\lim_{n\rightarrow\infty}\int^{R}_{0}\varphi_{p}\big(\mu(1+\eta\|u_n\|^{p}_{L^{p}_{\theta}})^{\frac{1}{p-1}}\vert u_n\vert^{\frac{p}{p-1}}\big)\mathrm{d}\lambda_{\theta}\\
&\quad\quad=\int^{R}_{0}\varphi_{p}\big(\mu(1+\eta a)^{\frac{1}{p-1}}\vert u_0\vert^{\frac{p}{p-1}}\big)\mathrm{d}\lambda_{\theta}.
\end{aligned}
\end{equation}
Also, form \eqref{loc-convergence}, for any $R>0$ we get
\begin{equation}\label{lq-convergence}
\lim_{n\rightarrow\infty}\int^{R}_{0}(1+\eta\|u_n\|^{p}_{L^{p}_{\theta}})\vert u_n\vert^{p}\mathrm{d}\lambda_{\theta}=(1+\eta a)\int_{0}^{R}\vert u_0\vert^{p}\mathrm{d} \lambda_{\theta}.
\end{equation}
Now, we claim that 
\begin{equation}\label{NnotN}
AD(\eta, \mu, \alpha,\theta)=\left\{\begin{aligned}
& \int_{0}^{\infty}\varphi_{p}\big(\mu(1+\eta a)^{\frac{1}{p-1}}\vert u_0\vert^{\frac{p}{p-1}}\big)\mathrm{d}\lambda_{\theta}, & \;\;\mbox{if}\;\; p\not\in \mathbb{N},\\
& \left.\begin{aligned}
\int_{0}^{\infty}\varphi_{p}\big(\mu(1+\eta a)^{\frac{1}{p-1}}\vert u_0\vert^{\frac{p}{p-1}}\big)\mathrm{d}\lambda_{\theta}\\
+\frac{\mu^{p-1}}{(p-1)!}(1+\eta a)\big(a-\|u_0\|^{p}_{L^{p}_{\theta}}\big)
\end{aligned}\right\} & \mbox{if}\;\; p\in \mathbb{N}.\\
\end{aligned}\right.
\end{equation}
Indeed, for any real number $p\ge 2$, $\alpha=p-1$ and $r\ge R\ge 1$, \eqref{radial lemma} yields
\begin{equation}\label{tail-1}
\begin{aligned}
& \varphi_{p}\big(\mu(1+\eta\|u_n\|^{p}_{L^{p}_{\theta}})^{\frac{1}{p-1}}\vert u_n\vert^{\frac{p}{p-1}}\big)-\frac{\mu^{k_0}}{k_0!}(1+\eta\|u_n\|^{p}_{L^{p}_{\theta}})^{\frac{k_0}{p-1}}\vert u_n\vert ^{\frac{pk_0}{p-1}}\\
&\le \frac{\vert u_n\vert^{p}}{R^{\frac{\theta+1}{p}}} \sum_{j=k_0+1}^{\infty}\frac{\mu^{j}}{j!}(1+\eta)^{\frac{j}{p-1}}C^{\frac{j-p}{p-1}}\\
&\le C^{\prime}\frac{\vert u_n\vert^{p}}{R^{\frac{\theta+1}{p}}},
\end{aligned}
\end{equation}
where $C^{\prime}$  does not dependent of $n$ and $R$. If $p\in\mathbb{N}$ the above estimate yields
\begin{equation}\nonumber
\begin{aligned}
\lim_{R\rightarrow\infty}\lim_{n\rightarrow\infty}\int_{R}^{\infty}\big[(\varphi_{p}\big(\mu(1+\eta\|u_n\|^{p}_{L^{p}_{\theta}})^{\frac{1}{p-1}}\vert u_n\vert ^{\frac{p}{p-1}}\big)\\
\;\;\;\;\;\;-\frac{\mu^{p-1}}{(p-1)!}(1+\eta\|u_n\|^{p}_{L^{p}_{\theta}})\vert u_n\vert^{p}\big]\mathrm{d}\lambda_{\theta}=0.
\end{aligned}
\end{equation}
Hence, by splitting the integral on $(0,R)$ and $(R,\infty)$ and letting $n\rightarrow\infty$ and then $R\rightarrow\infty$, from \eqref{sub-loc} and \eqref{lq-convergence} we obtain  \eqref{NnotN}. If $p\notin\mathbb{N}$, we must have $k_0>p-1$. Since $u_n$ is a non-increasing function
$$
\|u_n\|^{p}_{L^{p}_{\theta}}\ge \int_{0}^{r}\vert u_n\vert^{p}\mathrm{d}\lambda_{\theta}\ge \vert u_n(r)\vert^{p}\int_{0}^{r}\mathrm{d}\lambda_{\theta},\quad r>0.
$$
Thus, there is $c>0$ (independent of $n$) such that
\begin{equation}\nonumber
\vert u_n\vert^{\frac{k_0p}{p-1}}\le \frac{c}{r^{(\theta+1)\frac{k_0}{p-1}}},\quad r>0.
\end{equation}
Hence, arguing as in \eqref{tail-1}, for $r>R$ we can write
\begin{equation}\label{tail-1null}
\begin{aligned}
 \varphi_{p}\big(\mu(1+\eta\|u_n\|^{p}_{L^{p}_{\theta}})^{\frac{1}{p-1}}\vert u_n\vert^{\frac{p}{p-1}}\big)
& \le \frac{c_{2}}{r^{(\theta+1)\frac{k_0}{p-1}}}+\frac{ c_1\vert u_n\vert^{p}}{R^{\frac{\theta+1}{p}}},
\end{aligned}
\end{equation}
where  $c_1$ and $c_{2}$ are  independent of $n$ and $R$. By 
using $k_0>p-1$, from \eqref{tail-1null}
\begin{equation}\nonumber
\begin{aligned}
\lim_{R\rightarrow\infty}\lim_{n\rightarrow\infty}\int_{R}^{\infty}\varphi_{p}\big(\mu(1+\eta\|u_n\|^{p}_{L^{p}_{\theta}})^{\frac{1}{p-1}}\vert u_n\vert^{\frac{p}{p-1}}\big)\mathrm{d}\lambda_{\theta}=0.
\end{aligned}
\end{equation}
Hence,  by splitting the integral on $(0,R)$ and $(R,\infty)$ and letting $n\rightarrow\infty$ and then $R\rightarrow\infty$, from \eqref{sub-loc} we complete the proof of \eqref{NnotN}.

\noindent Now,  if $u_0\equiv 0$ then \eqref{NnotN} yields
\begin{equation}\nonumber
0<AD(\eta, \mu, \alpha,\theta)=\left\{\begin{aligned}
& 0, & \mbox{if}\;\; p\not\in \mathbb{N}\\
& \frac{\mu^{p-1}}{(p-1)!}(1+\eta a)a\le \frac{\mu^{p-1}}{(p-1)!}(1+\eta),
 & \mbox{if}\;\; p\in \mathbb{N},
\end{aligned}\right.
\end{equation}
which contradicts the Proposition~\ref{vanishing-integer}. Hence $u_0\not\equiv0$. If $\tau=(a/\|u_0\|^{p}_{L^{p}_{\theta}})^{1/(\theta+1)}\ge 1$ and $v_0(r)=u_0(r/\tau)$ then
\begin{equation}
\|v_0\|^{p}_{L^{p}_{\theta}}=\|u_0\|^{p}_{L^{p}_{\theta}}\tau^{\theta+1}=a,\;\; \mbox{and}\;\;\|v^{\prime}_0\|_{L^{p}_{\alpha}}=\|u^{\prime}_0\|^{p}_{L^{p}_{\alpha}}.
\end{equation}
It follows that
$
1\ge \liminf_{n}\|u_n\|^{p}\ge \|v_0\|^{p}.
$
Therefore, if $p\in\mathbb{N}$, by using  \eqref{NnotN}
\begin{equation}\nonumber
\begin{aligned}
AD(\eta, \mu, \alpha,\theta) & \ge  \tau^{\theta+1}\int_{0}^{\infty}\varphi_{p}\big(\mu(1+\eta a)^{\frac{1}{p-1}}\vert u_0\vert^{\frac{p}{p-1}}\big)\mathrm{d}\lambda_{\theta}\\
&= AD(\eta, \mu, \alpha,\theta) + (\tau^{\theta+1}-1)\int_{0}^{\infty}\big[\varphi_{p}\big(\mu(1+\eta a)^{\frac{1}{p-1}}\vert u_0\vert^{\frac{p}{p-1}}\big)\\
&\quad\quad\quad-\frac{\mu^{p-1}}{(p-1)!}(1+\eta a)\vert u_0\vert^{p}\big]\mathrm{d}\lambda_{\theta}.
\end{aligned}
\end{equation}
Since $u_0\not\equiv0$, we obtain $\tau=1$ which gives $a=\|u_0\|^{p}_{L^{p}_{\theta}}.$ On the other hand, for $p\not\in\mathbb{N}$ \eqref{NnotN} yields
\begin{equation}\nonumber
\begin{aligned}
AD(\eta, \mu, \alpha,\theta) & \ge \int_{0}^{\infty}\varphi_{p}\big(\mu(1+\eta \|v_0\|^{p}_{L^{p}_{\theta}})^{\frac{1}{p-1}}\vert v_0\vert^{\frac{p}{p-1}}\big)\mathrm{d}\lambda_{\theta}\\
&= \tau^{\theta+1}\int_{0}^{\infty}\varphi_{p}\big(\mu(1+\eta a)^{\frac{1}{p-1}}\vert u_0\vert^{\frac{p}{p-1}}\big)\mathrm{d}\lambda_{\theta}\\
&=\tau^{\theta+1}AD(\eta, \mu, \alpha,\theta),
\end{aligned}
\end{equation}
which gives $\tau=1$ once again. Hence, for any real number $p\ge 2$ (if $p=2$, according to  Proposition~\ref{vanishing-integer}, we must assume $\mu>2(1+2\eta)/(1+\eta)^2B_{2,\theta}$), we have the following
\begin{equation}\nonumber
\begin{aligned}
AD(\eta, \mu, \alpha,\theta)=\int_{0}^{\infty}\varphi_{p}\big(\mu(1+\eta \|u_0\|^{p}_{L^{p}_{\theta}})^{\frac{1}{p-1}}\vert u_0\vert ^{\frac{p}{p-1}}\big)\mathrm{d}\lambda_{\theta}.
\end{aligned}
\end{equation}
This together with the fact that $\|u_0\|\le 1$, implies $\|u_0\|=1$ and completes the proof. 
\subsection{Proof of Theorem~\ref{thm-cmax}:  Maximizers for the critical case}
From Theorem~\ref{thm-extremal}, for each $\epsilon>0$ 
(small) there is a non-increasing function $u_{\epsilon}\in X^{1,p}_{\infty}$ with $\|u_{\epsilon}\|=1$ such that
\begin{equation}\label{sub-extremals}
\begin{aligned}
AD(\eta, \mu_{\alpha,\theta}-\epsilon, \alpha,\theta)=\int_{0}^{\infty}\varphi_{p}\big((\mu_{\alpha,\theta}-\epsilon)(1+\eta \|u_{\epsilon}\|^{p}_{L^{p}_{\theta}})^{\frac{1}{p-1}}\vert u_{\epsilon}\vert^{\frac{p}{p-1}}\big)\mathrm{d}\lambda_{\theta}.
\end{aligned}
\end{equation}
It is easy to see that the Lagrange multipliers theorem implies
\begin{equation}\label{Euler-Lagrange}
\begin{aligned}
 \int_{0}^{\infty}\vert u_{\epsilon}^{\prime}\vert^{p-2}u_{\epsilon}^{\prime}v^{\prime}\,\mathrm{d}\lambda_{\alpha} & =\frac{b_{\epsilon}}{d_{\epsilon}}\int_{0}^{\infty}\varphi_{p}^{\prime}\big(\eta_{\epsilon}\vert u_{\epsilon}\vert^{\frac{p}{p-1}}\big)\vert u_{\epsilon}\vert^{\frac{1}{p-1}}v\,\mathrm{d}\lambda_{\theta}\\
 & +(c_{\epsilon}-1)\int_{0}^{\infty}\vert u_{\epsilon}\vert^{p-1}v\,\mathrm{d}\lambda_{\theta}
\end{aligned}
\end{equation}
for all $v\in X^{1,p}_{\infty}$,  where
\begin{equation}\label{L-constants}
\left\{\begin{aligned}
\eta_{\epsilon}&=\mu_{\epsilon}(1+\eta\|u_{\epsilon}\|^{p}_{L^{p}_{\theta}})^{\frac{1}{p-1}}, \;\;\mbox{with}\;\; \mu_{\epsilon}=\mu_{\alpha,\theta}-\epsilon\\
b_{\epsilon}&=(1+\eta\|u_{\epsilon}\|^{p}_{L^{p}_{\theta}})/(1+2\eta\|u_{\epsilon}\|^{p}_{L^{p}_{\theta}})\\
c_{\epsilon}&=\eta/(1+2\eta\|u_{\epsilon}\|^{p}_{L^{p}_{\theta}})\\
d_{\epsilon}&=\int_{0}^{\infty}\vert u_{\epsilon}\vert^{\frac{p}{p-1}}\varphi_{p}^{\prime}\big(\eta_{\epsilon}\vert u_{\epsilon}\vert^{\frac{p}{p-1}}\big)\,\mathrm{d}\lambda_{\theta}.
\end{aligned}\right.
\end{equation}
In order to perform the blow up analysis method, we need to show that each maximizers $u_{\epsilon}$  belongs to $C^{1}[0,\infty)$.  Indeed, we have the following:
\begin{lemma}\label{reg-lemma}  For each $\epsilon>0$, we have $u_{\epsilon}\in C^{1}[0,\infty)\cap C^{2}(0,\infty).
$\end{lemma}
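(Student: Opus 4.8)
The plan is to establish regularity of the subcritical maximizer $u_\epsilon$ by working directly with the Euler--Lagrange equation \eqref{Euler-Lagrange}. Written in radial form, that equation says that $u_\epsilon$ is a weak solution of the ordinary differential equation
\[
-\big(r^{\alpha}\lvert u_\epsilon'\rvert^{p-2}u_\epsilon'\big)' = r^{\theta}\Big[\tfrac{b_\epsilon}{d_\epsilon}\varphi_p'\big(\eta_\epsilon\lvert u_\epsilon\rvert^{\frac{p}{p-1}}\big)\lvert u_\epsilon\rvert^{\frac{1}{p-1}} + (c_\epsilon-1)\lvert u_\epsilon\rvert^{p-1}\,\sign(u_\epsilon)\Big]
\]
on $(0,\infty)$, together with the natural boundary behaviour coming from $u_\epsilon\in X^{1,p}_\infty$. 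Since we already know $u_\epsilon\in X^{1,p}_\infty(\alpha,\theta)$ is non-increasing, the pointwise bound \eqref{radial lemma} gives $u_\epsilon\in L^\infty_{\mathrm{loc}}(0,\infty)$ and in fact $\lvert u_\epsilon(r)\rvert\le C r^{-(\theta+1)/p}$; in particular $u_\epsilon$ is bounded away from $0$ and decays at infinity. Combined with Remark~\ref{remark-imp} (so that $\varphi_p(\mu\lvert u_\epsilon\rvert^{p/(p-1)})\in L^1_\theta(0,\infty)$, hence $\varphi_p'$ of the same argument is locally integrable against $\mathrm{d}\lambda_\theta$), the right-hand side above defines a function that is continuous on $(0,\infty)$ once we know $u_\epsilon$ is continuous there.

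First I would bootstrap continuity. Integrating the ODE once shows $r^\alpha\lvert u_\epsilon'\rvert^{p-2}u_\epsilon'$ equals a constant plus an absolutely continuous function of $r$ on any compact subinterval of $(0,\infty)$, because the right-hand side is in $L^1_{\mathrm{loc}}(0,\infty)$ (using the $L^\infty_{\mathrm{loc}}$ bound on $u_\epsilon$ and the local integrability of $\varphi_p'$). Inverting the monotone map $s\mapsto \lvert s\rvert^{p-2}s$ — here $p\ge2$ so this is a locally Lipschitz homeomorphism of $\mathbb R$ away from, and Hölder-continuous near, the origin — yields that $u_\epsilon'$ is continuous on $(0,\infty)$, hence $u_\epsilon\in C^1(0,\infty)$. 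With $u_\epsilon$ now continuous, the right-hand side of the ODE is continuous on $(0,\infty)$, so $r^\alpha\lvert u_\epsilon'\rvert^{p-2}u_\epsilon'$ is $C^1$, and inverting again gives $u_\epsilon\in C^2(0,\infty)$ wherever $u_\epsilon'\ne0$; at points where $u_\epsilon'=0$ one reads $u_\epsilon''$ off the equation directly (the $(r^\alpha)'$ term drops out), so $u_\epsilon\in C^2(0,\infty)$ throughout. This is the standard elliptic-regularity-for-$p$-Laplacian argument specialised to one radial variable, and I expect it to be essentially routine.

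The main obstacle is regularity up to the origin $r=0$, i.e. showing $u_\epsilon\in C^1[0,\infty)$ with $u_\epsilon'(0)=0$. The difficulty is the degeneracy of the weight $r^\alpha$ at $r=0$: from $r^\alpha\lvert u_\epsilon'\rvert^{p-2}u_\epsilon'(r) = -\int_0^r t^\theta(\cdots)\,\mathrm{d}t$ (the constant of integration must vanish since $u_\epsilon'\in L^p_\alpha$ forces $r^\alpha\lvert u_\epsilon'\rvert^{p-1}\to0$), and using $\alpha=p-1$ together with the boundedness of the bracket near $0$, one gets $\lvert u_\epsilon'(r)\rvert^{p-1}\le C r^{\theta+1}/r^{p-1} = C r^{\theta+1-(p-1)}$, so $\lvert u_\epsilon'(r)\rvert \le C r^{(\theta+2-p)/(p-1)}$. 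Since $\theta\ge\alpha=p-1$ we have $\theta+2-p\ge1>0$, hence $u_\epsilon'(r)\to0$ as $r\to0^+$; extending $u_\epsilon'$ by $0$ at $r=0$ makes it continuous there, and $u_\epsilon(r)=u_\epsilon(0)+\int_0^r u_\epsilon'$ shows $u_\epsilon\in C^1[0,\infty)$. One should also record that the boundary term at $r=0$ in the weak formulation \eqref{Euler-Lagrange} genuinely vanishes, which is exactly where membership in $X^{1,p}_\infty(\alpha,\theta)$ and the radial bound \eqref{radial lemma} are used. Assembling these pieces gives $u_\epsilon\in C^1[0,\infty)\cap C^2(0,\infty)$, as claimed.
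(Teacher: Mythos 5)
Your overall strategy coincides with the paper's: derive the one-dimensional integral identity $\omega_\alpha r^{\alpha}(-u_\epsilon')^{p-1}=\int_0^r\big[\tfrac{b_\epsilon}{d_\epsilon}\varphi_p'(\eta_\epsilon\vert u_\epsilon\vert^{p/(p-1)})\vert u_\epsilon\vert^{1/(p-1)}+(c_\epsilon-1)\vert u_\epsilon\vert^{p-1}\big]\mathrm{d}\lambda_\theta$ from \eqref{Euler-Lagrange}, bootstrap to $C^2(0,\infty)$ in the interior, and use $\theta\ge\alpha$ to force $u_\epsilon'(0)=0$. The interior part of your argument is essentially the paper's and is fine.

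However, there is a genuine gap at the origin, which is precisely where the paper does its real work. You assert ``the boundedness of the bracket near $0$'' to conclude $\vert u_\epsilon'(r)\vert^{p-1}\le Cr^{\theta+1-(p-1)}$. But the bracket contains $\varphi_p'(\eta_\epsilon\vert u_\epsilon\vert^{p/(p-1)})\vert u_\epsilon\vert^{1/(p-1)}$, and $u_\epsilon$ is only known to lie in the completion $X^{1,p}_\infty(\alpha,\theta)$ with $\alpha=p-1$ (the Trudinger--Moser case), so it may be unbounded at $r=0$; the paper's proof explicitly treats the case $\lim_{r\to0}u_\epsilon(r)=+\infty$. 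In that case $e^{\eta_\epsilon\vert u_\epsilon\vert^{p/(p-1)}}$ blows up and your pointwise bound on the integrand is simply unavailable. What is actually needed (and suffices, via L'Hospital applied to the quotient $\int_0^r[\cdots]\mathrm{d}\lambda_\theta\,/\,r^{\alpha}$) is the much weaker statement \eqref{limite-exzero}, namely $r^{\sigma}\big[\varphi_p'(\eta_\epsilon\vert u_\epsilon\vert^{p/(p-1)})\vert u_\epsilon\vert^{1/(p-1)}+(c_\epsilon-1)\vert u_\epsilon\vert^{p-1}\big]\to0$ for every $\sigma>0$. This is not free: the paper proves it by showing the derivative of that expression lies in $L^1_\sigma(0,1/2)$, which in turn rests on the uniform exponential integrability of Lemma~\ref{prop1}-$(i)$, the $L^m_\sigma$ bound \eqref{grad-Lm} on $u_\epsilon'$, and H\"older's inequality, following \cite[Lemma~6]{RUDOL}. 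Without some substitute for this decay estimate, your derivation of $u_\epsilon'(r)\to0$ as $r\to0^{+}$ — and hence the claim $u_\epsilon\in C^1[0,\infty)$ — does not go through. (A secondary, fixable point: even writing the integral identity with vanishing constant of integration requires knowing the right-hand side is $\mathrm{d}\lambda_\theta$-integrable near $0$, which again uses the exponential integrability of Lemma~\ref{prop1}, not just Remark~\ref{remark-imp} applied to $\varphi_p$.)
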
 
\begin{proof}
The proof of this result  proceeds along the same lines of  \cite[Lemma~6]{RUDOL}, and we limit
ourselves to sketching a few differences. Firstly, given $\sigma>0$ we will prove that 
\begin{equation}\label{limite-exzero}
 \lim_{r\rightarrow 0^{+}}r^{\sigma}\big[\varphi_{p}^{\prime}\big(\eta_{\epsilon}\vert u_{\epsilon}\vert ^{\frac{p}{p-1}}\big)\vert u_{\epsilon}\vert^{\frac{1}{p-1}}+(c_{\epsilon}-1)\vert u_{\epsilon}\vert^{p-1}\big]=0.
\end{equation}
Since $0\le \varphi^{\prime}_{p}(t)\le e^{t}$, $t\ge 0$ it is sufficient to show that 
\begin{equation}\label{varphi-zerolimit}
 \lim_{r\rightarrow 0^{+}}r^{\sigma}\varphi(r)=0, \;\; \mbox{with}\;\; \varphi =:\vert u_{\epsilon}\vert^{\frac{1}{p-1}}e^{\eta_{\epsilon}\vert u_{\epsilon}\vert ^{\frac{p}{p-1}}}+(c_{\epsilon}-1)\vert u_{\epsilon}\vert^{p-1}.
\end{equation}
Let  $1<q<p$ such that $(\sigma+1)q>p$ and $m={p}/{q}$. Then the H\"{o}lder inequality and $\alpha=p-1$ yield
\begin{equation}\label{grad-Lm}
 \int_{0}^{1}r^{\sigma}\vert u^{\prime}_{\epsilon}\vert^{m}dr\le C\|u_{\epsilon}\|^{m},  
\end{equation}
for some $C>0$ depending only on $\alpha,\sigma$ and $m$. Note that 
\begin{equation}\nonumber
\begin{aligned}
    \varphi^{\prime}&=\frac{1}{p-1}\vert u_{\epsilon}\vert^{\frac{2-p}{p-1}}u^{\prime}_{\epsilon}e^{\eta_{\epsilon}\vert u_{\epsilon}\vert^{\frac{p}{p-1}}}+\frac{\eta_{\epsilon}p}{p-1}\vert u_{\epsilon}\vert^{\frac{2}{p-1}}u^{\prime}_{\epsilon}e^{\eta_{\epsilon}\vert u_{\epsilon}\vert^{\frac{p}{p-1}}}\\
    & +(c_{\epsilon}-1)(p-1)\vert u_{\epsilon}\vert^{p-2}u^{\prime}_{\epsilon}.
    \end{aligned}
\end{equation}
In addition, without loss of generality we can assume $u_{\epsilon}>0$ in $(0, 1)$ and $\lim_{r\rightarrow 0}u_{\epsilon}(r)=+\infty.$ Hence,  there exists $C>0$ such that 
\begin{equation}\nonumber
\begin{aligned}
    \vert \varphi^{\prime}\vert &\le C\left[\vert u^{\prime}_{\epsilon}\vert e^{\eta_{\epsilon}\vert u_{\epsilon}\vert^{\frac{p}{p-1}}}+\vert u_{\epsilon}\vert^{\frac{2}{p-1}}\vert u^{\prime}_{\epsilon}\vert e^{\eta_{\epsilon}\vert u_{\epsilon}\vert^{\frac{p}{p-1}}}+\vert u_{\epsilon}\vert ^{p-2}\vert u^{\prime}_{\epsilon}\vert\right],
    \end{aligned}
\end{equation}
on $ (0,1/2)$. Now, for  $m>1$ given by \eqref{grad-Lm} and $q_1,q_2>1$ such that
\begin{equation}\nonumber
    \frac{2}{q_1(p-1)}+\frac{1}{m}+\frac{1}{q_2}=1
\end{equation}
the H\"{o}lder inequality, Remark~\ref{remarkW} (recall $u_{\epsilon}\in L^{q_1}_{\sigma}(0,1)$), \eqref{grad-Lm} and  Lemma~\ref{prop1}-$(i)$ imply
\begin{equation}\label{graphi-1}
    \begin{aligned}
    & \int_{0}^{1/2}r^{\sigma}\vert u_{\epsilon}\vert^{\frac{2}{p-1}}\vert u^{\prime}_{\epsilon}\vert e^{\eta_{\epsilon}\vert u_{\epsilon}\vert^{\frac{p}{p-1}}}dr \\
    &\le\Big(\int_{0}^{1}r^{\sigma}\vert u_{\epsilon}\vert^{q_1}dr\Big)^{\frac{2}{q_1(p-1)}}\Big(\int_{0}^{1}r^{\sigma}\vert u^{\prime}_{\epsilon}\vert^{m}dr\Big)^{\frac{1}{m}}\Big(\int_{0}^{1}r^{\sigma}e^{q_2\eta_{\epsilon}\vert u_{\epsilon}\vert^{\frac{p}{p-1}}}dr\Big)^{\frac{1}{q_2}}\le c,
    \end{aligned}
\end{equation}
for some $c>0$ depending only on $\alpha,\sigma$ and $m$. Analogously, 
 \begin{equation}\label{graphi-2}
    \begin{aligned}
     \int_{0}^{1/2}r^{\sigma}\vert u^{\prime}_{\epsilon}\vert e^{\eta_{\epsilon}\vert u_{\epsilon}\vert^{\frac{p}{p-1}}}dr \le C \|u^{\prime}_{\epsilon}\|_{L^{m}_{\sigma}(0,1)}\Big(\int_{0}^{1}r^{\sigma}e^{\frac{m\eta_{\epsilon}}{m-1}\vert u_{\epsilon}\vert^{\frac{p}{p-1}}}dr\Big)^{\frac{m-1}{m}}\le c_1
    \end{aligned}
\end{equation}
and 
\begin{equation}\label{graphi-3}
    \begin{aligned}
     \int_{0}^{1/2}r^{\sigma}\vert u_{\epsilon}\vert^{p-2}\vert u^{\prime}_{\epsilon}\vert dr \le C \|u^{\prime}_{\epsilon}\|_{L^{m}_{\sigma}(0,1)}\|u_{\epsilon}\|^{p-2}_{L^{\frac{m(p-2)}{m-1}}_{\sigma}(0,1)}\le c_2.
    \end{aligned}
\end{equation}
Thus, from \eqref{graphi-1}, \eqref{graphi-2} and \eqref{graphi-3} it follows that 
\begin{equation}\label{claim2}
  \varphi^{\prime}\in L^{1}_{\sigma}(0,1/2).
\end{equation}
Hence, in the same line of \cite[Lemma~6]{RUDOL}, combining \eqref{grad-Lm} and \eqref{claim2} we conclude that \eqref{varphi-zerolimit} holds.

\noindent Following  \cite{Clement-deFigueiredo-Mitidieri}, for each $r, \delta>0$ we  consider the test function $v_{\delta}\in X^{1,p}_{\infty}$ given by
 \begin{equation}\label{test-magic}
     v_{\delta}(s)=\left\{\begin{aligned}
     &1\;\;&\mbox{if}&\;\; 0<s\le r,\\
     &1+\frac{1}{\delta}(r-s)\;\;&\mbox{if}&\;\; r\le s\le r+\delta,\\
     & 0\;\;&\mbox{if}&\;\; s\ge r+\delta.
     \end{aligned}
     \right.
 \end{equation}
 By using $v_{\delta}$ in \eqref{Euler-Lagrange} and letting $\delta\rightarrow 0$, we get the integral equation
 \begin{equation}\label{eq.integral}
     (-u_{\epsilon}^{\prime}(r))^{p-1}=\frac{1}{\omega_{\alpha}r^{\alpha}}\int_{0}^{r}\big[\frac{b_{\epsilon}}{d_{\epsilon}}\varphi_{p}^{\prime}\big(\eta_{\epsilon}\vert u_{\epsilon}(s)\vert ^{\frac{p}{p-1}}\big)\vert u_{\epsilon}\vert^{\frac{1}{p-1}}+(c_{\epsilon}-1)\vert u_{\epsilon}(s)\vert^{p-1}\big]\,\mathrm{d}\lambda_{\theta}.
 \end{equation}
 It follows that $u_{\epsilon}\in C^{2}(0,\infty)$. In addition, since we are supposing $\theta\ge \alpha$, the  L'Hospital rule and \eqref{limite-exzero} imply  $u^{\prime}_{\epsilon}(0)=0$, and thus $u_{\epsilon}\in C^{1}[0,\infty)$.
\end{proof}
\begin{lemma} \label{de-maxcri}
We have $AD(\eta, \mu_{\epsilon}, \alpha,\theta) \rightarrow AD(\eta, \mu_{\alpha,\theta}, \alpha,\theta)$, as $\epsilon\rightarrow 0$.  In particular, $\displaystyle\liminf_{\epsilon\rightarrow 0}d_{\epsilon}>0$. 
\end{lemma}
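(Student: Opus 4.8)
The plan is to prove the two assertions of Lemma~\ref{de-maxcri} in sequence. For the convergence $AD(\eta,\mu_\epsilon,\alpha,\theta)\to AD(\eta,\mu_{\alpha,\theta},\alpha,\theta)$, I would first establish the inequality $\liminf_{\epsilon\to 0}AD(\eta,\mu_\epsilon,\alpha,\theta)\ge AD(\eta,\mu_{\alpha,\theta},\alpha,\theta)$. This direction uses only the definition of the supremum together with Fatou's lemma: fix any admissible $u\in X^{1,p}_\infty(\alpha,\theta)$ with $\|u\|=1$; since $\varphi_p$ is continuous and nondecreasing and $\mu_\epsilon=\mu_{\alpha,\theta}-\epsilon\uparrow\mu_{\alpha,\theta}$, the integrand $\varphi_p\big(\mu_\epsilon(1+\eta\|u\|^p_{L^p_\theta})^{1/(p-1)}|u|^{p/(p-1)}\big)$ converges monotonically (increasingly) to $\varphi_p\big(\mu_{\alpha,\theta}(1+\eta\|u\|^p_{L^p_\theta})^{1/(p-1)}|u|^{p/(p-1)}\big)$, so by the monotone convergence theorem $\int_0^\infty\varphi_p(\mu_\epsilon\cdots)\,\mathrm{d}\lambda_\theta\to\int_0^\infty\varphi_p(\mu_{\alpha,\theta}\cdots)\,\mathrm{d}\lambda_\theta$; taking the supremum over such $u$ and then the liminf in $\epsilon$ gives the bound. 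Note $AD(\eta,\mu_\epsilon,\alpha,\theta)$ is finite for $\epsilon>0$ by Theorem~\ref{TSUP}(1), and is finite at $\mu_{\alpha,\theta}$ as well (again Theorem~\ref{TSUP}(1), since $\mu_{\alpha,\theta}\le\mu_{\alpha,\theta}$ and $\eta<1$), so everything is well-posed.

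For the reverse inequality $\limsup_{\epsilon\to0}AD(\eta,\mu_\epsilon,\alpha,\theta)\le AD(\eta,\mu_{\alpha,\theta},\alpha,\theta)$, the key observation is monotonicity of $AD$ in $\mu$: since $\varphi_p$ is nondecreasing and $\mu_\epsilon<\mu_{\alpha,\theta}$, for every admissible $u$ the integrand at $\mu_\epsilon$ is pointwise dominated by the integrand at $\mu_{\alpha,\theta}$, hence $AD(\eta,\mu_\epsilon,\alpha,\theta)\le AD(\eta,\mu_{\alpha,\theta},\alpha,\theta)$ for all $\epsilon>0$. Combining the two inequalities yields $AD(\eta,\mu_\epsilon,\alpha,\theta)\to AD(\eta,\mu_{\alpha,\theta},\alpha,\theta)$ as $\epsilon\to0$.

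It remains to deduce $\liminf_{\epsilon\to0}d_\epsilon>0$, where $d_\epsilon=\int_0^\infty|u_\epsilon|^{p/(p-1)}\varphi_p'\big(\eta_\epsilon|u_\epsilon|^{p/(p-1)}\big)\,\mathrm{d}\lambda_\theta$. The idea is to compare $d_\epsilon$ with $AD(\eta,\mu_\epsilon,\alpha,\theta)$, which we now know stays bounded away from $0$ (it converges to the positive number $AD(\eta,\mu_{\alpha,\theta},\alpha,\theta)$, which is $>0$ by Proposition~\ref{vanishing-integer}). From the series expansion $\varphi_p(t)=\sum_{j\ge k_0}t^j/j!$ one has $t\varphi_p'(t)=\sum_{j\ge k_0}j\,t^j/j!\ge k_0\,\varphi_p(t)$ for $t\ge0$, whence, writing $t=\eta_\epsilon|u_\epsilon|^{p/(p-1)}$ and recalling $\eta_\epsilon=\mu_\epsilon(1+\eta\|u_\epsilon\|^p_{L^p_\theta})^{1/(p-1)}$,
\begin{equation}\nonumber
\eta_\epsilon\, d_\epsilon=\int_0^\infty\big(\eta_\epsilon|u_\epsilon|^{\frac{p}{p-1}}\big)\varphi_p'\big(\eta_\epsilon|u_\epsilon|^{\frac{p}{p-1}}\big)\,\mathrm{d}\lambda_\theta\ge k_0\int_0^\infty\varphi_p\big(\eta_\epsilon|u_\epsilon|^{\frac{p}{p-1}}\big)\,\mathrm{d}\lambda_\theta=k_0\,AD(\eta,\mu_\epsilon,\alpha,\theta),
\end{equation}
using \eqref{sub-extremals} in the last equality. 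Since $\eta_\epsilon\le\mu_{\alpha,\theta}(1+\eta)^{1/(p-1)}=:\Lambda<\infty$ is bounded above uniformly in $\epsilon$, we get $d_\epsilon\ge (k_0/\Lambda)\,AD(\eta,\mu_\epsilon,\alpha,\theta)$, and taking $\liminf$ as $\epsilon\to0$ gives $\liminf_{\epsilon\to0}d_\epsilon\ge (k_0/\Lambda)\,AD(\eta,\mu_{\alpha,\theta},\alpha,\theta)>0$.

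The main obstacle is essentially bookkeeping rather than a genuine difficulty: one must be careful that the dominated/monotone convergence arguments in the first paragraph are legitimate on the whole half-line $(0,\infty)$ — this is guaranteed by Remark~\ref{remark-imp}, which ensures $\varphi_p(\mu|u|^{p/(p-1)})\in L^1_\theta(0,\infty)$ for every fixed admissible $u$ — and that the elementary inequality $t\varphi_p'(t)\ge k_0\varphi_p(t)$ is applied with the correct value $k_0=\min\{j\in\mathbb N:\ p-1\le j\}$, which in particular is $\ge 1$ since $p\ge2$, so the constant $k_0/\Lambda$ is strictly positive.
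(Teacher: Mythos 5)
Your proposal is correct and follows essentially the same route as the paper: Fatou/monotone convergence for the lower semicontinuity of $AD$ in $\mu$, trivial monotonicity for the upper bound, and the pointwise inequality $t\varphi_p'(t)\ge\varphi_p(t)$ (you use the marginally sharper constant $k_0$) combined with the uniform bound $\eta_\epsilon\le\mu_{\alpha,\theta}(1+\eta)^{1/(p-1)}$ to bound $d_\epsilon$ from below by a positive multiple of $AD(\eta,\mu_\epsilon,\alpha,\theta)$. The differences (monotone convergence in place of Fatou, $k_0$ in place of $1$) are cosmetic.
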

\begin{proof}
For any $u\in X^{1,p}_{\infty}$ with $\|u\|\le 1$, the Fatou's Lemma yields
\begin{equation}\nonumber
    \begin{aligned}
 \liminf_{\epsilon\rightarrow 0} AD(\eta, \mu_{\epsilon}, \alpha,\theta) 
    &\ge \int_{0}^{\infty}\varphi_{p}\big(\mu_{\alpha,\theta}(1+\eta \|u\|^{p}_{L^{p}_{\theta}})^{\frac{1}{p-1}}\vert u\vert ^{\frac{p}{p-1}}\big)\mathrm{d}\lambda_{\theta}.
    \end{aligned}
\end{equation}
Since $u$ is taken arbitrary, we get
\begin{equation}\nonumber
    \begin{aligned}
    \liminf_{\epsilon\rightarrow 0}  AD(\eta, \mu_{\epsilon}, \alpha,\theta)  \ge AD(\eta, \mu_{\alpha,\theta}, \alpha,\theta).
    \end{aligned}
\end{equation}
Also, we clearly have 
\begin{equation}\nonumber
    \begin{aligned}
    \limsup _{\epsilon\rightarrow 0}  AD(\eta, \mu_{\epsilon}, \alpha,\theta)\le  AD(\eta, \mu_{\alpha,\theta}, \alpha,\theta).
    \end{aligned}
\end{equation}
Then we can write 
\begin{equation}\label{MTe-limit}
    \begin{aligned}
    \lim_{\epsilon\rightarrow 0}  AD(\eta, \mu_{\epsilon}, \alpha,\theta)  = AD(\eta, \mu_{\alpha,\theta}, \alpha,\theta).
    \end{aligned}
\end{equation}
Noticing that 
\begin{equation}\nonumber
    t\varphi^{\prime}_{p}(t)=\sum_{j=k_0}^{\infty}\frac{t^{j}}{(j-1)!}\ge \sum_{j=k_0}^{\infty}\frac{t^{j}}{j!}=\varphi_{p}(t), \;\; t\ge0
\end{equation}
we can see that
\begin{equation}\nonumber 
    \begin{aligned}
    d_{\epsilon} & \ge \frac{1}{\eta_{\epsilon}}AD(\eta, \mu_{\epsilon}, \alpha,\theta)
   \ge \frac{1}{\mu_{\epsilon}(1+\eta)^{\frac{1}{p-1}}}AD(\eta, \mu_{\epsilon}, \alpha,\theta).
    \end{aligned}
\end{equation}
Thus, using \eqref{MTe-limit}, we get 
\begin{equation}
    \begin{aligned}
  \liminf_{\epsilon\rightarrow 0} d_{\epsilon}\ge \frac{1}{\mu_{\alpha,\theta}(1+\eta)^{\frac{1}{p-1}}}AD(\eta, \mu_{\alpha,\theta}, \alpha,\theta)>0.
    \end{aligned}
\end{equation}
\end{proof}

In the sequel, we do not distinguish sequence and subsequence. As well as in \eqref{loc-convergence} and \eqref{ab-limite} we have
\begin{equation}\label{eploc-convergence}
u_{\epsilon}\rightharpoonup u_0\;\mbox{in}\;X^{1,p}_{\infty}(\alpha,\theta),\;\; u_{\epsilon}\rightarrow u_0\;\mbox{in}\;L^{q}_{\theta}(0,R)\;\mbox{and}\; u_{\epsilon}(r)\rightarrow u_0(r)\;\mbox{a.e in}\; (0,\infty),
\end{equation} 
for any $ R>0$ and $q\in (1,\infty)$. In addition, we can pick $a\in [0,1]$ such that 
\begin{equation}\label{epab-limite}
\|u_{\epsilon}\|^{p}_{L^{p}_{\theta}}\rightarrow a.
\end{equation}
Moreover, since $u_{\epsilon}\in C^{1}[0,\infty)$ is a  non-increasing function  we can define
\begin{equation}\label{a-blow}
    a_{\epsilon}:=u_{\epsilon}(0)=\max_{r\in [0,\infty)}u_{\epsilon}(r).
\end{equation}
\begin{lemma}\label{lemma-abounded}
If $(a_{\epsilon})$ is bounded, then $ AD(\eta, \mu_{\alpha,\theta}, \alpha,\theta)$ is attained.
\end{lemma}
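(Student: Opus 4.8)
The plan is to show that when the maximum values $a_\epsilon = u_\epsilon(0)$ stay bounded as $\epsilon \to 0$, the weak limit $u_0$ from \eqref{eploc-convergence} is itself an extremal for the critical supremum $AD(\eta,\mu_{\alpha,\theta},\alpha,\theta)$. The first step is to upgrade the convergence of $u_\epsilon$: since $(a_\epsilon)$ is bounded, the pointwise bound \eqref{radial lemma} together with monotonicity gives $\|u_\epsilon\|_{L^\infty} \le C$ uniformly, and then $|u_\epsilon|^{p/(p-1)}$ is bounded in $L^\infty$, so the integrand $\varphi_p\big(\mu_\epsilon(1+\eta\|u_\epsilon\|^p_{L^p_\theta})^{1/(p-1)}|u_\epsilon|^{p/(p-1)}\big)$ is controlled pointwise by $C\,|u_\epsilon|^p$ (using the series expansion of $\varphi_p$ and factoring out $|u_\epsilon|^p$, exactly as in the estimates \eqref{tail-1} and \eqref{tail-1null}). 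On any ball $(0,R)$ one gets equi-integrability and passes to the limit by Vitali as in \eqref{sub-loc}; on the tail $(R,\infty)$ one controls $\int_R^\infty \varphi_p(\cdots)\,\mathrm{d}\lambda_\theta$ by a small multiple of $\int_R^\infty |u_\epsilon|^p\,\mathrm{d}\lambda_\theta$, which is uniformly small for $R$ large because $\|u_\epsilon\|^p_{L^p_\theta}\le 1$ and, combined with \eqref{radial lemma}, the $L^p_\theta$-mass escaping to infinity is uniformly controlled.

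Next I would combine these to obtain
\[
\lim_{\epsilon\to 0}\int_0^\infty \varphi_p\big(\mu_\epsilon(1+\eta\|u_\epsilon\|^p_{L^p_\theta})^{\frac{1}{p-1}}|u_\epsilon|^{\frac{p}{p-1}}\big)\,\mathrm{d}\lambda_\theta
= \int_0^\infty \varphi_p\big(\mu_{\alpha,\theta}(1+\eta b)^{\frac{1}{p-1}}|u_0|^{\frac{p}{p-1}}\big)\,\mathrm{d}\lambda_\theta,
\]
where $b=\lim\|u_0\|_{\epsilon}$-type mass — more precisely one has to be careful here: if $p\in\mathbb N$ there is a possible loss of $L^p_\theta$-mass analogous to \eqref{NnotN}, giving an extra term $\frac{\mu_{\alpha,\theta}^{p-1}}{(p-1)!}(1+\eta a)(a-\|u_0\|^p_{L^p_\theta})$. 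By Lemma~\ref{de-maxcri} the left side equals $AD(\eta,\mu_{\alpha,\theta},\alpha,\theta)$. Then, since $\|u_0\|\le 1$, a rescaling argument identical to the one at the end of Subsection~\ref{Max-sub} (introducing $\tau=(a/\|u_0\|^p_{L^p_\theta})^{1/(\theta+1)}\ge1$ and $v_0(r)=u_0(r/\tau)$, using \eqref{rts-LpLq}) forces $\tau=1$, hence $a=\|u_0\|^p_{L^p_\theta}$ and the extra term vanishes in the integer case; one also needs $u_0\not\equiv 0$, which follows because otherwise $AD$ would be bounded by $\frac{\mu_{\alpha,\theta}^{p-1}}{(p-1)!}(1+\eta)$, contradicting Proposition~\ref{vanishing-integer} together with the fact that $AD(\eta,\mu_{\alpha,\theta},\alpha,\theta)\ge \lim_\epsilon AD(\eta,\mu_\epsilon,\alpha,\theta)$ and the strict inequality there survives in the limit. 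Finally, from $AD(\eta,\mu_{\alpha,\theta},\alpha,\theta)=\int_0^\infty \varphi_p\big(\mu_{\alpha,\theta}(1+\eta\|u_0\|^p_{L^p_\theta})^{1/(p-1)}|u_0|^{p/(p-1)}\big)\,\mathrm{d}\lambda_\theta$ with $\|u_0\|\le 1$ and the monotonicity of the functional in $\|u_0\|$, one concludes $\|u_0\|=1$, so $u_0$ is the desired maximizer.

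The main obstacle I anticipate is the tail control at the critical level: unlike the subcritical case, one cannot absorb a small constant times $\mu_{\alpha,\theta}$ below $\mu_{\alpha,\theta}$, so the argument must genuinely exploit the boundedness of $a_\epsilon$ to keep $|u_\epsilon|^{p/(p-1)}$ uniformly bounded, and then use the power-series tail of $\varphi_p$ rather than the exponential itself — this is why the hypothesis ``$(a_\epsilon)$ bounded'' is exactly what makes the compactness work. A secondary delicate point, only when $p$ is an integer, is ruling out the concentration-type loss term in \eqref{NnotN} and showing it is reclaimed by the rescaling; here the strict lower bound of Proposition~\ref{vanishing-integer} is what prevents $u_0\equiv 0$ and ultimately what forces the mass not to escape.
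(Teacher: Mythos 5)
Your proposal is correct and follows essentially the same route as the paper: uniform boundedness of $a_{\epsilon}$ gives local equi-integrability and Vitali convergence, the tail is handled by the estimates \eqref{blow-tail-1}--\eqref{blow-tail-1null} (with the extra mass-loss term $\frac{\mu_{\alpha,\theta}^{p-1}}{(p-1)!}(1+\eta a)(a-\|u_0\|^{p}_{L^{p}_{\theta}})$ when $p\in\mathbb{N}$), Lemma~\ref{de-maxcri} identifies the limit with $AD(\eta,\mu_{\alpha,\theta},\alpha,\theta)$, Proposition~\ref{vanishing-integer} rules out $u_0\equiv 0$, and the rescaling $v_0(r)=u_0(r/\tau)$ forces $\tau=1$ and $\|u_0\|=1$. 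No gaps.
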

\begin{proof}
Assume that there is $c>0$ such that $a_{\epsilon}\le c$, for any $\epsilon>0$. Hence,  $u_{\epsilon}(r)\le c$, for $r\in (0,\infty)$ and  uniformly on $\epsilon$. Consequently, for any $R>0$ there is $q>1$ such that 
\begin{equation}\nonumber
\begin{aligned}
\sup_{\epsilon>0}\int_{0}^{R}\big[\varphi_{p}\big(\eta_{\epsilon}\vert u_{\epsilon}\vert^{\frac{p}{p-1}}\big)\big]^{q}\mathrm{d}\lambda_{\theta} <\infty.
\end{aligned}
\end{equation}
From \eqref{eploc-convergence} and the Vitali's convergence theorem
\begin{equation}\label{blow-sub-loc}
\lim_{\epsilon\rightarrow0}\int^{R}_{0}\varphi_{p}\big(\eta_{\epsilon}\vert u_{\epsilon}\vert^{\frac{p}{p-1}}\big)\mathrm{d}\lambda_{\theta}=\int^{R}_{0}\varphi_{p}\big(\mu_{\alpha,\theta}(1+\eta a)^{\frac{1}{p-1}}\vert u_0\vert^{\frac{p}{p-1}}\big)\mathrm{d}\lambda_{\theta}.
\end{equation}
Also, 
\begin{equation}\label{blow-lq-convergence}
\lim_{\epsilon\rightarrow 0}\int^{R}_{0}(1+\eta\|u_{\epsilon}\|^{p}_{L^{p}_{\theta}})\vert u_{\epsilon}\vert^{p}\mathrm{d}\lambda_{\theta}=(1+\eta a)\int_{0}^{R}\vert u_0\vert^{p}\mathrm{d} \lambda_{\theta}.
\end{equation}
For $u_0$ and $a$ are given by \eqref{eploc-convergence} and \eqref{epab-limite},  as in \eqref{NnotN} we will prove that
\begin{equation}\label{Blow-NnotN}
AD(\eta, \mu_{\alpha,\theta}, \alpha,\theta)=\left\{\begin{aligned}
& \int_{0}^{\infty}\varphi_{p}\big(\mu_{\alpha,\theta}(1+\eta a)^{\frac{1}{p-1}}\vert u_0\vert^{\frac{p}{p-1}}\big)\mathrm{d}\lambda_{\theta}, & \;\;\mbox{if}\;\; p\not\in \mathbb{N}\\
& \left. \begin{aligned}
&\int_{0}^{\infty}\varphi_{p}\big(\mu_{\alpha,\theta}(1+\eta a)^{\frac{1}{p-1}}\vert u_0\vert^{\frac{p}{p-1}}\big)\mathrm{d}\lambda_{\theta}\\
&+\frac{\mu^{p-1}_{\alpha,\theta}}{(p-1)!}(1+\eta a)\big(a-\|u_0\|^{p}_{L^{p}_{\theta}}\big)
\end{aligned}\right\} & \mbox{if}\;\; p\in \mathbb{N}.\\
\end{aligned}\right.
\end{equation}
Firstly,  \eqref{radial lemma} yields
\begin{equation}\label{blow-tail-1}
\begin{aligned}
& \varphi_{p}\big(\eta_{\epsilon}\vert u_{\epsilon}\vert^{\frac{p}{p-1}}\big)-\frac{\eta^{k_0}_{\epsilon}}{k_0!}\vert u_{\epsilon}\vert^{\frac{pk_0}{p-1}} \le C^{\prime}\frac{\vert u_{\epsilon}\vert^{p}}{R^{\frac{\theta+1}{p}}},\quad r\ge R\ge 1
\end{aligned}
\end{equation}
where $C^{\prime}$  does not dependent of $\epsilon$ and $R$. Hence, if $p\in\mathbb{N}$, i.e.  $k_0=p-1$ we obtain
\begin{equation}\label{rabophi}
\begin{aligned}
\lim_{R\rightarrow\infty}\lim_{\epsilon\rightarrow 0}\int_{R}^{\infty}\big[\varphi_{p}\big(\eta_{\epsilon}\vert u_{\epsilon}\vert^{\frac{p}{p-1}}\big)-\frac{\eta^{p-1}_{\epsilon}}{(p-1)!}\vert u_{\epsilon}\vert^{p}\big]\mathrm{d}\lambda_{\theta}=0.
\end{aligned}
\end{equation}
Combining \eqref{MTe-limit}, \eqref{blow-sub-loc}, \eqref{blow-lq-convergence} and \eqref{rabophi} we get \eqref{Blow-NnotN} if $p\in\mathbb{N}$. Now, suppose that $p>2$ is not an integer number, i.e.  $k_0>p-1$. Then, arguing as in \eqref{tail-1null}, for $r>R$ we can write
\begin{equation}\label{blow-tail-1null}
\begin{aligned}
 \varphi_{p}\big(\eta_{\epsilon}\vert u_{\epsilon}\vert^{\frac{p}{p-1}}\big)
& \le \frac{C_{2}}{r^{(\theta+1)\frac{k_0}{p-1}}}+\frac{ C_1\vert u_{\epsilon}\vert^{p}}{R^{\frac{\theta+1}{p}}}.
\end{aligned}
\end{equation}
Hence,
\begin{equation}\label{rabophi-notZ}
\begin{aligned}
\lim_{R\rightarrow\infty}\lim_{\epsilon\rightarrow 0}\int_{R}^{\infty}\varphi_{p}\big(\eta_{\epsilon}\vert u_{\epsilon}\vert^{\frac{p}{p-1}}\big)\mathrm{d}\lambda_{\theta}=0.
\end{aligned}
\end{equation}
Combining \eqref{MTe-limit}, \eqref{blow-sub-loc}  and \eqref{rabophi-notZ} we 
get \eqref{Blow-NnotN} for the case $p\not\in\mathbb{N}$. If $u_0\equiv0$ then \eqref{Blow-NnotN} implies 
\begin{equation}\nonumber
0<AD(\eta, \mu_{\alpha,\theta}, \alpha,\theta)=\left\{\begin{aligned}
& 0, & \mbox{if}\;\; p\not\in \mathbb{N}\\
& \frac{\mu^{p-1}_{\alpha,\theta}}{(p-1)!}(1+\eta a)a\le \frac{\mu^{p-1}_{\alpha,\theta}}{(p-1)!}(1+\eta),
 & \mbox{if}\;\; p\in \mathbb{N},
\end{aligned}\right.
\end{equation}
which contradicts the Proposition~\ref{vanishing-integer}. Thus, $u_0\not\equiv0$ and by setting $v_0(r)=u_0(r/\tau)$, with $\tau=(a/\|u_0\|^{p}_{L^{p}_{\theta}})^{1/(\theta+1)}$ we can argue as in the proof of Theorem~\ref{thm-extremal} to conclude the result.
\end{proof}

\noindent In view of Lemma~\ref{lemma-abounded}, without loss of generality,  in the sequel we are supposing  the condition
\begin{equation}\label{a-blowup}
\lim_{\epsilon\rightarrow
\infty}a_{\epsilon}=+\infty.
\end{equation}
To complete the proof of Theorem~\ref{thm-cmax},  we shall apply the two-step strategy of Carleson-Chang \cite{CC}, namely: Supposing the condition \eqref{a-blowup}, we exhibit an explicit constant $\mathcal{U}(\eta,\alpha,\theta)$ so that
\paragraph{\textbf{Step~1}:}  (Lemma~\ref{abaixo})
\begin{equation}\label{S<=}
AD(\eta, \mu_{\alpha,\theta}, \alpha,\theta)\le \mathcal{U}(\eta,\alpha,\theta), \quad\mbox{for any}\quad 0\le \eta<1.
\end{equation}

\paragraph{\textbf{Step~2}:} (Lemma~\ref{acima}) For each $\eta$ small enough, there is  $v_{\eta}\in X^{1,p}_{\infty}$ with $\|v_{\eta}\|=1$ so that 
\begin{equation}\label{S>}
AD(\eta, \mu_{\alpha,\theta}, \alpha,\theta)\ge \int_{0}^{\infty}\varphi_{p}\big(\mu_{\alpha,\theta}(1+\eta\|v_{\eta}\|^{p}_{L^{p}_{\theta}})^{\frac{1}{p-1}}\vert v_{\eta}\vert^{\frac{p}{p-1}}\big)\mathrm{d}\lambda_{\theta}> \mathcal{U}(\eta,\alpha,\theta).
\end{equation}
The contradiction given by \eqref{S<=} and \eqref{S>} excludes \eqref{a-blowup} and Theorem~\ref{thm-cmax} follows from Lemma~\ref{lemma-abounded}. The proof of both Step~1 and Step~2 will be divided into a series of lemmas.
\begin{lemma}\label{we-concentra} The sequence $(u_{\epsilon})$ is  concentrating at the origin, i.e,  
\begin{equation}\label{concentrada}
\|u^{\prime}_{\epsilon}\|_{L^{p}_{\alpha}}\le 1,\;\; u_{\epsilon}\rightharpoonup 0\;\;\mbox{and}\;\; \lim_{\epsilon\rightarrow 0}\int_{r_0}^{\infty}\vert u^{\prime}_{\epsilon}\vert^{p}\mathrm{d}\lambda_{\alpha}=0,\;\;\mbox{for any}\;\; r_0>0.
\end{equation} 
In particular, $b_{\epsilon}\rightarrow 1 $,  $c_{\epsilon}\rightarrow \eta $ and $\eta_{\epsilon}\rightarrow \mu_{\alpha,\theta}$, as $\epsilon\rightarrow 0$.
\end{lemma}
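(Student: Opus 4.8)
The plan is to show that all three claims in \eqref{concentrada} follow from the blow-up hypothesis \eqref{a-blowup}, and then read off the limits of the Lagrange constants from \eqref{L-constants}. Since $\|u_\epsilon\|=1$ we automatically have $\|u_\epsilon'\|_{L^p_\alpha}\le 1$, so only the last two assertions require work. First I would argue that $u_0\equiv 0$, where $u_0$ is the weak limit in \eqref{eploc-convergence}. Suppose not. Then, repeating the argument already carried out in the proof of Lemma~\ref{lemma-abounded} (the identity \eqref{Blow-NnotN} and the subsequent rescaling by $\tau=(a/\|u_0\|_{L^p_\theta}^p)^{1/(\theta+1)}$), one concludes that $AD(\eta,\mu_{\alpha,\theta},\alpha,\theta)$ is attained at a nonzero maximizer. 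But a maximizer $u_0$ with $\|u_0\|=1$, being continuous on $[0,\infty)$ (Lemma~\ref{reg-lemma}, or rather the analogous regularity for the critical maximizer), would force $u_\epsilon(0)=a_\epsilon$ to stay bounded along the convergence $u_\epsilon\to u_0$ in $L^q_\theta(0,R)$ together with monotonicity — more precisely, since each $u_\epsilon$ is non-increasing, $|u_\epsilon(r)|^p \int_0^r \mathrm{d}\lambda_\theta \le \|u_\epsilon\|_{L^p_\theta}^p\le 1$, which only bounds $u_\epsilon$ away from the origin; the actual contradiction with \eqref{a-blowup} comes instead from Lemma~\ref{lemma-abounded} itself, whose conclusion (existence of a maximizer) is exactly what one is trying to avoid in the blow-up regime. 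So the cleanest route is: \eqref{a-blowup} holds precisely because the alternative, bounded $(a_\epsilon)$, has been disposed of in Lemma~\ref{lemma-abounded}; and under \eqref{a-blowup} one shows $u_0\equiv 0$ by noting that if $u_0\not\equiv 0$ then by lower semicontinuity and the argument of Lemma~\ref{lemma-abounded} the supremum is attained, hence (again Lemma~\ref{lemma-abounded}) $(a_\epsilon)$ may be taken bounded, contradicting \eqref{a-blowup}.

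Next, with $u_0\equiv 0$ established, the weak convergence $u_\epsilon\rightharpoonup 0$ in $X^{1,p}_\infty$ is immediate. For the tail estimate $\int_{r_0}^\infty |u_\epsilon'|^p\,\mathrm{d}\lambda_\alpha\to 0$, I would test the Euler--Lagrange equation \eqref{Euler-Lagrange} (equivalently use the integral form \eqref{eq.integral}) against a cutoff that vanishes on $(0,r_0/2)$ and equals $u_\epsilon$ on $(r_0,\infty)$. On the region $r\ge r_0/2$ the radial estimate \eqref{radial lemma} gives $|u_\epsilon(r)|^p\le C r_0^{-(\alpha+\theta(p-1))/p}$, a uniform bound; combined with Lemma~\ref{prop1}-(ii) (the uniform tail bound for $\varphi_p$, valid without the restriction $\mu\le\mu_{\alpha,\theta}$, cf. Remark~\ref{remark-imp}) and with $\liminf_\epsilon d_\epsilon>0$ from Lemma~\ref{de-maxcri}, the right-hand side of the tested equation is dominated by a multiple of $\int_{r_0/2}^\infty |u_\epsilon|^p\,\mathrm{d}\lambda_\theta$ plus lower-order terms, all of which tend to $0$ since $u_\epsilon\to 0$ in $L^p_\theta(r_0/2,R)$ for every $R$ and the far tail is uniformly small by \eqref{radial lemma}. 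This yields $\int_{r_0}^\infty |u_\epsilon'|^p\,\mathrm{d}\lambda_\alpha\to 0$.

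Finally, the limits of the constants follow by inspection of \eqref{L-constants}: since $\|u_\epsilon\|_{L^p_\theta}^p = 1-\|u_\epsilon'\|_{L^p_\alpha}^p$ and the mass of $u_\epsilon$ concentrates at the origin, one has $\|u_\epsilon\|_{L^p_\theta}^p\to 0$ (this is what ``concentrating'' means here, and it follows from $u_\epsilon\to 0$ in $L^p_\theta(0,R)$ for each $R$ together with the uniform smallness of the tail), whence $a=0$, $b_\epsilon=(1+\eta\|u_\epsilon\|_{L^p_\theta}^p)/(1+2\eta\|u_\epsilon\|_{L^p_\theta}^p)\to 1$, $c_\epsilon=\eta/(1+2\eta\|u_\epsilon\|_{L^p_\theta}^p)\to\eta$, and $\eta_\epsilon=\mu_\epsilon(1+\eta\|u_\epsilon\|_{L^p_\theta}^p)^{1/(p-1)}\to\mu_{\alpha,\theta}$ since $\mu_\epsilon=\mu_{\alpha,\theta}-\epsilon\to\mu_{\alpha,\theta}$. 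The main obstacle is the tail estimate in the middle step: one must control the nonlinear term $\varphi_p'(\eta_\epsilon|u_\epsilon|^{p/(p-1)})|u_\epsilon|^{1/(p-1)}$ uniformly in $\epsilon$ on $(r_0,\infty)$, which is where the uniform pointwise bound \eqref{radial lemma} away from the origin, the uniform integral bound of Lemma~\ref{prop1}-(ii), and the nonvanishing of $d_\epsilon$ must be combined carefully; everything else is bookkeeping with the already-established identities.
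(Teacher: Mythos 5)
There is a genuine gap: the proposal never uses the blow-up hypothesis \eqref{a-blowup} in the quantitative way that actually drives the lemma. The paper's proof is built on the claim that the full Dirichlet energy concentrates on $(0,r_0)$, i.e. $\int_0^{r_0}\vert u'_\epsilon\vert^p\,\mathrm{d}\lambda_\alpha\to 1$, and this is proved by contradiction \emph{from} $a_\epsilon\to\infty$: if the limit were some $\delta<1$, then (because $\eta<1$) the normalized truncation $w_\epsilon=v_\epsilon/\|v'_\epsilon\|_{L^p_\alpha}$ obeys a strictly subcritical Trudinger--Moser bound, the right-hand side $f_\epsilon$ of the integral equation \eqref{eq.integral} is bounded in $L^q_\theta(0,r_0)$ for some $q>1$, and integrating the equation yields $a_\epsilon=u_\epsilon(0)\le c$, contradicting \eqref{a-blowup}. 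The tail gradient estimate and $\|u_\epsilon\|_{L^p_\theta}\to 0$ (hence $u_\epsilon\rightharpoonup 0$ and the limits of $b_\epsilon,c_\epsilon,\eta_\epsilon$) then fall out of the norm constraint $\|u_\epsilon\|=1$. This mechanism --- ``no concentration $\Rightarrow$ $a_\epsilon$ bounded'' --- is the heart of the lemma and is absent from your argument.

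Your substitute steps do not close. First, the argument for $u_0\equiv 0$ is circular: the identity \eqref{Blow-NnotN} rests on the Vitali convergence \eqref{blow-sub-loc}, which requires the uniform bound $u_\epsilon\le c$, i.e. exactly the boundedness of $(a_\epsilon)$ that \eqref{a-blowup} denies; moreover Lemma~\ref{lemma-abounded} is the implication ``$(a_\epsilon)$ bounded $\Rightarrow$ attained'' only, and the converse you invoke (``attained $\Rightarrow$ $(a_\epsilon)$ may be taken bounded'') is neither proved nor available. Second, your tail estimate relies on the claim that the far $L^p_\theta$-tail of $u_\epsilon$ is uniformly small by \eqref{radial lemma}; this is false, since \eqref{radial lemma} gives $\vert u_\epsilon(r)\vert^p\lesssim r^{-(\alpha+\theta(p-1))/p}$ and $\int_R^\infty r^{\theta-(\alpha+\theta(p-1))/p}\,\mathrm{d}r$ diverges (the exponent equals $(\theta-\alpha)/p\ge 0$ under the standing hypothesis $\theta\ge\alpha$). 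Finally, for $p\in\mathbb{N}$ (so $k_0=p-1$) the leading term of the tested right-hand side of \eqref{Euler-Lagrange} on $(r_0/2,\infty)$ is of order $\int\vert u_\epsilon\vert^p\,\mathrm{d}\lambda_\theta$, which is governed by the non-compact endpoint embedding $X^{1,p}_\infty\hookrightarrow L^p_\theta$; weak convergence to $0$ alone does not make it vanish, so your testing argument presupposes $\|u_\epsilon\|_{L^p_\theta}\to 0$, which is part of what is to be proved.
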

\begin{proof}
Fix $r_0>0$. We claim that 
\begin{equation}\label{no-concentrada-claim}
\lim_{\epsilon\rightarrow 0}\int_{0}^{r_0}\vert u^{\prime}_{\epsilon}\vert^{p}\mathrm{d}\lambda_{\alpha}=1.
\end{equation}
Of course we have $\|u^{\prime}_{\epsilon}\|_{L^{p}_{\alpha}(0,r_0)}\le 1$. By contradiction, suppose that there is $0\le \delta<1$  such that  $\|u^{\prime}_{\epsilon}\|^{p}_{L^{p}_{\alpha}(0,r_0)}\rightarrow \delta$, as $\epsilon\rightarrow0$. By setting $v_{\epsilon}=u_{\epsilon}-u_{\epsilon}(r_0)$ on $(0,r_0]$, we obtain $v_{\epsilon}\in X^{1,p}_{r_0}(\alpha,\theta)$ with
\begin{equation}\label{grad-vdeltar0}
   \lim_{\epsilon\rightarrow 0} \int_{0}^{r_0}\vert v^{\prime}_{\epsilon}\vert^{p}\mathrm{d}\lambda_{\alpha}=\lim_{\epsilon\rightarrow 0}\int_{0}^{r_0}\vert u^{\prime}_{\epsilon}\vert^{p}\mathrm{d}\lambda_{\alpha}=\delta.
\end{equation}
For any $\sigma>0$, from \eqref{radial lemma} and  \eqref{elementary-ineq}
\begin{equation}\nonumber
\begin{aligned}
\vert u_{\epsilon}\vert ^{\frac{p}{p-1}} &\le (1+\sigma)^{\frac{1}{p}}\vert v_{\epsilon}\vert^{\frac{p}{p-1}}+c_{\sigma}r_{0}^{-\frac{\theta+1}{p}},\;\; \mbox{in}\;\; (0, r_0]
\end{aligned}
\end{equation}
for some $c_{\sigma}>0$ depending only on $\sigma, p$ and $\theta$. Define  $w_{\epsilon}=v_{\epsilon}/\|v^{\prime}_{\epsilon}\|_{L^{p}_{\alpha}}$. Then
\begin{equation}\nonumber
\begin{aligned}
\varphi^{\prime}_{p}\big(\eta_{\epsilon}\vert u_{\epsilon}\vert^{\frac{p}{p-1}}\big) 
&\le c e^{\mu_{\epsilon}(1+\sigma)^{\frac{1}{p}}[(1+\eta\|u_{\epsilon}\|^{p}_{L^{p}_{\theta}})\|v^{\prime}_{\epsilon}\|^{p}_{L^{p}_{\alpha}}]^{\frac{1}{p-1}}\vert w_{\epsilon}\vert^{\frac{p}{p-1}}}
\end{aligned}
\end{equation}
for some
$
c=c(\alpha,\theta, r_0,\sigma)>0.
$
Note that $\|u_{\epsilon}\|=1$ yields
\begin{equation}\nonumber
    \begin{aligned}
    (1+\eta\|u_{\epsilon}\|^{p}_{L^{p}_{\theta}})\|v^{\prime}_{\epsilon}\|^{p}_{L^{p}_{\alpha}}
    & \le  (1+\eta-\eta\|v^{\prime}_{\epsilon}\|^{p}_{L^{p}_{\alpha}})\|v^{\prime}_{\epsilon}\|^{p}_{L^{p}_{\alpha}}.
    \end{aligned}
\end{equation}
Hence, from \eqref{grad-vdeltar0}
\begin{equation}\nonumber
\begin{aligned}
   & \lim_{\epsilon\rightarrow 0}\mu_{\epsilon}(1+\sigma)^{\frac{1}{p}}[(1+\eta\|u_{\epsilon}\|^{p}_{L^{p}_{\theta}})\|v^{\prime}_{\epsilon}\|^{p}_{L^{p}_{\alpha}}]^{\frac{1}{p-1}}\\
    &\quad\quad \le \mu_{\alpha,\theta}(1+\sigma)^{\frac{1}{p}}[(1+\eta-\eta\delta)\delta]^{\frac{1}{p-1}}<\mu_{\alpha,\theta}
    \end{aligned}
\end{equation}
for $\sigma>0$ small enough (since $\eta,\delta<1$). Hence, we can choose $q>1$ such that 
\begin{equation}\label{q-perto1}
q^2\mu_{\epsilon}(1+\sigma)^{\frac{1}{p}}[(1+\eta\|u_{\epsilon}\|^{p}_{L^{p}_{\theta}})\|v^{\prime}_{\epsilon}\|^{p}_{L^{p}_{\alpha}}]^{\frac{1}{p-1}}<\mu_{\alpha,\theta}
\end{equation}
for all $\epsilon>0$ small enough. Thus, the above estimates and \eqref{TM1} imply 
\begin{equation}\label{TM-wr0}
\begin{aligned}
\int_{0}^{r_0}\big[\varphi^{\prime}_{p}\big(\eta_{\epsilon}\vert u_{\epsilon}\vert^{\frac{p}{p-1}}\big)\big]^{q^2}\mathrm{d}\lambda_{\theta} 
&\le c^{q^2} \int_{0}^{r_0}e^{\mu_{\alpha,\theta}\vert w_{\epsilon}\vert^{\frac{p}{p-1}}}\mathrm{d}\lambda_{\theta}<c_{1}
\end{aligned}
\end{equation}
where $c_{1}>0$ does not depend on $\epsilon$. Set 
\begin{equation}\nonumber 
   f_{\epsilon}(r)=\frac{1}{\omega_{\alpha}}\frac{b_{\epsilon}}{d_{\epsilon}}\varphi_{p}^{\prime}\big(\eta_{\epsilon}\vert u_{\epsilon}(r)\vert^{\frac{p}{p-1}}\big)\vert u_{\epsilon}\vert^{\frac{1}{p-1}}+(c_{\epsilon}-1)\vert u_{\epsilon}(r)\vert^{p-1}.
\end{equation}
From Lemma~\ref{de-maxcri}, there is $c_2>0$ such that
\begin{equation}\nonumber 
\begin{aligned}
  & \frac{1}{c_2}\int_{0}^{r_0}\vert f_{\epsilon}\vert^{q}\mathrm{d}\lambda_{\theta}\le \int_{0}^{r_0}\big[\varphi_{p}^{\prime}\big(\eta_{\epsilon}\vert u_{\epsilon}\vert^{\frac{p}{p-1}}\big)\big]^{q}\vert u_{\epsilon}\vert^{\frac{q}{p-1}}\mathrm{d}\lambda_{\theta}+\int_{0}^{r_0}\vert u_{\epsilon}\vert^{(p-1)q}\mathrm{d}\lambda_{\theta}.
   \end{aligned}
\end{equation}
Therefore, \eqref{TM-compactembeddings} (see Remark~\ref{remarkW}), \eqref{TM-wr0} and the H\"{o}lder inequality  yield
\begin{equation}\nonumber 
\begin{aligned}
   \frac{1}{c_2}\int_{0}^{r_0}\vert f_{\epsilon}\vert^{q}\mathrm{d}\lambda_{\theta}& \le \Big(\int_{0}^{r_0}\big[\varphi^{\prime}_{p}\big(\eta_{\epsilon}\vert u_{\epsilon}\vert^{\frac{p}{p-1}}\big)\big]^{q^2}\mathrm{d}\lambda_{\theta}\Big)^{\frac{1}{q}}\Big(\int_{0}^{r_0}\vert u_{\epsilon}\vert^{\frac{q^2}{(q-1)(p-1)}}\mathrm{d}\lambda_{\theta}\Big)^{\frac{q-1}{q}}\\
  &+ \Big( \int_{0}^{r_0}\vert u_{\epsilon}\vert^{(p-1)\frac{q^2}{q-1}}\mathrm{d}\lambda_{\theta}\Big)^{\frac{q-1}{q}}\Big(\int_{0}^{r_0}\mathrm{d}\lambda_{\theta}\Big)^{\frac{1}{q}}\le c,
   \end{aligned}
\end{equation}
 where we have chosen $q>1$ (close $1$) such that $q^2/(q-1)>(p-1)p$ and \eqref{q-perto1} hold. In particular, 
 \begin{equation}\label{f-limitado}
\int_{0}^{r_0}\vert f_{\epsilon}\vert^{q}\mathrm{d}\lambda_{\theta}\le c, 
 \end{equation}
 for all $\epsilon>0$ small enough.  Now, from \eqref{eq.integral}, for any $0\le r<r_0$ we can write 
 \begin{equation}\nonumber 
     \begin{aligned}
     u_{\epsilon}(r) & \le u_{\epsilon}(r_0)+\Big(\frac{\omega_{\theta}}{\theta+1}\Big)^{\frac{1}{p-1}}\Big(\int_{0}^{r_0}\vert f_{\epsilon}\vert^{q}\mathrm{d}\lambda_{\theta}\Big)^{\frac{1}{q(p-1)}}\int_{0}^{r_0}s^{(\theta+1)\frac{q-1}{q(p-1)}-1}ds.
     \end{aligned}
 \end{equation}
 Hence, from \eqref{radial lemma} and  \eqref{f-limitado} we obtain 
$ a_{\epsilon}=u_{\epsilon}(0)\le c$. This, contradicts \eqref{a-blowup} and proves \eqref{no-concentrada-claim}.
 
 Next we will prove \eqref{concentrada}. By contradiction,  suppose that there are $0<A<1$ and $r_0>0$ such that 
\begin{equation}\nonumber
\lim_{\epsilon\rightarrow 0}\int_{r_0}^{\infty}\vert u^{\prime}_{\epsilon}\vert^{p}\mathrm{d}\lambda_{\alpha}>A.
\end{equation} 
Thus,
\begin{equation}\nonumber
   1=\|u_{\epsilon}\|^{p}_{L^{p}_{\theta}}+\|u^{\prime}_{\epsilon}\|^{p}_{L^{p}_{\alpha}}>\|u_{\epsilon}\|^{p}_{L^{p}_{\theta}}+\int_{0}^{r_0}\vert u^{\prime}_{\epsilon}\vert^{p}\mathrm{d}\lambda_{\alpha}+A
\end{equation}
and consequently
\begin{equation}\nonumber
   \int_{0}^{r_0}\vert u^{\prime}_{\epsilon}\vert^{p}\mathrm{d}\lambda_{\alpha}<1-A.
\end{equation}
Hence, we get 
\begin{equation}\nonumber
   \lim_{\epsilon\rightarrow 0}\int_{0}^{r_0}\vert u^{\prime}_{\epsilon}\vert^{p}\mathrm{d}\lambda_{\alpha}=\delta\le 1-A<1,
\end{equation}
which contradicts \eqref{no-concentrada-claim}. So, \eqref{concentrada} holds. Finally, from  \eqref{concentrada} and \eqref{no-concentrada-claim}
\begin{equation}\nonumber
\begin{aligned}
   1&=\lim_{\epsilon\rightarrow 0}\Big[\|u_{\epsilon}\|^{p}_{L^{p}_{\theta}}+\int_{0}^{1}\vert u^{\prime}_{\epsilon}\vert^{p}\mathrm{d}\lambda_{\alpha}+\int_{1}^{\infty}\vert u^{\prime}_{\epsilon}\vert^{p}\mathrm{d}\lambda_{\alpha}\Big]
 \ge \lim_{\epsilon\rightarrow 0}\|u_{\epsilon}\|^{p}_{L^{p}_{\theta}}+1.
 \end{aligned}
\end{equation}
Consequently $u_{\epsilon}\rightarrow 0$ in $L^{p}_{\theta}$ and, from \eqref{eploc-convergence},  we get  $u_{\epsilon}\rightharpoonup u_{0}\equiv 0$.
\end{proof}

In order to investigate the behavior of the sequence $(u_{\epsilon})$ around of blowing up point $r=0$, we
consider the following auxiliary functions
\begin{equation}\label{specialfunctions}
    \left\{\begin{aligned}
   & v_{\epsilon}(r)=\frac{u_{\epsilon}(r_{\epsilon}r)}{a_{\epsilon}}\\
 &w_{\epsilon}(r)=a_{\epsilon}^{\frac{1}{p-1}}\left(u_{\epsilon}(r_{\epsilon}r)-a_{\epsilon}\right)
    \end{aligned}
    \right.,\quad r\in (0,\infty)
\end{equation}
where
\begin{equation}\label{repsilon}
    r_{\epsilon}^{\theta+1}=\frac{d_{\epsilon}}{b_{\epsilon}}a^{-\frac{p}{p-1}}_{\epsilon}e^{-\eta_{\epsilon}a^{\frac{p}{p-1}}_{\epsilon}}.
\end{equation}
\begin{lemma} For any $0<\mu<\mu_{\alpha,\theta}$, we have
$$
\lim_{\epsilon\rightarrow 0}r^{\theta+1}_{\epsilon}a^{\frac{p}{p-1}}_{\epsilon}e^{\mu a^{\frac{p}{p-1}}_{\epsilon}}=0.
$$
In particular, $\lim_{\epsilon\rightarrow0} r^{\theta+1}_{\epsilon}=0.$
\end{lemma}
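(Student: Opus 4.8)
\emph{Strategy.} The starting point is the algebraic consequence of the definition \eqref{repsilon}:
\begin{equation}\nonumber
r_{\epsilon}^{\theta+1}a_{\epsilon}^{\frac{p}{p-1}}e^{\mu a_{\epsilon}^{\frac{p}{p-1}}}=\frac{d_{\epsilon}}{b_{\epsilon}}\,e^{(\mu-\eta_{\epsilon})a_{\epsilon}^{\frac{p}{p-1}}}.
\end{equation}
By Lemma~\ref{we-concentra} we have $b_{\epsilon}\to 1$ and $\eta_{\epsilon}\to\mu_{\alpha,\theta}$, while the blow-up hypothesis \eqref{a-blowup} gives $a_{\epsilon}\to\infty$; since $\mu<\mu_{\alpha,\theta}$, the exponent $(\mu-\eta_{\epsilon})a_{\epsilon}^{p/(p-1)}\to-\infty$. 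So the lemma reduces to showing that the Lagrange constant $d_{\epsilon}$ from \eqref{L-constants} does not grow too fast; I will prove a bound
\begin{equation}\nonumber
d_{\epsilon}\le C\Big(1+a_{\epsilon}^{M}\,e^{\,\omega(\epsilon)\,a_{\epsilon}^{p/(p-1)}+C\,a_{\epsilon}^{1/(p-1)}}\Big),\qquad \omega(\epsilon)\to 0,
\end{equation}
with $C$, $M$ independent of $\epsilon$. Granting this, the identity above is bounded by $\tfrac{C}{b_{\epsilon}}\big(e^{(\mu-\eta_{\epsilon})a_{\epsilon}^{p/(p-1)}}+a_{\epsilon}^{M}e^{(\mu-\eta_{\epsilon}+\omega(\epsilon))a_{\epsilon}^{p/(p-1)}+C a_{\epsilon}^{1/(p-1)}}\big)$; both exponents tend to $-\infty$ (the second since $\tfrac{p}{p-1}>\tfrac{1}{p-1}$ and $\mu-\eta_{\epsilon}+\omega(\epsilon)\to\mu-\mu_{\alpha,\theta}<0$) and beat the factor $a_{\epsilon}^{M}$, so the whole quantity tends to $0$. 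The ``in particular'' assertion then follows at once upon dividing by $a_{\epsilon}^{p/(p-1)}e^{\mu a_{\epsilon}^{p/(p-1)}}\to\infty$.

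\emph{The key superlevel-set estimate.} Each $u_{\epsilon}\in C^{1}[0,\infty)$ is non-increasing with $\|u_{\epsilon}'\|_{L^{p}_{\alpha}}\le 1$, and by \eqref{radial lemma} $u_{\epsilon}(1)\le C_{0}$ for some $C_{0}\ge 1$ independent of $\epsilon$. Hence, for $t>C_{0}$, the radius $\rho_{\epsilon}(t):=\sup\{r:u_{\epsilon}(r)>t\}$ is strictly less than $1$, and applying H\"older's inequality to $t-C_{0}\le u_{\epsilon}(\rho_{\epsilon}(t))-u_{\epsilon}(1)=\int_{\rho_{\epsilon}(t)}^{1}(-u_{\epsilon}')\,ds$ and using $\alpha=p-1$ gives $t-C_{0}\le\omega_{\alpha}^{-1/p}\big(\ln\tfrac{1}{\rho_{\epsilon}(t)}\big)^{(p-1)/p}$. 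Since $\mu_{\alpha,\theta}=(\theta+1)\omega_{\alpha}^{1/\alpha}$, this yields
\begin{equation}\nonumber
\lambda_{\theta}\big(\{u_{\epsilon}>t\}\big)=\frac{\omega_{\theta}}{\theta+1}\,\rho_{\epsilon}(t)^{\theta+1}\le\frac{\omega_{\theta}}{\theta+1}\,e^{-\mu_{\alpha,\theta}(t-C_{0})^{p/(p-1)}},\qquad t>C_{0}.
\end{equation}

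\emph{Estimating $d_{\epsilon}$.} The series \eqref{exp-frac} gives the elementary inequalities $\varphi_{p}'(s)\le s^{k_{0}-1}e^{s}$ and $s\,\varphi_{p}'(s)\le s^{k_{0}}e^{s}$. On $\{u_{\epsilon}\le 1\}$ the first gives $u_{\epsilon}^{p/(p-1)}\varphi_{p}'\big(\eta_{\epsilon}u_{\epsilon}^{p/(p-1)}\big)\le C\,u_{\epsilon}^{k_{0}p/(p-1)}\le C\,u_{\epsilon}^{p}$ (recall $k_{0}p/(p-1)\ge p$), so this part of $d_{\epsilon}$ is bounded by $C\|u_{\epsilon}\|_{L^{p}_{\theta}}^{p}\le C$. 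On $\{u_{\epsilon}>1\}$ the second gives $u_{\epsilon}^{p/(p-1)}\varphi_{p}'\big(\eta_{\epsilon}u_{\epsilon}^{p/(p-1)}\big)\le\eta_{\epsilon}^{k_{0}-1}h(u_{\epsilon})$ with $h(t)=t^{k_{0}p/(p-1)}e^{\eta_{\epsilon}t^{p/(p-1)}}$; I then use the layer-cake formula $\int_{\{u_{\epsilon}>1\}}h(u_{\epsilon})\,d\lambda_{\theta}=h(1)\lambda_{\theta}(\{u_{\epsilon}>1\})+\int_{1}^{a_{\epsilon}}h'(t)\lambda_{\theta}(\{u_{\epsilon}>t\})\,dt$, bound $\lambda_{\theta}(\{u_{\epsilon}>t\})\le t^{-p}\|u_{\epsilon}\|^{p}_{L^{p}_{\theta}}\le 1$ on $[1,C_{0}]$ and by the superlevel estimate on $[C_{0},a_{\epsilon}]$, and then compute the remaining one-variable integral. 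After the shift $\tau=t-C_{0}$ and using $(\tau+C_{0})^{p/(p-1)}\le\tau^{p/(p-1)}+C(\tau^{1/(p-1)}+1)$ (here $\tfrac{p}{p-1}\le 2$), the two exponentials combine to $e^{(\eta_{\epsilon}-\mu_{\alpha,\theta})\tau^{p/(p-1)}+O(\tau^{1/(p-1)})}$; estimating $\tau\le a_{\epsilon}$ in the slowly varying factor $e^{(\eta_{\epsilon}-\mu_{\alpha,\theta})\tau^{p/(p-1)}}$ and integrating the rest produces exactly the bound for $d_{\epsilon}$ stated above, with $\omega(\epsilon)=|\eta_{\epsilon}-\mu_{\alpha,\theta}|\to 0$.

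\emph{What is delicate.} The whole difficulty is the control of $d_{\epsilon}$: since $\eta_{\epsilon}$ approaches the sharp exponent $\mu_{\alpha,\theta}$ — and, a priori, from either side — one cannot apply the subcritical inequality \eqref{TM1} with room to spare, and the naive estimate $e^{\eta_{\epsilon}u_{\epsilon}^{p/(p-1)}}\le e^{\eta_{\epsilon}a_{\epsilon}^{p/(p-1)}}$ is hopelessly lossy near the concentration point. The superlevel-set bound above, in which the exponent carries exactly the constant $\mu_{\alpha,\theta}=(\theta+1)\omega_{\alpha}^{1/\alpha}$, is what absorbs this loss and lets the decay of $e^{(\mu-\eta_{\epsilon})a_{\epsilon}^{p/(p-1)}}$ prevail.
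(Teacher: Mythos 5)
Your argument is correct, and it takes a genuinely different route from the paper's. Both proofs start from the same identity $r_{\epsilon}^{\theta+1}a_{\epsilon}^{p/(p-1)}e^{\mu a_{\epsilon}^{p/(p-1)}}=\tfrac{d_{\epsilon}}{b_{\epsilon}}e^{(\mu-\eta_{\epsilon})a_{\epsilon}^{p/(p-1)}}$, but they diverge in how the critical exponential inside $d_{\epsilon}$ is tamed. The paper does not bound $d_{\epsilon}$ by itself: it keeps the factor $e^{(\mu-\eta_{\epsilon})a_{\epsilon}^{p/(p-1)}}$ attached to the integral, splits at a radius $R$, and on $(0,R)$ uses the pointwise inequality $(\eta_{\epsilon}-\mu)(\vert u_{\epsilon}\vert^{p/(p-1)}-a_{\epsilon}^{p/(p-1)})\le 0$ (valid since $u_{\epsilon}\le a_{\epsilon}$ and $\eta_{\epsilon}>\mu$ eventually) to convert $e^{(\mu-\eta_{\epsilon})a_{\epsilon}^{p/(p-1)}}e^{\eta_{\epsilon}\vert u_{\epsilon}\vert^{p/(p-1)}}$ into the subcritical $e^{\mu\vert u_{\epsilon}\vert^{p/(p-1)}}$; it then concludes by H\"older, the uniform subcritical bound of Lemma~\ref{prop1}$(i)$, and the strong convergence $u_{\epsilon}\to 0$ in $L^{q}_{\theta}(0,R)$ from Lemma~\ref{we-concentra}, handling the tail via \eqref{radial lemma} and $u_{\epsilon}\to0$ in $L^{p}_{\theta}$. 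You instead prove an a priori growth bound on $d_{\epsilon}$ alone, via the classical sharp superlevel-set estimate $\lambda_{\theta}(\{u_{\epsilon}>t\})\le \tfrac{\omega_{\theta}}{\theta+1}e^{-\mu_{\alpha,\theta}(t-C_0)^{p/(p-1)}}$ and the layer-cake formula, and only then multiply by the decaying prefactor. Your route is more quantitative and self-contained (it uses only $\|u_{\epsilon}\|\le1$, $a_{\epsilon}\to\infty$, $\eta_{\epsilon}\to\mu_{\alpha,\theta}$, $b_{\epsilon}\to1$, and in particular does not need the Vitali/local-compactness step nor the weak vanishing $u_{\epsilon}\rightharpoonup0$ beyond what enters through $\eta_{\epsilon}$), at the cost of re-deriving the Moser-type level-set decay; the paper's route is shorter because it recycles the already-established subcritical machinery and the concentration information. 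Your handling of the delicate point — that $\eta_{\epsilon}$ may approach $\mu_{\alpha,\theta}$ from either side, absorbed into $\omega(\epsilon)=|\eta_{\epsilon}-\mu_{\alpha,\theta}|$ and dominated by the strictly negative limit of $\mu-\eta_{\epsilon}+\omega(\epsilon)$ together with $a_{\epsilon}^{1/(p-1)}=o(a_{\epsilon}^{p/(p-1)})$ — is sound.
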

\begin{proof}
For any $R>0$
\begin{equation}\label{re-ptudo}
    \begin{aligned}
    r^{\theta+1}_{\epsilon}a^{\frac{p}{p-1}}_{\epsilon}e^{\mu a^{\frac{p}{p-1}}_{\epsilon}}& =\frac{e^{(\mu-\eta_{\epsilon})a^{\frac{p}{p-1}}_{\epsilon}}}{b_{\epsilon}}\int_{0}^{R}\vert u_{\epsilon}\vert ^{\frac{p}{p-1}}\varphi_{p}^{\prime}\big(\eta_{\epsilon}\vert u_{\epsilon}\vert^{\frac{p}{p-1}}\big)\,\mathrm{d}\lambda_{\theta}\\
    & +\frac{e^{(\mu-\eta_{\epsilon})a^{\frac{p}{p-1}}_{\epsilon}}}{b_{\epsilon}}\int_{R}^{\infty}\vert u_{\epsilon}\vert^{\frac{p}{p-1}}\varphi_{p}^{\prime}\big(\eta_{\epsilon}\vert u_{\epsilon}\vert ^{\frac{p}{p-1}}\big)\,\mathrm{d}\lambda_{\theta}\\
    & = I_1+I_2.
    \end{aligned}
\end{equation}
Since   $\eta_{\epsilon}\rightarrow\mu_{\alpha,\theta}>\mu$, for $\epsilon$ sufficiently small we obtain
\begin{equation}\nonumber
    (\eta_{\epsilon}-\mu)(\vert u_{\epsilon}(r)\vert^{\frac{p}{p-1}}-a^{\frac{p}{p-1}}_{\epsilon})\le 0, \;\; r>0.
\end{equation}
Then, using that $\varphi^{\prime }_{p}(t)\le e^{t},\, t\ge 0$, we can write
\begin{equation}\nonumber 
    \begin{aligned}
   I_1 & \le \frac{1}{b_{\epsilon}}\int_{0}^{R}\vert u_{\epsilon}\vert^{\frac{p}{p-1}}e^{\mu\vert u_{\epsilon}\vert^{\frac{p}{p-1}}}\,\mathrm{d}\lambda_{\theta}.\\
    \end{aligned}
\end{equation}
Now, by choosing $q>1$ such that $q\mu<\mu_{\alpha,\theta}$ and since $b_{\epsilon}\rightarrow 1$,  from Lemma~\ref{prop1}-$(i)$ and the convergence in \eqref{eploc-convergence} with $u_0\equiv 0$, we obtain 
\begin{equation}\label{reI1}
    \begin{aligned}
   I_1  & \le \frac{1}{b_{\epsilon}}\Big(\int_{0}^{R}\vert u_{\epsilon}\vert^{\frac{pq}{(p-1)(q-1)}}\,\mathrm{d}\lambda_{\theta}\Big)^{\frac{q-1}{q}}\Big(\int_{0}^{R}e^{q\mu \vert u_{\epsilon}\vert^{\frac{p}{p-1}}}\,\mathrm{d}\lambda_{\theta}\Big)^{\frac{1}{q}}\rightarrow 0, \;\; \mbox{as}\;\; \epsilon\rightarrow 0.
    \end{aligned}
\end{equation}
From \eqref{radial lemma},  for any $r\ge R$ we have $\vert u_{\epsilon}(r)\vert^{\frac{p}{p-1}}\le 
{c}/{R^{\frac{\theta+1}{p}}}$ for some $c>0$ which is independent of $\epsilon$ and $R$. Hence, since $\eta\|u_{\epsilon}\|^{p}_{L^{p}_{\theta}}\le 1$ and $\mu_{\epsilon}\le \mu_{\alpha,\theta}$ we can write 
\begin{equation}\nonumber
   \eta_{\epsilon}\vert u_{\epsilon}\vert^{\frac{p}{p-1}}\le \mu_{\alpha,\theta}2^{\frac{1}{p-1}}CR^{-\frac{\theta+1}{p}},\;\; \forall r\ge R.
\end{equation}
Hence, noticing that for any $T>0$ we have 
$
  t\varphi^{\prime}_{p}(t)\le t^{k_0}e^{t}\le t^{k_0}e^{T},
$
for $0\le t\le T$
we can write (use $k_0\ge p-1$)
\begin{equation}\label{Fe-later}
   \begin{aligned}
    \vert u_{\epsilon}\vert^{\frac{p}{p-1}}\varphi_{p}^{\prime}\big(\eta_{\epsilon}\vert u_{\epsilon}\vert^{\frac{p}{p-1}}\big)
   & \le \frac{e^{\mu_{\alpha,\theta}2^{\frac{1}{p-1}}CR^{-\frac{\theta+1}{p}}}}{\eta_{\epsilon}}\eta_{\epsilon}^{k_0}\vert u_{\epsilon}\vert^{\frac{k_0p}{p-1}}
    \le  c(\alpha, \eta, \theta, R)\vert u_{\epsilon}\vert^{p}, \;\; \forall r\ge R
   \end{aligned} 
\end{equation}
where $R>0$ is large such that  $\vert u_{\epsilon}(r)\vert\le 1$ for $r\ge R$.  Hence, since  $u_{\epsilon}\rightarrow 0$ in $L^{p}_{\theta}$ we obtain
\begin{equation}\label{reI2}
  I_2\le   c(\alpha, \eta, \theta, R)\frac{e^{(\mu-\eta_{\epsilon})a^{\frac{p}{p-1}}_{\epsilon}}}{b_{\epsilon}}\int_{R}^{\infty}\vert u_{\epsilon}\vert^{p}\mathrm{d}\lambda_{\theta} \rightarrow 0,\;\; \mbox{as}\;\; \epsilon\rightarrow 0.
\end{equation}
Combining \eqref{re-ptudo}, \eqref{reI1} and \eqref{reI2} we get the result.
\end{proof}
\begin{lemma}\label{vto1wtolog} Let $v_{\epsilon}$ and $w_{\epsilon}$ given by \eqref{specialfunctions}. Set
\begin{equation}\label{w-express}
    w(r)=-\frac{p-1}{\mu_{\alpha,\theta}}\ln\big(1+c_{\alpha,\theta}r^{\frac{\theta+1}{p-1}}\big),\;\;\mbox{with}\;\; c_{\alpha,\theta}=\Big(\frac{\omega_{\theta}}{\theta+1}\Big)^{\frac{1}{p-1}}.
\end{equation}
Then $v_{\epsilon}\rightarrow 1$ in $C^{1}_{loc}[0,\infty)$ and $w_{\epsilon}\rightarrow w$ in $C^{0}_{loc}[0,\infty)$. In addition,
\begin{equation}\label{w-normal}
    \int_{0}^{\infty}e^{\frac{p}{p-1}\mu_{\alpha,\theta}w(r)}\mathrm{d}\lambda_{\theta}=1.
\end{equation}
\end{lemma}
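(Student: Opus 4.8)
\textbf{Proof proposal for Lemma~\ref{vto1wtolog}.}

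The plan is to derive an ODE for $v_\epsilon$ and $w_\epsilon$ from the integral equation \eqref{eq.integral}, pass to the limit using the scaling \eqref{repsilon} together with the concentration from Lemma~\ref{we-concentra}, and identify the limit by solving the limiting ODE explicitly. First I would rewrite \eqref{eq.integral} after the change of variable $r\mapsto r_\epsilon r$. Using $\alpha=p-1$ and the definition of $r_\epsilon$ in \eqref{repsilon}, a direct computation shows that $v_\epsilon$ satisfies
\begin{equation}\nonumber
\big(-v_\epsilon'(r)\big)^{p-1}=\frac{1}{\omega_\alpha r^{\alpha}}\int_0^r \Big[\frac{\varphi_p'\big(\eta_\epsilon a_\epsilon^{p/(p-1)}\vert v_\epsilon\vert^{p/(p-1)}\big)}{\varphi_p'\big(\eta_\epsilon a_\epsilon^{p/(p-1)}\big)}\,e^{-\eta_\epsilon a_\epsilon^{p/(p-1)}}\vert v_\epsilon\vert^{1/(p-1)}+\frac{(c_\epsilon-1)d_\epsilon^{-1}b_\epsilon a_\epsilon^{p/(p-1)}e^{-\eta_\epsilon a_\epsilon^{p/(p-1)}}}{\,}\vert v_\epsilon\vert^{p-1}\Big]\,\mathrm{d}\lambda_\theta,
\end{equation}
where I have multiplied and divided by the relevant powers of $a_\epsilon$. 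Since $a_\epsilon\to\infty$, the second bracketed term tends to $0$ uniformly on compact sets, and using $\varphi_p'(t)\sim e^t$ as $t\to\infty$ together with $v_\epsilon\le 1$, $v_\epsilon(0)=1$, the first term converges, after writing $\vert v_\epsilon\vert^{p/(p-1)}=1+a_\epsilon^{-p/(p-1)}\big((w_\epsilon+\text{l.o.t.})\cdot(\text{something})\big)$ — more precisely using the elementary expansion $(1+s)^{p/(p-1)}=1+\frac{p}{p-1}s+O(s^2)$ applied to $v_\epsilon=1+a_\epsilon^{-p/(p-1)}w_\epsilon$ — to $e^{\frac{p}{p-1}\mu_{\alpha,\theta}w(r)}$ provided $v_\epsilon\to 1$ and $w_\epsilon\to w$. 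This is the standard bootstrap: one first shows $v_\epsilon$ is locally bounded in $C^1$ (the right-hand side is controlled on compacts because the integrand is bounded there), extracts a locally uniform limit $v_\epsilon\to \bar v$ with $\bar v(0)=1$, $\bar v\le 1$, deduces the limiting integral equation forces $\bar v'\equiv 0$ hence $\bar v\equiv 1$, and then feeds this back to get the equation for $w_\epsilon$.

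For $w_\epsilon$, the relation $w_\epsilon(r)=a_\epsilon^{1/(p-1)}(u_\epsilon(r_\epsilon r)-a_\epsilon)$ gives $w_\epsilon'(r)=a_\epsilon^{1/(p-1)}r_\epsilon u_\epsilon'(r_\epsilon r)=a_\epsilon^{p/(p-1)}v_\epsilon'(r)/a_\epsilon^{1/(p-1)}\cdot$... — cleaner: $w_\epsilon'(r)=a_\epsilon^{p/(p-1)}v_\epsilon'(r)$ is not quite right dimensionally, so I would simply note $w_\epsilon' = a_\epsilon^{1/(p-1)} r_\epsilon\, u_\epsilon'(r_\epsilon\cdot)$ and $v_\epsilon' = r_\epsilon a_\epsilon^{-1} u_\epsilon'(r_\epsilon \cdot)$, whence $w_\epsilon' = a_\epsilon^{p/(p-1)} v_\epsilon'$. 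Combining with the $v_\epsilon$-equation and using $v_\epsilon\to 1$, $\varphi_p'(\eta_\epsilon a_\epsilon^{p/(p-1)}\vert v_\epsilon\vert^{p/(p-1)})/\varphi_p'(\eta_\epsilon a_\epsilon^{p/(p-1)})\to e^{\frac{p}{p-1}\mu_{\alpha,\theta}w(r)}$ and $\eta_\epsilon\to\mu_{\alpha,\theta}$, $b_\epsilon\to1$, $c_\epsilon\to\eta$, one obtains that any locally uniform limit $w$ of $w_\epsilon$ satisfies
\begin{equation}\nonumber
\big(-w'(r)\big)^{p-1}=\frac{1}{\omega_\alpha r^{\alpha}}\int_0^r e^{\frac{p}{p-1}\mu_{\alpha,\theta}w(s)}\,\mathrm{d}\lambda_\theta,\qquad w(0)=0,\ w\le 0.
\end{equation}
Differentiating and using $\mathrm{d}\lambda_\theta=\omega_\theta s^\theta\,\mathrm{d}s$, this is an autonomous-type ODE whose solution is checked to be exactly $w(r)=-\frac{p-1}{\mu_{\alpha,\theta}}\ln\big(1+c_{\alpha,\theta}r^{(\theta+1)/(p-1)}\big)$ with $c_{\alpha,\theta}=(\omega_\theta/(\theta+1))^{1/(p-1)}$: substitute this ansatz, compute $e^{\frac{p}{p-1}\mu_{\alpha,\theta}w(s)}=(1+c_{\alpha,\theta}s^{(\theta+1)/(p-1)})^{-p}$, integrate against $\omega_\theta s^\theta$, and verify both sides agree; uniqueness of the ODE (with the boundary behaviour at $0$) closes this step.

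Finally, \eqref{w-normal} follows by direct evaluation: with $\beta=(\theta+1)/(p-1)$,
\begin{equation}\nonumber
\int_0^\infty e^{\frac{p}{p-1}\mu_{\alpha,\theta}w(r)}\,\mathrm{d}\lambda_\theta=\omega_\theta\int_0^\infty \frac{r^{\theta}}{(1+c_{\alpha,\theta}r^{\beta})^{p}}\,\mathrm{d}r,
\end{equation}
and the substitution $t=c_{\alpha,\theta}r^{\beta}$ reduces this to a Beta integral which evaluates to $1$ precisely because $c_{\alpha,\theta}^{(\theta+1)/\beta}=\omega_\theta/(\theta+1)$ was chosen for that purpose (here one uses $\mu_{\alpha,\theta}=(\theta+1)\omega_\alpha^{1/\alpha}$ and $\alpha=p-1$ only through the exponents, the key cancellation being $\int_0^\infty t^{\theta/\beta}(1+t)^{-p}\,\mathrm{d}t\cdot\frac{1}{\beta}=(\theta+1)^{-1}$ after simplification). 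I expect the main obstacle to be the passage to the limit in the integral equation for $w_\epsilon$: one must justify that the normalized nonlinearity $\varphi_p'(\eta_\epsilon a_\epsilon^{p/(p-1)}\vert v_\epsilon\vert^{p/(p-1)})/\varphi_p'(\eta_\epsilon a_\epsilon^{p/(p-1)})$ converges locally uniformly to $e^{\frac{p}{p-1}\mu_{\alpha,\theta}w}$, which requires a uniform (in $\epsilon$, locally in $r$) lower bound $v_\epsilon\ge 1-o(1)$ with the right rate — i.e. an a priori $C^0$ bound on $w_\epsilon$ from above \emph{and} below on compacts — before the limit is even known to exist; this is obtained by the same integral-equation estimate bootstrapped from $v_\epsilon\to1$, but the circularity has to be broken carefully, as in \cite{RUDOL,CC}.
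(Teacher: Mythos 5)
Your proposal is correct and follows essentially the same route as the paper: rescale the Euler--Lagrange integral equation, use the $a_\epsilon^{-p}$ prefactor (and $\vert v_\epsilon\vert\le 1$, which makes the exponential factor $e^{\eta_\epsilon a_\epsilon^{p/(p-1)}(\vert v_\epsilon\vert^{p/(p-1)}-1)}\le 1$ and breaks the circularity you worry about) to get $v_\epsilon'\to 0$ hence $v_\epsilon\to 1$, then note $w_\epsilon'=a_\epsilon^{p/(p-1)}v_\epsilon'$ is uniformly bounded on compacts, apply Ascoli--Arzel\`a, identify the limit through the limiting ODE, and evaluate \eqref{w-normal} as a Beta integral. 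The only cosmetic defect is the normalization in your displayed $v_\epsilon$-equation (dividing by $\varphi_p'(\eta_\epsilon a_\epsilon^{p/(p-1)})$ \emph{and} multiplying by $e^{-\eta_\epsilon a_\epsilon^{p/(p-1)}}$ double-counts the exponential), but this does not affect the argument's structure.
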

\begin{proof}
By using  integral equation \eqref{eq.integral}, from the definition in \eqref{specialfunctions} and \eqref{repsilon},   it is easy to see that 
\begin{equation}\nonumber
\begin{aligned}
\omega_{\alpha}\vert v^{\prime}_{\epsilon}(r)\vert^{p-1}
&=\frac{1}{r^{\alpha}}\frac{e^{-\eta_{\epsilon}a^{\frac{p}{p-1}}_{\epsilon}}}{a^{p}_{\epsilon}}\int_{0}^{r}\big[\varphi_{p}^{\prime}\big(\eta_{\epsilon}\vert v_{\epsilon}(s)\vert^{\frac{p}{p-1}}a^{\frac{p}{p-1}}_{\epsilon}\big)\vert v_{\epsilon}(s)\vert^{\frac{1}{p-1}}\big]\,\mathrm{d}\lambda_{\theta}\\
&+\frac{(c_{\epsilon}-1)r^{\theta+1}_{\epsilon}}{r^{\alpha}}\int_{0}^{r}\vert v_{\epsilon}(s)\vert^{p-1}\,\mathrm{d}\lambda_{\theta}.
\end{aligned}
\end{equation}
Note that
\begin{equation}\label{splitphiprime}
   \varphi^{\prime}_{p}(t)=\left\{ \begin{aligned}
    & e^{t},\;\;&\mbox{if}&\;\; p=2\\
    & e^{t}-\sum^{k_0-2}_{j=0}\frac{t^{j}}{j!},\;\;&\mbox{if}&\;\; p>2\\
    \end{aligned}\right.,\;\; (t\ge 0).
\end{equation}
Thus, we can write
\begin{equation}\label{veeq.integral}
\begin{aligned}
\omega_{\alpha}\vert v^{\prime}_{\epsilon}(r)\vert^{p-1} &=\frac{1}{a^{p}_{\epsilon}}\frac{1}{r^{\alpha}}\int_{0}^{r}e^{\eta_{\epsilon}a^{\frac{p}{p-1}}_{\epsilon}(\vert v_{\epsilon}\vert^{\frac{p}{p-1}}-1)}\vert v_{\epsilon}\vert^{\frac{1}{p-1}}\,\mathrm{d}\lambda_{\theta}\\
&-\frac{e^{-\eta_{\epsilon}a^{\frac{p}{p-1}}_{\epsilon}}}{a^{p}_{\epsilon}}\sum^{k_0-2}_{j=0}\frac{\eta^{j}_{\epsilon}a^{\frac{jp}{p-1}}_{\epsilon}}{j!}\frac{1}{r^{\alpha}}\int_{0}^{r}\vert v_{\epsilon}\vert^{\frac{jp+1}{p-1}}\mathrm{d}\lambda_{\theta}\\
&+\frac{(c_{\epsilon}-1)r^{\theta+1}_{\epsilon}}{r^{\alpha}}\int_{0}^{r}\vert v_{\epsilon}\vert^{p-1}\,\mathrm{d}\lambda_{\theta},
\end{aligned}
\end{equation}
where the second term on the right hand side does not appear if $p=2$. Fix $r_0>0$. Since $\theta\ge \alpha$,  $\vert v_{\epsilon}\vert\le 1$, $a_{\epsilon}\rightarrow\infty$, $c_{\epsilon}\rightarrow\eta$ and $r^{\theta+1}_{\epsilon}\rightarrow 0$, we conclude from \eqref{veeq.integral} that $v^{\prime}_{\epsilon}\rightarrow 0$ uniformly on $[0,r_0]$. Since $v_{\epsilon}(0)=1$, we obtain $v_{\epsilon}\rightarrow 1$ in $C^{1}_{loc}[0,\infty)$. In addition, since $w^{\prime}_{\epsilon}=a^{\frac{p}{p-1}}_{\epsilon}v^{\prime}_{\epsilon}$, from \eqref{veeq.integral} we also have 
\begin{equation}\label{weeq.integral}
\begin{aligned}
\omega_{\alpha}\vert w^{\prime}_{\epsilon}(r)\vert^{p-1} &=\frac{1}{r^{\alpha}}\int_{0}^{r}e^{\eta_{\epsilon}a^{\frac{p}{p-1}}_{\epsilon}(\vert v_{\epsilon}\vert^{\frac{p}{p-1}}-1)}\vert v_{\epsilon}\vert^{\frac{1}{p-1}}\,\mathrm{d}\lambda_{\theta}\\
&-e^{-\eta_{\epsilon}a^{\frac{p}{p-1}}_{\epsilon}}\sum^{k_0-2}_{j=0}\frac{\eta^{j}_{\epsilon}a^{\frac{jp}{p-1}}_{\epsilon}}{j!}\frac{1}{r^{\alpha}}\int_{0}^{r}\vert v_{\epsilon}\vert^{\frac{jp+1}{p-1}}\mathrm{d}\lambda_{\theta}\\
&+\frac{(c_{\epsilon}-1)a^{p}_{\epsilon}r^{\theta+1}_{\epsilon}}{r^{\alpha}}\int_{0}^{r}\vert v_{\epsilon}\vert^{p-1}\,\mathrm{d}\lambda_{\theta}.
\end{aligned}
\end{equation}
Analogously, since $\theta\ge \alpha$,  $\vert v_{\epsilon}\vert\le 1$ and  $(c_{\epsilon}-1)a^{p}_{\epsilon}r^{\theta+1}_{\epsilon}\rightarrow 0$ we have from \eqref{weeq.integral} that $w^{\prime}_{\epsilon}$ is bounded in $C^{0}[0, r_0]$. Since $w_{\epsilon}(0) = 0$, we have that $(w_{\epsilon})$ is uniformly bounded equicontinuous sequence in $C[0, r_0]$. Thus, the Ascoli-Arzel\`{a} theorem
gives $w_{\epsilon}\rightarrow w$ uniformly for some  $w\in C[0, r_0]$. Next, we will show that $w$ has the expression given by \eqref{w-express}. To get this, we observe
\begin{equation}\label{welimit}
    a^{\frac{p}{p-1}}_{\epsilon}(\vert v_{\epsilon}(s)\vert^{\frac{p}{p-1}}-1)=\frac{p}{p-1}w_{\epsilon}(s)\big[1+O_{\epsilon}\big(\vert v_{\epsilon}\vert-1\big)\big].
\end{equation}
By integrating in \eqref{weeq.integral} on $(0,r)$ we obtain
\begin{equation}\nonumber 
\begin{aligned}
w_{\epsilon}(r)=-\int_{0}^{r}\Big(\frac{1}{\omega_{\alpha}t^{\alpha}}\int_{0}^{t}g_{\epsilon}(s)\mathrm{d}\lambda_{\theta}\Big)^{\frac{1}{p-1}}dt,
\end{aligned}
\end{equation}
where
\begin{equation}\nonumber 
\begin{aligned}
g_{\epsilon}(s)& =e^{\eta_{\epsilon}a^{\frac{p}{p-1}}_{\epsilon}(\vert v_{\epsilon}\vert^{\frac{p}{p-1}}-1)}\vert v_{\epsilon}\vert^{\frac{1}{p-1}}+(c_{\epsilon}-1)a^{p}_{\epsilon}r^{\theta+1}_{\epsilon}\vert v_{\epsilon}\vert^{p-1}\\
&-e^{-\eta_{\epsilon}a^{\frac{p}{p-1}}_{\epsilon}}\sum^{k_0-2}_{j=0}\frac{\eta^{j}_{\epsilon}a^{\frac{jp}{p-1}}_{\epsilon}}{j!}\vert v_{\epsilon}\vert^{\frac{jp+1}{p-1}}.
\end{aligned}
\end{equation}
Now, fixed $t>0$ arbitrary,  from \eqref{welimit} we obtain 
\begin{equation}\nonumber 
\begin{aligned}
\lim_{\epsilon\rightarrow 0} g_{\epsilon}(s) =e^{\mu_{\alpha,\theta}\frac{p}{p-1}w(s)},\;\; \forall\, s\in (0,t).
\end{aligned}
\end{equation}
Hence, 
\begin{equation}\nonumber 
\begin{aligned}
w(r)=-\int_{0}^{r}\Big(\frac{1}{\omega_{\alpha}t^{\alpha}}\int_{0}^{t}e^{\mu_{\alpha,\theta}\frac{p}{p-1}w(s)}\mathrm{d}\lambda_{\theta}\Big)^{\frac{1}{p-1}} dt.
\end{aligned}
\end{equation}
It is easy to show that $w$ must satisfy the equation
\begin{equation}\nonumber
    \left\{\begin{aligned}
    &-\omega_{\alpha}(r^{\alpha}\vert w^{\prime}\vert^{p-2}w^{\prime})^{\prime}=\omega_{\theta}r^{\theta}e^{\frac{p}{p-1}\mu_{\alpha,\theta}w(r)}\;\;\mbox{on}\;\; [0,\infty)\\
   &  w(0)=w^{\prime}(0)=0.
    \end{aligned}
    \right.
\end{equation}
Noticing that the  unique solution for the above ODE is the function given in \eqref{w-express} we get the desired expression for $w$. Performing the change of variable $s=c_{\alpha,\theta}r^{(\theta+1)/(p-1)}$ and using the  identities (see \cite{special})
\begin{equation}\nonumber
\Gamma(1)=1, \;\;\Gamma(x+1)=x\Gamma(x)\;\; \mbox{and}\;\;  
\int_{0}^{\infty}\frac{s^{x-1}}{(1+s)^{x+y}}ds=\frac{\Gamma(x)\Gamma(y)}{\Gamma(x+y)},\;\; x, y>0
\end{equation}
we obtain \eqref{w-normal}.
\end{proof}
\begin{lemma}\label{lemmauec} For each $c>1$, set $u_{\epsilon,c}=\min\left\{u_{\epsilon}, a_{\epsilon}/c\right\}$. Then
\begin{equation}\nonumber
    \lim_{\epsilon\rightarrow 0}\int_{0}^{\infty}\vert u^{\prime}_{\epsilon,c}\vert^{p}\mathrm{d}\lambda_{\alpha}=\frac{1}{c}.
\end{equation}
\end{lemma}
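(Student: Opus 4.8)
The plan is to test the Euler--Lagrange equation \eqref{Euler-Lagrange} with $v=u_{\epsilon,c}$ and then transport every resulting integral to the blow-up scale \eqref{specialfunctions}--\eqref{repsilon}, where Lemma~\ref{vto1wtolog} supplies all the limits.

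First I would record the structure of $u_{\epsilon,c}$. By Lemma~\ref{reg-lemma}, $u_{\epsilon}\in C^{1}[0,\infty)$ is non-increasing with $u_{\epsilon}(0)=a_{\epsilon}$ and $u_{\epsilon}(r)\to 0$; since $c>1$ there is $s_{\epsilon}\in(0,\infty)$ with $u_{\epsilon}(s_{\epsilon})=a_{\epsilon}/c$, and then $u_{\epsilon,c}\equiv a_{\epsilon}/c$ on $[0,s_{\epsilon}]$ while $u_{\epsilon,c}\equiv u_{\epsilon}$ on $[s_{\epsilon},\infty)$. Hence $u_{\epsilon,c}'=u_{\epsilon}'$ on $(s_{\epsilon},\infty)$, $u_{\epsilon,c}'=0$ on $(0,s_{\epsilon})$, and $|u_{\epsilon}'|^{p-2}u_{\epsilon}'\,u_{\epsilon,c}'=|u_{\epsilon,c}'|^{p}$ a.e. Writing $\rho_{\epsilon}:=s_{\epsilon}/r_{\epsilon}$, the local uniform convergence $v_{\epsilon}\to 1$ of Lemma~\ref{vto1wtolog} forces $v_{\epsilon}>1/c$ on every fixed $[0,R]$ for $\epsilon$ small, so $\rho_{\epsilon}\to\infty$. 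Since $u_{\epsilon,c}\in X^{1,p}_{\infty}$, taking $v=u_{\epsilon,c}$ in \eqref{Euler-Lagrange} and splitting the $\varphi_{p}^{\prime}$-integral at $s_{\epsilon}$ gives $\int_{0}^{\infty}|u_{\epsilon,c}'|^{p}\,\mathrm{d}\lambda_{\alpha}=\tfrac{b_{\epsilon}}{d_{\epsilon}}\big(J_{\epsilon}^{(1)}+J_{\epsilon}^{(2)}\big)+R_{\epsilon}$, where $J_{\epsilon}^{(1)}=\tfrac{a_{\epsilon}}{c}\int_{0}^{s_{\epsilon}}\varphi_{p}^{\prime}(\eta_{\epsilon}|u_{\epsilon}|^{p/(p-1)})|u_{\epsilon}|^{1/(p-1)}\,\mathrm{d}\lambda_{\theta}$, $J_{\epsilon}^{(2)}=\int_{s_{\epsilon}}^{\infty}\varphi_{p}^{\prime}(\eta_{\epsilon}|u_{\epsilon}|^{p/(p-1)})|u_{\epsilon}|^{p/(p-1)}\,\mathrm{d}\lambda_{\theta}$, and $R_{\epsilon}=(c_{\epsilon}-1)\int_{0}^{\infty}|u_{\epsilon}|^{p-1}u_{\epsilon,c}\,\mathrm{d}\lambda_{\theta}$; the remainder satisfies $|R_{\epsilon}|\le|c_{\epsilon}-1|\,\|u_{\epsilon}\|^{p}_{L^{p}_{\theta}}\to 0$ by Lemma~\ref{we-concentra}.

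Next comes the rescaling: I substitute $r=r_{\epsilon}\rho$ (so that $\mathrm{d}\lambda_{\theta}$ picks up a factor $r_{\epsilon}^{\theta+1}$), use $u_{\epsilon}(r_{\epsilon}\rho)=a_{\epsilon}v_{\epsilon}(\rho)$, and invoke the identity $r_{\epsilon}^{\theta+1}a_{\epsilon}^{p/(p-1)}=\tfrac{d_{\epsilon}}{b_{\epsilon}}e^{-\eta_{\epsilon}a_{\epsilon}^{p/(p-1)}}$ from \eqref{repsilon}. Setting $\Phi_{\epsilon}(\rho):=e^{-\eta_{\epsilon}a_{\epsilon}^{p/(p-1)}}\varphi_{p}^{\prime}\big(\eta_{\epsilon}a_{\epsilon}^{p/(p-1)}v_{\epsilon}(\rho)^{p/(p-1)}\big)$, one gets $J_{\epsilon}^{(1)}=\tfrac{d_{\epsilon}}{c\,b_{\epsilon}}\int_{0}^{\rho_{\epsilon}}\Phi_{\epsilon}v_{\epsilon}^{1/(p-1)}\,\mathrm{d}\lambda_{\theta}$, $J_{\epsilon}^{(2)}=\tfrac{d_{\epsilon}}{b_{\epsilon}}\int_{\rho_{\epsilon}}^{\infty}\Phi_{\epsilon}v_{\epsilon}^{p/(p-1)}\,\mathrm{d}\lambda_{\theta}$, and, applying the same change of variables to $d_{\epsilon}=\int_{0}^{\infty}\varphi_{p}^{\prime}(\eta_{\epsilon}|u_{\epsilon}|^{p/(p-1)})|u_{\epsilon}|^{p/(p-1)}\,\mathrm{d}\lambda_{\theta}$, the normalization $\int_{0}^{\infty}\Phi_{\epsilon}v_{\epsilon}^{p/(p-1)}\,\mathrm{d}\lambda_{\theta}=b_{\epsilon}$. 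Using \eqref{welimit}, $w_{\epsilon}\to w$ in $C^{0}_{loc}$, $v_{\epsilon}\to 1$, $\eta_{\epsilon}\to\mu_{\alpha,\theta}$ (Lemmas~\ref{we-concentra} and \ref{vto1wtolog}) and the elementary fact $e^{-t}\varphi_{p}^{\prime}(t)\to 1$ as $t\to\infty$ (from \eqref{splitphiprime}), I would show $\Phi_{\epsilon}\to e^{\frac{p}{p-1}\mu_{\alpha,\theta}w}$ uniformly on compact subsets of $[0,\infty)$, with $\Phi_{\epsilon}$ uniformly bounded there; consequently $\Phi_{\epsilon}v_{\epsilon}^{s}\to e^{\frac{p}{p-1}\mu_{\alpha,\theta}w}$ in $L^{1}_{\theta}(0,R)$ for each fixed $R$ and $s\in\{1/(p-1),\,p/(p-1)\}$.

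The delicate --- and decisive --- step is to exclude loss of rescaled mass at $\rho=\infty$. Since $\int_{0}^{\infty}\Phi_{\epsilon}v_{\epsilon}^{p/(p-1)}\,\mathrm{d}\lambda_{\theta}=b_{\epsilon}\to 1$, $v_{\epsilon}\le 1$, and $\int_{0}^{\infty}e^{\frac{p}{p-1}\mu_{\alpha,\theta}w}\,\mathrm{d}\lambda_{\theta}=1$ by \eqref{w-normal}, for every $\delta>0$ one can fix $R$ with $\limsup_{\epsilon\to 0}\int_{R}^{\infty}\Phi_{\epsilon}v_{\epsilon}^{p/(p-1)}\,\mathrm{d}\lambda_{\theta}<\delta$. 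On $(R,\rho_{\epsilon})$ one has $v_{\epsilon}\in[1/c,1]$, hence $\Phi_{\epsilon}\le c^{p/(p-1)}\Phi_{\epsilon}v_{\epsilon}^{p/(p-1)}$; combined with the $L^{1}_{\theta}(0,R)$-convergence above and $\rho_{\epsilon}\to\infty$, this gives $\int_{0}^{\rho_{\epsilon}}\Phi_{\epsilon}v_{\epsilon}^{1/(p-1)}\,\mathrm{d}\lambda_{\theta}\to 1$ and $\int_{\rho_{\epsilon}}^{\infty}\Phi_{\epsilon}v_{\epsilon}^{p/(p-1)}\,\mathrm{d}\lambda_{\theta}\to 0$. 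Plugging back, $\tfrac{b_{\epsilon}}{d_{\epsilon}}J_{\epsilon}^{(1)}=\tfrac1c\int_{0}^{\rho_{\epsilon}}\Phi_{\epsilon}v_{\epsilon}^{1/(p-1)}\,\mathrm{d}\lambda_{\theta}\to\tfrac1c$, $\tfrac{b_{\epsilon}}{d_{\epsilon}}J_{\epsilon}^{(2)}=\int_{\rho_{\epsilon}}^{\infty}\Phi_{\epsilon}v_{\epsilon}^{p/(p-1)}\,\mathrm{d}\lambda_{\theta}\to 0$, and $R_{\epsilon}\to 0$, so $\int_{0}^{\infty}|u_{\epsilon,c}'|^{p}\,\mathrm{d}\lambda_{\alpha}\to\tfrac1c$, as claimed. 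I expect essentially all the difficulty to be in this last neck estimate --- that the rescaled mass stays at finite $\rho$ --- which leans squarely on the sharp normalization \eqref{w-normal}.
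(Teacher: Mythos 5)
Your argument is correct, but it follows a different route from the paper's. The paper also tests \eqref{Euler-Lagrange} with $u_{\epsilon,c}$ and rescales via \eqref{repsilon}, but it only extracts a \emph{lower} bound $\liminf_{\epsilon}\int_0^\infty\vert u_{\epsilon,c}'\vert^p\,\mathrm{d}\lambda_\alpha\ge 1/c$ by discarding everything outside the blow-up core $(0,r_\epsilon R)$ and letting $R\to\infty$ with \eqref{w-normal}; it then repeats the same computation for the complementary truncation $(u_\epsilon-a_\epsilon/c)^{+}$ to get $\liminf\ge (c-1)/c$, and closes the argument with the exact energy additivity $\int\vert u_{\epsilon,c}'\vert^p+\int\vert((u_\epsilon-a_\epsilon/c)^{+})'\vert^p=\int\vert u_\epsilon'\vert^p=1+o_\epsilon(1)$, which forces both limits to saturate their lower bounds. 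You instead compute the limit of the single truncation directly: splitting at the level set $s_\epsilon$, using the rescaled normalization $\int_0^\infty\Phi_\epsilon v_\epsilon^{p/(p-1)}\,\mathrm{d}\lambda_\theta=b_\epsilon$ together with \eqref{w-normal} to show no $\varphi_p'$-mass survives past any fixed rescaled radius, and controlling the neck $(R,\rho_\epsilon)$ by the pointwise inequality $\Phi_\epsilon v_\epsilon^{1/(p-1)}\le c^{p/(p-1)}\Phi_\epsilon v_\epsilon^{p/(p-1)}$ valid where $v_\epsilon\ge 1/c$. The paper's two-truncation trick is shorter because the energy identity does the work of your neck estimate for free; your version is more informative, since it localizes the right-hand side of the Euler--Lagrange identity exactly (all of it sits at finite rescaled $\rho$, and $\tfrac{b_\epsilon}{d_\epsilon}J_\epsilon^{(2)}\to 0$), and it yields the matching upper bound without ever introducing $(u_\epsilon-a_\epsilon/c)^{+}$. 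Both arguments rest on the same ingredients (Lemma~\ref{vto1wtolog}, \eqref{repsilon}, \eqref{w-normal}, and $u_\epsilon\to0$ in $L^p_\theta$), so I see no gap.
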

\begin{proof}
We have  $v_{\epsilon}\rightarrow 1$ in $C^{1}_{loc}[0,\infty)$ and then $u_{\epsilon}=a_{\epsilon}(1+o_{\epsilon}(R))$ uniformly on $(0, r_{\epsilon}R)$. Then
\begin{equation}\label{qovere}
  \frac{\int_{0}^{r_{\epsilon}R}\vert u_{\epsilon}\vert^{q}\mathrm{d}\lambda_{\theta}}{\int_{0}^{r_{\epsilon}R}e^{\eta_{\epsilon}\vert u_{\epsilon}\vert^{\frac{p}{p-1}}}\mathrm{d}\lambda_{\theta}}=o_{\epsilon}(R),\quad \forall\,q>1
\end{equation}
where $o_{\epsilon}(R)$ means that $\lim_{\epsilon\rightarrow 0}o_{\epsilon}(R)=0$   if $R$ is fixed. Since $\|u_{\epsilon}\|_{L^ {p}_{\theta}}=o_{\epsilon}(1)$, from \eqref{Euler-Lagrange}, we can write 
\begin{equation}\nonumber
\begin{aligned}
\int_{0}^{\infty}\vert u_{\epsilon,c}^{\prime}\vert^{p}\,\mathrm{d}\lambda_{\alpha}
 =\frac{b_{\epsilon}}{d_{\epsilon}}\int_{0}^{\infty}\vert u_{\epsilon}\vert^{\frac{1}{p-1}}\varphi_{p}^{\prime}\big(\eta_{\epsilon}\vert u_{\epsilon}\vert^{\frac{p}{p-1}}\big)u_{\epsilon,c}\,\mathrm{d}\lambda_{\theta}+o_{\epsilon}(1).
\end{aligned}
\end{equation}
In view of \eqref{splitphiprime} and \eqref{qovere} , for $\epsilon>0$ small enough such that $(0,r_{\epsilon}R)\subset\left\{u_{\epsilon}\ge a_{\epsilon}/c\right\}$, we have 
\begin{equation}\nonumber
\begin{aligned}
\int_{0}^{\infty}\vert u_{\epsilon,c}^{\prime}\vert^{p}\,\mathrm{d}\lambda_{\alpha}
 & \ge \frac{b_{\epsilon}}{d_{\epsilon}}\frac{a^{\frac{p}{p-1}}_{\epsilon}}{c}(1+o_{\epsilon}(R))\int_{0}^{r_{\epsilon}R}e^{\eta_{\epsilon}\vert u_{\epsilon}\vert^{\frac{p}{p-1}}}\,\mathrm{d}\lambda_{\theta}+o_{\epsilon}(1).
\end{aligned}
\end{equation}
By using the change of variable $r=r_{\epsilon}s$, from \eqref{repsilon}  we get
\begin{equation}\nonumber
\begin{aligned}
\int_{0}^{\infty}\vert u_{\epsilon,c}^{\prime}\vert^{p}\,\mathrm{d}\lambda_{\alpha}
& \ge  \frac{1}{c}(1+o_{\epsilon}(R))\int_{0}^{R}e^{\eta_{\epsilon}a^{\frac{p}{p-1}}_{\epsilon}(\vert v_{\epsilon}\vert^{\frac{p}{p-1}}-1)}\,\mathrm{d}\lambda_{\theta}+o_{\epsilon}(1).\\
\end{aligned}
\end{equation}
Using \eqref{w-normal} and \eqref{welimit},  setting $\epsilon\rightarrow 0$ and then $R\rightarrow\infty$ we obtain 
\begin{equation}\label{graduec1}
\begin{aligned}
\int_{0}^{\infty}\vert u_{\epsilon,c}^{\prime}\vert^{p}\,\mathrm{d}\lambda_{\alpha}\ge 
 \frac{1}{c}.
\end{aligned}
\end{equation}
Similarly, we obtain
\begin{equation}\label{graduec2}
\begin{aligned}
\int_{0}^{\infty}\vert((u_{\epsilon}-\frac{a_{\epsilon}}{c})^{+})^{\prime}\vert^{p}\,\mathrm{d}\lambda_{\alpha}\ge 
 \frac{c-1}{c},
\end{aligned}
\end{equation}
where $u^{+}=\max\left\{u,0\right\}$. Also, since $\|u_{\epsilon}\|=1$ and  $\|u_{\epsilon}\|^{p}_{L^{p}_{\theta}}=o_{\epsilon}(1)$ we have
\begin{equation}\label{graduec3}
\begin{aligned}
\int_{0}^{\infty}\vert u_{\epsilon,c}^{\prime}\vert^{p}\,\mathrm{d}\lambda_{\alpha}+\int_{0}^{\infty}\vert((u_{\epsilon}-\frac{a_{\epsilon}}{c})^{+})^{\prime}\vert^{p}\,\mathrm{d}\lambda_{\alpha}=\int_{0}^{\infty}\vert u_{\epsilon}^{\prime}\vert^{p}\,\mathrm{d}\lambda_{\alpha}=1+o_{\epsilon}(1).
\end{aligned}
\end{equation}
Combining \eqref{graduec1} \eqref{graduec2} and \eqref{graduec3}, we conclude the proof.
\end{proof}
\begin{lemma}\label{a/d} It holds
\begin{equation}\nonumber
    AD(\eta,\mu_{\alpha,\theta}, \alpha,\theta)=\lim_{\epsilon\rightarrow 0}\frac{d_{\epsilon}}{a^{\frac{p}{p-1}}_{\epsilon}}.
\end{equation}
In particular, $d_{\epsilon}/a^{\sigma}_{\epsilon}\rightarrow\infty$, for any $\sigma<p/(p-1)$. Also, ${a^{p/(p-1)}_{\epsilon}}/{d_{\epsilon}}$ is bounded.
\end{lemma}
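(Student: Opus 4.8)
\noindent The plan is to obtain the equality $AD(\eta,\mu_{\alpha,\theta},\alpha,\theta)=\lim_{\epsilon\to0}d_\epsilon a_\epsilon^{-p/(p-1)}$ as two one–sided bounds; the remaining assertions are then immediate. Indeed, $AD(\eta,\mu_{\alpha,\theta},\alpha,\theta)$ is finite by Theorem~\ref{TSUP}$(1)$ and positive (e.g.\ by Proposition~\ref{vanishing-integer}), so $a_\epsilon^{p/(p-1)}/d_\epsilon\to AD(\eta,\mu_{\alpha,\theta},\alpha,\theta)^{-1}$ is bounded, while $d_\epsilon/a_\epsilon^{\sigma}=(d_\epsilon/a_\epsilon^{p/(p-1)})\,a_\epsilon^{p/(p-1)-\sigma}\to\infty$ for every $\sigma<p/(p-1)$, since $a_\epsilon\to\infty$ by \eqref{a-blowup}.

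\smallskip
\noindent\emph{Step 1: the upper bound.} I would first show $\limsup_{\epsilon\to0}d_\epsilon a_\epsilon^{-p/(p-1)}\le AD(\eta,\mu_{\alpha,\theta},\alpha,\theta)$. From the expansion \eqref{exp-frac} one has $\varphi_p'(t)=\varphi_p(t)+t^{k_0-1}/(k_0-1)!$, hence, using $\vert u_\epsilon\vert\le a_\epsilon$,
\[
\frac{d_\epsilon}{a_\epsilon^{p/(p-1)}}=\int_0^\infty\Big(\tfrac{\vert u_\epsilon\vert}{a_\epsilon}\Big)^{\!\frac{p}{p-1}}\varphi_p\big(\eta_\epsilon\vert u_\epsilon\vert^{\frac{p}{p-1}}\big)\,\mathrm{d}\lambda_\theta+\frac{\eta_\epsilon^{k_0-1}}{(k_0-1)!\,a_\epsilon^{p/(p-1)}}\,\|u_\epsilon\|_{L^{k_0p/(p-1)}_\theta}^{k_0p/(p-1)}.
\]
The first term is $\le\int_0^\infty\varphi_p(\eta_\epsilon\vert u_\epsilon\vert^{p/(p-1)})\,\mathrm{d}\lambda_\theta=AD(\eta,\mu_\epsilon,\alpha,\theta)$ by \eqref{sub-extremals}; in the second, $k_0p/(p-1)\ge p$ so $\|u_\epsilon\|_{L^{k_0p/(p-1)}_\theta}\le C\|u_\epsilon\|=C$ by \eqref{TM-compactembeddings}, and since $\eta_\epsilon$ is bounded and $a_\epsilon\to\infty$ this term is $O(a_\epsilon^{-p/(p-1)})\to0$. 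As $AD(\eta,\mu_\epsilon,\alpha,\theta)\to AD(\eta,\mu_{\alpha,\theta},\alpha,\theta)$ (Lemma~\ref{de-maxcri}), Step~1 follows.

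\smallskip
\noindent\emph{Step 2: reduction of the lower bound to a tail estimate.} Fix $R>0$. The change of variables $r=r_\epsilon s$ together with \eqref{repsilon} gives
\[
\int_0^{r_\epsilon R}\varphi_p\big(\eta_\epsilon\vert u_\epsilon\vert^{\frac{p}{p-1}}\big)\,\mathrm{d}\lambda_\theta=\frac{d_\epsilon}{b_\epsilon a_\epsilon^{p/(p-1)}}\int_0^R e^{\eta_\epsilon a_\epsilon^{p/(p-1)}\big(\vert v_\epsilon\vert^{p/(p-1)}-1\big)}\big(1+o_\epsilon(1)\big)\,\mathrm{d}\lambda_\theta,
\]
with the error uniform on $[0,R]$ because there $\eta_\epsilon a_\epsilon^{p/(p-1)}\vert v_\epsilon\vert^{p/(p-1)}\to\infty$ uniformly ($v_\epsilon\to1$ in $C^1_{loc}$, Lemma~\ref{vto1wtolog}) and $\varphi_p(t)/e^t\to1$. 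By \eqref{welimit}, $w_\epsilon\to w$, $v_\epsilon\to1$ and dominated convergence (the integrand being $\le1$), the last integral tends to $I_R:=\int_0^R e^{\frac{p}{p-1}\mu_{\alpha,\theta}w}\,\mathrm{d}\lambda_\theta$, and $I_R\uparrow1$ as $R\to\infty$ by \eqref{w-normal}. Combining with \eqref{sub-extremals} and Lemma~\ref{de-maxcri},
\[
\frac{d_\epsilon}{a_\epsilon^{p/(p-1)}}=\frac{b_\epsilon}{I_R+o_\epsilon(1)}\Big(AD(\eta,\mu_\epsilon,\alpha,\theta)-\int_{r_\epsilon R}^\infty\varphi_p\big(\eta_\epsilon\vert u_\epsilon\vert^{\frac{p}{p-1}}\big)\,\mathrm{d}\lambda_\theta\Big),
\]
so $\liminf_{\epsilon\to0}d_\epsilon a_\epsilon^{-p/(p-1)}=I_R^{-1}\big(AD(\eta,\mu_{\alpha,\theta},\alpha,\theta)-T_R\big)$ with $T_R:=\limsup_{\epsilon\to0}\int_{r_\epsilon R}^\infty\varphi_p(\eta_\epsilon\vert u_\epsilon\vert^{p/(p-1)})\,\mathrm{d}\lambda_\theta$. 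Letting $R\to\infty$, the lower bound $\liminf_{\epsilon\to0}d_\epsilon a_\epsilon^{-p/(p-1)}\ge AD(\eta,\mu_{\alpha,\theta},\alpha,\theta)$ — and hence, with Step~1, the whole lemma — is equivalent to the tail estimate $\lim_{R\to\infty}T_R=0$.

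\smallskip
\noindent\emph{Step 3: the tail estimate, and the main obstacle.} To prove $\lim_{R\to\infty}T_R=0$ I would split $(r_\epsilon R,\infty)$ at the level $a_\epsilon/c$ for $c>1$ fixed and large. On $\{u_\epsilon<a_\epsilon/c\}$ — which contains the far region $\{\vert u_\epsilon\vert\le1\}$ — the function $\psi_\epsilon:=cu_\epsilon/a_\epsilon$ (extended by $1$ where $u_\epsilon\ge a_\epsilon/c$) satisfies $\vert\psi_\epsilon\vert\le1$, $\|\psi_\epsilon\|\to0$, and, by Lemma~\ref{lemmauec}, $\|\psi_\epsilon'\|_{L^p_\alpha}^p=(c/a_\epsilon)^p\|u_{\epsilon,c}'\|_{L^p_\alpha}^p=c^{p-1}a_\epsilon^{-p}(1+o_\epsilon(1))$; consequently $\eta_\epsilon(a_\epsilon/c)^{p/(p-1)}\|\psi_\epsilon\|^{p/(p-1)}\to\mu_{\alpha,\theta}c^{-1/(p-1)}<\mu_{\alpha,\theta}$, so applying the subcritical inequality (Theorem~\ref{TSUP}$(1)$ with $\eta=0$) to $\psi_\epsilon/\|\psi_\epsilon\|$ bounds $\int_{\{u_\epsilon<a_\epsilon/c\}}\varphi_p(\eta_\epsilon\vert u_\epsilon\vert^{p/(p-1)})\,\mathrm{d}\lambda_\theta$, and even its $q$th power for some $q>1$, uniformly in $\epsilon$; together with $u_\epsilon\to u_0\equiv0$ a.e.\ (Lemma~\ref{we-concentra}) and a Vitali / equi–integrability argument this integral tends to $0$. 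It then remains to control the neck annulus $\{a_\epsilon/c\le u_\epsilon\}\cap(r_\epsilon R,\infty)$; after $r=r_\epsilon s$ this equals $\dfrac{d_\epsilon}{b_\epsilon a_\epsilon^{p/(p-1)}}\int_R^{S_{\epsilon,c}}e^{\eta_\epsilon a_\epsilon^{p/(p-1)}(\vert v_\epsilon\vert^{p/(p-1)}-1)}\,\mathrm{d}\lambda_\theta$ with $S_{\epsilon,c}\to\infty$, and by \eqref{welimit}–\eqref{w-express} the exponential should compare with $(1+c_{\alpha,\theta}s^{(\theta+1)/(p-1)})^{-p}$, whose tail $\int_R^\infty (1+c_{\alpha,\theta}s^{(\theta+1)/(p-1)})^{-p}\,\mathrm{d}\lambda_\theta\to0$ as $R\to\infty$. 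I expect this last point to be the principal difficulty: on the neck the gradient energy of $u_\epsilon$ is asymptotically the critical value $1$, so the estimate cannot rely on the $C^1_{loc}$ information of Lemma~\ref{vto1wtolog} alone — one must propagate the comparison $w_\epsilon\approx w$ out to the scale where $u_\epsilon\approx a_\epsilon/c$ by exploiting the integral equations \eqref{eq.integral} and \eqref{weeq.integral} (a uniform decay estimate for the blow–up profile), and then keep careful track of the exponential factors.
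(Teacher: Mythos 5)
Your Step~1 is correct and coincides with one half of the paper's argument: the identity $\varphi_p'(t)=\varphi_p(t)+t^{k_0-1}/(k_0-1)!$, the bound $\vert u_\epsilon\vert\le a_\epsilon$, the embedding \eqref{TM-compactembeddings} and Lemma~\ref{de-maxcri} do give $\limsup_{\epsilon\to0}d_\epsilon a_\epsilon^{-p/(p-1)}\le AD(\eta,\mu_{\alpha,\theta},\alpha,\theta)$, and the two ``in particular'' assertions follow once the equality is in hand. The reduction in your Step~2 is also sound.

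The gap is in Step~3, and you have flagged it yourself: the contribution of the neck $\{u_\epsilon\ge a_\epsilon/c\}\cap(r_\epsilon R,\infty)$ is never estimated, and this is not a routine point. The convergences of Lemma~\ref{vto1wtolog} are only in $C^{0}_{loc}$, so they give no uniform comparison of $w_\epsilon$ with $w$ on $(R,S_{\epsilon,c})$ with $S_{\epsilon,c}\to\infty$; proving $\lim_{R\to\infty}T_R=0$ amounts to showing that all of $d_\epsilon$ concentrates on $(0,r_\epsilon R)$, which is essentially equivalent to, and no easier than, the lemma itself. The paper circumvents this entirely by decomposing the \emph{other} inequality differently: starting from $AD(\eta,\mu_\epsilon,\alpha,\theta)=\int_0^\infty\varphi_p(\eta_\epsilon\vert u_\epsilon\vert^{p/(p-1)})\,\mathrm{d}\lambda_\theta$, it splits along the level set $\{u_\epsilon\le a_\epsilon/c\}$ versus $\{u_\epsilon> a_\epsilon/c\}$. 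On the first set the integrand equals $\varphi_p(\eta_\epsilon\vert u_{\epsilon,c}\vert^{p/(p-1)})$, and $\int_0^\infty\varphi_p(\eta_\epsilon\vert u_{\epsilon,c}\vert^{p/(p-1)})\,\mathrm{d}\lambda_\theta\to0$ because $\|u_{\epsilon,c}\|^p\to 1/c<1$ (Lemma~\ref{lemmauec} plus the subcritical inequality and the vanishing weak limit). On the second set one inserts the trivial factor $1\le (c\vert u_\epsilon\vert/a_\epsilon)^{p/(p-1)}$ and uses $\varphi_p\le\varphi_p'$ to absorb the \emph{entire} contribution into $c^{p/(p-1)}d_\epsilon/a_\epsilon^{p/(p-1)}$ --- a multiple of the very quantity one is bounding from below --- and only at the very end lets $c\searrow1$. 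No tail or neck estimate is required. I recommend replacing your Steps~2--3 by this absorption argument.
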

\begin{proof}
Let $u_{\epsilon,c}=\min\left\{u_{\epsilon}, a_{\epsilon}/c\right\},$ $c>1$ be given by Lemma~\ref{lemmauec}. Since  $\varphi^{\prime}_{p}(t)\ge\varphi_{p}(t),t\ge 0 $ we obtain
\begin{equation}\nonumber
\begin{aligned}
 AD(\eta,\mu_{\epsilon}, \alpha,\theta)  &\le \int_{0}^{\infty}\varphi_{p}\big(\eta_{\epsilon}\vert u_{\epsilon,c}\vert^{\frac{p}{p-1}}\big)\mathrm{d}\lambda_{\theta}
+\frac{c^{\frac{p}{p-1}}}{a^{\frac{p}{p-1}}_{\epsilon}} \int_{0}^{\infty}\vert u_{\epsilon}\vert^{\frac{p}{p-1}}\varphi_{p}\big(\eta_{\epsilon}\vert u_{\epsilon}\vert^{\frac{p}{p-1}}\big)\mathrm{d}\lambda_{\theta}\\
& \le \int_{0}^{\infty}\varphi_{p}\big(\eta_{\epsilon}\vert u_{\epsilon,c}\vert^{\frac{p}{p-1}}\big)\mathrm{d}\lambda_{\theta}+\frac{c^{\frac{p}{p-1}}d_{\epsilon}}{a^{\frac{p}{p-1}}_{\epsilon}}.
\end{aligned}
\end{equation}
From Lemma~\ref{lemmauec}, $\|u_{\epsilon,c}\|^{p}\rightarrow 1/c<1$ as $\epsilon\rightarrow0$. Hence, using  \eqref{TM1} and arguing as in  \eqref{sub-loc}, \eqref{lq-convergence} we get 
\begin{equation}\label{ucsubc}
\lim_{\epsilon\rightarrow 0}\int_{0}^{\infty}\varphi_{p}\big(\eta_{\epsilon}\vert u_{\epsilon,c}\vert^{\frac{p}{p-1}}\big)\mathrm{d}\lambda_{\theta}=0.
\end{equation}
Letting $\epsilon\rightarrow 0$ and then $c \searrow 1$, we obtain
\begin{equation}\label{MT<frac}
\begin{aligned}
AD(\eta,\mu_{\alpha,\theta}, \alpha,\theta) \le \liminf_{\epsilon\rightarrow 0} AD(\eta,\mu_{\epsilon}, \alpha,\theta)\le  \liminf_{\epsilon\rightarrow 0}\frac{d_{\epsilon}}{a^{\frac{p}{p-1}}_{\epsilon}}.
\end{aligned}
\end{equation}
Noticing that $\varphi^{\prime}_{p}(t)=\varphi_{p}(t)+t^{k_0-1}/((k_0-1)!)$ we can see that
\begin{equation}\nonumber 
\begin{aligned}
d_{\epsilon}\le a^{\frac{p}{p-1}}_{\epsilon}AD(\eta,\mu_{\epsilon}, \alpha,\theta)+\frac{\eta^{k_0-1}_{\epsilon}}{(k_0-1)!}\|u_{\epsilon}\|^{\frac{k_0p}{p-1}}_{L^{\frac{k_0p}{p-1}}_{\theta}}.
\end{aligned}
\end{equation}
Hence, using  that $\|u_{\epsilon}\|_{L^{q}_{\theta}}\rightarrow 0$,  $q\ge p$ and \eqref{MT<frac} and the above inequality we complete the proof. 
\end{proof}

\begin{lemma} \label{lemmaFe}For any $v\in C^{0}_{c}[0,\infty)$, we  have
\begin{equation}\label{v-dirac}
\lim_{\epsilon\rightarrow 0}\int_{0}^{\infty}\frac{a_{\epsilon}b_{\epsilon}}{d_{\epsilon}}\vert u_{\epsilon}\vert^{\frac{1}{p-1}}\varphi^{\prime}_{p}\big(\eta_{\epsilon}\vert u_{\epsilon}\vert^{\frac{p}{p-1}}\big)v\mathrm{d}\lambda_{\theta}=v(0).
\end{equation} 
In particular, the sequence $(F_{\epsilon})$ given by
\begin{equation}\label{Fe}
F_{\epsilon}=\frac{a_{\epsilon}b_{\epsilon}}{d_{\epsilon}}\vert u_{\epsilon}\vert^{\frac{1}{p-1}}\varphi^{\prime}_{p}\big(\eta_{\epsilon}\vert u_{\epsilon}\vert^{\frac{p}{p-1}}\big), \;\; \epsilon>0
\end{equation} 
 satisfies
\begin{equation}\label{1-dirac}
\lim_{\epsilon\rightarrow 0}\int_{0}^{\rho}F_{\epsilon}\mathrm{d}\lambda_{\theta}=1,\quad \rho>0.
\end{equation} 
\end{lemma}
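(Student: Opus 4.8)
The plan is to show that the positive measures $F_\epsilon\,\mathrm{d}\lambda_\theta$ concentrate, as $\epsilon\to0$, into a unit Dirac mass at the origin, when tested against $v\in C^0_c[0,\infty)$. Throughout set $I_R=\int_0^R e^{\frac{p}{p-1}\mu_{\alpha,\theta}w}\,\mathrm{d}\lambda_\theta$, so that $I_R\uparrow1$ as $R\to\infty$ by \eqref{w-normal}, and split $\int_0^\infty F_\epsilon v\,\mathrm{d}\lambda_\theta=\int_0^{r_\epsilon R}+\int_{r_\epsilon R}^\infty$.

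First I would treat the \emph{blow-up region} $(0,r_\epsilon R)$ by the change of variables $r=r_\epsilon s$, which turns $\int_0^{r_\epsilon R}F_\epsilon v\,\mathrm{d}\lambda_\theta$ into $\int_0^R r_\epsilon^{\theta+1}F_\epsilon(r_\epsilon s)\,v(r_\epsilon s)\,\mathrm{d}\lambda_\theta(s)$. Inserting \eqref{specialfunctions} and \eqref{repsilon}, the powers of $a_\epsilon$ cancel and one gets
\[
r_\epsilon^{\theta+1}F_\epsilon(r_\epsilon s)=v_\epsilon(s)^{\frac{1}{p-1}}\,e^{-\eta_\epsilon a_\epsilon^{\frac{p}{p-1}}}\,\varphi_p'\!\big(\eta_\epsilon a_\epsilon^{\frac{p}{p-1}}v_\epsilon(s)^{\frac{p}{p-1}}\big)\le 1 ,
\]
since $v_\epsilon\le1$. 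Replacing $\varphi_p'$ by $e^{(\cdot)}(1+o(1))$ on compacts via \eqref{splitphiprime}, and using \eqref{welimit} together with $v_\epsilon\to1$, $w_\epsilon\to w$ in $C^0_{loc}$ and $\eta_\epsilon\to\mu_{\alpha,\theta}$ (Lemma~\ref{vto1wtolog}, Lemma~\ref{we-concentra}), one obtains $r_\epsilon^{\theta+1}F_\epsilon(r_\epsilon s)\to e^{\frac{p}{p-1}\mu_{\alpha,\theta}w(s)}$ locally uniformly. Combined with $v(r_\epsilon s)\to v(0)$ and dominated convergence (the integrand is $\le\|v\|_\infty$ on the finite-measure set $(0,R)$), this gives $\int_0^{r_\epsilon R}F_\epsilon v\,\mathrm{d}\lambda_\theta\to v(0)I_R$, which tends to $v(0)$ when $R\to\infty$ afterwards.

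The heart of the matter is the \emph{tail}: one must show $\lim_{R\to\infty}\limsup_{\epsilon\to0}\int_{r_\epsilon R}^\infty F_\epsilon|v|\,\mathrm{d}\lambda_\theta=0$, say for $\mathrm{supp}\,v\subseteq[0,T]$. The tool is the truncation $u_{\epsilon,c}=\min\{u_\epsilon,a_\epsilon/c\}$, whose derivative is $u_\epsilon'\,\mathbb 1_{\{u_\epsilon<a_\epsilon/c\}}$. Testing \eqref{Euler-Lagrange} with $u_{\epsilon,c}$, the left side equals $\|u_{\epsilon,c}'\|^p_{L^p_\alpha}\to1/c$ by Lemma~\ref{lemmauec}, while its last term is $O(\|u_\epsilon\|^p_{L^p_\theta})=o(1)$ by Lemma~\ref{we-concentra}; hence $\int_0^\infty F_\epsilon\,\tfrac{u_{\epsilon,c}}{a_\epsilon}\,\mathrm{d}\lambda_\theta\to1/c$. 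Since $v_\epsilon\to1$ locally uniformly, for $\epsilon$ small $u_{\epsilon,c}/a_\epsilon\equiv1/c$ on $(0,r_\epsilon R)$, so $\int_{r_\epsilon R}^\infty F_\epsilon\,\tfrac{u_{\epsilon,c}}{a_\epsilon}\,\mathrm{d}\lambda_\theta\to\tfrac1c(1-I_R)$; and on $\{u_\epsilon\ge a_\epsilon/c\}$ one has $u_{\epsilon,c}/a_\epsilon=1/c$, whence $\int_{(r_\epsilon R,\infty)\cap\{u_\epsilon\ge a_\epsilon/c\}}F_\epsilon\,\mathrm{d}\lambda_\theta\le c\int_{r_\epsilon R}^\infty F_\epsilon\tfrac{u_{\epsilon,c}}{a_\epsilon}\,\mathrm{d}\lambda_\theta\to 1-I_R$, which is small once $R$ is large. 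This handles the ``neck'' $(r_\epsilon R,\delta)$ once $c$ is chosen (possibly $c=c(\epsilon)\to\infty$, e.g. $c=a_\epsilon/u_\epsilon(\delta)$) so that $u_\epsilon\ge a_\epsilon/c$ throughout $(r_\epsilon R,\delta)$, \emph{provided} one checks that the convergences above survive such a choice of $c$, i.e. that the error in Lemma~\ref{lemmauec} is uniform enough; this is precisely the step I expect to be the main obstacle. On the complementary outer piece $(\delta,T)$, $u_\epsilon\to0$ uniformly by the concentration in Lemma~\ref{we-concentra} and $a_\epsilon/d_\epsilon\to0$ by Lemma~\ref{a/d}, so $F_\epsilon\to0$ uniformly there and $\int_\delta^T F_\epsilon\,\mathrm{d}\lambda_\theta\to0$.

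Putting the pieces together, $\int_0^\infty F_\epsilon v\,\mathrm{d}\lambda_\theta=v(0)I_R+o_\epsilon(1)+O\big((1-I_R)+o_\epsilon(1)\big)$, and letting $\epsilon\to0$ and then $R\to\infty$ yields $\int_0^\infty F_\epsilon v\,\mathrm{d}\lambda_\theta\to v(0)$, i.e.\ \eqref{v-dirac}. Finally \eqref{1-dirac} follows by a squeeze: given $\rho>0$, choose $v_-,v_+\in C^0_c[0,\infty)$ with $0\le v_-\le\mathbb 1_{[0,\rho]}\le v_+\le1$, $v_-(0)=v_+(0)=1$ and $\mathrm{supp}\,v_+\subseteq[0,2\rho]$; since $F_\epsilon\ge0$, applying \eqref{v-dirac} to $v_\pm$ pins $\int_0^\rho F_\epsilon\,\mathrm{d}\lambda_\theta$ between two quantities converging to $1$.
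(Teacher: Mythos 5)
Your overall architecture — blow-up region $(0,r_\epsilon R)$ by rescaling, plus a tail estimate built on the truncation $u_{\epsilon,c}$ and the Euler--Lagrange identity — is the paper's, and your treatment of $(0,r_\epsilon R)$ and of the set $\{u_\epsilon>a_\epsilon/c\}\cap[r_\epsilon R,\infty)$ is in substance correct (the latter coincides with the paper's bound for its integral $I^1_\epsilon$: the unit total mass of $|u_\epsilon|^{p/(p-1)}\varphi_p'(\cdot)/d_\epsilon$ is already exhausted by the blow-up region as $R\to\infty$). The genuine gap is exactly where you flag it: the region $A_2=\{u_\epsilon\le a_\epsilon/c\}\cap[r_\epsilon R,\infty)$ cannot be absorbed by letting $c=c(\epsilon)\to\infty$. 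With $c=a_\epsilon/u_\epsilon(\delta)$ one has $u_{\epsilon,c}=\min\{u_\epsilon,u_\epsilon(\delta)\}$, hence $\int_0^\infty|u_{\epsilon,c}'|^p\,\mathrm{d}\lambda_\alpha=\int_\delta^\infty|u_\epsilon'|^p\,\mathrm{d}\lambda_\alpha\to0$ by Lemma~\ref{we-concentra}, while $1/c\to0$ as well; the identity obtained from \eqref{Euler-Lagrange} then only says that two vanishing quantities agree up to $o(1)$ errors, and multiplying through by $c\to\infty$ requires a rate for $\int_\delta^\infty|u_\epsilon'|^p\,\mathrm{d}\lambda_\alpha$ relative to $u_\epsilon(\delta)/a_\epsilon$ and the smallness of $c\,\|u_\epsilon\|^p_{L^p_\theta}$, neither of which is available. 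Lemma~\ref{lemmauec} is genuinely a fixed-$c$ statement, and on $A_2$ near the origin $u_\epsilon$ is of order $a_\epsilon/c\to\infty$, so $\varphi_p'(\eta_\epsilon|u_\epsilon|^{p/(p-1)})$ can still be exponentially large there; your ``outer piece'' argument does not reach it.

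The missing ingredient is the paper's subcritical-vanishing step. On $A_2$ one has $u_\epsilon=u_{\epsilon,c}$ and $a_\epsilon|u_\epsilon|^{1/(p-1)}\le a_\epsilon^{p/(p-1)}/c^{1/(p-1)}$, so writing $\varphi_p'(t)=\varphi_p(t)+t^{k_0-1}/(k_0-1)!$ gives
\[
\int_{A_2}F_\epsilon|v|\,\mathrm{d}\lambda_\theta\le \frac{a_\epsilon^{\frac{p}{p-1}}}{d_\epsilon}\,\frac{b_\epsilon m}{c^{\frac{1}{p-1}}}\int_0^\infty\varphi_p\big(\eta_\epsilon|u_{\epsilon,c}|^{\frac{p}{p-1}}\big)\,\mathrm{d}\lambda_\theta+o_\epsilon(1),
\]
where the $o_\epsilon(1)$ collects the polynomial part via $a_\epsilon/d_\epsilon\to0$. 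This tends to $0$ because $a_\epsilon^{p/(p-1)}/d_\epsilon$ is bounded (Lemma~\ref{a/d}) and $\int_0^\infty\varphi_p(\eta_\epsilon|u_{\epsilon,c}|^{p/(p-1)})\,\mathrm{d}\lambda_\theta\to0$ by \eqref{ucsubc}: since $\|u_{\epsilon,c}\|^p\to1/c<1$, the truncation stays strictly subcritical and its Moser functional vanishes along the concentrating, weakly null sequence. Once $A_2$ is closed this way (for a fixed $c>1$, letting $\epsilon\to0$, then $R\to\infty$, then $c\searrow1$ where needed), the rest of your argument — the dominated convergence on the blow-up region and the squeeze between $v_-$ and $v_+$ for \eqref{1-dirac} — goes through.
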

\begin{proof}
For  $R>0$ and $c>1$ we can divide $[0,\infty)=A_1\cup A_2\cup A_3$ into the three disjoints sets
\begin{equation}\nonumber
\begin{aligned}
A_1=\left\{u_{\epsilon}> a_{\epsilon}/c\right\}\cap[r_{\epsilon}R,\infty), \quad A_2= \left\{u_{\epsilon}\le a_{\epsilon}/c\right\}\cap[r_{\epsilon}R,\infty)\;\;\mbox{and}\;\; A_3= (0,r_{\epsilon}R).
\end{aligned}
\end{equation}
We split the integral in \eqref{v-dirac}  into three integrals over these sets and denote by $I^{1}_{\epsilon}$, $I^{2}_{\epsilon}$ and $I^{2}_{\epsilon}$,
respectively. Setting $m=\max_{r\in [0,\infty)}\vert v(r)\vert$ and using \eqref{splitphiprime} we can write
\begin{align*}
I^{1}_{\epsilon} 
&\le  mb_{\epsilon}c\Big[1-\frac{1}{b_{\epsilon}}\int_{0}^{R}\vert v_{\epsilon}\vert^{\frac{p}{p-1}}e^{\eta_{\epsilon}a^{\frac{p}{p-1}}_{\epsilon}(\vert v_{\epsilon}\vert^{\frac{p}{p-1}}-1)}\mathrm{d}\lambda_{\theta}+o_{\epsilon}(R)\Big],
\end{align*}
where we use  $a_{\epsilon}\rightarrow\infty$.  From Lemma~\ref{vto1wtolog}, letting $\epsilon\rightarrow 0$ and then $R\rightarrow\infty$, we obtain $I^{1}_{\epsilon}\rightarrow 0$ as $\epsilon\rightarrow 0$. Also, from Lemma~\ref{a/d} we have $a_{\epsilon}/d_{\epsilon}\rightarrow 0$, then  if $L>0$ is chosen such that $\mathrm{supp}\,v\subset (0,L)$,  we obtain
\begin{align*}
I^{2}_{\epsilon} &\le \frac{a_{\epsilon}b_{\epsilon}m}{d_{\epsilon}}\Big[\int_{A_2}\vert u_{\epsilon}\vert^{\frac{1}{p-1}}\varphi_{p}\big(\eta_{\epsilon}\vert u_{\epsilon}\vert^{\frac{p}{p-1}}\big)\mathrm{d}\lambda_{\theta}+\frac{\eta^{k_0-1}_{\epsilon}}{(k_0-1)!}\int_{0}^{L}\vert u_{\epsilon}\vert^{\frac{(k_0-1)p+1}{p-1}}\mathrm{d}\lambda_{\theta}\Big]\\
& \le \frac{a^{\frac{p}{p-1}}_{\epsilon}}{d_{\epsilon}} \frac{b_{\epsilon}m}{c^{\frac{1}{p-1}}}\int_{0}^{\infty}\varphi_{p}\big(\eta_{\epsilon}\vert u_{\epsilon,c}\vert^{\frac{p}{p-1}}\big)\mathrm{d}\lambda_{\theta}+o_{\epsilon}(1).
\end{align*}
From Lemma~\ref{a/d},  $a^{p/(p-1)}_{\epsilon}/d_{\epsilon}$ is bounded, then using \eqref{ucsubc} we get $I^{2}_{\epsilon}\rightarrow 0$. Finally, for some $\tau\in [0,R]$
\begin{align*}
I^{3}_{\epsilon}
&= \frac{a^{\frac{p}{p-1}}_{\epsilon}b_{\epsilon}r^{\theta+1}_{\epsilon}}{d_{\epsilon}}v(r_{\epsilon}\tau)\int_{0}^{R}\vert v_{\epsilon}\vert^{\frac{1}{p-1}}\varphi^{\prime}_{p}\big(\eta_{\epsilon}a^{\frac{p}{p-1}}_{\epsilon}\vert v_{\epsilon}\vert^{\frac{p}{p-1}}\big)\mathrm{d}\lambda_{\theta}\\
&= v(r_{\epsilon}\tau)\Big[\int_{0}^{R}\vert v_{\epsilon}\vert ^{\frac{1}{p-1}}e^{\eta_{\epsilon}a^{\frac{p}{p-1}}_{\epsilon}(\vert v_{\epsilon}\vert^{\frac{p}{p-1}}-1)}\mathrm{d}\lambda_{\theta}\\
&-e^{-\eta_{\epsilon}a^{\frac{p}{p-1}}_{\epsilon}}\sum_{j=0}^{k_0-2}\frac{\eta^{j}_{\epsilon}a^{\frac{jp}{p-1}}_{\epsilon}}{j!}\int_{0}^{R}\vert v_{\epsilon}\vert^{\frac{pj+1}{p-1}}\mathrm{d}\lambda_{\theta}\Big].
\end{align*}
By \eqref{w-normal} and \eqref{welimit}, letting $\epsilon\rightarrow 0$ and then $R\rightarrow\infty$ we obtain $I^{3}_{\epsilon}\rightarrow v(0)$ which proves \eqref{v-dirac}. To get \eqref{1-dirac}, we choose $v\in C^{0}_{c}[0,\infty)$ such that $v\ge 0$ and $v\equiv 1$ in $[0,\rho)$. For $\epsilon>0$ small enough  we have $r_{\epsilon}R<\rho$ and thus
$(\rho, \infty)\subset A_1\cup A_2.$ So, \eqref{1-dirac}  follows from  $I^{1}_{\epsilon}\rightarrow 0$ and $I^{2}_{\epsilon}\rightarrow 0$ and \eqref{v-dirac}.
\end{proof}

Next, we employ \cite[Lemma~9]{JJ2013} to ensure the existence of the Green-type functions to equation \eqref{Euler-Lagrange}.
\begin{lemma} \label{lemma-greenduro} Let $p\ge 2$, $1<q<p$, $0\le \eta<1$ and $R>0$. Set $g_{\epsilon}=a^{\frac{1}{p-1}}_{\epsilon}u_{\epsilon}$, $\epsilon>0$. Then there is $g_{\eta}$ such that
$(g_{\epsilon})$  converges weakly to $g_{\eta}$  in  $W^{1,q}_{R}(\alpha,\theta)$ and $g_{\eta}$ satisfies the equation
\begin{equation}\label{g-trueeq}
\omega_{\alpha}r^{\alpha}\vert g_{\eta}^{\prime}(r)\vert^{p-1}+\int_{0}^{r}\vert g_{\eta}\vert^{p-1}\mathrm{d}\lambda_{\theta}=1+\eta\int_{0}^{r}\vert g_{\eta}\vert^{p-1}\mathrm{d}\lambda_{\theta},\quad r>0.
\end{equation}
In addition, $g_{\epsilon}\rightarrow g_{\eta}$ in $C^{0}_{loc}(0,\infty)$ and $g^{\prime}_{\epsilon}\rightarrow g^{\prime}_{\eta}$ in $L^{p}_{\alpha}(r_1,\infty)$, for all $r_1>0$.
\end{lemma}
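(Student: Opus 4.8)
The plan is to reduce everything to an integral equation for $g_{\epsilon}=a_{\epsilon}^{1/(p-1)}u_{\epsilon}$. Multiplying \eqref{eq.integral} by $a_{\epsilon}$ and using $(-g_{\epsilon}')^{p-1}=a_{\epsilon}(-u_{\epsilon}')^{p-1}$, $|g_{\epsilon}|^{p-1}=a_{\epsilon}|u_{\epsilon}|^{p-1}$ together with the definition \eqref{Fe} of $F_{\epsilon}$, one obtains
\[
\omega_{\alpha}r^{\alpha}(-g_{\epsilon}'(r))^{p-1}=\int_{0}^{r}F_{\epsilon}\,\mathrm{d}\lambda_{\theta}+(c_{\epsilon}-1)\int_{0}^{r}|g_{\epsilon}|^{p-1}\,\mathrm{d}\lambda_{\theta},\qquad r>0.
\]
Since $c_{\epsilon}<1$ and both integrals are nonnegative, this gives $\omega_{\alpha}r^{\alpha}(-g_{\epsilon}'(r))^{p-1}\le\int_{0}^{R}F_{\epsilon}\,\mathrm{d}\lambda_{\theta}$ on $(0,R]$, and by \eqref{1-dirac} the right-hand side stays bounded as $\epsilon\to0$; since $\alpha=p-1$ this yields $-g_{\epsilon}'(r)\le C_{R}r^{-1}$ on $(0,R]$, hence a uniform bound for $(g_{\epsilon}')$ in $L^{q}_{\alpha}(0,R)$ whenever $q<p=\alpha+1$.

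The main difficulty is a uniform bound for $(g_{\epsilon})$ in $L^{q}_{\theta}(0,R)$, which is not obvious because $a_{\epsilon}\to\infty$ while $u_{\epsilon}\to0$. I would read it off the displayed identity at $r=R$: both terms on the left being nonnegative, $(1-c_{\epsilon})\int_{0}^{R}|g_{\epsilon}|^{p-1}\,\mathrm{d}\lambda_{\theta}\le\int_{0}^{R}F_{\epsilon}\,\mathrm{d}\lambda_{\theta}$, and since $1-c_{\epsilon}\ge1-\eta>0$ this bounds $\int_{0}^{R}|g_{\epsilon}|^{p-1}\,\mathrm{d}\lambda_{\theta}$ uniformly in $\epsilon$. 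As $g_{\epsilon}$ is non-increasing, this in turn bounds $g_{\epsilon}(R)$, and then the gradient estimate gives $g_{\epsilon}(r)\le g_{\epsilon}(R)+C_{R}\ln(R/r)$ on $(0,R]$, an $\epsilon$-uniform bound which lies in $L^{q}_{\theta}(0,R)$ for every $q\ge1$ (the logarithmic singularity being integrable against $r^{\theta}\,\mathrm{d}r$). Therefore $(g_{\epsilon})$ is bounded in $W^{1,q}_{R}(\alpha,\theta)$ for $1<q<p$.

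By reflexivity one extracts $g_{\epsilon}\rightharpoonup g_{\eta}$ in $W^{1,q}_{R}(\alpha,\theta)$; the compact embedding $W^{1,q}_{R}(\alpha,\theta)\hookrightarrow L^{s}_{\theta}(0,R)$ of Remark~\ref{remarkW} (here $\alpha-q+1=p-q>0$, the Sobolev regime) gives strong convergence in some $L^{s}_{\theta}(0,R)$ and a.e.\ convergence, while on each $[r_1,r_2]\subset(0,\infty)$ the bounds $-g_{\epsilon}'\le Cr^{-1}$ and $g_{\epsilon}(r)\le C+C\ln(R/r)$ make $(g_{\epsilon})$ uniformly bounded and equicontinuous, so by Arzel\`{a}--Ascoli $g_{\epsilon}\to g_{\eta}$ in $C^{0}_{loc}(0,\infty)$. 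Passing to the limit in the displayed identity: $\int_{0}^{r}F_{\epsilon}\,\mathrm{d}\lambda_{\theta}\to1$ by Lemma~\ref{lemmaFe}, $c_{\epsilon}\to\eta$ by Lemma~\ref{we-concentra}, and $\int_{0}^{r}|g_{\epsilon}|^{p-1}\,\mathrm{d}\lambda_{\theta}\to\int_{0}^{r}|g_{\eta}|^{p-1}\,\mathrm{d}\lambda_{\theta}$ by dominated convergence (domination $g_{\epsilon}(s)\le C+C\ln(R/s)$); hence $-g_{\epsilon}'(r)\to-g_{\eta}'(r)$ for each $r$, $g_{\eta}\in C^{1}(0,\infty)$, and $\omega_{\alpha}r^{\alpha}|g_{\eta}'(r)|^{p-1}=1-(1-\eta)\int_{0}^{r}|g_{\eta}|^{p-1}\,\mathrm{d}\lambda_{\theta}$, which is \eqref{g-trueeq}. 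Exactly as in \cite[Lemma~9]{JJ2013}, this equation together with $g_{\eta}\ge0$ non-increasing, $g_{\eta}\to0$ at infinity (which forces $\int_{0}^{\infty}|g_{\eta}|^{p-1}\,\mathrm{d}\lambda_{\theta}=1/(1-\eta)$) and $|g_{\eta}'(r)|\sim(\omega_{\alpha}r^{\alpha})^{-1/(p-1)}$ as $r\to0^{+}$ (the unit Dirac mass on the right) admits a unique solution, so the whole family $(g_{\epsilon})$ converges, not just a subsequence.

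For the last assertion, $g_{\epsilon}'\to g_{\eta}'$ in $L^{p}_{\alpha}(r_1,\infty)$, I would differentiate the displayed identity and use \eqref{Fe-later}, which for $r$ so large that $|u_{\epsilon}(r)|\le1$ gives $F_{\epsilon}(r)\le(b_{\epsilon}c/d_{\epsilon})|g_{\epsilon}(r)|^{p-1}$ with $c=c(\alpha,\eta,\theta,R)$; hence the integrand $F_{\epsilon}+(c_{\epsilon}-1)|g_{\epsilon}|^{p-1}\le(b_{\epsilon}c/d_{\epsilon}-(1-c_{\epsilon}))|g_{\epsilon}|^{p-1}$, which is negative for $\epsilon$ small because $d_{\epsilon}\to\infty$ (Lemma~\ref{a/d} and $a_{\epsilon}\to\infty$). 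Thus $r\mapsto\omega_{\alpha}r^{\alpha}(-g_{\epsilon}'(r))^{p-1}$ is eventually non-increasing and tends to $0$ (otherwise $u_{\epsilon}$ would not vanish at infinity), so integrating from $r$ to $\infty$ gives $\omega_{\alpha}r^{\alpha}(-g_{\epsilon}'(r))^{p-1}\le(1-c_{\epsilon})\int_{r}^{\infty}|g_{\epsilon}|^{p-1}\,\mathrm{d}\lambda_{\theta}$; combining with $\omega_{\alpha}r^{\alpha}(-g_{\epsilon}'(r))^{p-1}\to1-(1-\eta)\int_{0}^{r}|g_{\eta}|^{p-1}\,\mathrm{d}\lambda_{\theta}\to0$ (as $\epsilon\to0$, then $r\to\infty$) one gets $\limsup_{\epsilon\to0}\int_{R}^{\infty}|g_{\epsilon}|^{p-1}\,\mathrm{d}\lambda_{\theta}\to0$ as $R\to\infty$. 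Writing $\|g_{\epsilon}'\|^{p}_{L^{p}_{\alpha}(r_1,\infty)}=\int_{r_1}^{\infty}(-g_{\epsilon}')\,\omega_{\alpha}r^{\alpha}(-g_{\epsilon}')^{p-1}\,\mathrm{d}r$ and splitting at $R$, the part over $(r_1,R)$ converges to the corresponding quantity for $g_{\eta}'$ by dominated convergence while the tail over $(R,\infty)$ is uniformly small; hence $\|g_{\epsilon}'\|_{L^{p}_{\alpha}(r_1,\infty)}\to\|g_{\eta}'\|_{L^{p}_{\alpha}(r_1,\infty)}$, and with a.e.\ (hence weak) convergence and uniform convexity of $L^{p}_{\alpha}(r_1,\infty)$ this gives strong convergence. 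The genuinely delicate steps are the uniform $L^{q}_{\theta}(0,R)$ bound for $(g_{\epsilon})$ despite the blow-up of $a_{\epsilon}$ (resolved via the sign $c_{\epsilon}<1$) and the tail estimates on the unbounded interval required for the $L^{p}_{\alpha}(r_1,\infty)$ statement.
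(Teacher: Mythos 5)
Your proof is correct, but it reaches the two central compactness estimates by a genuinely different and more elementary route than the paper. For the key uniform bound $\sup_{\epsilon}\int_{0}^{R}\vert g_{\epsilon}\vert^{p-1}\mathrm{d}\lambda_{\theta}\le c$ (the paper's \eqref{gep-1bounded}), the paper argues by contradiction: it normalizes $h_{\epsilon}=g_{\epsilon}/\|g_{\epsilon}\|_{L^{p-1}_{\theta}(0,R)}$, invokes \cite[Lemma~9]{JJ2013} and the weighted Sobolev embeddings to extract a limit $h$ solving \eqref{hequation-fake}, and derives the sign contradiction $\vert h^{\prime}(R)\vert^{p-1}=(\eta-1)/(\omega_{\alpha}R^{\alpha})<0$. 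You instead read the bound directly off the integral identity \eqref{Geq.integral} evaluated at $r=R$: since $-g_{\epsilon}^{\prime}\ge 0$ and $1-c_{\epsilon}\ge 1-\eta>0$, both left-hand terms are nonnegative and each is dominated by $\int_{0}^{R}F_{\epsilon}\,\mathrm{d}\lambda_{\theta}$, which \eqref{1-dirac} controls. This is a substantial simplification that bypasses the entire contradiction argument. Similarly, you obtain the gradient bound in $L^{q}_{\alpha}(0,R)$ from the pointwise estimate $-g_{\epsilon}^{\prime}(r)\le C_{R}/r$ (using $\alpha=p-1$) rather than from \cite[Lemma~9]{JJ2013}, and you get the $L^{q}_{\theta}(0,R)$ bound from the resulting logarithmic pointwise bound rather than from the embedding chain \eqref{qbarraW}--\eqref{h3}. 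For the final assertion $g_{\epsilon}^{\prime}\rightarrow g_{\eta}^{\prime}$ in $L^{p}_{\alpha}(r_1,\infty)$, your treatment is actually more careful than the paper's: the paper dominates by a constant on $[r_1,\infty)$ and appeals to dominated convergence, but a constant (indeed even $c/r$) is not in $L^{p}_{\alpha}(r_1,\infty)$ when $\alpha=p-1$; your tail estimate via the eventual monotonicity of $r\mapsto\omega_{\alpha}r^{\alpha}(-g_{\epsilon}^{\prime})^{p-1}$, norm convergence, and the Radon--Riesz property supplies the missing uniform smallness of the tails. Two minor caveats: your claim that the whole family (not just a subsequence) converges rests on a uniqueness assertion for the limit equation that you attribute to \cite[Lemma~9]{JJ2013} but do not prove; this is harmless since the paper itself only works with subsequences. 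Also, the dependence of your bounds on \eqref{1-dirac} (Lemma~\ref{lemmaFe}) is legitimate and non-circular, as that lemma is established before this one.
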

\begin{proof}
By using \eqref{Euler-Lagrange}, for any $v\in X^{1,p}_{\infty}$ we obtain
\begin{equation}\label{EulerG}
\begin{aligned}
 \int_{0}^{\infty}\vert g_{\epsilon}^{\prime}\vert ^{p-2}g_{\epsilon}^{\prime}v^{\prime}\,\mathrm{d}\lambda_{\alpha}
=\int_{0}^{\infty}F_{\epsilon}v\,\mathrm{d}\lambda_{\theta}+(c_{\epsilon}-1)\int_{0}^{\infty}\vert g_{\epsilon}\vert^{p-1}v\,\mathrm{d}\lambda_{\theta},
\end{aligned}
\end{equation}
where $F_{\epsilon}$ is given by \eqref{Fe}. Firstly, for any $R>0$  we claim that 
\begin{equation}\label{gep-1bounded}
\sup_{\epsilon>0}\int_{0}^{R}\vert g_\epsilon\vert^{p-1}\mathrm{d}\lambda_{\theta}\le c.
\end{equation} 
By contradiction,  assume that there exists $R>0$ such that $\lim_{\epsilon\rightarrow 0}\|g_{\epsilon}\|_{L^{p-1}_{\theta}(0,R)}=\infty$. Setting $h_{\epsilon}=g_{\epsilon}/\|g_{\epsilon}\|_{L^{p-1}_{\theta}(0,R)}$ on $(0,R)$,  we have $\|h_{\epsilon}\|_{L^{p-1}_{\theta}(0,R)}=1$ and $h^{\prime}_{\epsilon}=g^{\prime}_{\epsilon}/\|g_{\epsilon}\|_{L^{p-1}_{\theta}(0,R)}$. Also, from \eqref{EulerG}
\begin{equation}\nonumber
\begin{aligned}
 \int_{0}^{\infty}\vert h_{\epsilon}^{\prime}\vert^{p-2}h_{\epsilon}^{\prime}v^{\prime}\,\mathrm{d}\lambda_{\alpha}
=\int_{0}^{\infty}\tilde{F}_{\epsilon}v\,\mathrm{d}\lambda_{\theta}
\end{aligned}
\end{equation}
where
$$
\tilde{F}_{\epsilon}=\frac{F_{\epsilon}}{\|g_{\epsilon}\|^{p-1}_{L^{p-1}_{\theta}(0,R)}}+(c_{\epsilon}-1)\vert h_{\epsilon}\vert^{p-1}.
$$
From \eqref{1-dirac}, we can see that $(\tilde{F}_{\epsilon})$ is bounded in $L^{1}_{\theta}(0,R)$. Let $\tilde{h}_{\epsilon}=h_{\epsilon}-h_{\epsilon}(R)$ on $(0,R)$. Then each $\tilde{h}_{\epsilon}\in X^{1,p}_{R}(\alpha,\theta)$ and satisfies
\begin{equation}\label{euler-fake}
\begin{aligned}
 \int_{0}^{R}\vert \tilde{h}_{\epsilon}^{\prime}\vert^{p-2}\tilde{h}_{\epsilon}^{\prime}v^{\prime}\,\mathrm{d}\lambda_{\alpha}
=\int_{0}^{R}\vert h_{\epsilon}^{\prime}\vert^{p-2}h_{\epsilon}^{\prime}v^{\prime}\,\mathrm{d}\lambda_{\alpha}=\int_{0}^{R}\tilde{F}_{\epsilon}v\,\mathrm{d}\lambda_{\theta},\;\;\forall v\in X^{1,p}_{R}(\alpha,\theta).
\end{aligned}
\end{equation}
Applying \cite[Lemma~9]{JJ2013}, for any $1<q<p$ we get 
\begin{equation}\label{gradh3}
\|h^{\prime}_{\epsilon}\|_{L^{q}_{\alpha}(0,R)}=\|(\tilde{h}_{\epsilon})^{\prime}\|_{L^{q}_{\alpha}(0,R)}\le C(\alpha,q,R,c_0)
\end{equation}
where where $c_0$ is an upper bound of  $(\tilde{F}_{\epsilon})$ in $L^{1}_{\theta}(0,R)$.  Then, if $\overline{q}=p-1$ (cf. Ramark~\ref{remarkW}), we have 
\begin{equation}\label{qbarraW}
\|h_\epsilon\|_{W^{1,\overline{q}}_R}=(\|h^{\prime}_{\epsilon}\|_{L^{\overline{q}}_{\alpha}(0,R)}+\|h_{\epsilon}\|_{L^{\overline{q}}_{\theta}(0,R)})^{1/\overline{q}}\le (C(\alpha,\overline{q},R,c_0)+1)^{1/\overline{q}}.
\end{equation}
Hence, $(h_\epsilon)$ bounded in $W^{1,\overline{q}}_{R}(\alpha,
\theta)$. Also, since $\theta\ge \alpha$ and $\alpha=p-1$ we have $\alpha-\overline{q}+1=1>0$ and $\theta\ge \alpha-\overline{q}=0$. Further, 
$$
\overline{q}^{*}=\overline{q}^{*}(\alpha,\theta,\overline{q})=\frac{(\theta+1)\overline{q}}{\alpha-\overline{q}+1}=(\theta+1)(p-1)\ge p, \;\; \forall\; p\ge 2.
$$
Hence, from \eqref{Ebeddingswhole}, it follows that  the continuous embedding 
\begin{equation}\label{embed-qbarra}
W^{1,\overline{q}}_{R}(\alpha,
\theta)\hookrightarrow L^{q}_{\theta}(0,R),\;\; 1<q\le \overline{q}^{*}.
\end{equation}
Note that $p\le \overline{q}^{*}$ and, from \eqref{qbarraW}, $(h_\epsilon)$ is bounded in $W^{1,\overline{q}}_{R}(\alpha, 
\theta)$. Then \eqref{embed-qbarra} yields 
\begin{equation}\label{h3}
\sup_{\epsilon>0}\|h_{\epsilon}\|_{L^{q}_{\theta}(0,R)}\le C_0,\;\; \forall \, 1<q<p.
\end{equation}
Combining \eqref{gradh3} and \eqref{h3}, we conclude that  $(h_\epsilon)$ bounded in $W^{1,q}_{R}(\alpha,
\theta)$, for any $1<q<p$. Therefore,
\begin{equation}\nonumber 
h_{\epsilon}\rightharpoonup h\;\;\mbox{weakly in}\;\; W^{1,q}_{R}(\alpha,
\theta),\;\; 1<q<p.
\end{equation}
But, we have $\alpha-q+1>\alpha-p+1=0$ and $\theta\ge \alpha>\alpha-q$, for all $1<q<p$. According with \eqref{Ebeddingswhole}  (cf. Remark~\ref{remarkW}) we have the compact embedding
\begin{equation}\label{embed-fake}
W^{1,q}_{R}(\alpha,
\theta)\hookrightarrow L^{s}_{\theta}(0,R),\;\; 1<s<q^{*}(\alpha,\theta,q)=\frac{(\theta+1)q}{\alpha-q+1}.
\end{equation}
Since $\alpha-p+1=0$, we have $q^{*}(\alpha,\theta,q)>p$ if $q$ is  sufficiently near $p$. In particular, using that $(h_\epsilon)$ is bounded in $W^{1,q}_{R}(\alpha, 
\theta)$ and \eqref{embed-fake} we obtain
\begin{equation}\label{embed-fake2}
h_{\epsilon}\rightarrow h\;\;\mbox{in}\;\; L^{s}_{\theta}(0,R),\;\; 1<s<p.
\end{equation}
Combining \eqref{euler-fake}, \eqref{embed-fake} and \eqref{embed-fake2}, we get
\begin{equation}\label{hequation-fake}
\begin{aligned}
 \int_{0}^{R}\vert h^{\prime}\vert^{p-2}h^{\prime}v^{\prime}\,\mathrm{d}\lambda_{\alpha}=(\eta-1)\int_{0}^{R}\vert h\vert ^{p-1}v\,\mathrm{d}\lambda_{\theta},\;\;\forall v\in X^{1,p}_{R}(\alpha,\theta).
\end{aligned}
\end{equation}
Analogous to \eqref{eq.integral}, by using the  test function $v_{\delta}$ given by  \eqref{test-magic}  and noticing that $h$ is a non-increasing function, we can write 
\begin{equation}\label{eq.integral-fake}
    \vert h^{\prime}(r)\vert^{p-1}= (-h^{\prime}(r))^{p-1}=\frac{\eta-1}{\omega_{\alpha}r^{\alpha}}\int_{0}^{r}\vert h\vert^{p-1}\,\mathrm{d}\lambda_{\theta},\;\; r\in (0,R].
 \end{equation}
 Since $\|h\|_{L^{p-1}_{\theta}}=\lim_{\epsilon\rightarrow0}\|h_{\epsilon}\|_{L^{p-1}_{\theta}}=1$, from \eqref{eq.integral-fake}, it follows that
 \begin{equation}\nonumber
    \vert h^{\prime}(R)\vert^{p-1}=\frac{\eta-1}{\omega_{\alpha}R^{\alpha}}<0
 \end{equation}
 which is a contradiction. Hence, \eqref{gep-1bounded} holds.  Thus, from Lemma~\ref{lemmaFe} and \eqref{gep-1bounded}, by setting $\overline{f}_{\epsilon}=F_{\epsilon}+(c_{\epsilon}-1)\vert g_\epsilon\vert^{p-1}$ we get  that the sequence $(\overline{f}_{\epsilon})$ is  bounded in $L^{1}_{\theta}(0,R)$ for all $R>0$.  From \eqref{EulerG} (setting $v\equiv0$ on $[R,\infty)$) we obtain
 \begin{equation}\label{EulerGG}
\begin{aligned}
 \int_{0}^{R}\vert g_{\epsilon}^{\prime}\vert^{p-2}g_{\epsilon}^{\prime}v^{\prime}\,\mathrm{d}\lambda_{\alpha}
=\int_{0}^{R}\overline{f}_{\epsilon}v\,\mathrm{d}\lambda_{\theta},\;\; \forall\; v\in X^{1,p}_{R}(\alpha,\theta).
\end{aligned}
\end{equation}
Using  \cite[Lemma~9]{JJ2013}, we get 
\begin{equation}\label{gradgg3}
\|g^{\prime}_{\epsilon}\|_{L^{q}_{\alpha}(0,R)}\le C(\alpha,q,R,c_0), \;\; 1<q<p,
\end{equation}
where $c_0$ is an upper bound of  $(\overline{f}_{\epsilon})$ in $L^{1}_{\theta}(0,R)$. Now, from \eqref{gep-1bounded}, we can  argue such as  in \eqref{qbarraW}, \eqref{embed-qbarra} and \eqref{h3} to conclude that   $(g_\epsilon)$ is bounded in $W^{1,q}_{R}(\alpha,
\theta)$, $1<q<p$.  Hence, there exists  $g_{\eta}\in W^{1,q}_{R}(\alpha,
\theta)$ such that 
\begin{equation}\label{W-conv-true}
g_{\epsilon}\rightharpoonup g_{\eta}\;\;\mbox{weakly in}\;\; W^{1,q}_{R}(\alpha,
\theta),\;\; 1<q<p.
\end{equation}
Further, from the compact embedding \eqref{embed-fake}, for any $R>0$
\begin{equation}\label{Lpconv-true}
g_{\epsilon}\rightarrow g_{\eta}\;\;\mbox{in}\;\; L^{p-1}_{\theta}(0,R)\;\;\mbox{and}\;\; g_{\epsilon}(r)\rightarrow g_{\eta}(r)\;\;\mbox{a.e in}\;\;(0,R).
\end{equation}
We proceed to show that $g_{\eta}$ satisfies the equation \eqref{g-trueeq}. From \eqref{eq.integral}, we obtain 
\begin{equation}\label{Geq.integral}
     -g_{\epsilon}^{\prime}(r)=\frac{1}{\omega^{\frac{1}{\alpha}}_{\alpha}r}\Big[\int_{0}^{r}(F_{\epsilon}+(c_{\epsilon}-1)\vert g_{\epsilon}\vert^{p-1})\mathrm{d}\lambda_{\theta}\Big]^{\frac{1}{p-1}}, \quad r>0.
 \end{equation}
 Fixed $\rho>0$ arbitrarily,  by integrating \eqref{Geq.integral} on $(r,\rho)$ we obtain 
\begin{equation}\label{explicity-gepsilon}
     g_{\epsilon}(r)=g_{\epsilon}(\rho)+\frac{1}{\omega^{\frac{1}{\alpha}}_{\alpha}}\int_{r}^{\rho}\frac{1}{t}\Big[\int_{0}^{t}(F_{\epsilon}+(c_{\epsilon}-1)\vert g_{\epsilon}\vert^{p-1})\mathrm{d}\lambda_{\theta}\Big]^{\frac{1}{p-1}}\, \mathrm{d}t.
 \end{equation}
 Using Lemma~\ref{lemmaFe} and \eqref{Lpconv-true},  
  \begin{equation}\label{explicity-g}
 \begin{aligned}
g_{\eta}(r)&=g_{\eta}(\rho)+\frac{1}{\omega^{\frac{1}{\alpha}}_{\alpha}}\int_{r}^{\rho}\frac{1}{t}\Big[1+(\eta-1)\int_{0}^{t}\vert g_{\eta}\vert^{p-1}\mathrm{d}\lambda_{\theta}\Big]^{\frac{1}{p-1}}\, \mathrm{d}t.
\end{aligned}
 \end{equation}
By differentiating  this equation we obtain \eqref{g-trueeq}.  Now, let $[r_1, R]\subset (0,\infty)$ be an arbitrary compact interval.  From \eqref{v-dirac} and \eqref{Geq.integral}  we obtain $\vert g_{\epsilon}^{\prime}\vert\le c/r_1$ on $[r_1,R]$, where $c>0$ does not depend on $\epsilon$. In addition, by \eqref{Lpconv-true} we can pick $\rho_0>R$ such that $g_{\epsilon}(\rho_0)\rightarrow g_{\eta}(\rho_0)$ as $\epsilon\rightarrow 0$. This fact combined with \eqref{explicity-gepsilon} and \eqref{v-dirac}  ensure $\vert g_{\epsilon}(r)\vert\le c_1+c_2\ln(\rho_0/r_1)$ on $[r_1,R]$, with $c_1,c_2>0$ do not depend on $\epsilon$. Hence, the Ascoli-Arzelà theorem and  \eqref{Lpconv-true}  imply that $(g_{\epsilon})$ converges to $g_{\eta}$ in $C^{0}[r_1, R]$.  Also,  from  \eqref{v-dirac} and \eqref{Geq.integral}  we have $\vert g_{\epsilon}^{\prime}(r)\vert\le c/r_1$ on $[r_1,\infty)$. Also, by using \eqref{Geq.integral} and \eqref{explicity-g} we conclude that $g^{\prime}_{\epsilon}(r)\rightarrow g^{\prime}_{\eta}(r)$ a.e in $(r_1,\infty)$. Hence, from the Lebesgue dominated convergence theorem we have $g^{\prime}_{\epsilon}\rightarrow g^{\prime}_{\eta}$ in $L^{p}_{\alpha}(r_1,\infty)$.
\end{proof}
\begin{remark}
 From \eqref{g-trueeq}, we can see that $g_{\eta}$ satisfies the equation
 \begin{equation}\label{g-trueeqDelta}
\int_{0}^{\infty}\vert g_{\eta}^{\prime}\vert^{p-2}g_{\eta}^{\prime}v^{\prime}\mathrm{d}\lambda_{\alpha}+\int_{0}^{\infty}\vert g_{\eta}\vert^{p-1}v\mathrm{d}\lambda_{\theta}=\delta_{0}(v)+
\eta\int_{0}^{\infty}\vert g_{\eta}\vert^{p-1}v\mathrm{d}\lambda_{\theta},
\end{equation}
for any  $v\in X^{1,p}_{\infty}\cap C[0,\infty)$,
where $\delta_0$ is the Dirac measure concentrated at origin $r=0$.
\end{remark}
\begin{lemma}\label{g-form}  Let $g_{\eta}$ be given by Lemma~\ref{lemma-greenduro}. Then there exists 
$$
\mathcal{A}_{\eta}=\lim_{r\rightarrow 0}\big[g_{\eta}(r)+\frac{\theta+1}{\mu_{\alpha,\theta}}\ln r\big].
$$ 
Also, for some $z\in C^{1}(0,\infty)$ and $z(r)=O(r^{\theta+1}\vert \ln r\vert^{p-1})$ as $r\rightarrow 0$, $g_{\eta}$ takes the form
\begin{equation}\label{g-shape}
g_{\eta}(r)=-\frac{\theta+1}{\mu_{\alpha,\theta}}\ln r +\mathcal{A}_{\eta} +z(r), \;\; 0<r\le 1.
\end{equation} 
\end{lemma}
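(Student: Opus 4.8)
The plan is to integrate the first–order identity satisfied by $g_{\eta}$ and to measure precisely how far its derivative is from that of $-\frac{\theta+1}{\mu_{\alpha,\theta}}\ln r$. Since $g_{\eta}$ is a limit of the non–increasing functions $g_{\epsilon}$, it is itself non–increasing, so $g_{\eta}'\le 0$; recalling $\alpha=p-1$ and $\mu_{\alpha,\theta}=(\theta+1)\omega_{\alpha}^{1/(p-1)}$, identity \eqref{g-trueeq} can be rewritten as
\[
-g_{\eta}'(r)=\frac{\theta+1}{\mu_{\alpha,\theta}}\,\frac{1}{r}\,\big(1-(1-\eta)\Psi(r)\big)^{\frac{1}{p-1}},\qquad \Psi(r):=\int_{0}^{r}|g_{\eta}|^{p-1}\,\mathrm{d}\lambda_{\theta},
\]
where, by the same identity, $0\le 1-(1-\eta)\Psi(r)=\omega_{\alpha}r^{\alpha}|g_{\eta}'(r)|^{p-1}\le 1$ for every $r>0$. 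In particular $0<-g_{\eta}'(r)\le \frac{\theta+1}{\mu_{\alpha,\theta}}\frac1r$, and inserting this into \eqref{explicity-g} (where the bracket lies in $[0,1]$) from a fixed small $\rho>0$ gives the a priori bound $|g_{\eta}(r)|\le C(1+|\ln r|)$ for $0<r\le 1$.

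Next I would feed this logarithmic bound back into $\Psi$. Using $\mathrm{d}\lambda_{\theta}=\omega_{\theta}s^{\theta}\mathrm{d}s$ and $\theta\ge\alpha\ge 1$,
\[
\Psi(r)\le C\int_{0}^{r}(1+|\ln s|)^{p-1}s^{\theta}\,\mathrm{d}s=O\!\left(r^{\theta+1}|\ln r|^{p-1}\right)\qquad\text{as }r\to 0^{+},
\]
so in particular $\Psi(r)\to 0$. Hence there is $r_{1}\in(0,1]$ with $(1-\eta)\Psi(r)\le \tfrac12$ on $(0,r_{1}]$, and on that interval the elementary inequality $0\le 1-(1-x)^{1/(p-1)}\le 2x$ (valid for $0\le x\le\tfrac12$, $p\ge2$) yields
\[
\Big|g_{\eta}'(r)+\frac{\theta+1}{\mu_{\alpha,\theta}}\,\frac1r\Big|
=\frac{\theta+1}{\mu_{\alpha,\theta}}\,\frac1r\Big[1-\big(1-(1-\eta)\Psi(r)\big)^{\frac{1}{p-1}}\Big]
\le \frac{C}{r}\,\Psi(r)\le C'\,r^{\theta}|\ln r|^{p-1},
\]
which is integrable on $(0,r_{1})$.

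Consequently $r\mapsto g_{\eta}(r)+\frac{\theta+1}{\mu_{\alpha,\theta}}\ln r$ has an $L^{1}$ derivative near the origin, so the limit
\[
\mathcal{A}_{\eta}:=\lim_{r\to 0^{+}}\Big[g_{\eta}(r)+\frac{\theta+1}{\mu_{\alpha,\theta}}\ln r\Big]
\]
exists, which is the first assertion. Setting $z(r):=g_{\eta}(r)+\frac{\theta+1}{\mu_{\alpha,\theta}}\ln r-\mathcal{A}_{\eta}$ yields \eqref{g-shape} by definition. Moreover $z\in C^{1}(0,\infty)$: the integrand in \eqref{explicity-g} is continuous on $(0,\infty)$ (its bracket equals $\omega_{\alpha}t^{\alpha}|g_{\eta}'(t)|^{p-1}\ge0$ by \eqref{g-trueeq}), so $g_{\eta}\in C^{1}(0,\infty)$. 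Finally $z(r)\to 0$ as $r\to 0^{+}$, and for $0<r\le r_{1}$,
\[
|z(r)|=\Big|\int_{0}^{r}\Big(g_{\eta}'(s)+\frac{\theta+1}{\mu_{\alpha,\theta}}\,\frac1s\Big)\,\mathrm{d}s\Big|\le C'\int_{0}^{r}s^{\theta}|\ln s|^{p-1}\,\mathrm{d}s=O\!\left(r^{\theta+1}|\ln r|^{p-1}\right),
\]
as claimed.

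The computation is essentially routine; the one point that needs care is the bootstrap. One must first extract the logarithmic a priori bound $|g_{\eta}|\lesssim 1+|\ln r|$ from \eqref{explicity-g} — this is what makes both $\Psi(r)\to0$ and the estimate $1-(1-x)^{1/(p-1)}\le 2x$ applicable — and then check that the gain $r^{\theta+1}|\ln r|^{p-1}$ obtained for $\Psi$ survives division by $r$ and re‑integration; it is precisely the hypothesis $\theta\ge\alpha\ (\ge 1)$ that keeps every integral convergent at the origin.
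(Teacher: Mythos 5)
Your proof is correct and follows essentially the same route as the paper: both start from the integral identity \eqref{g-trueeq}/\eqref{explicity-g}, show $\Psi(r)=\int_0^r|g_\eta|^{p-1}\mathrm{d}\lambda_\theta=O(r^{\theta+1}|\ln r|^{p-1})$ via the a priori logarithmic bound on $g_\eta$, and integrate the resulting estimate on $g_\eta'+\frac{\theta+1}{\mu_{\alpha,\theta}}\frac1r$. The only difference is technical: the paper uses the first-order Taylor expansion $[1-h]^{1/(p-1)}=1-\frac{h}{p-1}+E_2$ (whose precise coefficient it reuses later, e.g. in \eqref{o-crucial} and \eqref{ordem-capaz}), whereas you use the cruder bound $1-(1-x)^{1/(p-1)}\le 2x$, which suffices for this lemma.
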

\begin{proof}
For $t>0$, we set 
$
h_{g_{\eta}}(t)=(1-\eta)\int_{0}^{t}\vert g_{\eta}\vert^{p-1}\mathrm{d}\lambda_{\theta}.
$
We claim that 
\begin{equation}\label{hgopequeno}
\lim_{t\rightarrow 0}\frac{h_{g_{\eta}}(t)}{t}=0.
\end{equation}
Indeed,  by \eqref{explicity-g} we have $t^{\sigma}g_{\eta}(t)\rightarrow 0$ as $t\rightarrow 0$, for $\sigma>0$. Then, since $g_{\eta}$ belongs to $L^{p-1}_{\theta}(0,R)$, with $R>0$, the L’Hospital rule yields \eqref{hgopequeno}.  Now, we note that 
\begin{align}\label{hg-expoansion}
\left[1-h_{g_{\eta}}(t)\right]^{\frac{1}{p-1}}&=1-\frac{1}{p-1}h_{g_{\eta}}(t)+E_2(t), \quad t>0
\end{align}
where 
\begin{equation}\nonumber
E_2(t)=-\frac{p-2}{2!(p-1)^2}\left(1-\tau_0 h_{g_{\eta}}(t)\right)^{-\frac{2p-3}{p-1}}h^2_{g_{\eta}}(t),
\end{equation}
for some $\tau_0=\tau_0(t)\in (0,1)$. In view of \eqref{hgopequeno} we also have  
\begin{equation}\label{E2opequeno}
\lim_{t\rightarrow 0}\frac{E_2(t)}{t^2}=0.
\end{equation}
From  \eqref{explicity-g} and \eqref{hg-expoansion}, for any  $0<r\le 1$
 \begin{equation}\label{ghg}
 \begin{aligned}
g_{\eta}(r)&=g_{\eta}(1)-\frac{\theta+1}{\mu_{\alpha,\theta}}\ln r-\frac{\theta+1}{\mu_{\alpha,\theta}}\frac{1}{p-1}\int_{r}^{1}\frac{h_{g_{\eta}}(t)}{t}dt+\frac{\theta+1}{\mu_{\alpha,\theta}}\int_{r}^{1}\frac{E_2(t)}{t} dt.
\end{aligned}
 \end{equation}
Hence, from \eqref{hgopequeno} and \eqref{E2opequeno},  there exists
\begin{align}\nonumber
\mathcal{A}_{\eta}&=\lim_{r\rightarrow 0}\big[g_{\eta}(r)+\frac{\theta+1}{\mu_{\alpha,\theta}}\ln r\big]\\
&=g_{\eta}(1)-\frac{\theta+1}{\mu_{\alpha,\theta}}\frac{1}{p-1}\int_{0}^{1}\frac{h_{g_{\eta}}(t)}{t}dt+\frac{\theta+1}{\mu_{\alpha,\theta}}\int_{0}^{1}\frac{E_2(t)}{t} dt.
\end{align}
From \eqref{ghg} we also can write 
\begin{align}\label{gform-quasi}
g_{\eta}(r)&=-\frac{\theta+1}{\mu_{\alpha,\theta}} \ln r+\mathcal{A}_{\eta}+z(r)
\end{align}
with 
\begin{align}\nonumber
z(r)=\frac{\theta+1}{\mu_{\alpha,\theta}}\frac{1}{p-1}\int_{0}^{r}\frac{h_{g_{\eta}}(t)}{t}dt-\frac{\theta+1}{\mu_{\alpha,\theta}}\int_{0}^{r}\frac{E_2(t)}{t} dt.
\end{align}
We observe that $z(0)=0$.  In addition,  the L’Hospital rule  and \eqref{gform-quasi}  yield
\begin{equation}\label{o-crucial}
\begin{aligned}
\lim_{r\rightarrow 0}\frac{h_{g_{\eta}}(r)}{r^{\theta+1}\vert\ln r\vert^{p-1}}
&=\frac{\omega_{\theta}}{\omega_{\alpha}}\frac{1-\eta}{\theta+1}.\\
\end{aligned}
\end{equation}
Thus, we obtain 
\begin{equation}\nonumber
\begin{aligned}
\lim_{r\rightarrow 0}\frac{\int_{0}^{r}\frac{h_{g_{\eta}}(t)}{t}dt}{r^{\theta+1}\vert\ln r\vert^{p-1}}= \frac{\omega_{\theta}}{\omega_{\alpha}}\frac{1-\eta}{(\theta+1)^2}.
\end{aligned}
\end{equation}
Since $E_2(t)=O(h^2_{g_{\eta}}(t))$ as $t\rightarrow 0$, from the above argument we can also write 
 \begin{equation}\nonumber
\begin{aligned}
\lim_{r\rightarrow 0}\frac{\int_{0}^{r}\frac{E_2(t)}{t}dt}{r^{\theta+1}\vert\ln r\vert^{p-1}}=0.
\end{aligned}
\end{equation}
Then we get 
\begin{equation}\nonumber
z(r)=O(r^{\theta+1}\vert\ln r\vert^{p-1}) +o(r^{\theta+1}\vert\ln r\vert^{p-1}) ,\quad \mbox{as}\quad r\rightarrow 0.
\end{equation}
\end{proof}
\begin{lemma} \label{abaixo} For $\mathcal{A}_{\eta}$ is given by Lemma~\ref{g-form}, we have
\begin{equation}\nonumber
  AD(\eta, \mu_{\alpha,\theta}, \alpha,\theta)\le \frac{ \omega_{\theta}}{\theta+1}e^{\mu_{\alpha,\theta}\mathcal{A}_{\eta}+\gamma+\Psi(p)},
  \end{equation}
  where $\Psi(x)=\frac{d}{dx}(\ln\Gamma(x))$ is the digamma function and $\gamma=-\Psi(1)$ is the Euler-Mascheroni constant.
\end{lemma}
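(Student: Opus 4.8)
Throughout I keep the notation of the preceding lemmas. The natural starting point is Lemma~\ref{a/d} combined with \eqref{repsilon}, which give
$$AD(\eta,\mu_{\alpha,\theta},\alpha,\theta)=\lim_{\epsilon\to0}\frac{d_\epsilon}{a_\epsilon^{p/(p-1)}}=\lim_{\epsilon\to0}b_\epsilon\,r_\epsilon^{\theta+1}e^{\eta_\epsilon a_\epsilon^{p/(p-1)}},$$
so, since $b_\epsilon\to1$, it suffices to bound $\limsup_{\epsilon\to0}\big[(\theta+1)\ln r_\epsilon+\eta_\epsilon a_\epsilon^{p/(p-1)}\big]$ from above. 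Fixing $c>1$ and setting $\sigma_\epsilon:=\sup\{r>0:u_\epsilon(r)>a_\epsilon/c\}$, on $(\sigma_\epsilon,\infty)$ one has $u_\epsilon\le a_\epsilon/c$, so the contribution of that set to $d_\epsilon/a_\epsilon^{p/(p-1)}$ is controlled by $\int_0^\infty\varphi_p(\eta_\epsilon|u_{\epsilon,c}|^{p/(p-1)})\,\mathrm{d}\lambda_\theta$, which tends to $0$ by \eqref{ucsubc} and Lemma~\ref{lemmauec}. On $(0,\sigma_\epsilon)$, where $\eta_\epsilon|u_\epsilon|^{p/(p-1)}\to\infty$, one uses $\varphi_p'\le e^{(\cdot)}$ together with $|u_\epsilon|^{p/(p-1)}\le a_\epsilon^{p/(p-1)}$ to reduce the whole problem to estimating $\int_0^{\sigma_\epsilon}e^{\eta_\epsilon|u_\epsilon|^{p/(p-1)}}\,\mathrm{d}\lambda_\theta$; the auxiliary parameter $c$ will eventually be sent to $1$.

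The heart of the matter is a matched-asymptotic expansion at the concentration point $r=0$, gluing the bubble profile of Lemma~\ref{vto1wtolog} on the scale $r\sim r_\epsilon$ to the Green-type function $g_\eta$ of Lemma~\ref{lemma-greenduro} on unit scales. I would use that $g_\epsilon:=a_\epsilon^{1/(p-1)}u_\epsilon\to g_\eta$ in $C^0_{loc}(0,\infty)$, that $w_\epsilon\to w$ in $C^0_{loc}[0,\infty)$, and that by \eqref{specialfunctions} $g_\epsilon(r_\epsilon s)=a_\epsilon^{p/(p-1)}+w_\epsilon(s)$. Comparing the large-$s$ behaviour of the explicit $w$ in \eqref{w-express} with the $r\to0^+$ expansion $g_\eta(r)=-\tfrac{\theta+1}{\mu_{\alpha,\theta}}\ln r+\mathcal{A}_\eta+o(1)$ of Lemma~\ref{g-form} on an intermediate scale $r_\epsilon\ll r\ll 1$ forces $(\theta+1)\ln r_\epsilon+\mu_{\alpha,\theta}a_\epsilon^{p/(p-1)}$ to converge, and the constant $c_{\alpha,\theta}^{p-1}=\omega_\theta/(\theta+1)$ from \eqref{w-express} produces precisely the factor $\tfrac{\omega_\theta}{\theta+1}e^{\mu_{\alpha,\theta}\mathcal{A}_\eta}$; this is where $\mathcal{A}_\eta$ enters the estimate.

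To turn this heuristic matching into a rigorous one-sided bound and to generate the digamma constant, I would exploit the energy constraint via a $p$-capacity comparison on the neck $\{a_\epsilon/c<u_\epsilon\}\setminus(0,r_\epsilon R)$. By Lemma~\ref{we-concentra} and Lemma~\ref{lemmauec}, the Dirichlet energy of $u_\epsilon$ on that neck equals $1$ minus the $L^p_\theta$-mass and the bubble and exterior energies, all of which are $O(a_\epsilon^{-p/(p-1)})$ once rescaled (against $w'$ and $g_\eta'$); since for the weight $\mathrm{d}\lambda_\alpha$ with $\alpha=p-1$ the logarithmic function minimizes the $p$-capacity of a spherical shell, this yields a lower bound for $\ln(1/r_\epsilon)$ in terms of $a_\epsilon^{p/(p-1)}$, of $\mathcal{A}_\eta$, and of the normalization $\int_0^\infty e^{\frac{p}{p-1}\mu_{\alpha,\theta}w}\,\mathrm{d}\lambda_\theta=1$ from \eqref{w-normal}. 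Letting $\epsilon\to0$, then the auxiliary radii $R\to\infty$ and $\delta\to0$, and finally $c\to1$ in that order, the residual integrals of $w'$ and $g_\eta'$ collapse — through the Gamma-function identities recalled after Lemma~\ref{vto1wtolog}, in particular $\int_0^\infty\frac{s^{x-1}}{(1+s)^{x+y}}\,\mathrm{d}s=\frac{\Gamma(x)\Gamma(y)}{\Gamma(x+y)}$ differentiated in a parameter — into $\Psi(p)+\gamma$, delivering $AD(\eta,\mu_{\alpha,\theta},\alpha,\theta)\le\frac{\omega_\theta}{\theta+1}e^{\mu_{\alpha,\theta}\mathcal{A}_\eta+\gamma+\Psi(p)}$.

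The main obstacle, as always in Carleson--Chang-type arguments, is the uniform analysis of the neck $(r_\epsilon R,\delta)$: one must show that $u_\epsilon$ there is so close to the capacity extremal that (i) the energy–capacity gap, (ii) the error in $(u_\epsilon/a_\epsilon)^{p/(p-1)}=1-\tfrac{p}{p-1}(1-u_\epsilon/a_\epsilon)+O((1-u_\epsilon/a_\epsilon)^2)$, and (iii) the discrepancy between $g_\epsilon$ and $g_\eta$ near $r=r_\epsilon$ all become negligible \emph{after} the iterated limits, so that no spurious multiple of the divergent quantity $a_\epsilon^{p/(p-1)}$ survives. Closely related, and also delicate, is extracting the sharp rate $\|u_\epsilon\|_{L^p_\theta}^p=O(a_\epsilon^{-p/(p-1)})$, which is what makes $(\eta_\epsilon-\mu_{\alpha,\theta})a_\epsilon^{p/(p-1)}$ bounded and hence harmless in the limit.
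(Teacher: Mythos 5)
Your reduction via Lemma~\ref{a/d} and \eqref{repsilon} to bounding $\limsup_{\epsilon\to 0}\bigl[(\theta+1)\ln r_{\epsilon}+\eta_{\epsilon}a_{\epsilon}^{p/(p-1)}\bigr]$ is correct, and your picture of bubble plus Green function plus neck is the right one. But the two steps that actually produce the bound are not carried out, and you yourself flag them as ``the main obstacle.'' First, the claim that matching $w_{\epsilon}\to w$ with $g_{\epsilon}\to g_{\eta}$ on an intermediate scale $r_{\epsilon}\ll r\ll 1$ \emph{forces} the desired bound is unjustified: the convergences you have are only in $C^{0}_{loc}[0,\infty)$ after rescaling by $r_{\epsilon}$ and in $C^{0}_{loc}(0,\infty)$ at unit scale, with no uniform control in between; closing that gap is exactly the neck analysis, and a heuristic matching cannot substitute for it. Second, the constant $\gamma+\Psi(p)$ is only gestured at (``the residual integrals collapse \dots into $\Psi(p)+\gamma$''); no capacity inequality is stated, no extremal is exhibited, and no computation is performed. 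As written, the proposal is a plan, not a proof.

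For comparison, the paper sidesteps the neck entirely. It first proves $g_{\epsilon}\to g_{\eta}$ in $L^{p}_{\theta}(0,\infty)$ (via the energy identity obtained by multiplying \eqref{Geq.integral} by $g_{\epsilon}'$ on $(\rho,\infty)$), which yields the sharp deficit
$\int_{0}^{\rho}\vert u^{\prime}_{\epsilon}\vert^{p}\mathrm{d}\lambda_{\alpha}=1-a_{\epsilon}^{-p/(p-1)}\bigl[g_{\eta}(\rho)+\eta\|g_{\eta}\|^{p}_{L^{p}_{\theta}}+o_{\epsilon}(\rho)+o_{\rho}(1)\bigr]$.
It then truncates, $v_{\epsilon,\rho}=u_{\epsilon}-u_{\epsilon}(\rho)$, normalizes by $\tau_{\epsilon,\rho}^{1/p}$, observes that the resulting sequence concentrates at the origin in $X^{1,p}_{\rho}$, and invokes the Carleson--Chang type bound of \cite[Lemma~A]{JJ2013}, which delivers $\frac{\omega_{\theta}\rho^{\theta+1}}{\theta+1}e^{\gamma+\Psi(p)}$ in one stroke; the energy deficit and the cross term in $(v_{\epsilon,\rho}+u_{\epsilon}(\rho))^{p/(p-1)}$ contribute the factor $e^{\mu_{\alpha,\theta}g_{\eta}(\rho)}$, and letting $\rho\to 0$ converts $g_{\eta}(\rho)+\frac{\theta+1}{\mu_{\alpha,\theta}}\ln\rho$ into $\mathcal{A}_{\eta}$ by Lemma~\ref{g-form}. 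If you want to pursue your capacity route you would essentially be reproving that Lemma~A; otherwise the missing ingredient in your argument is precisely its identification and use.
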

\begin{proof}
Fix $\rho>0$. From  the same argument in \eqref{Fe-later},  for $r\ge \rho$ we can write
\begin{equation}\nonumber
   \begin{aligned}
    \vert u_{\epsilon}(r)\vert^{\frac{p}{p-1}}\varphi_{p}^{\prime}\big(\eta_{\epsilon}\vert u_{\epsilon}(r)\vert^{\frac{p}{p-1}}\big)
   & \le C(\alpha, \eta, \theta)e^{\mu_{\alpha,\theta}C2^{\frac{1}{p-1}}\rho^{-\frac{\theta+1}{p}}}\vert u_{\epsilon}(r)\vert^{\frac{k_0p}{p-1}}.
   \end{aligned} 
\end{equation}
Using Lemma~\ref{a/d}, the convergence in \eqref{eploc-convergence} and  the Fatou's lemma we get
\begin{equation}\nonumber
\int_{\rho}^{\infty}g_{\epsilon}F_{\epsilon}\mathrm{d}\lambda_{\theta}=\frac{a^{\frac{p}{p-1}}_{\epsilon}b_{\epsilon}}{d_{\epsilon}}\int_{\rho}^{\infty}\vert u_{\epsilon}\vert^{\frac{p}{p-1}}\varphi^{\prime}_{p}\big(\eta_{\epsilon}\vert u_{\epsilon}\vert^{\frac{p}{p-1}}\big)\mathrm{d}\lambda_{\theta}=o_{\epsilon}(\rho)
\end{equation}
where $o_{\epsilon}(\rho)\rightarrow 0$ as $\epsilon\rightarrow 0$. In addition, \eqref{Geq.integral} yields
\begin{equation}\nonumber
\begin{aligned}
\int_{\rho}^{\infty}\vert g^{\prime}_{\epsilon}\vert^{p}\mathrm{d}\lambda_{\alpha}&=-\int_{\rho}^{\infty}g^{\prime}_{\epsilon}(s) \Big[\int_{0}^{s}(F_{\epsilon}+(c_{\epsilon}-1)\vert g_{\epsilon}\vert^{p-1})\mathrm{d}\lambda_{\theta}\Big]\mathrm{d}s\\
&=\omega_{\alpha}\rho^{\alpha}\vert g^{\prime}_{\epsilon}(\rho)\vert^{p-1}g_{\epsilon}(\rho)+
\int_{\rho}^{\infty}\left(g_{\epsilon}F_{\epsilon}+(c_{\epsilon}-1)\vert g_{\epsilon}\vert^{p}\right)\mathrm{d}\lambda_{\theta}.\\
\end{aligned}
\end{equation}
Thus, 
\begin{equation}\label{out-gg}
\begin{aligned} \int_{\rho}^{\infty}\vert g^{\prime}_{\epsilon}\vert^{p}\mathrm{d}\lambda_{\alpha}+(1-c_{\epsilon})\int_{\rho}^{\infty}\vert g_{\epsilon}\vert^{p}\mathrm{d}\lambda_{\theta}
=\omega_{\alpha}\rho^{\alpha}\vert g^{\prime}_{\epsilon}(\rho)\vert^{p-1}g_{\epsilon}(\rho)+ o_{\epsilon}(\rho).
\end{aligned}
\end{equation}
Combining \eqref{1-dirac},  \eqref{Lpconv-true} and \eqref{Geq.integral}, we can see that $\omega_{\alpha}\rho^{\alpha}\vert g_{\epsilon}^{\prime}(\rho)\vert^{p-1}\le c$, for some $c>0$ which does not depend on $\epsilon$. Since $c_{\epsilon}\rightarrow \eta$, from \eqref{out-gg}
\begin{equation}\nonumber
\begin{aligned} \int_{\rho}^{\infty}\vert g_{\epsilon} \vert^{p}\mathrm{d}\lambda_{\theta}
\le c_{1}g_{\epsilon}(\rho)+ o_{\epsilon}(\rho).
\end{aligned}
\end{equation}
Lemma~\ref{lemma-greenduro} together with  Fatou's lemma yields
\begin{equation}\label{G-fora}
\begin{aligned}
\int_{\rho}^{\infty}\vert g_{\eta} \vert^{p}\mathrm{d}\lambda_{\theta}\le  \lim_{\epsilon\rightarrow 0}\int_{\rho}^{\infty}\vert g_{\epsilon} \vert^{p}\mathrm{d}\lambda_{\theta}
\le c_{1}g_{\eta}(\rho).
\end{aligned}
\end{equation}
In particular,  $g_{\eta}\in L^{p}_{\theta}(\rho, \infty)$, for $\rho>0$ and we obtain $g_{\eta}(r)\rightarrow 0$ as $r\rightarrow\infty$. Thus, from \eqref{G-fora} we obtain 
\begin{equation}\nonumber
\lim_{\rho\rightarrow\infty}\lim_{\epsilon\rightarrow 0}\int_{\rho}^{\infty}\vert g_{\epsilon} \vert^{p}\mathrm{d}\lambda_{\theta}=\lim_{\rho\rightarrow\infty} \int_{\rho}^{\infty}\vert g_{\eta} \vert^{p}\mathrm{d}\lambda_{\theta}=0.
\end{equation}
Using Lemma~\ref{lemma-greenduro} we conclude that  $g_{\epsilon}\rightarrow g_{\eta}$ in $L^{p}_{\theta}(0,\infty)$. Thus, from \eqref{g-trueeq} we have 
 \begin{equation}\nonumber
\omega_{\alpha}\rho^{\alpha}\vert g_{\eta}^{\prime}(\rho)\vert^{p-1}g_{\eta}(\rho)=g_{\eta}(\rho)+(\eta-1)g_{\eta}(\rho)\int_{0}^{\rho}\vert g_{\eta}\vert^{p-1}\mathrm{d}\lambda_{\theta}=g_{\eta}(\rho)+o_{\rho}(1), \;\mbox{as}\; \rho\rightarrow 0
\end{equation}
where we have used $0\le g_{\eta}(\rho)\int_{0}^{\rho}\vert g_{\eta}\vert^{p-1}\mathrm{d}\lambda_{\theta}\le \int_{0}^{\rho}\vert g_{\eta} \vert^{p}\mathrm{d}\lambda_{\theta}\rightarrow 0$ as $\rho\rightarrow 0$.
Hence, from \eqref{out-gg}  we have
\begin{equation}\nonumber
\begin{aligned}
 \int_{\rho}^{\infty}\vert u^{\prime}_{\epsilon}\vert^{p}\mathrm{d}\lambda_{\alpha}+\int_{\rho}^{\infty}\vert u_{\epsilon}\vert^{p}\mathrm{d}\lambda_{\theta}&=\frac{1}{a^{\frac{p}{p-1}}_{\epsilon}}\big[g_{\eta}(\rho)+\eta\int_{\rho}^{\infty}\vert g_{\eta} \vert^{p}\mathrm{d}\lambda_{\theta}+o_{\rho}(1)+o_{\epsilon}(\rho)\big]\\
 &=\frac{1}{a^{\frac{p}{p-1}}_{\epsilon}}\big[g_{\eta}(\rho)+\eta\|g_{\eta}\|^{p}_{L^{p}_{\theta}}+o_{\epsilon}(\rho)+o_{\rho}(1)\big].
 \end{aligned}
\end{equation}
We also have 
\begin{equation}
\int_{0}^{\rho}\vert u_{\epsilon}\vert^{p}\mathrm{d}\lambda_{\theta}=\frac{1}{a^{\frac{p}{p-1}}_{\epsilon}}\Big(\int_{0}^{\rho}\vert g_{\eta} \vert^{p}\mathrm{d}\lambda_{\theta}+o_{\epsilon}(\rho)\Big)=\frac{1}{a^{\frac{p}{p-1}}_{\epsilon}}\left(o_{\rho}(1)+o_{\epsilon}(\rho)\right).
\end{equation}
Since $\|u_{\epsilon}\|=1$ the two previous estimates yield
\begin{equation}\nonumber
\begin{aligned}
 \int_{0}^{\rho}\vert u^{\prime}_{\epsilon}\vert^{p}\mathrm{d}\lambda_{\alpha}
 &=1-\frac{1}{a^{\frac{p}{p-1}}_{\epsilon}}\big[g_{\eta}(\rho)+\eta\|g_{\eta}\|^{p}_{L^{p}_{\theta}}+o_{\epsilon}(\rho)+o_{\rho}(1)\big].
 \end{aligned}
\end{equation}
Set $\tau_{\epsilon,\rho}=\|u^{\prime}_{\epsilon}\|^{p}_{L^{p}_{\alpha}(0,\rho)}$. Define  $v_{\epsilon,\rho}=u_{\epsilon}-u_{\epsilon}(\rho)$ on $(0,\rho]$. We have $v_{\epsilon,\rho}\in X^{1,p}_{\rho}$. Also, by using Lemma~\ref{we-concentra} we have that $(v_{\epsilon,\rho}/\tau^{1/p}_{\epsilon,\rho})$ is concentrating at the origin. Hence, by \cite[Lemma~A]{JJ2013} (see also  \cite{CC}) we obtain
\begin{equation}\label{CCJJ}
\limsup_{\epsilon\rightarrow 0}\int_{0}^{\rho}\big(e^{\mu_{\alpha,\theta}\vert v_{\epsilon,\rho}/\tau^{1/p}_{\epsilon,\rho}\vert^{\frac{p}{p-1}}}-1\big)\mathrm{d}\lambda_{\theta}\le \frac{\omega_{\theta}\rho^{\theta+1}}{\theta+1}e^{\gamma+\Psi(p)}.
\end{equation}
Fix $R>0$. We have $0<r_{\epsilon}R\le \rho$, for $\epsilon>0$ small enough.  From Lemma~\ref{vto1wtolog} we can write $u_{\epsilon}=a_{\epsilon}(1+o_{\epsilon}(R))$ uniformly on $[0,r_{\epsilon}R)$. Thus, 
\begin{equation}\nonumber
\begin{aligned}
\eta_{\epsilon}\vert u_{\epsilon}\vert^{\frac{p}{p-1}}
&\le \mu_{\alpha,\theta}\vert u_{\epsilon}\vert^{\frac{p}{p-1}}+\frac{\mu_{\alpha,\theta}\eta}{p-1}\|g_{\eta}\|^{p}_{L^{p}_{\theta}}+o_{\epsilon}(R).\\
\end{aligned}
\end{equation}
Analogously
\begin{equation}\nonumber
\begin{aligned}
\vert u_{\epsilon}\vert^{\frac{p}{p-1}}
&=(v_{\epsilon,\rho}+u_{\epsilon}(\rho))^{\frac{p}{p-1}}=\vert v_{\epsilon,\rho}\vert^{\frac{p}{p-1}}+\frac{p}{p-1}\vert v_{\epsilon,\rho}\vert^{\frac{1}{p-1}}u_{\epsilon}(\rho)+o_{\epsilon}(\rho).
\end{aligned}
\end{equation}
So, it follows that 
\begin{equation}\nonumber
\begin{aligned}
\eta_{\epsilon}\vert u_{\epsilon}\vert^{\frac{p}{p-1}}
&\le \mu_{\alpha,\theta}\vert v_{\epsilon,\rho}\vert^{\frac{p}{p-1}}+\frac{p\mu_{\alpha,\theta}}{p-1}\vert v_{\epsilon,\rho}\vert^{\frac{1}{p-1}}u_{\epsilon}(\rho)+\frac{\mu_{\alpha,\theta}\eta}{p-1}\|g_{\eta}\|^{p}_{L^{p}_{\theta}}+o_{\epsilon}(R)+o_{\epsilon}(\rho),
\end{aligned}
\end{equation}
on $[0,r_{\epsilon}R).$ From Lemma~\ref{lemma-greenduro},
$
\vert v_{\epsilon,\rho}\vert^{\frac{1}{p-1}}u_{\epsilon}(\rho)=g_{\eta}(\rho)+o_{\epsilon}(R)
$
and by definition of $\tau_{\epsilon,\rho}$ we can see that
\begin{equation}\nonumber
\tau^{\frac{1}{p-1}}_{\epsilon,\rho}=1-\frac{1}{p-1}\frac{1}{a^{\frac{p}{p-1}}_{\epsilon}}\left(g_{\eta}(\rho)+\eta\|g_{\eta}\|^{p}_{L^{p}_{\theta}}+o_{\epsilon}(\rho)+o_{\rho}(1)\right).
\end{equation}
Hence, by using $v_{\epsilon,\rho}=a_{\epsilon}[1+o_{\epsilon}(R)+o_{\epsilon}(\rho)] $ uniformly on $[0,r_{\epsilon}R)$ and $\tau_{\epsilon,\rho}=1+o_{\epsilon}(\rho)$, we can write
\begin{equation}\nonumber
\begin{aligned}
\eta_{\epsilon}\vert u_{\epsilon}\vert^{\frac{p}{p-1}}
&\le \mu_{\alpha,\theta}\vert v_{\epsilon,\rho}/\tau^{1/p}_{\epsilon,\rho}\vert^{\frac{p}{p-1}}+\mu_{\alpha,\theta}g_{\eta}(\rho)+o_{\epsilon}(\rho)+o_{\epsilon}(R)+o_{\rho}(1).\\
\end{aligned}
\end{equation}
Since $0<r_{\epsilon}R\le \rho$ and $\varphi_{p}(t)\le e^{t}$, for $t\ge 0$ 
\begin{equation}\nonumber
\begin{aligned}
\int_{0}^{r_{\epsilon}R}\varphi_{p}\big(\eta_{\epsilon}\vert u_{\epsilon}\vert^{\frac{p}{p-1}}\big)\mathrm{d}\lambda_{\theta}
&\le  \Big[e^{\mu_{\alpha,\theta}g_{\eta}(\rho)+o_{\epsilon}(\rho)+o_{\epsilon}(R)+o_{\rho}(1)}\Big]\times\\
&\Big[\int_{0}^{\rho}\big(e^{\mu_{\alpha,\theta}\vert v_{\epsilon,\rho}/\tau^{1/p}_{\epsilon,\rho}\vert^{\frac{p}{p-1}}}-1\big)\mathrm{d}\lambda_{\theta}\Big]+o_{\epsilon}(\rho).
\end{aligned}
\end{equation}
Thus, from \eqref{CCJJ} we obtain
\begin{equation}\nonumber
\begin{aligned}
\limsup_{\epsilon\rightarrow 0}\int_{0}^{r_{\epsilon}R}\varphi_{p}\big(\eta_{\epsilon}\vert u_{\epsilon}\vert^{\frac{p}{p-1}}\big)\mathrm{d}\lambda_{\theta}\le\frac{\omega_{\theta}\rho^{\theta+1}}{\theta+1} e^{\mu_{\alpha,\theta}g_{\eta}(\rho)+o_{\rho}(1)}e^{\gamma+\Psi(p)}.
\end{aligned}
\end{equation}
Recalling
\eqref{qovere} we can see that 
\begin{equation}\nonumber
\begin{aligned}
\int_{0}^{r_{\epsilon}R}\varphi_{p}\big(\eta_{\epsilon}\vert u_{\epsilon}\vert^{\frac{p}{p-1}}\big)\mathrm{d}\lambda_{\theta} 
&=(1+o_{\epsilon}(R))\frac{d_{\epsilon}}{a^{\frac{p}{p-1}}_{\epsilon}}\int_{0}^{R}e^{(1+o_{\epsilon}(R))\mu_{\alpha,\theta}\frac{p}{p-1}w}\mathrm{d}\lambda_{\theta}.
 \end{aligned}
\end{equation}
Letting $\epsilon\rightarrow 0$ and $R\rightarrow\infty$ and using Lemma~\ref{a/d} we obtain
\begin{equation}\nonumber
\begin{aligned}
\lim_{R\rightarrow \infty}\lim_{\epsilon\rightarrow 0}\int_{0}^{r_{\epsilon}R}\varphi_{p}\big(\eta_{\epsilon}\vert u_{\epsilon}\vert^{\frac{p}{p-1}}\big)\mathrm{d}\lambda_{\theta} = AD(\eta,\mu_{\alpha,\theta}, \alpha,\theta).
 \end{aligned}
\end{equation}
Then 
\begin{equation}\nonumber
\begin{aligned}
AD(\eta,\mu_{\alpha,\theta}, \alpha,\theta)\le\frac{\omega_{\theta}}{\theta+1} e^{\mu_{\alpha,\theta}g_{\eta}(\rho)+(\theta+1)\ln\rho+o_{\rho}(1)}e^{\gamma+\Psi(p)}.
\end{aligned}
\end{equation}
Letting $\rho\rightarrow 0$ and using Lemma~\ref{g-form}  we conclude the proof.
\end{proof}
\begin{lemma}\label{lema-tecnico} For any $z>0$ and $p\ge2$ we have 
\begin{equation}\label{LT-eq1}
\int_{0}^{z}\frac{s^{p-1}}{(1+s)^{p}}\mathrm{d}s=\ln(1+z)-[\gamma+\Psi(p)]+\int_{\frac{z}{1+z}}^{1}\frac{1-s^{p-1}}{1-s}\mathrm{d}s
\end{equation}
and
\begin{equation}\label{LT-eq2}
\int_{0}^{z}\frac{s^{p-2}}{(1+s)^{p}}\mathrm{d}s=p-1-\int_{z}^{\infty}\frac{1-s^{p-2}}{1-s}\mathrm{d}s.
\end{equation}
In addition, we have
\begin{equation}\label{LT-eq3}
\frac{\Gamma(p)\Gamma(1+x)}{\Gamma(p+x)}=1-[\Psi(p)+\gamma]x+O(x^2),\quad \mbox{as}\quad x\rightarrow 0.
\end{equation}
\end{lemma}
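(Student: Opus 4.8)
The plan is to handle the three displayed assertions one at a time; each is elementary once the right change of variables is in place. For \eqref{LT-eq1} I substitute $t=s/(1+s)$, so that $s=t/(1-t)$, $\mathrm{d}s=\mathrm{d}t/(1-t)^2$ and $1+s=1/(1-t)$; a short count of the exponents of $(1-t)$ shows the integrand collapses to $t^{p-1}/(1-t)$, whence
\[
\int_0^z\frac{s^{p-1}}{(1+s)^p}\,\mathrm{d}s=\int_0^{z/(1+z)}\frac{t^{p-1}}{1-t}\,\mathrm{d}t .
\]
Writing $\dfrac{t^{p-1}}{1-t}=\dfrac{1}{1-t}-\dfrac{1-t^{p-1}}{1-t}$, the first term integrates to $-\ln\!\bigl(1-\tfrac{z}{1+z}\bigr)=\ln(1+z)$; for the second I split $\int_0^{z/(1+z)}=\int_0^1-\int_{z/(1+z)}^1$ and invoke Gauss's integral representation of the digamma function, $\displaystyle\int_0^1\frac{1-t^{p-1}}{1-t}\,\mathrm{d}t=\gamma+\Psi(p)$. (If one wants to stay within the identities already used in the paper, this last formula follows by differentiating the Beta identity $\int_0^1 t^{p-1}(1-t)^{y-1}\,\mathrm{d}t=\Gamma(p)\Gamma(y)/\Gamma(p+y)$ in $y$ at $y=1$, using $\Psi=(\ln\Gamma)'$ and $\Gamma(1)=1$; alternatively expand $1/(1-t)$ in a power series and use the series formula for $\Psi$.) Collecting the three pieces gives \eqref{LT-eq1}.

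For \eqref{LT-eq2} the same substitution (equivalently $t=1/(1+s)$) reduces the left-hand side to an elementary power integral; the clean way to package this is to note that, by the Beta–integral identity $\int_0^\infty s^{x-1}(1+s)^{-(x+y)}\,\mathrm{d}s=\Gamma(x)\Gamma(y)/\Gamma(x+y)$ already invoked in the proof of Lemma~\ref{vto1wtolog},
\[
\int_0^\infty\frac{s^{p-2}}{(1+s)^p}\,\mathrm{d}s=\frac{\Gamma(p-1)\Gamma(1)}{\Gamma(p)}=\frac{1}{p-1},
\]
and then to write the integral over $(0,z)$ as this total value minus the tail over $(z,\infty)$, which yields \eqref{LT-eq2}.

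Finally, \eqref{LT-eq3} is a first–order Taylor expansion at $x=0$ of the function $x\mapsto\log\dfrac{\Gamma(p)\Gamma(1+x)}{\Gamma(p+x)}$. It vanishes at $x=0$ since $\Gamma(1)=1$, and its derivative there is $\Psi(1)-\Psi(p)=-\gamma-\Psi(p)$, because $\Psi=(\ln\Gamma)'$ and $\Psi(1)=-\gamma$; analyticity of $\log\Gamma$ near $x=0$ (legitimate since $p\ge2$, so none of $\Gamma(1+x)$, $\Gamma(p+x)$ has a pole or zero there) supplies the $O(x^2)$ remainder. Exponentiating, $e^{-(\gamma+\Psi(p))x+O(x^2)}=1-(\gamma+\Psi(p))x+O(x^2)$, which is \eqref{LT-eq3}.

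Since every step is a single substitution, an algebraic split, or a standard special-function identity, there is no genuine obstacle here; the only places requiring a little care are the bookkeeping of the exponents of $(1-t)$ after the substitution in \eqref{LT-eq1}–\eqref{LT-eq2} and the control of the remainder in \eqref{LT-eq3}, both of which are routine.
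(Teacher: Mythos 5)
Your treatment of \eqref{LT-eq1} and \eqref{LT-eq3} is correct and essentially the same as the paper's: the paper substitutes $\tau=1/(1+s)$ (the mirror image of your $t=s/(1+s)$), splits off the $1/\tau$ term to produce $\ln(1+z)$, and invokes the Gauss-type identity $\int_0^1\frac{1-s^{p-1}}{1-s}\,\mathrm{d}s=\gamma+\Psi(p)$, which it obtains from a Dirichlet-type integral representation of $\Psi$ rather than from differentiating the Beta identity as you suggest; \eqref{LT-eq3} is in both cases a first-order Taylor expansion of $\log\big(\Gamma(p)\Gamma(1+x)/\Gamma(p+x)\big)$ at $x=0$. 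These are cosmetic differences only.

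The one point you should not gloss over is \eqref{LT-eq2}. Your computation gives
\begin{equation*}
\int_0^z\frac{s^{p-2}}{(1+s)^p}\,\mathrm{d}s=\frac{1}{p-1}-\int_z^\infty\frac{s^{p-2}}{(1+s)^p}\,\mathrm{d}s,
\end{equation*}
which is correct and is exactly the form used later in the proof of Lemma~\ref{acima} (there with exponent $p+Y_{p,\eta}$, the Beta identity supplying $\Gamma(p)\Gamma(1+Y_{p,\eta})/\Gamma(p+Y_{p,\eta})$). But this is not the identity printed in \eqref{LT-eq2}: the printed right-hand side has $p-1$ in place of $1/(p-1)$ and a tail $\int_z^\infty\frac{1-s^{p-2}}{1-s}\,\mathrm{d}s$ which, for $p>2$, diverges (the integrand behaves like $s^{p-3}$ at infinity), while for $p=2$ the printed identity reduces to $z/(1+z)=1$. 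So \eqref{LT-eq2} as stated is false, and writing ``which yields \eqref{LT-eq2}'' overstates matters: you should say explicitly that what you prove is the corrected identity displayed above (the version the paper actually needs downstream) and that \eqref{LT-eq2} as printed is a misstatement. Since the paper's own proof of \eqref{LT-eq2} consists of the single word ``analogously,'' there is nothing further to compare on that item.
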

\begin{proof}
We recall (see \cite{special} for instance) the following 
\begin{equation}\label{GPsi}
\Gamma(1)=1, \;\;\Gamma(x+1)=x\Gamma(x)\;\; \mbox{and}\;\;  
\int_{0}^{\infty}\frac{s^{x-1}}{(1+s)^{x+y}}ds=\frac{\Gamma(x)\Gamma(y)}{\Gamma(x+y)},\;\; x, y>0
\end{equation}
and the integral representation of $\Psi(x)$ due to Dirichlet
\begin{equation}\label{DPsi}
\Psi(x)=\int_{0}^{\infty}\frac{1}{z}\big(1-\frac{1}{(1+z)^x}\big)\mathrm{d}z,\quad x>0.
\end{equation}
Since $\Psi(1)=-\gamma$, by using  \eqref{DPsi} and the change of variables $s=1/(1+z)$ it follows that
\begin{equation}\nonumber
\begin{aligned}
\Psi(p)+\gamma&=\int_{0}^{\infty}\frac{1}{z}\Big[\big(1-\frac{1}{(1+z)^p}\big)-\big(1-\frac{1}{1+z}\big)\Big]\mathrm{d}z\\
&=\int_{0}^{1}\frac{1-s^{p-1}}{1-s}\mathrm{d}s.
\end{aligned}
\end{equation}
Thus, setting $1/\tau =1+s$  we can write
\begin{equation}\nonumber
\begin{aligned}
\int_{0}^{z}\frac{s^{p-1}}{(1+s)^{p}}\mathrm{d}s
&= \int_{\frac{1}{1+z}}^{1}\Big[\frac{1}{\tau}+\frac{1}{\tau}((1-\tau)^{p-1}-1)\Big]\mathrm{d}\tau\\
&=\ln(1+z)-[\Psi(p)+\gamma]+\int_{\frac{z}{1+z}}^{1}\frac{1-s^{p-1}}{1-s}\mathrm{d}s
\end{aligned}
\end{equation}
which proves \eqref{LT-eq1}. Analogously, with help of \eqref{GPsi} we obtain \eqref{LT-eq2}. Finally, the Taylor's expansion at $x=0$  of the function 
$x\mapsto {\Gamma(p)\Gamma(1+x)}/{\Gamma(p+x)}$ yields \eqref{LT-eq3}.
\end{proof}
\begin{lemma} \label{acima} There exists $\eta_{0}\in (0, 1)$ such that 
\begin{equation}\nonumber
\begin{aligned}
AD(\eta,\mu_{\alpha,\theta}, \alpha,\theta)>\frac{\omega_{\theta}}{\theta+1} e^{\mu_{\alpha,\theta}\mathcal{A}_{\eta}+\gamma+\Psi(p)},
\end{aligned}
\end{equation}
for any $0<\eta<\eta_0$.
\end{lemma}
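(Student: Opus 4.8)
The plan is to carry out the test–function half of the Carleson--Chang scheme, producing for each admissible $\eta$ an explicit $v_{\eta}\in X^{1,p}_{\infty}$ with $\|v_{\eta}\|=1$ whose energy is essentially spent on a concentrating bubble modelled on the profile $w$ of Lemma~\ref{vto1wtolog} and whose tail is a rescaled copy of the Green-type function $g_{\eta}$ of Lemma~\ref{lemma-greenduro}. Fix a small $\delta\in(0,1)$ and a large parameter $N>0$, and set
$$
v_{\eta}(r)=\begin{cases}
N+N^{-\frac{1}{p-1}}\big(w(r/r_{N})+B\big),& 0<r\le\delta,\\[0.4ex]
N^{-\frac{1}{p-1}}\,g_{\eta}(r),& r\ge\delta,
\end{cases}
$$
where the scale $r_{N}>0$ is determined by the $C^{0}$-matching at $r=\delta$, namely $N^{\frac{p}{p-1}}+w(\delta/r_{N})+B=g_{\eta}(\delta)$ (so $r_{N}\to0$ as $N\to\infty$), and $B=B(\delta,\eta,N)$ is the $O(1)$ constant forced by $\|v_{\eta}\|=1$. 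Here I would use: (i) that $w$ solves the Emden--Fowler equation of Lemma~\ref{vto1wtolog} with $\int_{0}^{\infty}e^{\frac{p}{p-1}\mu_{\alpha,\theta}w}\mathrm{d}\lambda_{\theta}=1$; (ii) the energy identity $\int_{\delta}^{\infty}|g_{\eta}^{\prime}|^{p}\mathrm{d}\lambda_{\alpha}+\int_{\delta}^{\infty}|g_{\eta}|^{p}\mathrm{d}\lambda_{\theta}=g_{\eta}(\delta)\big(1-(1-\eta)\!\int_{0}^{\delta}|g_{\eta}|^{p-1}\mathrm{d}\lambda_{\theta}\big)+\eta\!\int_{\delta}^{\infty}|g_{\eta}|^{p}\mathrm{d}\lambda_{\theta}$ from the proof of Lemma~\ref{abaixo}; and (iii) the evaluation $\int_{0}^{\delta/r_{N}}|w^{\prime}|^{p}\mathrm{d}\lambda_{\alpha}=\frac{p-1}{\mu_{\alpha,\theta}}\int_{0}^{S}\frac{s^{p-1}}{(1+s)^{p}}\mathrm{d}s$, $S=c_{\alpha,\theta}(\delta/r_{N})^{(\theta+1)/(p-1)}\to\infty$, whose asymptotics by Lemma~\ref{lema-tecnico}\,\eqref{LT-eq1} introduce the constant $\gamma+\Psi(p)$. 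These render $B$ and $\|v_{\eta}\|^{p}_{L^{p}_{\theta}}=N^{-\frac{p}{p-1}}\|g_{\eta}\|^{p}_{L^{p}_{\theta}}(1+o_{\delta}(1)+o_{N}(1))$ explicit up to $o_{N}(1)$; in particular $B=g_{\eta}(\delta)h_{g_{\eta}}(\delta)-\eta\!\int_{\delta}^{\infty}|g_{\eta}|^{p}\mathrm{d}\lambda_{\theta}+\tfrac{p-1}{\mu_{\alpha,\theta}}(\gamma+\Psi(p))+O(\delta^{\theta+1})+o_{N}(1)$.

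\textbf{Evaluating the functional.} I would split $\int_{0}^{\infty}\varphi_{p}\big(\mu_{\alpha,\theta}(1+\eta\|v_{\eta}\|^{p}_{L^{p}_{\theta}})^{\frac1{p-1}}|v_{\eta}|^{\frac{p}{p-1}}\big)\mathrm{d}\lambda_{\theta}$ over $(0,r_{N}T)$ (with $T=T(N)\to\infty$ slowly, so $v_{\eta}\gg1$ there), a transition layer $(r_{N}T,\delta)$, and $(\delta,\infty)$. By \eqref{radial lemma} and the series for $\varphi_{p}$ the tail $(\delta,\infty)$ contributes only $O(N^{-p/(p-1)})$ and the transition layer only $O(\delta^{\theta+1})$-size terms; on $(0,r_{N}T)$ one has $\varphi_{p}(t)=e^{t}(1+o(1))$, and expanding
$$
\mu_{\alpha,\theta}(1+\eta\|v_{\eta}\|^{p}_{L^{p}_{\theta}})^{\frac1{p-1}}|v_{\eta}|^{\frac{p}{p-1}}=\mu_{\alpha,\theta}\Big(N^{\frac{p}{p-1}}+\tfrac{p}{p-1}(w(r/r_{N})+B)+\tfrac{\eta}{p-1}\|g_{\eta}\|^{p}_{L^{p}_{\theta}}\Big)+O\big(N^{-\frac{p}{p-1}}(\ln N)^{2}\big),
$$
together with the change of variables $s=c_{\alpha,\theta}(r/r_{N})^{(\theta+1)/(p-1)}$, the matching relation, Lemma~\ref{g-form} (which furnishes $g_{\eta}(\delta)=-\tfrac{\theta+1}{\mu_{\alpha,\theta}}\ln\delta+\mathcal{A}_{\eta}+z(\delta)$) and the value of $B$, one arrives at
\begin{equation}\nonumber
\int_{0}^{\infty}\varphi_{p}\big(\mu_{\alpha,\theta}(1+\eta\|v_{\eta}\|^{p}_{L^{p}_{\theta}})^{\frac1{p-1}}|v_{\eta}|^{\frac{p}{p-1}}\big)\mathrm{d}\lambda_{\theta}\ \ge\ \frac{\omega_{\theta}}{\theta+1}\,e^{\mu_{\alpha,\theta}\mathcal{A}_{\eta}+\gamma+\Psi(p)}\,e^{\mathcal{E}(\delta,\eta)+o_{N}(1)},
\end{equation}
where $\mathcal{E}(\delta,\eta)=\mu_{\alpha,\theta}z(\delta)+\tfrac{\mu_{\alpha,\theta}}{p-1}\,g_{\eta}(\delta)h_{g_{\eta}}(\delta)+\tfrac{\mu_{\alpha,\theta}\eta}{p-1}\int_{0}^{\delta}|g_{\eta}|^{p}\mathrm{d}\lambda_{\theta}+O(\delta^{\theta+1})$.

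\textbf{Conclusion and the hard point.} It remains to check $\mathcal{E}(\delta,\eta)>0$ for all small $\delta$ and all $\eta$ in a fixed interval $(0,\eta_{0})$. By \eqref{o-crucial}, $h_{g_{\eta}}(\delta)\sim\frac{\omega_{\theta}(1-\eta)}{\omega_{\alpha}(\theta+1)}\delta^{\theta+1}|\ln\delta|^{p-1}$, while Lemma~\ref{g-form} gives $g_{\eta}(\delta)\sim\frac{\theta+1}{\mu_{\alpha,\theta}}|\ln\delta|$, $z(\delta)=O(\delta^{\theta+1}|\ln\delta|^{p-1})$ and $\int_{0}^{\delta}|g_{\eta}|^{p}\mathrm{d}\lambda_{\theta}\sim\frac{(\theta+1)^{p-1}\omega_{\theta}}{\mu_{\alpha,\theta}^{p}}\delta^{\theta+1}|\ln\delta|^{p}$; using $\mu_{\alpha,\theta}^{p-1}=(\theta+1)^{p-1}\omega_{\alpha}$, the $(1-\eta)$- and $\eta$-contributions to the dominant $\delta^{\theta+1}|\ln\delta|^{p}$ term combine (the $\eta$ cancels!) to $\frac{\omega_{\theta}}{(p-1)\omega_{\alpha}}\delta^{\theta+1}|\ln\delta|^{p}$, which beats the lower-order $O(\delta^{\theta+1}|\ln\delta|^{p-1})$ and $O(\delta^{\theta+1})$ terms; hence $\mathcal{E}(\delta,\eta)=\frac{\omega_{\theta}}{(p-1)\omega_{\alpha}}\delta^{\theta+1}|\ln\delta|^{p}(1+o_{\delta}(1))>0$, uniformly in $\eta\in(0,\eta_{0})$. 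Fixing such a $\delta$ and then $N$ large enough that $|o_{N}(1)|<\mathcal{E}(\delta,\eta)$, the displayed inequality yields $AD(\eta,\mu_{\alpha,\theta},\alpha,\theta)\ge\int_{0}^{\infty}\varphi_{p}(\cdots)\mathrm{d}\lambda_{\theta}>\frac{\omega_{\theta}}{\theta+1}e^{\mu_{\alpha,\theta}\mathcal{A}_{\eta}+\gamma+\Psi(p)}$, which is precisely \eqref{S>}. The main obstacle is the bookkeeping behind the displayed inequality: pinning down $B$ from the normalization, controlling the transition layer where $v_{\eta}$ is neither $\approx N$ nor small, and checking that every $\eta$-dependent remainder is uniform on $(0,\eta_{0})$; the restriction $\eta<\eta_{0}<1$ is needed only to keep the blow-up apparatus of Lemmas~\ref{we-concentra}--\ref{lemma-greenduro} (good behaviour of $g_{\eta}$ and $\mathcal{A}_{\eta}$) and the hypotheses of Theorem~\ref{thm-extremal} in force.
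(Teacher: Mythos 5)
Your test function is essentially the paper's \eqref{v-testfunction} (bubble profile $w$ glued to the Green-type function $g_{\eta}$, with a matching constant and a normalization constant), and you correctly identify Lemmas~\ref{lema-tecnico} and \ref{g-form} as the computational inputs. But the mechanism by which you extract the \emph{strict} inequality is not the paper's, and I don't believe it survives the bookkeeping you defer. You discard the tail $\int_{\delta}^{\infty}\varphi_{p}(\cdots)\,\mathrm{d}\lambda_{\theta}$ as an $O(N^{-p/(p-1)})$ error, whereas in the paper this is precisely the source of the surplus: the lower bound \eqref{cota-fora} keeps $\frac{\mu_{\alpha,\theta}^{k_{0}}}{k_{0}!}c^{-k_{0}p/(p-1)^{2}}\|g_{\eta}\|^{k_{0}p/(p-1)}_{L^{k_{0}p/(p-1)}_{\theta}}$, and the entire positivity analysis of $H(\epsilon,\eta)$ consists in showing this term beats the genuinely negative contribution $-\frac{p^{2}-p+2}{2(p-1)^{2}}\eta^{2}\|g_{\eta}\|^{2p}_{L^{p}_{\theta}}c^{-p/(p-1)}$ coming from the expansion \eqref{term-druet} of $(1+\eta\|v_{\epsilon}\|^{p}_{L^{p}_{\theta}})^{1/(p-1)}$ --- for integer $p$ these two compete at the \emph{same} order $c^{-p/(p-1)}$, and that competition is exactly why $\eta$ must be small. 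Your remark that $\eta_{0}$ is needed ``only to keep the blow-up apparatus in force'' misses this; if your argument were correct as stated it would prove the lemma for all $\eta\in[0,1)$, a substantially stronger claim than the paper makes.

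Instead you locate the surplus in $\mathcal{E}(\delta,\eta)\sim\frac{\omega_{\theta}}{(p-1)\omega_{\alpha}}\delta^{\theta+1}|\ln\delta|^{p}$, a fixed-$\delta$ correction assembled from $z(\delta)$, $g_{\eta}(\delta)h_{g_{\eta}}(\delta)$ and $\int_{0}^{\delta}|g_{\eta}|^{p}\mathrm{d}\lambda_{\theta}$. The problem is that this is exactly the order of the sign-indefinite remainders that the construction produces: the $L^{q}$-corrections \eqref{test-dentro}--\eqref{lp-test} and the inner/outer gluing each contribute terms of size $O\big((\epsilon L)^{\theta+1}|\ln(\epsilon L)|^{k_{0}p/(p-1)}\big)$ with $k_{0}p/(p-1)\ge p$, i.e.\ in your normalization $O(\delta^{\theta+1}|\ln\delta|^{p})$ with uncontrolled sign, plus the transition layer, which you estimate only as ``$O(\delta^{\theta+1})$-size'' without justification (on $(r_{N}T,\delta)$ the function is neither large nor negligible, so neither $\varphi_{p}(t)\approx e^{t}$ nor the monomial lower bound applies cleanly). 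To make your route rigorous you would have to compute every $\delta^{\theta+1}|\ln\delta|^{p}$-order coefficient exactly --- the very computation you flag as ``the main obstacle'' and do not perform --- and there is no evidence the net coefficient is positive; the paper's authors, who carry out the same expansion, absorb all such terms into error brackets precisely because their sign is not needed once the tail term is retained. As it stands the proposal asserts the key inequality rather than proving it, so the argument has a genuine gap at its central step.
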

\begin{proof}
For any $\epsilon>0$ small enough, let us define
\begin{equation}\label{v-testfunction}
v_{\epsilon}(r)=\left\{
\begin{aligned}
& c+\frac{1}{c^{\frac{1}{p-1}}}\Big[-\frac{p-1}{\mu_{\alpha,\theta}}\ln\Big(1+c_{\alpha,\theta}\Big(\frac{r}{\epsilon}\Big)^{\frac{\theta+1}{p-1}}\Big)+b\Big],& \mathrm{if}\;\; r\le \epsilon L\\
&\frac{1}{c^{\frac{1}{p-1}}}g_{\eta}(r), &  \mathrm{if}\;\; r>\epsilon L
\end{aligned}
\right.
\end{equation}
where $g_{\eta}$ is given by Lemma~\ref{lemma-greenduro}, $c_{\alpha,\theta}=(\omega_{\theta}/(\theta+1))^{1/(p-1)}$,  $L=-\ln\epsilon$ and $b$, $c$ are constants determined later.  To ensure $v_{\epsilon}\in X^{1,p}_{\infty}$, we assume the identity
\begin{equation}\label{lotus primaria}
c+\frac{1}{c^{\frac{1}{p-1}}}\Big[-\frac{p-1}{\mu_{\alpha,\theta}}\ln\Big(1+c_{\alpha,\theta}L^{\frac{\theta+1}{p-1}}\Big)+b\Big]=\frac{1}{c^{\frac{1}{p-1}}}g_{\eta}(\epsilon L). 
\end{equation}
From \eqref{g-shape} and the choice $L=-\ln \epsilon$, we can  write 
\begin{equation}\label{c-choice}
\begin{aligned}
c^{\frac{p}{p-1}} &=\frac{1}{\mu_{\alpha,\theta}}\ln \frac{\omega_{\theta}}{\theta+1}-\frac{\theta+1}{\mu_{\alpha,\theta}}\ln\epsilon+\mathcal{A}_{\eta}-b+O\Big(L^{-\frac{\theta+1}{p-1}}\Big).
\end{aligned}
\end{equation}
By simplicity, let us define 
$
\mathcal{C}^{q}_{g}(r)=\int_{0}^{r}\vert g_{\eta}\vert^{q}\mathrm{d}\lambda_{\theta}$,\; $q\ge 1,  r>0.$ In the same way of \eqref{o-crucial}, we have 
\begin{equation}\label{ordem-capaz}
\lim_{r\rightarrow 0}\frac{\mathcal{C}^{q}_{g}(r)}{r^{\theta+1}\vert \ln r\vert^{q}}=\frac{\omega_{\theta}}{\theta+1}\frac{1}{\omega^{\frac{q}{\alpha}}_{\alpha}}.
\end{equation}
Using \eqref{g-trueeq}, we have
\begin{equation}\nonumber
\begin{aligned}
\int_{\epsilon L}^{\infty}\vert g_{\eta}^{\prime}\vert^{p}\mathrm{d}\lambda_{\alpha}
&=\omega_{\alpha}(\epsilon L)^{\alpha}\vert g_{\eta}^{\prime}(\epsilon L)\vert^{p-1}g_{\eta}(\epsilon L)+(\eta-1)\int_{\epsilon L}^{\infty}\vert g_{\eta}\vert^{p}\mathrm{d}\lambda_{\theta}.
\end{aligned}
\end{equation}
From this and by using \eqref{g-trueeq} again we can write 
\begin{equation}\nonumber
\begin{aligned}
\int_{\epsilon L}^{\infty}\vert v^{\prime}_{\epsilon}\vert^{p}\mathrm{d}\lambda_{\alpha} &+\int_{\epsilon L}^{\infty}\vert v_{\epsilon}\vert^{p}\mathrm{d}\lambda_{\theta}\\
&=\frac{1}{c^{\frac{p}{p-1}}}\big[\eta\|g_{\eta}\|^{p}_{L^{p}_{\theta}}+g_{\eta}(\epsilon L)+(\eta-1)g_{\eta}(\epsilon L)\mathcal{C}^{p-1}_{g}(\epsilon L) -\eta \mathcal{C}^{p}_{g}(\epsilon L) \big].
\end{aligned}
\end{equation}
Therefore, from \eqref{ordem-capaz}  and Lemma~\ref{g-form}
\begin{equation}\label{test-fora} 
\begin{aligned}
&\int_{\epsilon L}^{\infty}\vert v^{\prime}_{\epsilon}\vert^{p}\mathrm{d}\lambda_{\alpha}+\int_{\epsilon L}^{\infty}\vert v_{\epsilon}\vert^{p}\mathrm{d}\lambda_{\theta}\\
&\quad=\frac{1}{c^{\frac{p}{p-1}}}\Big[\eta\|g_{\eta}\|^{p}_{L^{p}_{\theta}}-\frac{\theta+1}{\mu_{\alpha,\theta}}\ln\epsilon-\frac{\theta+1}{\mu_{\alpha,\theta}}\ln L+\mathcal{A}_{\eta}+O((\epsilon L)^{\theta+1}\vert \ln(\epsilon L)\vert^{p})\Big].
\end{aligned}
\end{equation}
By simplicity, we denote $z_{L}=c_{\alpha,\theta}L^{\frac{\theta+1}{p-1}}$. Hence,  from Lemma~\ref{lema-tecnico}
\begin{equation}\nonumber
\begin{aligned}
\int_{0}^{\epsilon L}\vert v^{\prime}_{\epsilon}\vert^{p}\mathrm{d}\lambda_{\alpha}
&=\frac{p-1}{c^{\frac{p}{p-1}}\mu_{\alpha,\theta}}\Big[\ln\left(1+z_{L}\right)-[\gamma+\Psi(p)]+\int_{\frac{z_{L}}{1+z_{L}}}^{1}\frac{1-s^{p-1}}{1-s}\mathrm{d}s\Big].
\end{aligned}
\end{equation}
Noticing that  
\begin{equation}\nonumber
\lim_{z\rightarrow\infty}(1+z)\int_{\frac{z}{1+z}}^{1}\frac{1-s^{p-1}}{1-s}\mathrm{d}s=p-1 \quad \mbox{(by L’Hospital rule)}
\end{equation}
and 
\begin{equation}\nonumber
\frac{p-1}{\mu_{\alpha,\theta}}\ln\left(1+z_{L}\right)=\frac{\theta+1}{\mu_{\alpha,\theta}}\ln L+\frac{1}{\mu_{\alpha,\theta}}\ln \frac{\omega_{\theta}}{\theta+1}+O\Big(L^{-\frac{\theta+1}{p-1}}\Big)
\end{equation}
we can see that
\begin{equation}\label{gradtest-dentro}
\begin{aligned}
& \int_{0}^{\epsilon L}\vert v^{\prime}_{\epsilon}\vert^{p}\mathrm{d}\lambda_{\alpha} =\frac{1}{c^{\frac{p}{p-1}}}\Big[\frac{\theta+1}{\mu_{\alpha,\theta}}\ln L+\frac{1}{\mu_{\alpha,\theta}}\ln \frac{\omega_{\theta}}{\theta+1}-\frac{p-1}{\mu_{\alpha,\theta}}[\gamma+\Psi(p)]+O\Big(L^{-\frac{\theta+1}{p-1}}\Big)\Big].
\end{aligned}
\end{equation}
On the other hand, by using $L=-\ln \epsilon$, \eqref{g-shape} and  \eqref{lotus primaria} we obtain for any $q\ge 1$
 \begin{equation}\label{test-dentro}
\begin{aligned}
\int_{0}^{\epsilon L}\vert v_{\epsilon}\vert^{q}\mathrm{d}\lambda_{\theta}&=\frac{1}{c^{\frac{q}{p-1}}}\int_{0}^{\epsilon L}\Big\vert g_{\eta}(\epsilon L)+\frac{p-1}{\mu_{\alpha,\theta}}\ln\bigg(\frac{1+c_{\alpha,
\theta}L^{\frac{\theta+1}{p-1}}}{1+c_{\alpha,
\theta}\big(\frac{r}{\epsilon}\big)^{\frac{\theta+1}{p-1}}}\bigg)\Big\vert^{q} \mathrm{d}\lambda_{\theta}\\
&=\frac{1}{c^{\frac{q}{p-1}}}\Big[O\big((\epsilon L)^{\theta+1}\vert \ln(\epsilon L)\vert^{q}\big)\Big].
\end{aligned}
\end{equation}
Also, for any $q\ge p$, directly from \eqref{v-testfunction} and \eqref{ordem-capaz} we have
\begin{equation}\label{test-foraLq}
\begin{aligned}
\int_{\epsilon L}^{\infty}\vert v_{\epsilon}\vert^{q}\mathrm{d}\lambda_{\theta}&=\frac{1}{c^{\frac{q}{p-1}}}\Big[\|g_{\eta}\|^{q}_{L^{q}_{\theta}}+O\left((\epsilon L)^{\theta+1}\vert\ln(\epsilon L)\vert^{q}\right)\Big].
\end{aligned}
\end{equation}
Combining  \eqref{test-dentro} and \eqref{test-foraLq} we have 
\begin{equation}\label{lp-test}
\begin{aligned}
\|v_{\epsilon}\|^{q}_{L^{q}_{\theta}}=\frac{1}{c^{\frac{q}{p-1}}}\left[\|g_{\eta}\|^{q}_{L^{q}_{\theta}}+O\left((\epsilon L)^{\theta+1}\vert \ln(\epsilon L)\vert^{q}\right)\right], \quad q\ge p.
\end{aligned}
\end{equation}
In addition,  from \eqref{test-fora}, \eqref{gradtest-dentro} and \eqref{test-dentro}
\begin{equation}\nonumber
\begin{aligned}
 \|v_{\epsilon}\|^{p}& =\frac{1}{c^{\frac{p}{p-1}}}\Big[\eta\|g_{\eta}\|^{p}_{L^{p}_{\theta}}+\frac{1}{\mu_{\alpha,\theta}}\ln \frac{\omega_{\theta}}{\theta+1}-\frac{\theta+1}{\mu_{\alpha,\theta}}\ln\epsilon\\
& +\mathcal{A}_{\eta}-\frac{p-1}{\mu_{\alpha,\theta}}[\gamma+\Psi(p)]+O\left(L^{-\frac{\theta+1}{p-1}}\right)\Big].\\
\end{aligned}
\end{equation}
For $\epsilon>0$ small enough, we can choose $b$ in \eqref{c-choice} of the form 
\begin{equation}\label{b-choice}
\begin{aligned}
b=& -\eta\|g_{\eta}\|^{p}_{L^{p}_{\theta}}+\frac{p-1}{\mu_{\alpha,\theta}}[\gamma+\Psi(p)]+O\Big(L^{-\frac{\theta+1}{p-1}}\Big)
\end{aligned}
\end{equation}
such that $c^{\frac{p}{p-1}}$ satisfies the equation \eqref{c-choice} and  $\|v_{\epsilon}\|=1$. In addition,  $c$ satisfies
\begin{equation}\label{c-choice2}
\begin{aligned}
c^{\frac{p}{p-1}}& =\eta\|g_{\eta}\|^{p}_{L^{p}_{\theta}}+\frac{1}{\mu_{\alpha,\theta}}\ln \frac{\omega_{\theta}}{\theta+1}-\frac{\theta+1}{\mu_{\alpha,\theta}}\ln\epsilon+\mathcal{A}_{\eta}-\frac{p-1}{\mu_{\alpha,\theta}}[\gamma+\Psi(p)]+O\Big(L^{-\frac{\theta+1}{p-1}}\Big).
\end{aligned}
\end{equation}
Now, for any $q>0$
\begin{equation}\nonumber
\begin{aligned}
 \left(1+\eta\|v_{\epsilon}\|^{p}_{L^{p}_{\theta}}\right)^{q}&=1+q\eta\|v_{\epsilon}\|^{p}_{L^{p}_{\theta}}+\frac{q(q-1)}{2!}(\eta\|v_{\epsilon}\|^{p}_{L^{p}_{\theta}})^{2}+R(\|v_{\epsilon}\|^{p}_{L^{p}_{\theta}})
\end{aligned}
\end{equation}
where, for some $0<\tau_{\epsilon}<1$,
\begin{equation}\nonumber
R(\|v_{\epsilon}\|^{p}_{L^{p}_{\theta}})=\frac{q(q-1)(q-2)}{3!}\left(1+\tau_{\epsilon}\eta\|v_{\epsilon}\|^{p}_{L^{p}_{\theta}}\right)^{q-3}(\eta\|v_{\epsilon}\|^{p}_{L^{p}_{\theta}})^{3}.
\end{equation}
Thus, taking into account \eqref{lp-test}, \eqref{c-choice2}  and  $L=-\ln\epsilon$ we can see that 
\begin{equation}\label{term-druet}
\begin{aligned}
 \left(1+\eta\|v_{\epsilon}\|^{p}_{L^{p}_{\theta}}\right)^{\frac{1}{p-1}}&=1+\frac{1}{p-1}\frac{\eta \|g_{\eta}\|^{p}_{L^{p}_{\theta}}}{c^{\frac{p}{p-1}}}-\frac{p-2}{2(p-1)^2}\frac{\eta^{2} \|g_{\eta}\|^{2p}_{L^{p}_{\theta}}}{c^{\frac{2p}{p-1}}} +O\Big(c^{-\frac{3p}{p-1}}\Big).
\end{aligned}
\end{equation}
Since $\varphi_{p}(t)\ge t^{k_0}/k_0!,$ for $t\ge 0$,  by using \eqref{test-foraLq} we can write
\begin{equation}\nonumber
\begin{aligned}
&\int_{\epsilon L}^{\infty} \varphi_{p}\big(\mu_{\alpha,\theta}\big(1+\eta\|v_{\epsilon}\|^{p}_{L^{p}_{\theta}}\big)^{\frac{1}{p-1}}\vert v_{\epsilon}\vert^{\frac{p}{p-1}}\big)\mathrm{d}\lambda_{\theta}\\
&\ge \frac{\mu^{k_0}_{\alpha,\theta}}{k_0
!}\frac{1}{c^{\frac{pk_0}{(p-1)^2}}} \left(1+\eta\|v_{\epsilon}\|^{p}_{L^{p}_{\theta}}\right)\Big[\|g_{\eta}\|^{\frac{k_0p}{p-1}}_{L^{\frac{pk_0}{p-1}}_{\theta}}+O\Big((\epsilon L)^{\theta+1}\vert \ln(\epsilon L)\vert^{{\frac{k_0p}{p-1}}}\Big)\Big].
\end{aligned}
\end{equation}
From  \eqref{lp-test},  $\|v_{\epsilon}\|^{p}_{L^{p}_{\theta}}=O(c^{-\frac{p}{p-1}})$ and consequently
\begin{equation}\label{cota-fora}
\begin{aligned}
&\int_{\epsilon L}^{\infty} \varphi_{p}\big(\mu_{\alpha,\theta}\big(1+\eta\|v_{\epsilon}\|^{p}_{L^{p}_{\theta}}\big)^{\frac{1}{p-1}}\vert v_{\epsilon}\vert^{\frac{p}{p-1}}\big)\mathrm{d}\lambda_{\theta}\\
&\ge \frac{1}{c^{\frac{pk_0}{(p-1)^2}}}\Big[\frac{\mu^{k_0}_{\alpha,\theta}}{k_0!}\|g_{\eta}\|^{\frac{k_0p}{p-1}}_{L^{\frac{pk_0}{p-1}}_{\theta}}+O\Big((\epsilon L)^{\theta+1}\vert \ln(\epsilon L)\vert^{\frac{k_0p}{p-1}}\Big)+O\left(c^{-\frac{p}{p-1}}\right)\Big].
\end{aligned}
\end{equation}
Note that for $\epsilon>0$ small enough, we have 
\begin{equation}\nonumber
c^{-\frac{p}{p-1}}\Big[-\frac{p-1}{\mu_{\alpha,\theta}}\ln\Big(1+c_{\alpha,\theta}\Big(\frac{r}{\epsilon}\Big)^{\frac{\theta+1}{p-1}}\Big)+b\Big]>-1,
\end{equation}
for any  $r\in (0,\epsilon L]$. Hence, the inequality $(1+t)^{d}\ge 1+dt$ for $t>-1$ and $d\in (1, 2]$ yields
\begin{equation}\nonumber
\begin{aligned}
\Big\vert \frac{v_{\epsilon}}{c}\Big\vert^{\frac{p}{p-1}}& \ge 1+\frac{p}{p-1}\frac{1}{c^{\frac{p}{p-1}}}\Big[w\left(\frac{r}{\epsilon}\right)+b\Big],
\end{aligned}
\end{equation}
where $w$ is given by \eqref{w-express}. Using \eqref{b-choice} 
\begin{equation}\label{term-Moser}
\begin{aligned}
 \vert v_{\epsilon}\vert^{\frac{p}{p-1}}& \ge c^{\frac{p}{p-1}}-\frac{p\eta}{p-1}\|g_{\eta}\|^{p}_{L^{p}_{\theta}}+\frac{p}{\mu_{\alpha,\theta}}[\gamma+\Psi(p)]+\frac{p}{p-1}w(r/\epsilon)+O\Big(L^{-\frac{\theta+1}{p-1}}\Big).
\end{aligned}
\end{equation}
From \eqref{term-druet}, we have for any $r\in (0,\epsilon L]$
\begin{equation}\label{two-pieces}
\begin{aligned}
  \left(1+\eta\|v_{\epsilon}\|^{p}_{L^{p}_{\theta}}\right)^{\frac{1}{p-1}}\vert v_{\epsilon}\vert^{\frac{p}{p-1}} & = \Big(1+\frac{\eta \|g_{\eta}\|^{p}_{L^{p}_{\theta}}}{(p-1)c^{\frac{p}{p-1}}}\Big)\vert v_{\epsilon}\vert^{\frac{p}{p-1}}\\
 & + \Big[\frac{2-p}{2(p-1)^2}\frac{\eta^{2} \|g_{\eta}\|^{2p}_{L^{p}_{\theta}}}{c^{\frac{2p}{p-1}}} +O\left(c^{-\frac{3p}{p-1}}\right)\Big]\vert v_{\epsilon}\vert^{\frac{p}{p-1}}.
\end{aligned}
\end{equation}
Next, we shall estimate each term on the right hand side of \eqref{two-pieces}. Firstly,  from  \eqref{term-Moser}
\begin{equation}\nonumber
\begin{aligned}
 \Big(1+\frac{\eta \|g_{\eta}\|^{p}_{L^{p}_{\theta}}}{(p-1)c^{\frac{p}{p-1}}}\Big)\vert v_{\epsilon}\vert ^{\frac{p}{p-1}} &\ge  \Big(1+\frac{\eta \|g_{\eta}\|^{p}_{L^{p}_{\theta}}}{(p-1)c^{\frac{p}{p-1}}}\Big)\frac{p}{p-1}w(r/\epsilon)\\ 
 &+c^{\frac{p}{p-1}}-\eta \|g_{\eta}\|^{p}_{L^{p}_{\theta}}-\frac{p\eta^{2} \|g_{\eta}\|^{2p}_{L^{p}_{\theta}}}{(p-1)^{2}c^{\frac{p}{p-1}}}+ \frac{p}{\mu_{\alpha,\theta}}[\gamma+\Psi(p)]\\ 
 & +\frac{p}{\mu_{\alpha,\theta}}[\gamma+\Psi(p)]\frac{\eta \|g_{\eta}\|^{p}_{L^{p}_{\theta}}}{(p-1)c^{\frac{p}{p-1}}}+O\left(L^{-\frac{\theta+1}{p-1}}\right).\\
\end{aligned}
\end{equation}
Using the identity in  \eqref{c-choice2}, we can also write
\begin{equation}\label{pieceI}
\begin{aligned}
& \Big(1 +\frac{\eta \|g_{\eta}\|^{p}_{L^{p}_{\theta}}}{(p-1)c^{\frac{p}{p-1}}}\Big)\vert v_{\epsilon}\vert^{\frac{p}{p-1}} \ge  \Big(1+\frac{\eta \|g_{\eta}\|^{p}_{L^{p}_{\theta}}}{(p-1)c^{\frac{p}{p-1}}}\Big)\frac{p}{p-1}w(r/\epsilon)\\ 
 &\quad + \frac{1}{\mu_{\alpha,\theta}}\ln \frac{\omega_{\theta}}{\theta+1}-\frac{\theta+1}{\mu_{\alpha,\theta}}\ln\epsilon+\mathcal{A}_{\eta}+\frac{1}{\mu_{\alpha,\theta}}[\gamma+\Psi(p)]-\frac{p\eta^{2} \|g_{\eta}\|^{2p}_{L^{p}_{\theta}}}{(p-1)^{2}c^{\frac{p}{p-1}}}\\ 
 & \quad +\frac{p}{\mu_{\alpha,\theta}}[\gamma+\Psi(p)]\frac{\eta \|g_{\eta}\|^{p}_{L^{p}_{\theta}}}{(p-1)c^{\frac{p}{p-1}}}+O\left(L^{-\frac{\theta+1}{p-1}}\right).\\
\end{aligned}
\end{equation}
On the other hand, from  \eqref{term-Moser} it is easy to see that
\begin{equation}\label{pieceII}
\begin{aligned}
 \Big(\frac{2-p}{2(p-1)^2}\frac{\eta^{2} \|g_{\eta}\|^{2p}_{L^{p}_{\theta}}}{c^{\frac{2p}{p-1}}} +O\left(c^{-\frac{3p}{p-1}}\Big)\right)\vert v_{\epsilon}\vert^{\frac{p}{p-1}} 
&=\frac{1}{c^{\frac{p}{p-1}}}\Phi_{\epsilon}
\end{aligned}
\end{equation}
where 
\begin{equation}\nonumber
\begin{aligned}
\Phi_{\epsilon}&=\Big[\frac{2-p}{2(p-1)^2}\eta^{2} \|g_{\eta}\|^{2p}_{L^{p}_{\theta}}+O\left(c^{-\frac{p}{p-1}}\right)\Big]\\
& \times\Big[1-\frac{p\eta\|g_{\eta}\|^{p}_{L^{p}_{\theta}}}{(p-1)c^{\frac{p}{p-1}}}+\frac{p[\gamma+\Psi(p)]}{\mu_{\alpha,\theta}c^{\frac{p}{p-1}}}+\frac{pw(r/\epsilon)}{(p-1)c^{\frac{p}{p-1}}}+O\Big(L^{-\frac{\theta+1}{p-1}}\Big)\Big].\\
\end{aligned}
\end{equation}
Note that  (recall $\theta\ge \alpha=p-1$)
\begin{equation}\label{pieceII-limit}
\begin{aligned}
\lim_{\epsilon\rightarrow 0}\Big[\Phi_{\epsilon} &+\frac{p^2-p+2}{2(p-1)^{2}}\eta^{2} \|g_{\eta}\|^{2p}_{L^{p}_{\theta}}+O\Big(\frac{c^{\frac{p}{p-1}}}{L^{\frac{\theta+1}{p-1}}}\Big)\Big]\\
&=\left\{\begin{aligned}
& (p-2)^2\frac{\eta^2 \|g_{\eta}\|^{2p}_{L^{p}_{\theta}}}{2(p-1)^2}>0, \;\;&\mbox{if}&\;\; p>2\\
& 2\eta^2\|g_{\eta}\|^{4}_{L^{2}_{\theta}}>0, \;\;&\mbox{if}&\;\; p=2.
\end{aligned}\right.
\end{aligned}
\end{equation}
For $\epsilon>0$ small enough, combining \eqref{two-pieces}, \eqref{pieceI}, \eqref{pieceII} and \eqref{pieceII-limit}, we have 
\begin{equation}\label{term-druet-Moser}
\begin{aligned}
  \left(1+\eta\|v_{\epsilon}\|^{p}_{L^{p}_{\theta}}\right)^{\frac{1}{p-1}} &\vert v_{\epsilon}\vert^{\frac{p}{p-1}} \ge\Big(1+\frac{\eta \|g_{\eta}\|^{p}_{L^{p}_{\theta}}}{(p-1)c^{\frac{p}{p-1}}}\Big)\frac{p}{p-1}w(r/\epsilon) \\
& + \frac{1}{\mu_{\alpha,\theta}}\ln \frac{\omega_{\theta}}{\theta+1}-\frac{\theta+1}{\mu_{\alpha,\theta}}\ln\epsilon+\mathcal{A}_{\eta}+\frac{1}{\mu_{\alpha,\theta}}[\gamma+\Psi(p)]\\
& +\frac{1}{\mu_{\alpha,\theta}}[\gamma+\Psi(p)]\frac{p\eta \|g_{\eta}\|^{p}_{L^{p}_{\theta}}}{(p-1)c^{\frac{p}{p-1}}}-\frac{p^2-p+2}{2(p-1)^{2}}\frac{\eta^{2} \|g_{\eta}\|^{2p}_{L^{p}_{\theta}}}{c^{\frac{p}{p-1}}}.
\end{aligned}
\end{equation}
From \eqref{exp-frac},  \eqref{test-dentro},\eqref{lp-test}, \eqref{c-choice2} and $L=-\ln \epsilon$, we have
\begin{equation}\label{cota-dentro 2part}
\begin{aligned}
&  \int_{0}^{\epsilon L} \varphi_{p}\big(\mu_{\alpha,\theta}\big(1 +\eta\|v_{\epsilon}\|^{p}_{L^{p}_{\theta}}\big)^{\frac{1}{p-1}}\vert v_{\epsilon}\vert^{\frac{p}{p-1}}\big)\mathrm{d}\lambda_{\theta} \\
 &\quad\quad\quad= \int_{0}^{\epsilon L}e^{\mu_{\alpha,\theta}\big(1+\eta\|v_{\epsilon}\|^{p}_{L^{p}_{\theta}}\big)^{\frac{1}{p-1}}\vert v_{\epsilon}\vert^{\frac{p}{p-1}}}\mathrm{d}\lambda_{\theta}+O\Big(L^{-\frac{\theta+1}{p-1}}\Big).
\end{aligned}
\end{equation}
By simplicity, set 
$$
Y_{p,\eta}=\frac{1}{c^{\frac{p}{p-1}}}\frac{p\eta}{p-1} \|g_{\eta}\|^{p}_{L^{p}_{\theta}}.
$$
With this notation, from \eqref{LT-eq3} and \eqref{GPsi} we have 
\begin{equation}\nonumber
\begin{aligned}
&\frac{1}{\epsilon^{\theta+1}} \int_{0}^{\epsilon L}e^{\mu_{\alpha,\theta}\Big(1+\frac{\eta \|g_{\eta}\|^{p}_{L^{p}_{\theta}}}{(p-1)c^{\frac{p}{p-1}}}\Big)\frac{p}{p-1}w(r/\epsilon)}\mathrm{d}\lambda_{\theta} =(p-1) \int_{0}^{z_L}\frac{s^{p-2}}{(1+s)^{p+Y_{p,\eta}}}ds\\
 &=\frac{\Gamma(p)\Gamma\left(1+Y_{p,\eta}\right)}{\Gamma\left(p+Y_{p,\eta}\right)}
 -(p-1) \int_{z_{L}}^{\infty}\frac{s^{p-2}}{(1+s)^{p+Y_{p,\eta}}}ds\\
 &=1-[\Psi(p)+\gamma]Y_{p,\eta}+O\left(c^{-\frac{2p}{p-1}}\right) + O\Big(L^{-\frac{\theta+1}{p-1}}\Big).
\end{aligned}
\end{equation}
This together with \eqref{term-druet-Moser} and the inequality $e^{x}\ge 1+x$ yields
\begin{equation}\nonumber
\begin{aligned}
&  \int_{0}^{\epsilon L}e^{\mu_{\alpha,\theta}\big(1+\eta\|v_{\epsilon}\|^{p}_{L^{p}_{\theta}}\big)^{\frac{1}{p-1}}\vert v_{\epsilon}\vert^{\frac{p}{p-1}}}\mathrm{d}\lambda_{\theta}\\
&\ge  \frac{\omega_{\theta}}{\theta+1}e^{\mu_{\alpha,\theta}\mathcal{A}_{\eta}+\gamma+\Psi(p)}\Big[1-\frac{p^2-p+2}{2(p-1)^{2}}\frac{\mu_{\alpha,\theta}\eta^{2} \|g_{\eta}\|^{2p}_{L^{p}_{\theta}}}{c^{\frac{p}{p-1}}}+O\left(c^{-\frac{2p}{p-1}}\right) + O\left(L^{-\frac{\theta+1}{p-1}}\right)\\
& +[\gamma+\Psi(p)]\frac{p^2-p+2}{2(p-1)^2}\frac{\mu_{\alpha,\theta}\eta^2\|g_{\eta}\|^{2p}_{L^{p}_{\theta}}}{c^{\frac{p}{p-1}}}Y_{p,\eta}\Big].
\end{aligned}
\end{equation}
Hence, 
\begin{equation}\label{cota-dentro part1}
\begin{aligned}
&  \int_{0}^{\epsilon L}e^{\mu_{\alpha,\theta}\big(1+\eta\|v_{\epsilon}\|^{p}_{L^{p}_{\theta}}\big)^{\frac{1}{p-1}}\vert v_{\epsilon}\vert^{\frac{p}{p-1}}}\mathrm{d}\lambda_{\theta}
 \ge \frac{\omega_{\theta}}{\theta+1}e^{\mu_{\alpha,\theta}\mathcal{A}_{\eta}+\gamma+\Psi(p)}\\
&+ \frac{\omega_{\theta}}{\theta+1}e^{\mu_{\alpha,\theta}\mathcal{A}_{\eta}+\gamma+\Psi(p)}\frac{\eta\mu_{\alpha,\theta} \|g_{\eta}\|^{p}_{L^{p}_{\theta}}}{c^{\frac{p}{p-1}}}\Big[\left([\gamma+\Psi(p)]Y_{p,\eta}-1\right)\frac{p^2-p+2}{2(p-1)^{2}}\eta \|g_{\eta}\|^{p}_{L^{p}_{\theta}}\\
& +O\left(c^{-\frac{2p}{p-1}}\right)+ O\left(L^{-\frac{\theta+1}{p-1}}\right)\Big].
\end{aligned}
\end{equation}
Now, by using \eqref{cota-fora}, \eqref{cota-dentro 2part}, \eqref{cota-dentro part1} we obtain
\begin{equation}\label{cota-final}
\begin{aligned}
&  \int_{0}^{\infty}e^{\mu_{\alpha,\theta}\big(1+\eta\|v_{\epsilon}\|^{p}_{L^{p}_{\theta}}\big)^{\frac{1}{p-1}}\vert v_{\epsilon}\vert^{\frac{p}{p-1}}}\mathrm{d}\lambda_{\theta}
 \ge \frac{\omega_{\theta}}{\theta+1}e^{\mu_{\alpha,\theta}\mathcal{A}_{\eta}+\gamma+\Psi(p)}+\frac{\|g_{\eta}\|^{p}_{L^{p}_{\theta}}}{c^{\frac{k_0p}{(p-1)^2}}}H(\epsilon,\eta),
\end{aligned}
\end{equation}
where
\begin{equation}\nonumber
\begin{aligned}
 & H(\epsilon,\eta)=\frac{\mu^{k_0}_{\alpha,\theta}}{k_0!}\frac{\|g_{\eta}\|^{\frac{k_0p}{p-1}}_{L^{\frac{pk_0}{p-1}}_{\theta}}}{\|g_{\eta}\|^{p}_{L^{p}_{\theta}}}+\frac{1}{c^{\frac{p}{p-1}}\|g_{\eta}\|^{p}_{L^{p}_{\theta}}}\Big[O\Big(c^{\frac{p}{p-1}}(\epsilon L)^{\theta+1}\ln^{\frac{k_0p}{p-1}}(\epsilon L)\Big)+O\left(1\right)\Big]\\
& +\frac{\omega_{\theta}\eta\mu_{\alpha,\theta}}{\theta+1}e^{\mu_{\alpha,\theta}\mathcal{A}_{\eta}+\gamma+\Psi(p)}c^{\frac{p}{p-1}(\frac{k_0}{p-1}-1)}\left([\gamma+\Psi(p)]Y_{p,\eta}-1\right)\frac{p^2-p+2}{2(p-1)^{2}}\eta \|g_{\eta}\|^{p}_{L^{p}_{\theta}} \\
& + O\Big(c^{\frac{k_0p}{(p-1)^2}-\frac{2p}{p-1}}\Big)+ O\Big(c^{\frac{k_0p}{(p-1)^2}}L^{-\frac{\theta+1}{p-1}}\Big).
\end{aligned}
\end{equation}
Now, it is sufficient to show that $H(\epsilon,\eta)>0$, for $\epsilon>0, \eta\ge 0$ small.
\paragraph{\textbf{Case 1:}} $2\le p\in\mathbb{R}$,  $\eta =0$ and $\epsilon>0$ small.
In this case we have 
\begin{equation}\nonumber
\begin{aligned}
H(\epsilon,0)&=\frac{\mu^{k_0}_{\alpha,\theta}}{k_0!}\frac{\|g_{0}\|^{\frac{k_0p}{p-1}}_{L^{\frac{pk_0}{p-1}}_{\theta}}}{\|g_{0}\|^{p}_{L^{p}_{\theta}}}+\frac{1}{c^{\frac{p}{p-1}}\|g_{0}\|^{p}_{L^{p}_{\theta}}}\Big[O\Big(c^{\frac{p}{p-1}}(\epsilon L)^{\theta+1}\ln^{\frac{k_0p}{p-1}}(\epsilon L)\Big)+O\left(1\right)\Big]\\
& +O\Big(c^{\frac{k_0p}{(p-1)^2}-\frac{2p}{p-1}}\Big)+ O\Big(c^{\frac{k_0p}{(p-1)^2}}L^{-\frac{\theta+1}{p-1}}\Big).
\end{aligned}
\end{equation}
Noting that $p-1\le k_0<p$ and by using \eqref{c-choice2} with $L=-\ln \epsilon$  we can see that
\begin{equation}\label{O-limits}
c^{\frac{p}{p-1}}\rightarrow\infty, \; c^{\frac{k_0p}{(p-1)^2}-\frac{2p}{p-1}}\rightarrow 0,\; c^{\frac{k_0p}{(p-1)^2}}L^{-\frac{\theta+1}{p-1}}\rightarrow 0,\; c^{\frac{p}{p-1}}(\epsilon L)^{\theta+1}\ln^{\frac{k_0p}{p-1}}(\epsilon L)\rightarrow 0,
\end{equation}
as $\epsilon\rightarrow 0$.
Thus,  we obtain $H(\epsilon,0)>0$ for $\epsilon>0$ small enough.
\paragraph{\textbf{Case 2:}} $2\le p\in\mathbb{N}$,  $\epsilon>0$ and $\eta>0$ small. Let $g_0$ be the solution of the equation \eqref{g-trueeq}, that is, 
\begin{equation}\nonumber
\omega_{\alpha}r^{\alpha}\vert g_{0}^{\prime}(r)\vert^{p-1}+\int_{0}^{r}\vert g_{0}\vert^{p-1}\mathrm{d}\lambda_{\theta}=1,\quad r>0.
\end{equation}
A simple scaling argument shows that $g_{\eta}(r)=g_{0}((1-\eta)^{1/(\theta+1)}r)$ . Hence, we have 
\begin{equation}\label{gg0}
A_{\eta}=A_0-\frac{1}{\mu_{\alpha,\theta}}\ln(1-\eta)\quad \mbox{and}\quad \|g_{\eta}\|^{q}_{L^{q}_{\theta}}=(1-\eta)^{-1}\|g_{0}\|^{q}_{L^{q}_{\theta}},\quad q\ge p.
\end{equation}
Now, since $p\in\mathbb{N}$, we have  $k_0=p-1$.  Thus,  by using \eqref{gg0}
\begin{equation}\nonumber
\begin{aligned}
& H(\epsilon,\eta)=\frac{\mu^{p-1}_{\alpha,\theta}}{(p-1)!}+\frac{1-\eta}{c^{\frac{p}{p-1}}\|g_{0}\|^{p}_{L^{p}_{\theta}}}\Big[O\left(c^{\frac{p}{p-1}}(\epsilon L)^{\theta+1}\vert \ln(\epsilon L)\vert^{p}\right)+O\left(1\right)\Big]\\
& +\Big[\frac{\eta}{1-\eta}\frac{\omega_{\theta}\mu_{\alpha,\theta}}{\theta+1}e^{\mu_{\alpha,\theta}\mathcal{A}_{0}+\gamma+\Psi(p)}\Big(\frac{p}{p-1}[\gamma+\Psi(p)]\frac{\eta}{1-\eta}\frac{\|g_{0}\|^{p}_{L^{p}_{\theta}}}{c^{\frac{p}{p-1}}}-1\Big)\Big]\times\\
&\quad\;\Big[\frac{p^2-p+2}{2(p-1)^{2}}\frac{\eta}{1-\eta} \|g_{0}\|^{p}_{L^{p}_{\theta}}\Big]+O\left(c^{-\frac{p}{p-1}}\right)+ O\left(c^{\frac{p}{p-1}}L^{-\frac{\theta+1}{p-1}}\right).
\end{aligned}
\end{equation}
From this, we conclude that $H(\epsilon,\eta)>0$, if $\epsilon,\eta>0$ is small enough.
\paragraph{\textbf{Case 3:}} $2\le p\not\in\mathbb{N}$,  $\epsilon>0$ and $\eta>0$ small. Here $p-1<k_0<p$ and, from  \eqref{c-choice2} and \eqref{gg0}
\begin{equation}\nonumber
\begin{aligned}
\eta c^{\frac{p}{p-1}(\frac{k_0}{p-1}-1)}& =\Big[\eta^{\frac{1}{\frac{k_0}{p-1}-1}}\frac{\eta}{1-\eta}\|g_{0}\|^{p}_{L^{p}_{\theta}}+\eta^{\frac{1}{\frac{k_0}{p-1}-1}}\frac{1}{\mu_{\alpha,\theta}}\ln \frac{\omega_{\theta}}{\theta+1}\\
&-\frac{\theta+1}{\mu_{\alpha,\theta}}\eta^{\frac{1}{\frac{k_0}{p-1}-1}}\ln\epsilon+\eta^{\frac{1}{\frac{k_0}{p-1}-1}}\mathcal{A}_{0}-\frac{1}{\mu_{\alpha,\theta}}\eta^{\frac{1}{\frac{k_0}{p-1}-1}}\ln(1-\eta)\\
&-\eta^{\frac{1}{\frac{k_0}{p-1}-1}}\frac{p-1}{\mu_{\alpha,\theta}}[\gamma+\Psi(p)]+O\left(L^{-\frac{\theta+1}{p-1}}\right)\Big]^{\frac{k_0}{p-1}-1}.
\end{aligned}
\end{equation}
Hence, by choosing $\epsilon=\eta>0$  we have 
\begin{equation}\label{c-control}
\eta c^{\frac{p}{p-1}(\frac{k_0}{p-1}-1)}\rightarrow 0, \quad \mbox{as}\;\; \epsilon=\eta\rightarrow 0.
\end{equation}
Moreover, using \eqref{gg0}
\begin{equation}\nonumber
\begin{aligned}
& H(\epsilon,\eta)=\frac{\mu^{k_0}_{\alpha,\theta}}{k_0!}\frac{\|g_{0}\|^{\frac{k_0p}{p-1}}_{L^{\frac{pk_0}{p-1}}_{\theta}}}{\|g_{0}\|^{p}_{L^{p}_{\theta}}}+\frac{1-\eta}{c^{\frac{p}{p-1}}\|g_{0}\|^{p}_{L^{p}_{\theta}}}\Big[O\Big(c^{\frac{p}{p-1}}(\epsilon L)^{\theta+1}\ln^{\frac{k_0p}{p-1}}(\epsilon L)\Big)+O\left(1\right)\Big]\\
&+\Big[\frac{\eta^2c^{\frac{p}{p-1}(\frac{k_0}{p-1}-1)}}{(1-\eta)^2}\frac{\omega_{\theta}\mu_{\alpha,\theta}}{\theta+1}e^{\mu_{\alpha,\theta}\mathcal{A}_{0}+\gamma+\Psi(p)}\Big]\times\\
& \quad\;\Big[\Big(\frac{p(\gamma+\Psi(p))}{p-1}\frac{\eta\|g_{0}\|^{p}_{L^{p}_{\theta}}}{c^{\frac{p}{p-1}}(1-\eta)}-1\Big)\frac{p^2-p+2}{2(p-1)^{2}} \|g_{0}\|^{p}_{L^{p}_{\theta}}\Big] \\
& \quad+O\Big(c^{\frac{k_0p}{(p-1)^2}-\frac{2p}{p-1}}\Big)+ O\Big(c^{\frac{k_0p}{(p-1)^2}}L^{-\frac{\theta+1}{p-1}}\Big).\\
\end{aligned}
\end{equation}
Thus, using the convergences in \eqref{O-limits} and \eqref{c-control} we have $H(\epsilon,\eta)>0$,  if $\epsilon=\eta>0$ is small enough.
\end{proof}
\section{Proof of Theorem~\ref{thm non-existence}: Non-existence of maximizers} 
\label{sec-non-existence}
We follow the argument of Ishiwata of \cite{Ishi}. For $p=2$, \cite[Theorem~1.4]{JJ2012} yields
\begin{equation}\nonumber
\mathrm{C}_{\mu,\theta}=\sup_{0\not\equiv u\in X^{1,2}_{\infty}}\frac{\|u^{\prime}\|^{2}_{L^{2}_{1}}}{\|u\|^{2}_{L^{2}_{\theta}}}\int_{0}^{\infty}\Big(e^{\mu \frac{u^{2}}{\|u^{\prime}\|^2_{L^{2}_{1}}}}-1\Big)\mathrm{d}\lambda_{\theta}<\infty,
\end{equation}
for any $0<\mu<\mu_{1,\theta}=2\pi(\theta+1)$. Thus, the series expansion of $x\mapsto e^x-1$ yields
\begin{equation}\label{p=2AT}
\|u\|^{2j}_{L^{2}_{\theta}}\le \mathrm{C}_{\mu,\theta}\frac{j!}{\mu^{j}}\|u^{\prime}\|^{2(j-1)}_{L^{2}_{1}}\|u\|^{2}_{L^{2}_{\theta}},\;\; j\ge 1.
\end{equation}
Let $\mathcal{M}=\{u\in X^{1,2}_{\infty}\;\vert \; \|u\|=1\}$. For any  $u\in\mathcal{M}$, we set
\begin{equation}\nonumber
u_{\tau}(r)=\tau^{\frac{1}{2}}u(\tau^{\frac{1}{\theta+1}}r),\quad v_{\tau}= \frac{u_{\tau}}{\|u_{\tau}\|},\;\; \tau>0
\end{equation}
Hence $v_{\tau}\in\mathcal{M}$. Define
\begin{equation}\nonumber
J(u)=\int_{0}^{\infty}\Big(e^{\mu(1+\eta\|u\|^{2}_{L^{2}_{\theta}})u^{2}}-1\Big)\mathrm{d}\lambda_{\theta}.
\end{equation}
If $u$ is a maximizer for $ AD(\eta, \mu, 1,\theta)$  then $u\in\mathcal{M}.$ Since $v_{\tau}$ is a curve in  $\mathcal{M}$  with $v_{1}=u$ we must have
\begin{equation}\label{cri-max}
\frac{d}{d\tau}J(v_{\tau})\big\vert_{\tau=1}=0. 
\end{equation}
From \eqref{rts-LpLq}, we can write
\begin{equation}\nonumber
J(v_{\tau})=\sum_{j=1}^{\infty} \frac{\mu^{j}}{j!}\Big(1+\eta \frac{\|u\|^{2}_{L^{2}_{\theta}}}{\tau \|u^{\prime}\|^{2}_{L^{2}_{1}}+\|u\|^{2}_{L^{2}_{\theta}}}\Big)^{j}\frac{\tau^{j-1}\|u\|^{2j}_{L^{2j}_{\theta}}}{\left(\tau \|u^{\prime}\|^{2}_{L^{2}_{1}}+\|u\|^{2}_{L^{2}_{\theta}}\right)^{j}}.
\end{equation}
Since $\|u\|=1$, it follows that
\begin{equation}\nonumber
\begin{aligned}
\frac{d}{d\tau}J(v_{\tau})\Big\vert_{\tau=1} 
& = -\mu \|u\|^{2}_{L^{2}_{\theta}}\|u^{\prime}\|^{2}_{L^{2}_{1}}\\
&+ \sum_{j=2}^{\infty}\frac{\mu^{j}}{j!}\left(1+\eta\|u\|^{2}_{L^{2}_{\theta}}\right)^{j-1}\|u\|^{2j}_{L^{2j}_{\theta}} \Big[-j \eta \|u\|^{2}_{L^{2}_{\theta}} \|u^{\prime}\|^{2}_{L^{2}_{1}}\\
& + \big(1+\eta\|u\|^{2}_{L^{2}_{\theta}}\big)\big(j-1 -j\|u^{\prime}\|^{2}_{L^{2}_{1}}\big)\Big].\\
& \le  -\mu \|u\|^{2}_{L^{2}_{\theta}}\|u^{\prime}\|^{2}_{L^{2}_{1}}+ \sum_{j=2}^{\infty}\frac{(2\mu)^{j}}{(j-1)!}\|u\|^{2j}_{L^{2j}_{\theta}}.
\end{aligned}
\end{equation}
For any $0<\gamma<\mu_{1,\theta}$, \eqref{p=2AT} yields
\begin{equation}\nonumber
\begin{aligned}
\frac{d}{d\tau}J(v_{\tau})\Big\vert_{\tau=1} &
 \le  \|u\|^{2}_{L^{2}_{\theta}}\|u^{\prime}\|^{2}_{L^{2}_{1}}\Big(-\mu+ \frac{4\mu^{2}}{\gamma^2}\mathrm{C}_{\gamma,\theta}\sum_{j=2}^{\infty} \frac{j(2\mu)^{j-2}}{\gamma^{j-2}}\|u^{\prime}\|^{2(j-2)}_{L^{2}_{1}}\Big)\\
&\le \mu \|u\|^{2}_{L^{2}_{\theta}}\|u^{\prime}\|^{2}_{L^{2}_{1}}\Big(-1+ \frac{4\mu}{\gamma^2}\mathrm{C}_{\gamma,\theta}\sum_{j=0}^{\infty} \frac{(j+2)(2\mu)^{j}}{\gamma^{j}}\Big).
\end{aligned}
\end{equation}
Thus, for $\mu<\mu_{1,\theta}/4$ and by choosing $\overline{\gamma}=3\mu_{1,\theta}/4$, we get
\begin{equation}\nonumber
\begin{aligned}
 \frac{d}{d\tau}J(v_{\tau})\Big\vert_{\tau=1} 
&\le  \mu \|u\|^{2}_{L^{2}_{\theta}}\|u^{\prime}\|^{2}_{L^{2}_{1}}\left(-1+ \mu C\right), \;\;\mbox{with}\;\; C=\frac{4}{\overline{\gamma}^2}\mathrm{C}_{\overline{\gamma},\theta}\sum_{j=0}^{\infty} (j+2)\frac{2^{j}}{3^j}.
\end{aligned}
\end{equation}
But, for any $\mu<\min\left\{\mu_{1,\theta}/4, 1/C\right\}$, this contradicts \eqref{cri-max}.

\bigskip
\bigskip

\begin{thebibliography}{999999}
%--------------------------------------------------------------------------------------------------------------------------------------------
   \bibitem{ADDruet2004}
   Adimurthi,  O. Druet,  \textit{Blow-up Analysis in Dimension $2$ and a Sharp Form of Trudinger-Moser Inequality}, Communications in Partial Differential Equations, \textbf{29} (2005), 295--322
   
   \bibitem{special}
   G.E.  Andrews, R. Askey, R. Roy, \textit{Special Functions } Encyclopedia of Mathematics and Its Applications, vol. 71. Cambridge University Press, Cambridge (1999)
   
   \bibitem{AbreFern}
   E. Abreu, L.G. Fernandes Jr, \textit{On a weighted Trudinger-Moser inequality in $\mathbb{R}^{n}$}, J. Differential Equations \textbf{269} (2020), 3089--3118
   
  %  \bibitem{ADTA2000}
  % S. Adachi, K. Tanaka, \textit{Trudinger type inequalities in $\mathbb{R}^N$ and their best exponents}, Proc. Amer. Math. Soc. \textbf{128} (2000), % 2051--2057.
 
   \bibitem{Alvino2017}
 A. Alvino, F. Brock, F. Chiacchio, A. Mercaldo, M.R. Posteraro, \textit{Some isoperimetric inequalities on $\mathbb{R}^N$ with respect to weights $|x|^{\alpha}$},  J. Math. Anal. Appl.  \textbf{451} (2017), 280--318
 
 
   % \bibitem{Cassani} D. Cassani, F. Sani, C. Tarsi, \textit{Equivalent Moser type inequalities in $\mathbb{R}^2$ and the zero mass case} J. Funct. Anal. \textbf{267} (2014), 4236-4263

%---------------------------------------------------------------------------------------------
%\bibitem{CAO92}
%    D.~M.~Cao, \textit{Nontrivial solution of semilinear elliptic equation with critical exponent in $\mathbb{R}^2$},
%   Comm. Partial Differential Equations \textbf{17} (1992), 407--435.


\bibitem{CC}
L. Carleson, S.Y.A. Chang, \textit{On the existence of an extremal function for an inequality of J. Moser}, Bull. Sci. Math. \textbf{110}, (1986)  113-127



%--------------------------------------------------------------------------------------------------------------------
%\bibitem{Cerny}
%R. \v Cern\'{y}, A. Cianchi, S. Hencl,
%\textit{Concentration-compactness principles for Moser-Trudinger inequalities: new results and proofs}, Ann. Mat. Pura Appl., \textbf{192} (2013), 225-243



    \bibitem{Clement-deFigueiredo-Mitidieri}
P.~Cl\' ement, D.~G, Figueiredo, E. Mitidieri, \textit{Quasilinear
elliptic equations with critical exponents}, Topol. Methods Nonlinear Anal.
\textbf{7} (1996), 133--170.


%--------------------------------------------------------------------------------------------
\bibitem{Collins}
J.C.~Collins, 
\textit{Renormalization }, Cambridge University Press, Cambridge, 1984.


%----------------------------------------------------------------------------------------------------------------------------------------------------
%    \bibitem{deFigueiredo-Goncalves-Miyagaki}
%D.G.~de Figueiredo, J.V.~Gon\c{c}alves, O.H. Miyagaki, \textit{On
%a class of quasilinear elliptic problems involving critical
%Sobolev exponents}, Commun. Contemp. Math. \textbf{2} (2000),
%47--59.


%---------------------------------------------------------------------------------------------
\bibitem{JF2017}
J.~F.~de~Oliveira, \textit{On a class of quasilinear elliptic problems with critical exponential growth on the whole space}, Topol. Methods Nonlinear Anal. \textbf{49}, (2017) 529--550 .


%\bibitem{OLDOUB} 
 %J.~F.~de~Oliveira, J.~M.~do~\'O, P. Ubilla, \textit{Hardy-Sobolev type %inequality and supercritical extremal problem}, Discrete Contin. Dyn. %Syst. \textbf{39} (2019) 3345--3364.




%\bibitem{OLDOUBJDE}
%J.F. de Oliveira, J.M. do \'{O}, P. Ubilla, \textit{Existence for a %k-Hessian equation involving supercritical growth}, J. Differential %Equations, \textbf{267} (2019), 1001--1024.


%-----------------------------------------------------------------------------------------------
    \bibitem{JJ2012}
     J.~F.~de~Oliveira, J.~M.~do~\'O,
     \textit{Trudinger-Moser type inequalities for weighted Sobolev spaces
     involving fractional dimensions},  Proc.~Amer.~Math.~Soc., \textbf{142} (8)  (2014),  2813--2828.

%---------------------------------------------------------------------------------------------------------------------------------------------------------------
    \bibitem{JJ2013}
    J.~F.~de~Oliveira, J.~M.~do~\'O,
    \textit{Concentration-compactness principle and extremal functions for a sharp Trudinger-Moser inequality}, 
Calc.~Var.~Partial Differential Equations, \textbf{52}  (2015) 125-163

   \bibitem{JJ2021}
    J.~F.~de~Oliveira, J.~M.~do~\'O,
    \textit{Equivalence of critical and subcritical sharp Trudinger-Moser  inequalities in fractional dimensions and extremal functions}, to appear in Rev. Mat. Iberoam.
    
    
\bibitem{RUDOL}
J.F. de Oliveira, J.M. do \'{O}, B. Ruf, \textit{Extremal for a $k$-Hessian inequality of Trudinger-Moser type}, Math. Z. \textbf{295} (2020)
1683-1706.


%	\bibitem{OLUBna}
%J.F. de Oliveira,  P. Ubilla, \textit{Extremal functions for a supercritical $k$-Hessian inequality of Sobolev-type}, Nonlinear Analysis: Real World Applications \textbf{60} (2021), 103314.



	\bibitem{OLUBrmi}
J.F. de Oliveira,   P. Ubilla, \textit{Admissible solutions to Hessian equations with exponential growth}, Rev. Mat. Iberoam. \textbf{37} (2021),  749--773.

%----------------------------------------------------------------------------------------------------------------------------------------------------
%    \bibitem{DoTM}
%J.M. do \'{O}, \textit{$N$-Laplacian equations in $\mathbb{R}^N$ with critical growth.} Abstr. Appl. Anal. \textbf{2}, (1997) 301--315 


	\bibitem{DOMADE}
J. M. do \'{O}, A. C. Macedo, J. F. de Oliveira, \textit{A sharp Adams-type inequality for weighted Sobolev spaces}, Q. J. Math. \textbf{71} (2020) 517--538.


	\bibitem{JJ2014}
J.~M.~do~\'O,  J.~F.~de~Oliveira, 
\textit{Concentration-compactness and extremal problems for a weighted Trudinger-Moser inequality},  Commun.~Contemp.~Math., \textbf{19} (2017) , 1650003.


%------------------------------------------------------------------------


%\bibitem{DOMAUB}
%J.M do Ó, M. de Souza, E. de Medeiros, U. Severo,
%\textit{An improvement for the Trudinger-Moser inequality and applications}
%J. Differential Equations, \textbf{256} (2014),1317--1349


%\bibitem{Flucher92}
%M. Flucher, \textit{Extremal functions for the Trudinger-Moser inequality in 2 dimensions}. Comment. Math.
%Helv. \textbf{67}, (1992) 471-497 



%-------------------------------------------------------------------------------------------------------------------------------------------------------------  
%    \bibitem{Jacobsen}
%J.~Jacobsen, K.~Schmitt, \textit{Radial solutions of quasilinear
%elliptic differential equations}. Handbook of differential
%equations, Amsterdam, (2004), 359-435.

%%-------------------------------------------------------------------------------------------------------
%    \bibitem{Jacobsen1}
%J.~Jacobsen, K.~Schmitt, \textit{The Liouville--Bratu--Gelfand
%Problem for Radial Operators}, J.~Differential Equations
%\textbf{184}, (2002), 283-298


%--------------------------------------------------------------------------------------------------------------------------------------------------------------

%	\bibitem{Hardy}
%G.H. Hardy, \textit{Note on a theorem of Hilbert},  Math. Z. \textbf{6} (1920), 314--0317.

%\bibitem{Hudson}
%S.Hudson, M. Leckband,  \textit{Extremals for a Moser-Jodeit exponential inequality}, Pacific J. Math. \textbf{206}, (2002) 113–128


	\bibitem{Ishi}
 M. Ishiwata,\textit{Existence and nonexistence of maximizers for variational problems associated with Trudinger-Moser type inequalities in $\mathbb{R}^N$}, Math. Ann. \textbf{351} (2011),  781--804.
 
 
%-------------------------------------------------------------------------------------------------------------------------------------------------------------- 
%	\bibitem{Ishi2}
%N. Ikoma, M. Ishiwata, H. Wadade, \textit{Existence and non-existence of maximizers for the Moser-Trudinger type inequalities under inhomogeneous constraints}, Math. Ann. \textbf{373}  (2019), 831--851.

%-----------------------------------------------------------------------------------------------------------------------------------------------
 %   \bibitem{Opic}
  %  A.~Kufner, B.~Opic, \textit{Hardy--type inequalities}, Pitman Res.
   % Notes in Math., vol. 219, Longman Scientific and Technical,
    %Harlow, 1990.

%\bibitem{LamLu}
%N.~Lam, G.~ Lu, \textit{Sharp Trudinger-Moser inequality on the Heisenberg group at the critical case and applications}. Adv. Math. \textbf{231} (2012), 3259-3287.

%---------------------------------------------------------------------------------------------
 %	\bibitem{LamLuIbero}
 % N. Lam, G. Lu, L. Zhang, \textit{Equivalence of critical and subcritical sharp Trudinger-Moser-Adams
%inequalities}, Rev. Mat. Iberoam. \textbf{33}  (2017), 1219--1246.

%---------------------------------------------------------------------------------------------
%	\bibitem{LamLuANS}
 %N. Lam, G. Lu, L. Zhang, \textit{Sharp singular Trudinger-Moser inequalities under different norms}, Adv.
%Nonlinear Stud. \textbf{19}  (2019), 239--261.

  
%---------------------------------------------------------------------------------------------
%	\bibitem{LamLuAdv}    
%   N. Lam, G. Lu, L. Zhang, \textit{Existence and nonexistence of extremal functions for sharp Trudinger-Moser inequalities}, Advances in Mathematics \textbf{352 } (2019), 1253-1298.
  

%---------------------------------------------------------------------------------------------
%\bibitem{BRufLi}
%Y.~Li, B.~Ruf,
%\textit{A sharp Trudinger--Moser type inequality for unbounded domains in $\mathbb{R}^{n}$}, Indiana Univ. Math. J. \textbf{57}, (2008) 451--480. 
    
    
%\bibitem{LiTAMS}    
%K.C. Lin, \textit{Extremal functions for Moser’s inequality}, Trans. Amer .Math. Soc.  \textbf{348} (1996), 2663-2671


%\bibitem{Lions}    
%  P.-L. Lions, \textit{The concentration-compactness principle in the calculus of variations. The limit
%case. I}, Rev. Mat. Iberoamericana  \textbf{1} (1985), 145-201
  

    
\bibitem{Moser}
     J.~Moser,
     \textit{A sharp form of an inequality by N. Trudinger},
     Indiana Univ. Math. J. \textbf{20} (1970/71), 1077--1092.

\bibitem{Nguyen}
V.H. Nguyen, \textit{Extremal functions for the Moser-Trudinger inequality of Adimurthi-Druet type in $W^{1,N}(\mathbb{R}^N)$}, Commun. Contemp. Math. \textbf{21},  1850023 (2019)


\bibitem{Palmer}
C. Palmer, P.N. Stavrinou, 
\textit{Equations of motion in a non-integer-dimensional space},
Journal of Physics A. \textbf{37}  (2004) 6987--7003.

%\bibitem{Panda}
%R. Panda, \textit{On semilinear Neumann problems with critical growth for the $n$-Laplacian}, Nonlinear Anal. \textbf{26} (1996), 1347--1366.


%\bibitem{BRuf}
%B.~Ruf,
%\textit{A sharp Trudinger--Moser type inequality for unbounded domains in $\mathbb{R}^{2}$}, Journal of Functional Analysis \textbf{219} (2005) 340--367.


\bibitem{Pohozaev65}
    S.~I.~Pohozaev,
    \textit{The Sobolev embedding in the case $pl = n$},
    Proceedings of the Technical Scientific Conference on Advances of Scientific Research
    1964--1965. Mathematics Section, Moscov. Energet. Inst. (1965),
    158--170.

\bibitem{Still77}
F.H.~Stillinger, 
\textit{Axiomatic basis for spaces with noninteger dimension}, J. Math. Phys. \textbf{18},  (1977) 1224--1234.

%\bibitem{Struwe}
%M. Struwe, \textit{Critical points of embeddings of $H^{1,n}_{0}$ into Orlicz spaces}, Ann. Inst. H. Poincar\'{e} Anal. Non Lin\'{e}aire, \textbf{5} (1988),  425-464.

%--------------------------------------------------------------------------------------------------------------------------------------------
\bibitem{Tian}
G.-T. Tian, X.-J. Wang, \textit{Moser–Trudinger type inequalities for the Hessian equation} J. Funct. Anal. \textbf{259} (2010), 1974-2002.



 \bibitem{Trudinger67}
     N.~Trudinger, \textit{On imbeddings into Orlicz spaces and some applications},
     J. Math. Mech.  \textbf{17} (1967), 473--483.
     
%----------------------------------------------------------------------------------
%\bibitem{Yang}
%Y. Yang, \textit{A sharp form of Moser-Trudinger inequality in high dimension}, J. Funct. Anal. \textbf{239} (2006) 100-126

\bibitem{Yudo}
V.I. Yudovich, \textit{Some estimates connected with integral operators and with solutions of elliptic equations}
Sov. Math. Dokl., \textbf{2} (1961), 746-749


%--------------------------------------------------------------------------------------------------------------------------------
\bibitem{Zubair11}
M. Zubair, M.J. Mughal, Q.A. Naqvi, \textit{On electromagnetic wave propagation in fractional
space}, Nonlinear Analysis: Real World Applications.\textbf{12} (2011) 2844--2850.


\bibitem{Zubair12}
M.~Zubair, M.J.~Mughal, Q.A.~Naqvi, \textit{Electromagnetic Fields and Waves in Fractional Dimensional Space}, Springer, Berlin, 2012.


\end{thebibliography}
\end{document}